\newcommand{\id}{\mathrm{id}}
\newtheorem{thm}{Theorem}[section]
\newtheorem{prop}[thm]{Proposition}
\newtheorem{definition}[thm]{Definition}
\numberwithin{equation}{section}
\numberwithin{enumi}{section}
\numberwithin{foo}{section}
\numberwithin{thm}{section}
\title{Realizability Semantics for Quantified Modal Logic: \\ Generalizing Flagg's 1985 Construction}
 \author{Benjamin G. Rin\footnote{Institute for Logic, Language and Computation at the University of Amsterdam, and Amsterdam University College. \emph{Email}: benjamin.rin@gmail.com} \; and Sean Walsh\footnote{Department of Logic and Philosophy of Science, University of California, Irvine. \emph{Email}: swalsh108@gmail.com or walsh108@uci.edu}
}
\begin{document}

\maketitle

\begin{abstract}
A semantics for quantified modal logic is presented that is based on Kleene's notion of realizability. This semantics generalizes Flagg's 1985 construction of a model of a modal version of Church's Thesis and first-order arithmetic. While the bulk of the paper is devoted to developing the details of the semantics, to illustrate the scope of this approach, we show that the construction produces (i) a model of a modal version of Church's Thesis and a variant of a modal set theory due to Goodman and Scedrov, (ii) a model of a modal version of Troelstra's generalized continuity principle together with a fragment of second-order arithmetic, and (iii) a model based on Scott's graph model (for  the untyped lambda calculus) which witnesses the failure of the stability of non-identity.
\end{abstract}


\newpage

\section{Introduction}\label{sec:intro}

Realizability is the method devised by \cite{Kleene1945aa} to provide a semantics for intuitionistic first-order number theory. In the course of its long history (cf. \cite{Oosten2002aa}), realizability was subsequently generalized to fragments of second-order number theory and set theory by  \cite{Troelstra1998aa}, \cite{McCarty1984aa,McCarty1986aa}, and others. The primary aim of this paper is to present a systematic way of transforming these semantics into an associated semantics for systems of classical modal logic, so that we obtain modal systems of first-order arithmetic, fragments of second-order arithmetic, and set theory. The resulting systems validate not intuitionistic logic but rather classical logic, so that while intuitionistic logic is used in the construction of these modal systems, the logic of these modal systems is thoroughly classical.

The resulting semantics generalize the important but little-understood construction of \cite{Flagg1985aa}, whose goal was to provide a consistency proof of Epistemic Church's Thesis together with epistemic arithmetic, a modal rendition of first-order arithmetic. \emph{Epistemic Church's Thesis} ($\mathsf{ECT}$) is the following statement:
\begin{equation}\label{eqn:whatwegotnow}
[\Box (\forall \; n \; \exists \; m \;  \Box \varphi(n,m))]\Rightarrow  [\exists \; e \; \Box \; \forall \; n \; \exists \; m \; \exists \; q \; (T(e,n,q) \wedge U(q,m) \wedge \Box \varphi(n,m))]
\end{equation}
In this, the quantifiers are understood to range over natural numbers, and $T(e,n,q)$ is Kleene's $T$-predicate, which says that program~$e$ on input $n$ halts and outputs a code for the halting computation $q$, while $U(q,m)$ says that the computation $q$ has output value~$m$. The modal operator takes on an epistemic interpretation due to \cite{Shapiro1985ac}, whereby $\Box\varphi$ represents ``$\varphi$ is knowable''. $\mathsf{ECT}$ then expresses the computability of any number-theoretic function which can be known to be total in the admittedly strong sense that it's knowable that the value of this function is knowable for each input.\footnote{There is some regrettable clash in terminology which ought to be mentioned at the outset. The modal principle $\mathsf{ECT}$ as defined in~(\ref{eqn:whatwegotnow}) ought not be confused with the non-modal principle known as \emph{extended Church's Thesis}  sometimes abbreviated  similarly. For a statement of extended Church's Thesis, see \citet[volume 1 p. 199]{Troelstra1988aa}.}

The extension of Flagg's construction sits well with some of the original philosophical motivations for $\mathsf{ECT}$ and epistemic arithmetic. Shapiro's motivation was to have an object-language in which certain ``pragmatic'' properties of computable functions could be expressed, where a property is ``pragmatic'' if an object has it or lacks it ``in virtue of human abilities, achievements, or knowledge, often idealized''  (\citet[p. 41]{Shapiro1985ac}, \citet[pp. 61-62]{Shapiro1993ab}). Reinhardt's interest stemmed from the observation that $\mathsf{ECT}$ together with $\Box \; \forall \; n \; (\Box \; \theta(n) \vee \Box \; \neg \theta(n))$ implies that~$\theta(n)$ is recursive, thereby expressing the idea that epistemically decidable predicates are as rare as recursive predicates \cite[ p. 185]{Reinhardt1985ad}. Given either of these motivations, one would obviously want to know what happens when the mathematical background is changed from arithmetic to other domains, and thus it's natural to seek to understand the extent to which Flagg's construction may be generalized.

Our semantics generalizes Flagg's work by distinguishing between two roles played by arithmetic in his original construction: on the one hand, arithmetic is used to formalize the standard notion of computation on the natural numbers, and on the other hand arithmetic is used to provide the domain and the interpretation of the non-logical arithmetic primitives. On our approach, the notion of computation is generalized to the setting of partial combinatory algebras (cf. \S~\ref{sec:heytingfrompca}), which roughly are algebraic structures capable of formalizing the elementary parts of computability theory such as the $s\mbox{-}m\mbox{-}n$ theorem, the enumeration theorem, the recursion theorem, etc. These algebras are used to construct a space of truth-values, and our semantics then maps modal sentences to elements of this space. Thus the order of explanation in our work is the reverse of that found in earlier work on $\mathsf{ECT}$ and epistemic arithmetic: whereas the earlier work attempted to use modal logic to explicate the notion of computability, our work uses certain notions from computability to aid in the explication of modality, at least in the minimal sense of providing a semantics for it.

While there are a number of different axiomatic systems for quantified modal logic, in what follows we only need to work with a small number of them. Let's define $Q^{\circ}.\mathsf{K}$ to be the Hilbert-style deductive system for the basic modal predicate system $\mathsf{K}$, as set out in \citet[pp. 133-134]{Fitting1998aa}  and \citet[p. 1487]{Corsi2002aa}. See the proof of Theorem~\ref{prop:S4soundness} below for an explicit listing of the axioms of $Q^{\circ}.\mathsf{K}$. The system $Q^{\circ}.\mathsf{K}+\mathsf{CBF}$ is then simply $Q^{\circ}.\mathsf{K}$ plus the Converse Barcan Formula~$\mathsf{CBF}$:
\begin{equation}\label{eqn:CBF}
\Box \; \forall \; x \; \varphi(x) \; \Rightarrow \; \forall \; x \; \Box \; \varphi(x)
\end{equation}
The system $Q.\mathsf{K}$ (with no $\circ$ superscript) results from the system $Q^{\circ}.\mathsf{K}$ by replacing the \textsc{Universal Instantiation Axiom} 
\begin{equation}\label{eqn:UniversalInstantiationAxiom}
\forall \; y \;  ((\forall \; x \; \varphi(x)) \; \Rightarrow \; \varphi(y))
\end{equation}
with its free-variable variant:
\begin{equation}\label{eqn:UniversalInstantiationAxiom2}
(\forall \; x \; \varphi(x)) \Rightarrow \varphi(y)
\end{equation}
As is well known, $Q^{\circ}.\mathsf{K}$ does not prove $\mathsf{CBF}$, but $Q.\mathsf{K}$ does \cite[pp. 245-246]{Hughes1996aa}. Finally, if $\mathsf{L}$ is any set of propositional modal axioms, then let $Q^{\circ}.\mathsf{L}$ be the system $Q^{\circ}.\mathsf{K}$ plus the $\mathsf{L}$-axioms, and similarly for $Q.\mathsf{L}$. In what follows, we'll work almost exclusively with $Q^{\circ}.\mathsf{S4}+\mathsf{CBF}$ and $Q.\mathsf{S4}$, where $\mathsf{S4}$ refers as usual to the $\mathsf{T}$-axiom $\Box \varphi \Rightarrow \varphi$ and the $\mathsf{4}$-axiom $\Box \varphi \Rightarrow \Box \Box \varphi$. In particular, we appeal repeatedly to the common theorem of $Q^{\circ}.\mathsf{S4}+\mathsf{CBF}$ and $Q.\mathsf{S4}$ that $\Box \; \forall \; x \; \varphi(x)$ and $\Box \; \forall \; x \; \Box \; \varphi(x)$ are equivalent.

The final modal notation that we need pertains to identity, existence, and stability. We expand the system $Q^{\circ}.K$ (resp. $Q.K$) to the system $Q^{\circ}_{eq}.K$ (resp. $Q_{eq}.K$) by adding the following axioms, which respectively express the reflexivity of identity and the indiscernibility of identicals, and wherein $s,t$ range over terms in the signature:
\begin{align}
t & = t \label{eqn:necid} \\
s = t & \Rightarrow (\varphi(t) \Rightarrow \varphi(s)) \label{eqn:subs}
\end{align}
Further, if $x$ is a variable, then let the existence predicate $\mathsf{E}(x)$ be defined by $\exists \; y \; y=x$, where $y$ is a variable chosen distinct from $x$. Finally, if $\Gamma$ is a class of formulas, then \emph{$\Gamma$-stability} is the following schema, wherein $\varphi$ is from $\Gamma$:
\begin{equation}
\Box \; [\forall \; \overline{x} \; (\varphi(\overline{x}) \Rightarrow \Box \;\varphi(\overline{x}))]
\end{equation}
Most often in what follows we'll focus on the stability of atomic formulas and their negations. Finally, it's perhaps worth mentioning that our notation for the systems $Q^{\circ}_{eq}.K$ and $Q_{eq}.K$ is patterned after \cite{Corsi2002aa}, who additionally assumes the stability of the negation of identity, which in general fails on our semantics. See the systems $Q^{\circ}_{=}.K$ and $Q_{=}.K$ on \citet[p. 1498]{Corsi2002aa}.

The systems of first-order arithmetic with which we shall work are Heyting arithmetic $\mathsf{HA}$, Peano arithmetic $\mathsf{PA}$, and two versions of epistemic arithmetic which we call $\mathsf{EA}^{\circ}$ and $\mathsf{EA}$. The first two are sufficiently well known from standard references such as \citet[volume 1 p. 126]{Troelstra1988aa} and \citet[p. 28]{Hajek1998}, although it's perhaps useful to stipulate that in what follows the signature of both shall be taken to be $L_{0}=\{0,S, f_1, f_2,\ldots\}$, wherein $0$ and $S$ stand respectively for zero and successor and $f_i$ is an enumeration of function symbols for the primitive recursive functions. Thus we take $x<y$ to be an abbreviation for $f(x,y)=0$ for a suitable primitive recursive function \cite[volume 1 p. 124 Definition 2.7]{Troelstra1988aa}. Then we define:
\begin{definition}\label{defn:EA}
The system $\mathsf{EA}^{\circ}$ is $Q^{\circ}_{eq}.\mathsf{S4}$ plus $\mathsf{PA}$, and the system $\mathsf{EA}$ is $Q_{eq}.\mathsf{S4}$ plus $\mathsf{PA}$, where in both cases we include induction axioms for all formulas in the extended modal language.
\end{definition}
\noindent These definitions in place, we can now state what we take our generalization of Flagg's construction to establish in the setting of first-order arithmetic:
\begin{thm}\label{thm:Flaggthm}
The theory consisting of $\mathsf{EA}^{\circ}$, $\mathsf{CBF}$, and $\mathsf{ECT}$ is consistent with both the failure of $\mathsf{EA}$ and the failure of the Barcan Formula~$\mathsf{BF}$:
\begin{equation}\label{eqn:BF}
[\forall \; x \; \Box \; \varphi(x)]\Rightarrow [\Box \; \forall \; x \;\varphi(x)]
\end{equation}
\end{thm}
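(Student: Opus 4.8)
The plan is to build a single semantic model---a realizability structure over a partial combinatory algebra---in which all the positive principles ($\mathsf{EA}^{\circ}$, $\mathsf{CBF}$, $\mathsf{ECT}$) are validated while the two negative facts (failure of $\mathsf{EA}$ and failure of $\mathsf{BF}$) can be read off. Since the paper has just announced that its generalized construction ``maps modal sentences to elements of this space'' of truth-values built from a pca, the expected route is to instantiate that general machinery with the specific pca coming from ordinary Kleene application on $\omega$ (Kleene's first algebra), which is what recovers Flagg's original 1985 setting. So the first thing I would do is specialize the general soundness results (e.g.\ Theorem~\ref{prop:S4soundness}, which is cited as listing the $Q^{\circ}.\mathsf{K}$ axioms) to this concrete model, thereby obtaining validity of $Q^{\circ}_{eq}.\mathsf{S4}$ and hence of $\mathsf{EA}^{\circ}$ outright.

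Next I would verify the three genuinely modal/arithmetical ingredients in the concrete model. For $\mathsf{CBF}$ and the interpretation of arithmetic: the domain at each ``world'' is essentially $\omega$ with its standard primitive-recursive structure, and because the nullary/constant structure of arithmetic is rigid across the modal dimension, the quantifier-commutation principle $\mathsf{CBF}$ should fall out of the fact that the quantifier ranges over a fixed domain whose elements have stable existence---this is exactly the point flagged earlier that in $Q^{\circ}.\mathsf{S4}+\mathsf{CBF}$ the formulas $\Box\forall x\,\varphi$ and $\Box\forall x\,\Box\varphi$ coincide. For $\mathsf{ECT}$: here one unwinds the realizability clause for the antecedent $\Box(\forall n\,\exists m\,\Box\varphi(n,m))$, extracts from a realizer a total recursive function $e$ uniformizing the choice of $m$ (using the $s$-$m$-$n$ and enumeration theorems guaranteed by the pca), and checks that $e$ together with the $T$- and $U$-predicates realizes the consequent; this is precisely the realizability content of Church's Thesis transported through the box, and it is the step where the computability structure of the pca does the real work.

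The two failures are then witnessed by exhibiting concrete formulas. To refute $\mathsf{BF}$ I would produce a formula $\varphi(x)$ for which $\forall x\,\Box\varphi(x)$ is valid while $\Box\forall x\,\varphi(x)$ is not: intuitively each instance $\varphi(n)$ is knowable (realizably true at every node) but there is no uniform realizer witnessing the universally quantified box, so the truth-value of $\Box\forall x\,\varphi(x)$ is strictly below top. Since $\mathsf{EA}=Q_{eq}.\mathsf{S4}+\mathsf{PA}$ upgrades $\mathsf{EA}^{\circ}$ precisely by strengthening Universal Instantiation from~(\ref{eqn:UniversalInstantiationAxiom}) to the free-variable form~(\ref{eqn:UniversalInstantiationAxiom2}), and $Q.\mathsf{S4}$ proves $\mathsf{BF}$ whereas $Q^{\circ}.\mathsf{S4}$ does not, a single counterexample to $\mathsf{BF}$ simultaneously witnesses the failure of $\mathsf{EA}$; I would make this implication explicit so that one counterexample discharges both negative clauses. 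Consistency then follows immediately: a model validating the positive theory and refuting $\mathsf{BF}$ (hence $\mathsf{EA}$) shows that no derivation of $\mathsf{BF}$ or of the full $\mathsf{EA}$ can exist over $\mathsf{EA}^{\circ}+\mathsf{CBF}+\mathsf{ECT}$.

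I expect the main obstacle to be the verification of $\mathsf{ECT}$ in the generalized pca setting rather than in Flagg's original $\omega$-based one. The delicate point is matching the two nested boxes in~(\ref{eqn:whatwegotnow}) against the realizability semantics: one must confirm that a realizer for the knowably-total antecedent genuinely yields an \emph{index} $e$ for which the $T$- and $U$-predicates are correctly realized \emph{and} that the inner $\Box\varphi(n,m)$ is preserved, which requires the stability (in the sense of the $\Gamma$-stability schema above) of the relevant atomic arithmetical predicates so that truth at one node propagates forward. Getting the interaction between modal persistence, the pca's recursion-theoretic closure, and the quantifier structure to line up exactly is where the bookkeeping is most likely to be subtle, and it is the step I would write out in full detail.
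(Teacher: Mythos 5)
Your plan has a genuine gap in how it discharges the ``failure of $\mathsf{EA}$'' clause. You assert that ``$Q.\mathsf{S4}$ proves $\mathsf{BF}$ whereas $Q^{\circ}.\mathsf{S4}$ does not,'' and on that basis conclude that a single counterexample to $\mathsf{BF}$ also witnesses the failure of $\mathsf{EA}$. That assertion is false: what the free-variable Universal Instantiation axiom~(\ref{eqn:UniversalInstantiationAxiom2}) buys is the \emph{Converse} Barcan Formula~$\mathsf{CBF}$, not $\mathsf{BF}$ --- you have transposed the paper's remark that ``$Q^{\circ}.\mathsf{K}$ does not prove $\mathsf{CBF}$, but $Q.\mathsf{K}$ does.'' The Barcan Formula is not derivable in quantified $\mathsf{S4}$ even with free-variable instantiation; its standard derivations require the symmetry axiom~$\mathsf{B}$ (i.e., systems like $Q.\mathsf{KB}$ or $Q.\mathsf{S5}$). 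So refuting $\mathsf{BF}$ in the model tells you nothing about whether $Q_{eq}.\mathsf{S4}+\mathsf{PA}$ holds there, and your proposal never produces a failure of $\mathsf{EA}$ at all. The paper needs, and gives, a separate counterexample: Theorem~\ref{thm:EA} shows that the instance $(\forall\, x\, \mathsf{E}(x)) \Rightarrow \mathsf{E}(y)$ of~(\ref{eqn:UniversalInstantiationAxiom2}) is invalid, where $\mathsf{E}$ is the existence predicate. The mechanism is the non-uniformity of the quantifier $\mathbb{Q}(n)=\{n\}$: a uniform realizer for this axiom would have to produce, uniformly in $n$, an element of $\{n\}$, which is impossible. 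This is entirely independent of the halting-set argument that refutes $\mathsf{BF}$ (Proposition~\ref{prop:failureofbarcan}); the two negative clauses of the theorem require two different counterexamples, not one.

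Two smaller points. First, your intuition that $\mathsf{CBF}$ holds because elements ``have stable existence'' is backwards in this semantics --- uniform stability of existence is exactly what fails (that is the $\mathsf{EA}$ counterexample above); $\mathsf{CBF}$ is instead verified by a direct computation valid in every such modal structure (Proposition~\ref{prop:CBF}). Second, your direct unwinding of $\mathsf{ECT}$ faces the problem that $\varphi$ inside the boxes is an arbitrary modal formula, so a realizer-extraction argument cannot proceed by induction on $\varphi$; the paper resolves this by proving non-modal Church's Thesis on the $P(\mathcal{K}_1)$-valued structure, applying the G\"odel translation for an atomic $G$, and then invoking the Change of Basis Theorem~\ref{prop:changeofbasis} to replace $G$ by $\Box\varphi$ --- which, together with the stability of $\Sigma_1$-formulas for the $T$- and $U$-predicates, is precisely the bookkeeping you flag as delicate. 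That part of your plan is a workable route in spirit, but it would in effect require you to rediscover the Change of Basis device; the fatal issue remains the $\mathsf{BF}$/$\mathsf{CBF}$ conflation.
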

\noindent The proof of this theorem is presented in \S\ref{sec:arithmetic}, and follows immediately from Theorem~\ref{thm:EA} and Theorem~\ref{thm:ECTisvalid} and Proposition~\ref{prop:failureofbarcan}. Further, as we note in Theorem~\ref{thm:EA}, the existence predicate is a counterexample to~(\ref{eqn:UniversalInstantiationAxiom2}). This admittedly clashes with Flagg's own statement of his results, as he takes himself to be working with $\mathsf{EA}$ instead of $\mathsf{EA}^{\circ}$. We discuss this issue more extensively immediately following Theorem~\ref{thm:ECTisvalid}. Finally, despite the centrality of the Barcan Formula~$\mathsf{BF}$~(\ref{eqn:BF}) to quantified modal logic, to our knowledge neither Flagg nor any of the commentators on his results had indicated whether his model validated this formula, and so it seems fitting to record this information in the statement of the above theorem. However, see \citet[p. 20]{Shapiro1985ac} for a discussion of the status of the Barcan Formula~$\mathsf{BF}$ in $\mathsf{EA}$ itself.

One distinctive feature of our Flagg-like construction is that it always produces the stability of atomics (cf. Proposition~\ref{prop:atomics}~(i)). In the set-theoretic case, the membership relation is atomic and so any set-theoretic construction \emph{\`a la} Flagg will induce the stability of the membership relation. This places strong constraints on the types of set theory that can be shown to be consistent with $\mathsf{ECT}$ using these methods. For, consider any non-computable set of natural numbers such as the halting set $X=\emptyset^{\prime}$, and let $Y=\omega\setminus X$ be its relative complement in the natural numbers. Then of course we have $\forall \; n\in \omega \; (n\in X \vee n\in Y)$. Then the stability of the membership relation implies that $\forall \; n\in \omega \; \exists \; y\in \omega \; \Box \; ( (n\in X\wedge y=1) \vee (n\in Y \wedge y=0))$. But then $\mathsf{ECT}$ implies that $X$'s characteristic function is computable. The moral of this elementary observation is that if one wants to show the consistency of $\mathsf{ECT}$ with a modal set theory, and one proceeds by a construction which forces atomics to be stable, then the modal set theory has to be something far different from just the usual set-theoretic axioms placed on top of the quantified modal logic.

  In what follows, we rather focus on a variant~$\mathsf{eZF}$ of Goodman's epistemic theory~$\mathsf{EZF}$ \cite[pp. 155-156]{Goodman1990aa}. Goodman's theory includes a version of the axiom of choice, and without this axiom, it is equivalent to Scedrov's epistemic set theory $\mathsf{ZFE}_C$ \cite[p. 749-750]{Scedrov1986aa}. This system is the successor to other versions of intensional or epistemic set theories proposed by Myhill, Goodman, and Scedrov in their contributions to the volume \cite{Shapiro1985ad} which also contains Flagg's original paper. Since $\mathsf{EZF}$ is less familiar, let's begin by briefly reviewing this system. This system is built from $Q_{eq}.\mathsf{S4}$ by the addition of the following axioms:

\vspace{2mm}

\noindent I. \emph{Modal Extensionality}: $\forall x \; \forall \; y \; (\Box (\forall \; z \; (z\in x \Leftrightarrow z\in y)) \Rightarrow x=y)$

\vspace{2mm}

\noindent II. \emph{Induction Schema}: $[\forall \; x \; ((\forall \; y\in x \; \varphi(y)) \Rightarrow \varphi(x))] \Rightarrow [\forall \; x \; \varphi(x)]$

\vspace{2mm}

\noindent III. \emph{Scedrov's Modal Foundation}: $[\Box \; \forall \; x \; (\Box (\forall \; y\in x \; \varphi(y)) \Rightarrow \varphi(x))]\Rightarrow [\Box \; \forall \; x \; \varphi(x)]$

\vspace{2mm}

\noindent IV. \emph{Pairing}: $\forall \; x,y\; \exists \; z \; \Box \;(\forall \; u\; (u\in z \Leftrightarrow (u=x \vee u=y)))$

\vspace{2mm}

\noindent V. \emph{Unions}: $\forall \; x \; \exists \; y \; \Box \; (\forall \; z \; (z\in y \Leftrightarrow \exists \; v \; (v\in x \wedge z\in v)))$

\vspace{2mm}

\noindent VI. \emph{Comprehension}: $\forall \; x \; \exists \; y \; \Box (\forall \; z \; (z\in y \Leftrightarrow (z\in x \; \wedge \; \varphi(z)))$

\vspace{2mm}

\noindent VII. \emph{Modal Power Set}: $\forall \; x \; \exists \; y \; \Box \; \forall \; z \; (z\in y \Leftrightarrow \Box \; (\forall \; u \; (u\in z\Rightarrow u\in x)))$

\vspace{2mm}

\noindent VIII. \emph{Infinity}: $\exists \; u \; \Box \; [\exists \; y \in u \; \Box \; (\forall \; z \; z\notin y)$

\hspace{30mm} $\wedge (\forall \; y\in u \; \exists \; z \in u \; \Box \; \forall \; v \; (v\in z \Leftrightarrow (v\in y \vee v=y)))]$

\vspace{2mm}

\noindent IX. \emph{Modal Collection}: 

\vspace{2mm}

\hspace{5mm} $[\Box \; (\forall \; y\in u \; \exists \; z \; \varphi(y,z))]\Rightarrow [\exists \; x \; \Box \; \forall \; y\in u \; \exists \; z \; (\Box (z\in x \wedge \varphi(y,z)))] $

\vspace{2mm}

\noindent X. \emph{Collection}: $[\forall \; y\in u \; \exists \; z \; \varphi(y,z)]\Rightarrow [\exists \; x \; \forall \; y\in u \; \exists \; z\in x \; \varphi(y,x)]$ 

\vspace{2mm}

\noindent In Goodman's earlier paper, ``A Genuinely Intensional Set Theory''~\cite[see][p. 63]{Goodman1985aa}, the Modal Extensionality Axiom (Axiom~I) is explained by noting that the objects in the theory's intended domain of discourse are not sets in the conventional sense, but should be understood rather as ``properties.'' Accordingly, one identifies $x$ and $y$ not when they have the same elements, but when they \emph{knowably} have the same elements. In the schemas II, III, VI, IX, X, the formulas can be any modal formulas in the signature of the membership relation, which may contain free parameter variables. Further, in this enumeration of the axioms, we've omitted Goodman's version of the axiom of choice, since its statement is complicated in the absence of full extensionality and since Scedrov's axiomatization contains no similar such axiom.

The only atomic in $\mathsf{EZF}$ besides identity is the membership relation, and one can check that Goodman's construction does not in general satisfy the stability of the membership relation. However, in this system there is the following entailment:
\begin{prop}
$\mathsf{EZF}$ plus the stability of the membership relation implies the stability of the negation of the membership relation.
\end{prop}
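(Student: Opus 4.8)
The plan is to use the Comprehension axiom~(VI) to turn an instance of non-membership into an instance of membership, and then to discharge the resulting membership claim with the assumed stability of membership. Fix parameters $a,b$ and assume $a\notin b$; the goal is to derive $\Box(a\notin b)$. First I would apply Pairing~(IV) with both arguments equal to $a$ to obtain a singleton $w$ with $\Box\,(\forall u\,(u\in w\Leftrightarrow u=a))$; together with the necessitated reflexivity instance $\Box(a=a)$ this gives $\Box(a\in w)$ and, via the $\mathsf{T}$-axiom, $a\in w$. Next I would apply Comprehension~(VI) to $w$ using the separating formula $z\notin b$, obtaining a set $c$ with
\begin{equation*}
\Box\,\bigl(\forall z\,(z\in c\Leftrightarrow(z\in w\wedge z\notin b))\bigr).
\end{equation*}
Intuitively $c$ is $\{a\}$ if $a\notin b$ and is empty otherwise, so membership of $a$ in $c$ now encodes the non-membership of $a$ in $b$.

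The conversion then proceeds as follows. Stripping the outer box by the $\mathsf{T}$-axiom and instantiating the biconditional at $z=a$, the hypothesis $a\notin b$ together with $a\in w$ gives $a\in c$. Applying the stability of membership (via the $\mathsf{T}$-axiom and instantiation) yields $a\in c\Rightarrow\Box(a\in c)$, hence $\Box(a\in c)$. To read off the conclusion I would move the instantiation under the box: since $\forall z\,\psi(z)\Rightarrow\psi(a)$ is a logical theorem, necessitation and the $\mathsf{K}$-axiom turn the displayed formula into $\Box\,(a\in c\Leftrightarrow(a\in w\wedge a\notin b))$. Combining this with $\Box(a\in c)$ gives $\Box(a\in w\wedge a\notin b)$, and distributing $\Box$ over the conjunction delivers the desired $\Box(a\notin b)$. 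Discharging the hypothesis and generalizing on the parameters establishes the matrix $\forall x\,\forall y\,(x\notin y\Rightarrow\Box(x\notin y))$.

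The step I expect to be the main obstacle is installing the outermost $\Box$ required by the stability schema, namely passing from this matrix to its necessitation. This cannot be obtained by commuting $\Box$ inward past the leading universal quantifiers, since that is precisely the Barcan direction and $\mathsf{BF}$ is not available here (indeed it can fail on the present semantics). Instead the outer box must be produced by necessitating the matrix, i.e.\ by rerunning the argument entirely under a box. For this to go through, the premises feeding the derivation must themselves be necessary: the stability of membership already has the form $\Box[\cdots]$ and so is usable inside any box by the $\mathsf{4}$-axiom, while the particular instances of Pairing and Comprehension that are invoked can be necessitated because a boxed-matrix existential satisfies $\exists z\,\Box\vartheta\Rightarrow\Box\,\exists z\,\Box\vartheta$ in $\mathsf{S4}$ (from $\exists$-introduction, necessitation, and the $\mathsf{4}$-axiom). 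Granting these necessary forms of the axioms, the standard principle that a deduction from necessary premises may itself be necessitated yields $\Box\,\forall x\,\forall y\,(x\notin y\Rightarrow\Box(x\notin y))$, which is the stability of the negation of membership. The care in the argument is thus almost entirely modal bookkeeping --- tracking which facts are boxed and ensuring each instantiation is performed under a box --- rather than any further set-theoretic input.
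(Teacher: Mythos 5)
Your proposal is correct and follows essentially the same route as the paper's proof: apply Comprehension with the separating formula $z\notin b$ to a set containing $a$ (the paper uses an arbitrary such set from Pairing, you use the singleton $\{a\}$), convert $a\notin b$ into membership in the comprehension set, upgrade that membership with the assumed stability of $\in$, and read $\Box(a\notin b)$ back off the boxed comprehension biconditional before necessitating. The only divergence is presentational: the paper dispatches the final outer box with a one-line appeal to necessitation (the convention in these systems being that the rule applies to all theorems of the theory), whereas you spend a paragraph justifying that same step --- valid, but not a different argument.
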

\begin{proof}
Consider the following instance of the Comprehension Axiom~VI of $\mathsf{EZF}$:
\begin{equation}\label{eqn:previousequation3234124399}
\forall \; p \; \forall \; x \; \exists \; y \; \Box \; (\forall \; z \; (z\in y \Leftrightarrow (z\in x \; \wedge \; z\notin p))
\end{equation}
We may argue that this implies the following:
\begin{equation}\label{eqn:previousequation323412433399}
\forall \; p,x,z \; (( z\in x \; \wedge \; z\notin p ) \Rightarrow \Box (z\notin p))
\end{equation}
For, let $p,x,z$ satisfy the antecedent. Let $y$ be the witness from~(\ref{eqn:previousequation3234124399}) with respect to this $x$ and $p$. From $z\in x \; \wedge \; z\notin p$ we can infer to $z\in y$, and from this and the stability of the membership relation we can infer to $\Box (z\in y)$. But then from~(\ref{eqn:previousequation3234124399}) we have $\Box (z\notin p)$. So indeed~(\ref{eqn:previousequation323412433399}) holds. But since $\mathsf{EZF}$ proves $\forall \; z \; \exists \; x \; z\in x$ (for instance by the Pairing Axiom~IV), we then have that $\mathsf{EZF}$ proves
\begin{equation}\label{eqn:previousequation323412433333399}
\forall \; p,z \; ((z\notin p ) \Rightarrow \Box (z\notin p))
\end{equation}
Then by necessitation, we obtain the stability of the negation of the membership relation. 
\end{proof}

As we will see in \S\ref{sec:setheory}, Flagg's construction applied to the set-theoretic case satisfies the stability of atomics but \emph{not} the stability of negated atomics (cf. Proposition~\ref{prop:counterstabilityatomics}). Thus the previous proposition implies that the type of set theory modeled by this construction will be slightly different from $\mathsf{EZF}$. Hence, we introduce the following modification:
\begin{definition}\label{defn:eZF}
The theory $\mathsf{eZF}$ consists of $Q_{eq}.\mathsf{S4}$ plus (i)~the following axioms of $\mathsf{EZF}$: Axiom~I (Modal Extensionality), Axiom~III (Scedrov's Modal Foundation), Axiom~IV (Pairing), Axiom~V (Unions), Axiom~VII (Modal Power set), Axiom~VIII (Infinity), and (ii)~the following axioms:

\vspace{2mm}

\noindent VI$^{\Box}$. \emph{Comprehension$^{\Box}$}: $\forall \; x \; \exists \; y \; \Box (\forall \; z \; (z\in y \Leftrightarrow (z\in x \; \wedge \; \Box \; \varphi(z))))$

\vspace{2mm}

\noindent X$^{\Box}$. \emph{Collection}$^{\Box}$: $[\forall \; y\in u \; \exists \; z \; \Box \; \varphi(y,z)]\Rightarrow [\exists \; x \; \forall \; y\in u \; \exists \; z\in x \; \Box \; \varphi(y,x)]$ 

\vspace{2mm}

\end{definition}

\noindent Hence, outside of the comprehension schema, the primary difference between the two systems is that $\mathsf{eZF}$ lacks Axiom~II (Induction Schema) and Axioms~IX-X (Modal Collection and Collection) of the system $\mathsf{EZF}$. It compensates by including versions of comprehension and collection for formulas which begin with a box. Our result is that when we apply Flagg's construction to a set-theoretic setting, we obtain the following:
\begin{thm}\label{thm:main}
The theory consisting of $\mathsf{eZF}$, $\mathsf{ECT}$, and the stability of atomics is consistent with the failure of the stability of negated atomics.
\end{thm}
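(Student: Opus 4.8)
The plan is to establish consistency by exhibiting a single model, namely the Flagg-style construction of \S\ref{sec:setheory} specialized to the signature of membership and built over a suitable partial combinatory algebra, such as Kleene's first algebra on $\omega$. Since the construction of \S\ref{sec:heytingfrompca} builds a space of truth-values out of the chosen algebra and interprets every modal sentence as an element of that space, such a model is just a specialization of the general apparatus, and any model of this kind that simultaneously validates $\mathsf{eZF}$, $\mathsf{ECT}$, and the stability of atomics while refuting the stability of negated atomics witnesses the asserted consistency.

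First I would dispatch the three ingredients that come essentially for free. The stability of atomics holds in every instance of the construction by Proposition~\ref{prop:atomics}~(i), and in particular holds for the membership and identity relations in the present signature. The validity of $\mathsf{ECT}$ is inherited from the general validity argument underlying Theorem~\ref{thm:ECTisvalid}, since the interpretation of Kleene's $T$- and $U$-predicates is read directly off the application operation of the underlying algebra and is insensitive to whether the ambient domain is arithmetic or set-theoretic. Finally, the failure of the stability of negated atomics is exactly Proposition~\ref{prop:counterstabilityatomics}, which exhibits membership facts $z\notin p$ that hold without $\Box(z\notin p)$ holding.

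The substantive work is to verify the axioms of Definition~\ref{defn:eZF} in the model. I would build a cumulative hierarchy of names over the algebra in the style of McCarty's realizability set theory, with realizers drawn from the algebra, and then check each axiom by producing a realizer uniformly in the parameters. Modal Extensionality, Pairing, Unions, and Infinity should follow from the corresponding realizers in the underlying intuitionistic set theory together with the absorption of $\Box$ that the truth-value space provides, while Scedrov's Modal Foundation should come from the well-foundedness of the name hierarchy transported through the box, along the lines already used for the induction principle in the arithmetic case.

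The main obstacle, and the reason $\mathsf{eZF}$ is framed with Comprehension$^{\Box}$ and Collection$^{\Box}$ rather than their unboxed counterparts, is the interaction of the box with separation. The earlier Proposition shows that plain Comprehension (Axiom~VI) together with the stability of membership forces the stability of \emph{negated} membership; since our model has stable membership, it cannot validate plain Comprehension on pain of contradicting Proposition~\ref{prop:counterstabilityatomics}. I would therefore verify only Comprehension$^{\Box}$, showing that the set $y$ collecting the $z\in x$ with $\Box\varphi(z)$ is realized precisely because the defining condition $\Box\varphi(z)$ is itself stable by the $\mathsf{4}$-axiom, so that the membership facts for $y$ inherit the required stability without forcing $\neg\varphi$ to become stable; Modal Power Set and Collection$^{\Box}$ would be handled by the same principle, the box on the relevant subformula confining the construction to stable conditions of just the sort the stability-of-atomics side of the model can support. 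Confirming that these boxed axioms are realized while plain Comprehension genuinely fails—so that the consistency claim is not satisfied vacuously by a model that secretly validates more—is the delicate point on which the proof turns.
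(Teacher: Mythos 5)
Your proposal is correct and takes essentially the same approach as the paper: Theorem~\ref{thm:main} is obtained there from exactly the ingredients you assemble, namely Proposition~\ref{prop:atomics}~(i) for the stability of atomics, Proposition~\ref{prop:counterstabilityatomics} for the failure of stability of negated atomics, Proposition~\ref{eqn:finallyECTsettheory} for $\mathsf{ECT}$ (transferred from McCarty's non-modal Church's Thesis, Proposition~\ref{prop:mccartychurch}, just as in the arithmetic case), and Proposition~\ref{prop:eZF} for $\mathsf{eZF}$, all on the same modalized McCarty structure $\mu[V_{\kappa}^{\mathcal{K}_1}]$. The mechanism you gesture at for the boxed schemas (boxed formulas behaving like stable atomics) is made precise in the paper by applying the G\"odel translation (Theorem~\ref{translationthingie}) to $\mathsf{IZF}$-validity (Theorem~\ref{thm:zfr-valued}) and then the Change of Basis Theorem~\ref{prop:changeofbasis}, which is the formal counterpart of your appeal to the $\mathsf{4}$-axiom.
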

\noindent This theorem is proven in \S\ref{sec:setheory} below, and in particular is an immediate consequence of Proposition~\ref{prop:eZF}, Proposition~\ref{prop:counterstabilityatomics}, and Proposition~\ref{eqn:finallyECTsettheory}. In Theorem~\ref{thm:main}, like in all set-theoretic formalizations of Epistemic Church's Thesis, one understands $\mathsf{ECT}$ to be rendered by binding all the variables to the set-theoretic surrogate for the natural numbers guaranteed by the Infinity Axiom (Axiom VIII).

The bulk of this paper is devoted to developing the details of our realizability semantics for quantified modal logic. In \S\ref{sec:heyting} we describe a general class of prealgebras, called Heyting prealgebras, which serve as the basis for the truth-values of the many-valued structures which we shall build in later sections. Further, in this section we emphasize a generalized double-negation operator which in our view is critical to the overall construction. In \S\ref{sec:heytingfrompca} we describe how to build Heyting prealgebras out of partial combinatory algebras. We there describe three important examples of partial combinatory algebras: Kleene's first model (the model used in ordinary computation on the natural numbers), Kleene's second model (related to oracle computations), and Scott's graph model (used to build models of the untyped lambda calculus).

Then in \S\ref{sec:heytingtobooleanmodal} we describe how to transform Heyting prealgebras based on partial combinatory algebras into Boolean prealgebras with a modal operator, where the relevant distinction is that Boolean prealgebras validate the law of the excluded middle whereas Heyting prealgebras do not in general. Prior to proceeding with the construction proper, in \S\ref{secS5isfalse} we focus on delimiting the scope of the modal propositional logic validated in these modal Boolean prealgebras. Then, in \S\ref{sec:intuitiontomodal}, we show how to associate a modal Boolean-valued structure to certain kinds of Heyting-valued structures. In \S\ref{sec:GTplusCB} we describe two general results on this semantics, namely the G\"odel translation and Flagg's Change of Basis Theorem. 

In \S\ref{sec:arithmetic} we apply this construction to the arithmetic case and obtain Theorem~\ref{thm:Flaggthm} and in \S\ref{sec:setheory} we apply the construction to the set-theoretic case and establish Theorem~\ref{thm:main}. These two constructions use the partial combinatory algebra associated to Kleene's first model, namely the model used in ordinary computation on the natural numbers. To illustrate the generality of the semantics constructed here, in \S\ref{sec:Troelstrakleene} we build a model of a fragment of second-order arithmetic which employs Kleene's second model and which validates a modal version of Troelstra's generalized continuity scheme. This modal version then stands to second-order arithmetic with low levels of comprehension roughly as $\mathsf{ECT}$ stands to first-order arithmetic (cf. Theorem~\ref{thm:genalizedcont}). Finally, in \S\ref{sec:untypedlambda}, we employ Scott's graph model and use it to build a simple example wherein the stability of the negation of identity fails (cf. Proposition~\ref{prop:failsurenegstable}).

Before proceeding, it ought to be explicitly mentioned that ours is not the first attempt to revisit Flagg's important construction. In particular, Flagg's advisor Goodman did so in 1986, remarking that Flagg's original presentation was ``not very perspicuous'' (cf. \citet[ p. 387]{Goodman1986aa}). In addition to generalizing from the arithmetic to the set-theoretic realm, one difference between our approach and that of Goodman is that he worked only over a single partial combinatory algebra, namely that of the ordinary model of computation on the natural numbers. Further, Goodman's proof proceeds via a series of syntactic translations between intuitionsitic arithmetic, epistemic arithmetic, and a modal intuitionistic system. By contrast, our methods are entirely semantic in nature, and carry with it all the benefits and limitations of a semantic approach. For instance, while we get lots of information about independence from the axioms, we get little information about the strength of the axioms. See the concluding section \S\ref{sec:conclusions} for some more specific questions about deductive features of the axiom systems modeled in this paper.


\section{Heyting Prealgebras and the Generalized Double-Negation Operator}\label{sec:heyting}

In this section, we focus on Heyting prealgebras and a certain operator $\ominus$ which is formally analogous to  double negation. A \emph{Heyting prealgebra} $\mathbb{H}$ is given by a reflexive, transitive order $\leq$ on $\mathbb{H}$ such that there is an infimum $x\wedge y$ and supremum $x\vee y$ of any two element subset $\{x,y\}\subseteq \mathbb{H}$, and a bottom element $\bot$, a top element $\top$, and such that there is a binary map $\Rightarrow$ satisfying $x\wedge y\leq z \mbox{ iff } y\leq x\Rightarrow z$ for all $x,y,z\in \mathbb{H}$. Hence, overall, a Heyting prealgebra is given by the signature $(\mathbb{H}, \leq, \wedge, \vee, \bot, \top, \Rightarrow)$. Hereafter, we will use the notation $\neg x$ to denote $x \Rightarrow \bot$, and $x\equiv y$ as an abbreviation for $x\leq y$ and $y\leq x$. If $t(\overline{x}), s(\overline{x})$ are terms in the signature of Heyting prealgebras, then atomic formulas of the form $t(\overline{x})\leq s(\overline{x})$ will be called \emph{reductions} or sometimes \emph{inequalities}, while atomics of the form $t(\overline{x})\equiv s(\overline{x})$ will be called \emph{equivalences}. Further, given the close connection between reductions and the conditional, in the context $t(\overline{x})\leq s(\overline{x})$, the term $t(\overline{x})$ will be called the \emph{antecedent} and the term $s(\overline{x})$ will be called the \emph{consequent}. As one can easily verify, the following are true on any Heyting prealgebra (cf. Proposition~\ref{prop:moreelementary} in Appendix~\ref{sec:appendixheyting}):
\begin{align}
& x \wedge (x\Rightarrow z)\leq z \label{help1}  \\
& x\leq y \mbox{ implies } y\Rightarrow z \; \leq \; x\Rightarrow z\label{help2} \\
& (x\Rightarrow y) \wedge (y \Rightarrow z)\; \leq\; x\Rightarrow z \label{help3} 
\end{align}

Suppose that $\mathbb{H}$ is a Heyting prealgebra and ${d}$ is in $\mathbb{H}$. Then we define the map $\ominus_{d}:\mathbb{H}\rightarrow \mathbb{H}$ by $\ominus_{d}(x) = (x\Rightarrow {d})\Rightarrow {d}$. Obviously if $d=\bot$, then $\ominus_d(x)=\neg\neg x$, and so we can think of $\ominus_d$ as a kind of generalization of the double-negation operator. This map $\ominus_{d}:\mathbb{H}\rightarrow \mathbb{H}$ then has the following properties (cf. Proposition~\ref{prop:thediamondprop22} in Appendix~\ref{sec:appendixheyting}):
\vspace{-8mm}
\begin{multicols}{2}
\begin{align}
& \hspace{-5mm} x\leq y \mbox{ implies } \ominus_d(x)\leq \ominus_{d}(y) \label{prop:diamonds1}\\
& \hspace{-5mm}x\leq \ominus_{d}(x)\label{prop:diamonds2} \\
& \hspace{-5mm}\ominus_{d}(\ominus_{d}(x))\leq \ominus_d (x) \label{prop:diamonds3}\\
& \hspace{-5mm}\ominus_{d}(\ominus_{d}(x)) \equiv \ominus_{d} (x) \label{prop:diamonds5} \\
& \hspace{-5mm}\ominus_{d}(x\wedge y) \equiv \ominus_{d}(x)\wedge \ominus_{d}(y)\label{prop:diamonds4} \\
& \hspace{-5mm} \ominus_{d}(x\Rightarrow y) \; \leq \;  \ominus_{d}(x)\Rightarrow \ominus_{d}(y)\label{prop:diamonds6} 
\end{align}
\columnbreak
\begin{align}
& \mbox{\;} \notag \\
& \ominus_{d}(x\vee y) \geq \ominus_{d}(x)\vee \ominus_{d}(y)\label{prop:diamonds9}\\
& \ominus_d(d)\leq d \label{prop:diamonds12} \\
& d\leq \ominus_d (x)\label{prop:diamonds8} \\
& \ominus_d (\top) \equiv \top \label{prop:diamonds7} \\
& \ominus_d (x) \Rightarrow d \; \equiv \; x\Rightarrow d \label{prop:diamonds10}
\end{align}
\end{multicols}
\vspace{-5mm}
\noindent Notationally, we may write $\ominus_d x$ for $\ominus_d (x)$. Further, we stipulate that $\ominus_d$ binds tightly, so that, e.g., writing $\ominus_d x \vee y$ will refer not to $\ominus_d (x \vee y)$ but to $(\ominus_d x) \vee y$. The map $\ominus_d$ is denoted by $\diamond_d$ in \citet[Example 8.3 p. 133]{Flagg1985aa}, but we avoid this terminology due to the received use of the diamond symbol for the modal operator.

There is a strong connection between Heyting prealgebras and intuitionistic logic. Following the tradition in many-valued logics, let's say that a formula $\varphi$ of propositional logic is \emph{valid} in a Heyting prealgebra if the homophonic translation of this formula into an element of the prealgebra is equivalent to the top element of the prealgebra. Further, let $\mathsf{IPC}$ designate the intuitionistic propositional calculus \cite[Definition 2.1.9, volume 1 p. 48]{Troelstra1988aa}. One can then show that if  $\mathsf{IPC}\vdash \varphi$, then $\varphi$ is valid in any Heyting prealgebra \cite[Theorem 13.5.3 volume 2 p. 705]{Troelstra1988aa}. Intuitionistic logic is also a natural logic to reason about Heyting prealgebras, and in Appendix~\ref{sec:appendixheyting} we verify that~(\ref{prop:diamonds1})-(\ref{prop:diamonds10}) are provable in a weak intuitionsitic logic, a fact which will prove useful in the subsequent sections (cf. in particular Proposition~\ref{prop:formaljustificationunfirm}, as well as the discussion of uniformity in \S\ref{sec:heytingfrompca}).

There is similarly a natural connection between Boolean prealgebras and classical logic. If a Heyting prealgebra satisfies $x\vee \neg x\equiv \top$ (equivalently, $x \equiv \neg \neg x$) for all elements $x$, then it is called a \emph{Boolean prealgebra}. Since the classical propositional calculus $\mathsf{CPC}$ simply extends the intuitionistic propositional calculus $\mathsf{IPC}$ by the law of the excluded middle, $\mathsf{CPC}\vdash \varphi$ implies that $\varphi$ is valid in any Boolean prealgebra. The purpose of the operator $\ominus_d$ is to map a Heyting prealgebra $\mathbb{H}$ with element~$d$ to a Boolean prealgebra $\mathbb{H}_d$ wherein $d$ is equivalent to the bottom element. This is the substance of the following proposition with which we close the section. This proposition is stated and a proof of it is sketched in \citet[Theorem 8.4 p. 134]{Flagg1985aa}.
\begin{prop}\label{prop:diamondsB}
Suppose that $\mathbb{H}$ is a Heyting prealgebra and ${d}\in \mathbb{H}$. Define the set 
\begin{equation}\label{defn:H_d}
\mathbb{H}_{d} = \{x\in \mathbb{H}: \ominus_{d} x \equiv x\}
\end{equation}
Then $\mathbb{H}_{d}$ has the structure of a Heyting prealgebra, where the operations are defined by restriction from $\mathbb{H}$, except in the case of join and falsum, which we define as follows:
\begin{equation}\label{structure0}
f \vee^{d} g  =  \ominus_d (f \vee g), \hspace{10mm} \bot^{d} = \ominus_d(\bot)
\end{equation}
Further, $\mathbb{H}_{d}$ is a Boolean prealgebra and $\bot^d \equiv d$ and $\neg_d \neg_d x \equiv \ominus_d x$.
\end{prop}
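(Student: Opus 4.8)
The plan is to treat $\ominus_d$ as a closure operator up to equivalence: by (\ref{prop:diamonds1}), (\ref{prop:diamonds2}), and (\ref{prop:diamonds5}) it is monotone, inflationary, and idempotent (modulo $\equiv$), and $\mathbb{H}_d$ is exactly its set of fixed points. In particular (\ref{prop:diamonds5}) shows that every element of the form $\ominus_d x$ lies in $\mathbb{H}_d$, so $\ominus_d$ retracts $\mathbb{H}$ onto $\mathbb{H}_d$, and the modified operations $f\vee^d g=\ominus_d(f\vee g)$ and $\bot^d=\ominus_d(\bot)$ are simply the retractions of the ambient join and bottom. I would then verify, one operation at a time, that each is closed in $\mathbb{H}_d$ and witnesses the correct universal property there, drawing exclusively on the already-established properties (\ref{help1})--(\ref{help3}) and (\ref{prop:diamonds1})--(\ref{prop:diamonds10}).

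First the easy operations. For $f,g\in\mathbb{H}_d$, property (\ref{prop:diamonds4}) gives $\ominus_d(f\wedge g)\equiv \ominus_d f\wedge\ominus_d g\equiv f\wedge g$, so $\wedge$ is closed and, the order being inherited, remains the infimum; likewise (\ref{prop:diamonds7}) puts $\top$ in $\mathbb{H}_d$. For $\vee^d$, membership in $\mathbb{H}_d$ is immediate from (\ref{prop:diamonds5}); it is an upper bound of $f,g$ by applying (\ref{prop:diamonds2}) to $f\vee g$, and it is least among upper bounds $h\in\mathbb{H}_d$ because $f\vee g\leq h$ yields $\ominus_d(f\vee g)\leq\ominus_d h\equiv h$ via the monotonicity (\ref{prop:diamonds1}). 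The same monotonicity argument shows $\bot^d$ is a bottom for $\mathbb{H}_d$. To obtain $\bot^d\equiv d$, I would combine (\ref{prop:diamonds8}) at $x=\bot$, which gives $d\leq\ominus_d\bot$, with $\bot\leq d$ together with (\ref{prop:diamonds1}) and (\ref{prop:diamonds12}), which give $\ominus_d\bot\leq\ominus_d d\leq d$.

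The main obstacle is the conditional, which is kept by restriction and so must be shown to remain inside $\mathbb{H}_d$. The key claim is that $f\Rightarrow g\in\mathbb{H}_d$ whenever $g\in\mathbb{H}_d$, for \emph{arbitrary} $f$. One inequality is (\ref{prop:diamonds2}); for the reverse I would chain (\ref{prop:diamonds6}) with the fact that $f\leq\ominus_d f$ makes $\Rightarrow$ antitone in its antecedent by (\ref{help2}):
\[
\ominus_d(f\Rightarrow g)\;\leq\;\ominus_d f\Rightarrow\ominus_d g\;\equiv\;\ominus_d f\Rightarrow g\;\leq\;f\Rightarrow g,
\]
using $\ominus_d g\equiv g$. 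With closure in hand, the Heyting adjunction $x\wedge y\leq z$ iff $y\leq x\Rightarrow z$ transfers verbatim from $\mathbb{H}$ to $\mathbb{H}_d$, since meet, conditional, and order all agree with the ambient ones. This completes the verification that $\mathbb{H}_d$ is a Heyting prealgebra.

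Finally, for the Boolean claim I would compute the double negation. Since $\bot^d\equiv d$, and since implication respects $\equiv$ in its consequent, one has $\neg_d y\equiv y\Rightarrow d$ for every $y$; noting also $d\in\mathbb{H}_d$ (from (\ref{prop:diamonds2}) and (\ref{prop:diamonds12})), this gives
\[
\neg_d\neg_d x\;\equiv\;(x\Rightarrow d)\Rightarrow d\;=\;\ominus_d x,
\]
which is the asserted identity $\neg_d\neg_d x\equiv\ominus_d x$. But on $\mathbb{H}_d$ we have $\ominus_d x\equiv x$ by definition, so $\neg_d\neg_d x\equiv x$ for every $x\in\mathbb{H}_d$; by the equivalent form of the Boolean condition recorded just before the proposition, $\mathbb{H}_d$ is a Boolean prealgebra. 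The only genuinely delicate step is the implication closure above; the remaining verifications are routine applications of the listed properties of $\ominus_d$.
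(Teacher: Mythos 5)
Your proposal is correct, and apart from one step it follows the same route as the paper's proof: closure of $\wedge$ via~(\ref{prop:diamonds4}), closure of $\vee^d$ via~(\ref{prop:diamonds5}), the supremum and bottom properties via monotonicity~(\ref{prop:diamonds1}), top via~(\ref{prop:diamonds7}), the identity $\bot^d \equiv d$ via~(\ref{prop:diamonds8}), (\ref{prop:diamonds1}), and~(\ref{prop:diamonds12}), and the Boolean property reduced to $\neg_d\neg_d x \equiv \ominus_d x \equiv x$, which holds by the very definition of membership in $\mathbb{H}_d$. The one place you genuinely diverge is the conditional. The paper assumes both $x,y \in \mathbb{H}_d$ and establishes the two-sided equivalence $\ominus_d(x\Rightarrow y) \equiv \ominus_d x \Rightarrow \ominus_d y$ (its equation~(\ref{eqn:seqasdfdas})), which it then reuses later, e.g.\ in the proof of Proposition~\ref{prop:uniformme}. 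You instead prove the stronger closure statement that $f \Rightarrow g \in \mathbb{H}_d$ for \emph{arbitrary} $f$ whenever $g \in \mathbb{H}_d$, by chaining~(\ref{prop:diamonds6}) with antitonicity~(\ref{help2}) applied to $f \leq \ominus_d f$; this is essentially the paper's later ``Proposition on Freedom in the Antecedent'' (Proposition~\ref{prop:mixing}), specialized to a single Heyting prealgebra, so your route anticipates a lemma the paper only isolates afterwards. Your argument does rely on monotonicity of $\Rightarrow$ in its consequent (to pass from $\ominus_d f \Rightarrow \ominus_d g$ to $\ominus_d f \Rightarrow g$), which is not among the listed facts~(\ref{help1})--(\ref{help3}), but it follows at once from~(\ref{help1}) and the adjunction defining $\Rightarrow$, and the paper itself uses the same unstated congruence when it rewrites $\neg_d x \equiv (x \Rightarrow \ominus_d \bot) \equiv (x \Rightarrow d)$; so this is a presentational omission, not a gap.
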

\begin{proof}
For the proof, let us explicitly name the components of the structure on $\mathbb{H}_d$:
\begin{equation}
(\mathbb{H}_d, \leq^{d}, \wedge^{d}, \vee^{d}, \bot^{d}, \top^{d}, \Rightarrow^{d})
\end{equation}
where these are defined by restriction of $\mathbb{H}$ to $\mathbb{H}_d$, except in the case of join and falsum which are defined as in equation~(\ref{structure0}). Since $\leq^{d}$ is the restriction of the preorder $\leq$ on $\mathbb{H}$ to $\mathbb{H}_d$, of course $\leq^d$ is a preorder on $\mathbb{H}_d$. Since they are defined by restriction, technically we have $\wedge^d, \vee^d, \Rightarrow^d :\mathbb{H}_d \times \mathbb{H}_d \rightarrow \mathbb{H}$. Let us argue that the codomain of these operations is in fact $\mathbb{H}_d$. So suppose that $x,y\in \mathbb{H}_d$. First note that $x\wedge^d y \in \mathbb{H}_d$. For, it follows from equation~(\ref{prop:diamonds4}) that $\ominus_d (x\wedge y) \equiv  \ominus_d x \wedge \ominus_d y \equiv x\wedge y$. Second note that 
\begin{equation}\label{eqn:seqasdfdas}
\ominus_{d}(x\Rightarrow y) \ \equiv \ \ominus_{d}x\Rightarrow \ominus_{d}y
\end{equation}
For, one half of this follows from equation~(\ref{prop:diamonds6}). For the other half, note that since $x,y\in \mathbb{H}_d$, we can appeal to equation~(\ref{prop:diamonds2}) to obtain $\ominus_d x \Rightarrow \ominus_d y \equiv  x\Rightarrow y \leq \ominus_d(x\Rightarrow y)$. From equation~(\ref{eqn:seqasdfdas}), it follows that $x\Rightarrow^d y$ is in $\mathbb{H}_d$, since $x\Rightarrow y \ \equiv \ \ominus_d x \Rightarrow \ominus_d y \ \equiv \ \ominus_d(x\Rightarrow y)$. Third, note that $x\vee^d y \in \mathbb{H}_d$. For equation~(\ref{prop:diamonds5}) yields the second equivalence in the following: $\ominus_d (x\vee^d y) \equiv \ominus_d (\ominus_d (x \vee y) ) \equiv \ominus_d (x \vee y) \equiv x\vee^d y$.

Since $\wedge^d$ is defined by restriction, it follows immediately that $x\wedge^d y$ is the infimum of $\{x,y\}$ in $\mathbb{H}_d$. To see that $x\vee^d y$ is the supremum of $\{x,y\}$ in $\mathbb{H}_d$, note that $x \equiv  \ominus_d x \leq \ominus_d (x\vee y) \equiv x\vee^d y$ and $y \equiv  \ominus_d y \leq \ominus_d (x\vee y) \equiv x\vee^d y$, where the inequality comes from equation~(\ref{prop:diamonds1}) applied to $x\leq x\vee y$ and $y\leq x\vee y$. Suppose now that $z\in \mathbb{H}_d$ with $x,y\leq z$. Then $x\vee y \leq z$. Then by applying equation~(\ref{prop:diamonds1}), we obtain $x\vee^d y \equiv \ominus_d (x\vee y) \leq \ominus_d z \equiv z$.  Hence, this is why $x\vee^d y$ is the supremum of $\{x,y\}$ in $\mathbb{H}_d$. Since we have that $\wedge^d$ and $\Rightarrow^d$ are the restrictions of $\wedge$ and $\Rightarrow$ to $\mathbb{H}_d$, it follows that we automatically have the axiom for the $\Rightarrow$-map.  Finally, since $\leq^d$ is defined by restriction and $\top$ is in $\mathbb{H}_d$ by equation~(\ref{prop:diamonds7}), we have that $\top$ is the top element of $\mathbb{H}_d$. As for the bottom element of $\mathbb{H}_d$, suppose that $x$ is an arbitrary element of $\mathbb{H}_d$. Then $\bot \leq x$, and so $\ominus_d \bot \leq \ominus_d x \equiv x$ by equation~(\ref{prop:diamonds1}). Thus it follows that $\mathbb{H}_d$ is a Heyting prealgebra. 

To see that $\bot^d = \ominus_d(\bot)\equiv d$, first note that $\bot \leq d$ and so $\ominus_d \bot \leq \ominus_d d\leq d$, where the first inequality comes from~(\ref{prop:diamonds1}) and the second from (\ref{prop:diamonds12}). On the other hand, one has that $d\leq \ominus_d(\bot)$ by equation~(\ref{prop:diamonds8}). So in fact we have $\bot^d \equiv \ominus_d(\bot)\equiv d$. 

Now we argue that $\mathbb{H}_d$ is a Boolean prealgebra. In our Heyting prealgebra $\mathbb{H}_d$, negation $\neg_d$ is defined by $\neg_d x \equiv (x \Rightarrow \bot^d) \equiv (x \Rightarrow \ominus_d \bot) \equiv (x\Rightarrow d )$. Note that it follows from this that $\neg_d \neg_d x \equiv \ominus_d x$. To establish that $\mathbb{H}_d$ is a Boolean prealgebra, we must show that  if $x\in \mathbb{H}_d$ then $\neg_d \neg_d x \equiv x$, or what is the same we must show $\ominus_d x \equiv x$. But this is precisely the condition to be an element of $\mathbb{H}_d$ in the first place.
\end{proof}


\section{Heyting Prealgebras from Partial Combinatory Algebras}\label{sec:heytingfrompca}

  The goal of this section is to recall the definition of a partial combinatory algebra and to describe some examples which will be relevant for our subsequent work. Our treatment relies heavily on \citet[Chapter 1]{Oosten2008aa}. The only way in which this differs from his treatment is that we need to pay attention to certain uniformity properties delivered by the standard construction of a Heyting prealgebra from a partial combinatory algebra. So without further ado, here is the definition of a partial combinatory algebra (cf. \citet[p. 3]{Oosten2008aa}, \citet[p. 100]{Beeson1985aa}):

\begin{definition}\label{defn:pca}
A \emph{partial combinatory algebra}, or \emph{pca}, is a set $\mathcal{A}$  with a partial binary function $(a,b)\mapsto ab$ and distinguished elements $k$ and $s$ such that (i) $sab\hspace{-1mm}\downarrow$ for all $a,b$ from $\mathcal{A}$, and (ii) $kab=a$ for all $a,b$ from $\mathcal{A}$, and (iii) $sabc = ac(bc)$ for all $a,b,c$ from $\mathcal{A}$.
\end{definition}

The convention in pca's is that one associates to the left, so that $abc=(ab)c$. Further, ``downarrow'' notation in condition~(i) of pca's is borrowed from computability theory, so that in general in a pca, $ab\hspace{-1mm}\downarrow$ indicates that the partial binary function is defined on input $(a,b)$. In any pca $\mathcal{A}$, there are some elements beyond $k$ and $s$ to which it is useful to call attention. First, the element $skk$ serves as an identity element, in that one has $skka=a$ for all $a$ from $\mathcal{A}$. Second, there is the pairing function $p$ and the two projection functions $p_0, p_1$ such that $p_0(pab)=a$ and $p_1(pab)=b$. Third, there is an element $\breve{k}$ such that $\breve{k}ab=b$. Sometimes we use $k$ and $\breve{k}$ to code case splits, much like we might use $0$ and $1$ in other contexts. It's worth noting that \citet[p. 5]{Oosten2008aa} uses the notation $\overline{k}$ instead of $\breve{k}$, but we opt for the latter because the former potentially clashes with notation for the numerals introduced in equation (\ref{eqn:mccartynumbers}). Finally, one has the following very helpful proposition, which is sometimes used as an alternative characterization of a pca (cf. \citet[p. 3]{Oosten2008aa}, \citet{Beeson1985aa} p. 101):
\begin{prop}\label{prop:onpcas}
Let $\mathcal{A}$ be a pca and let $t(x_1, \ldots, x_n)$ be a term. Then there is an element $f$ of $\mathcal{A}$ such that for all $a_1, \ldots, a_n$ from $\mathcal{A}$ one has $fa_1\cdots a_n\hspace{-1mm}\downarrow=t(a_1, \ldots, a_n)$.
\end{prop}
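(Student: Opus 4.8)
The plan is to prove the standard \emph{combinatory completeness} of a pca by constructing, for each term $t$ and variable $x$, an ``abstraction'' term $\langle x\rangle t$ behaving like $\lambda x.t$, and then iterating. First I would define $\langle x\rangle t$ by recursion on the build-up of $t$ from variables and the constants $k,s$ under application:
\begin{gather*}
\langle x\rangle x = skk, \qquad \langle x\rangle t = kt \ \text{ for $t$ a constant or a variable $\neq x$}, \\
\langle x\rangle(uv) = s(\langle x\rangle u)(\langle x\rangle v).
\end{gather*}
Here $skk$ is the identity element noted after Definition~\ref{defn:pca}. Note that $\langle x\rangle t$ is again a term, whose free variables are those of $t$ other than $x$.

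The core of the argument is a single-variable lemma, proved by induction on the structure of $t$: for any elements substituted for the free variables of $t$ other than $x$, we have (i) $\langle x\rangle t$ denotes an element of $\mathcal{A}$, and (ii) $(\langle x\rangle t)\,a \simeq t[a/x]$ for every $a$, where $\simeq$ is Kleene equality (both sides defined with equal value, or both undefined). Clause~(i) is where I would lean on the pca axioms that $ka\hspace{-1mm}\downarrow$ and $sab\hspace{-1mm}\downarrow$ hold for all $a,b$: since every abstraction term has outermost shape $k(\cdots)$ or $s(\cdots)(\cdots)$, definedness follows immediately by induction. For clause~(ii), the base cases use $kab=a$ (so $(kt)a=t$) and the computation $skka = ka(ka) = a$; the application case uses the $s$-axiom $sabc \simeq ac(bc)$ together with the two inductive hypotheses, so that $s(\langle x\rangle u)(\langle x\rangle v)a \simeq (\langle x\rangle u)a\bigl((\langle x\rangle v)a\bigr) \simeq u[a/x]\,v[a/x] = (uv)[a/x]$.

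With the single-variable lemma in hand, I would take $f$ to be the element denoted by the iterated abstraction $\langle x_1\rangle\langle x_2\rangle\cdots\langle x_n\rangle t$. Its free variables are those of $t$ outside $\{x_1,\dots,x_n\}$; since $t$ has free variables among $x_1,\dots,x_n$, this is a closed term and therefore, by clause~(i), denotes some $f\in\mathcal{A}$. Applying clause~(ii) $n$ times peels off one abstraction per argument, yielding $f a_1\cdots a_n \simeq t[a_1/x_1,\dots,a_n/x_n] = t(a_1,\dots,a_n)$, while the intermediate terms $f a_1\cdots a_k$ for $k<n$ remain abstractions and so are defined by clause~(i).

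The main obstacle, and the point I would be most careful about, is the bookkeeping of partiality. Because application in a pca is only partial, the equation $(\langle x\rangle t)a = t[a/x]$ cannot in general assert total definedness: for $t = x_1x_1$ one gets $fa \simeq aa$, which is defined only when $aa$ is. Thus the correct reading of the displayed conclusion $fa_1\cdots a_n\hspace{-1mm}\downarrow = t(a_1,\dots,a_n)$ is as a Kleene equality---$fa_1\cdots a_n$ is defined precisely when $t(a_1,\dots,a_n)$ is, with equal values---together with the genuine total-definedness of the partial applications $fa_1\cdots a_k$ for $k<n$. Keeping the $s$-axiom in its Kleene-equality form throughout the induction, rather than assuming everything in sight is defined, is what makes the argument go through cleanly.
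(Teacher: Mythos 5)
Your proof is correct: the paper states this proposition without giving a proof, deferring to the cited references (van Oosten, Beeson), and your bracket-abstraction argument is precisely the standard combinatory-completeness proof found there, with the right inductive lemma (definedness of $\langle x\rangle t$ via the axioms $ka\hspace{-1mm}\downarrow$, $sab\hspace{-1mm}\downarrow$, plus $(\langle x\rangle t)a \simeq t[a/x]$). Your closing point is also the correct gloss on the statement: the conclusion must be read as a Kleene equality, with genuine definedness only of the partial applications $fa_1\cdots a_k$ for $k<n$, and this reading is consistent with how the proposition is used throughout the paper (e.g.\ over $\mathcal{K}_2$ and $\mathcal{K}_1$, where the instantiated terms may be undefined).
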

Sometimes in what follows we'll have occasion to treat a pca as a first-order structure, and there arises then the question of the appropriate signature. It's helpful then to distinguish between a few different options. The \emph{sparse} signature takes merely $s,k$ as constants, while the \emph{expansive} signature invokes the above proposition to recursively introduce a new constant $f_t$ for any term $t$. Further, sometimes we'll work with pca's in which the binary operation is total. The \emph{relational} signature then takes the binary operation to be rendered as a ternary relation, while the \emph{functional} signature takes the binary operation as a binary function symbol.

The paradigmatic example of a pca is \emph{Kleene's first model} $\mathcal{K}_1$, which has as its underlying set the natural numbers and its operator $(e,n)\mapsto \{e\}(n)$, where this denotes the output, if any, of the $e$-th partial recursive function run on input~$n$.  In Kleene's first model $\mathcal{K}_1$, the element $k$ from condition~(ii) is given by a program~$k$ which on input~$a$ calls up a program for the constant function~$a$. For the element~$s$ from conditions~(i) and (iii), choose a total recursive function $s^{\prime\prime}$ such that $s^{\prime\prime}(a,b,c)$ is an index for the program which first runs input $c$ on index $a$ to obtain~$e$ and second runs input $c$ on index~$b$ to obtain index~$n$, and then runs input $n$ on index $e$. Then choose total recursive function~$s^{\prime}$ such that $s^{\prime}(a,b)$ is an index for a function which on input~$c$ returns $s^{\prime\prime}(a,b,c)$; and finally choose $s$ to be a total recursive function such that $s(a)$ is an index for a function which on input~$b$ returns~$s^{\prime}(a,b)$. The functions $k,s^{\prime\prime}, s^{\prime}, s$ all come from the $s\emph{-}m\emph{-}n$ Theorem and so may be chosen to be total, thus ensuring that condition~(i) holds.

Two further pcas which will be of use in \S\S\ref{sec:Troelstrakleene}-\ref{sec:untypedlambda} are Kleene's second model $\mathcal{K}_2$ and Scott's graph model, and before describing these pcas it's perhaps worth stressing that the reader who is uninterested in these specific sections can safely skip our treatment of these other pcas. Kleene's second model $\mathcal{K}_2$ (cf. \citet[pp. 15 ff]{Oosten2008aa} \S{1.4.3}, \citet[\S{VI.7.4} p. 132]{Beeson1985aa}) has Baire space $\omega^{\omega}=\{\alpha: \omega\rightarrow \omega\}$ as its underlying set. Recall that the topology on Baire space has the basis of clopens $[\sigma] =\{\alpha: \forall \; i<\left|\sigma\right| \; \alpha(i)=\sigma(i)\}$ wherein $\sigma$ is an element of $\omega^{<\omega}$, namely, the set of all strings $\sigma:[0,\ell)\rightarrow \omega$ where $\ell\geq 0$ and we define \emph{length of $\sigma$} as $\left|\sigma\right|=\ell$. Strings of small length are sometimes explicitly written out as, e.g., $(0,5,3)$ or $(57, 3,25, 100)$, and a degenerate case is length~$1$ strings written out as, e.g., $(0)$ or $(57)$. Each $\sigma$ from $\omega^{<\omega}$ is assumed to be identified with a natural number so that all the basic operations on strings such as length $\sigma\mapsto \left|\sigma\right|$ and concatenation $(\sigma,\tau)\mapsto \sigma^{\frown}\tau$ are computable. Finally, each $\alpha$ from Baire space and each length $\ell\geq 0$ naturally determines an element $\alpha\upharpoonright \ell$ of $\omega^{<\omega}$ of length $\ell$ by restriction. Likewise, the concatenation $(\sigma,\alpha)\mapsto \sigma^{\frown}\alpha$ is naturally defined to be the element of Baire space which begins with $\sigma$ along its length and then followed by $\alpha$. Baire space is of course a basic object of descriptive set theory, but the only part of that theory that we need to recall is that the $G_{\delta}$ subsets are the countable intersections of open sets, and that the subsets of Baire space whose relative topology is completely metrizable are precisely the $G_{\delta}$ sets (\citet{Kechris1995aa} p. 17). This hopefully motivates the attention paid to $G_{\delta}$ sets in the context of Kleene's second model, since it is these on which the relative topology is most similar to that of the usual topology on Baire space.

In Kleene's second model $\mathcal{K}_2$, the application function is defined in terms of oracle computations. The relevant notation here is: $\{e\}^{\gamma}(n)$ denotes the $e$-th Turing program run on input $n$ and with the graph of the function~$\gamma$ on the oracle tape, and $\alpha\oplus \beta$ is the effective union of $\alpha$ and $\beta$ which follows $\alpha$ on the evens and $\beta$ on the odds. Then the application function in $\mathcal{K}_2$ is defined as follows: 
\begin{equation}\label{eqn:appinkleenesecond}
(\alpha \beta)(n) = \{e_0\}^{\alpha\oplus \beta}(n)
\end{equation}
\noindent where program~$e_0$ on input~$n$, searches for the least $\ell$ such that $\alpha(((n)^{\frown} \beta)\upharpoonright \ell)$ has non-zero value $m+1$, and return $m$ if such is found. It is very difficult to work directly with this characterization of the application function, and so practically one proceeds by using the following proposition, whose proof we relegate to Appendix~\ref{appendix:specificpcas} since while we use it frequently in \S\ref{sec:Troelstrakleene}, we only use it in that section:
\begin{prop}\label{prop:oostenfacts}
(I)~For every continuous function $G:D\rightarrow \omega^{\omega}$ with $G_{\delta}$ domain $D\subseteq (\omega^{\omega})^n$ there is $\gamma$ such that $G(\alpha_1,\ldots \alpha_n)=\gamma\alpha_1 \cdots \alpha_n$ for all $(\alpha_1, \ldots, \alpha_n)$ in~$D$. (II)~For every $\gamma$ and each $n\in \{1,2\}$, the partial map $(\alpha_1, \ldots, \alpha_n)\mapsto \gamma \alpha_1 \cdots \alpha_n$ has $G_{\delta}$ domain and is continuous on this domain.
\end{prop}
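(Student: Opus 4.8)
The plan is to prove the two directions separately, treating $n=1$ as the substantive case and reducing $n=2$ to it. Throughout I identify an element $\gamma\in\omega^{\omega}$ with the neighbourhood function it induces through~(\ref{eqn:appinkleenesecond}): recall that $(\gamma\beta)(n)=m$ precisely when $m+1$ is the value of $\gamma$ on the \emph{least} prefix $((n)^{\frown}\beta)\upharpoonright\ell$ on which $\gamma$ takes a non-zero value.

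For part~(II) with $n=1$, fix $\gamma$ and consider $F\colon\beta\mapsto\gamma\beta$. I would first observe that $\gamma\beta\hspace{-1mm}\downarrow$ exactly when, for every output coordinate $n$, the search for a non-zero value of $\gamma$ along $((n)^{\frown}\beta)\upharpoonright\ell$ succeeds; since for fixed $n$ this is a union over $\ell$ of basic clopen conditions on $\beta$, the set of such $\beta$ is open, and the domain of $F$ is the countable intersection over $n$, hence $G_{\delta}$. Continuity then follows because the value $(\gamma\beta)(n)$ is read off from a single finite prefix of $\beta$ (the least one triggering a non-zero value), so it is locally constant on the domain. For $n=2$ I would use $\gamma\alpha\beta=(\gamma\alpha)\beta$: by the $n=1$ case the set $E=\{\alpha:\gamma\alpha\hspace{-1mm}\downarrow\}$ is $G_{\delta}$ and $H\colon\alpha\mapsto\gamma\alpha$ is continuous on $E$; the further requirement $(\gamma\alpha)\beta\hspace{-1mm}\downarrow$ is again a $G_{\delta}$ condition relative to $E\times\omega^{\omega}$ (each evaluation of $H(\alpha)$ at a fixed string is locally constant in $\alpha$, so the triggering conditions are relatively clopen), and $E\times\omega^{\omega}$ is itself $G_{\delta}$ in $(\omega^{\omega})^{2}$; intersecting two $G_{\delta}$ sets keeps the domain $G_{\delta}$, and continuity of the composite is immediate.

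For part~(I) with $n=1$, given continuous $G\colon D\to\omega^{\omega}$ I would define the representing $\gamma$ directly from the local constancy of $G$: set $\gamma((n)^{\frown}\sigma)=v+1$ when $[\sigma]\cap D\neq\emptyset$ and $G(\cdot)(n)$ is constantly $v$ on $[\sigma]\cap D$, and $\gamma=0$ otherwise (this is well defined, since such a $v$ is unique when it exists). For $\beta\in D$, continuity makes each coordinate map $\beta'\mapsto G(\beta')(n)$ locally constant on $D$, so some basic neighbourhood $[\beta\upharpoonright\ell]$ forces the value; hence the search in~(\ref{eqn:appinkleenesecond}) converges, and because $\beta$ itself lies in $[\beta\upharpoonright\ell]\cap D$ the value returned at the least such $\ell$ is exactly $G(\beta)(n)$, giving $\gamma\beta=G(\beta)$ on $D$. (Agreement on $D$ in fact uses only continuity; the $G_{\delta}$ hypothesis is the extra information supplied by~(II) and it is what pins down the exact class of representable maps.) For $n=2$ I would reduce to $n=1$ through the interleaving homeomorphism $p(\alpha,\beta)=\alpha\oplus\beta$ of $(\omega^{\omega})^{2}$ onto $\omega^{\omega}$, which carries the $G_{\delta}$ set $D$ to a $G_{\delta}$ set $p(D)$ and $G$ to a continuous $G\circ p^{-1}$; the $n=1$ case yields $\delta$ with $\delta(\alpha\oplus\beta)=G(\alpha,\beta)$ on $D$. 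I would then define, for each $\alpha$, a \emph{total} associate $a_{\alpha}\in\omega^{\omega}$ of the continuous map $\beta\mapsto\delta(\alpha\oplus\beta)$, obtained by having $a_{\alpha}$ read finite prefixes of $\alpha$ and simulate $\delta$'s search; since $\alpha\mapsto a_{\alpha}$ is then a total continuous map, the $n=1$ case supplies $\gamma$ with $\gamma\alpha=a_{\alpha}$, whence $\gamma\alpha\beta=\delta(\alpha\oplus\beta)=G(\alpha,\beta)$ on $D$.

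The main obstacle is not the $G_{\delta}$/continuity bookkeeping of~(II) but the \emph{convergence} and \emph{faithfulness} issues in~(I). For $n=1$ convergence of the search is secured by the local-constancy argument above, but for $n=2$ one must check both that the intermediate value $\gamma\alpha$ is a genuine (total) element of $\mathcal{K}_{2}$ and that the finite-prefix simulation defining $a_{\alpha}$ reproduces the \emph{least}-prefix value of $\delta$ along $\alpha\oplus\beta$ rather than skipping past it. Arranging the interleaving simulation to track $\delta$'s search exactly, while keeping $\alpha\mapsto a_{\alpha}$ everywhere defined and continuous, is the fiddly step, and is the reason the full argument is deferred to the appendix; it is also precisely where restricting to $n\in\{1,2\}$ keeps the combinatorics manageable.
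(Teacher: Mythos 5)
Your handling of the cases you treat is correct and runs along essentially the same lines as the paper's proof in Appendix~\ref{appendix:specificpcas}. For part~(I) with $n=1$, your definition of the neighbourhood function from local constancy of the coordinate maps $G(\cdot)(n)$ on $D$ is the paper's construction in lightly different clothing (the paper works with opens $V_{n,m}$ satisfying $(\pi_n\circ G)^{-1}(\{m\})=V_{n,m}\cap D$), and the faithfulness point you isolate---that any prefix of $\beta\in D$ triggering a non-zero value must return $G(\beta)(n)$, because $\beta$ itself lies in the forcing neighbourhood---is exactly the argument given there. For $n=2$ both you and the paper reduce to $n=1$ through the interleaving homeomorphism $\oplus$; the difference is how the resulting composite is realized by a single element of $\mathcal{K}_2$. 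The paper cites the total two-variable case (van Oosten's Lemma 1.4.1) to obtain $\gamma_2$ with $\gamma_2\alpha_1\alpha_2=\alpha_1\oplus\alpha_2$ and then invokes combinatory completeness (Proposition~\ref{prop:onpcas}) to form $\gamma'\alpha_1\alpha_2=\gamma(\gamma_2\alpha_1\alpha_2)$, whereas you hand-roll a currying step: a total associate $a_\alpha$ of $\beta\mapsto\delta(\alpha\oplus\beta)$, continuous in $\alpha$, followed by a second application of the $n=1$ case. Your route is more self-contained (no appeal to the pca combinators or to the cited lemma), and the least-prefix bookkeeping you worry about does go through: define $a_\alpha((n)^{\frown}\sigma)$ from the least prefix of $(\alpha\upharpoonright\left|\sigma\right|)\oplus\sigma$ on which $\delta$ is non-zero; this is stable under extension of $\sigma$, so the simulation cannot skip past the triggering prefix. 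Likewise your part~(II) for $n=2$---a relative $G_\delta$ inside the $G_\delta$ set $E\times\omega^{\omega}$, plus joint continuity of application on its domain---is sound, and is arguably cleaner than the paper's computation, which characterizes the domain by $\Pi^0_2$ oracle-computation conditions.

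The one genuine defect is scope. Part~(I) of the statement is for arbitrary $n$; only part~(II) carries the restriction $n\in\{1,2\}$, and the paper later uses part~(I) with $n=3$ (for instance for the three-variable functions $F$ in the proof of Proposition~\ref{prop:newHA}). Your closing remark that restricting to $n\in\{1,2\}$ is what ``keeps the combinatorics manageable'' indicates you read the restriction as applying to both parts, so your proposal as written does not prove the statement. Nothing in your method fails, however: the interleaving reduction iterates. Given the case $n$, collapse the last two coordinates by $\oplus$ (a homeomorphism preserving $G_\delta$ sets and continuity), apply the $n$-variable case, and recover the extra argument place either by your currying construction or, as the paper does, by induction on $n$ using $\Gamma_{n+1}$ together with Proposition~\ref{prop:onpcas}. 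You should state and carry out this induction so that part~(I) is established for all $n$.
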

Applying the Proposition~\ref{prop:oostenfacts}.I to the first projection allows one to satisfy condition~(ii) of a pca, and with a little further ingenuity one can likewise obtain conditions~(i) and (iii) of a pca (cf. \cite[pp. 17-18]{Oosten2008aa}). Further, Proposition~\ref{prop:oostenfacts}.I implies that for every index~$e\geq 0$ there is $\alpha_e$ such that $(\alpha_e \beta)(n) = \{e\}^{\beta}(n)$. Hence, Kleene's second model $\mathcal{K}_2$ can be understood as an ``oracle computation'' version of Kleene's first model.

The final pca which is used in this paper is Scott's graph model $\mathcal{S}$ (cf. \citet[\S{1.4.6} pp. 20-21]{Oosten2008aa}, \citet[pp. 157-165]{Scott1975ab}). The pca $\mathcal{S}$ has domain $P(\omega)$, namely all subsets of natural numbers. Often the pca is itself denoted as $P(\omega)$, but we will use $\mathcal{S}$ instead since the semantics we develop involves us using a good deal of powerset notation. In the context of Scott's graph model $\mathcal{S}$, the lower case Roman letters $a,b,c,x,y,z$ and subscripted versions thereof are reserved for elements of $\mathcal{S}$, while we reserve $n,m,\ell$ and subscripted versions thereof for natural numbers. Further, the letters $d_1, d_2, \ldots$ are reserved for a standard and fixed enumeration of finite subsets of the natural numbers.  For any $x\in \mathcal{S}$, we define $[x]=\{y\in \mathcal{S}: x\subseteq y\}$. Then the topology on $\mathcal{S}$ has basis $[d_n]$, and the topology on $\mathcal{S}^k$ is the product topology. When the exponent $k\geq 1$ is understood from context, we abbreviate e.g. $\overline{x}=(x_1, \ldots, x_k)$ and $\overline{y}=(y_1, \ldots, y_k)$ and $\overline{n} = (n_1, \ldots, n_k)$ and $d_{\overline{n}} = (d_{n_1}, \ldots, d_{n_k})$ and $[d_{\overline{n}}] = [d_{n_1}]\times \cdots\times [d_{n_k}]$. Finally, we say that  $\overline{x}\subseteq \overline{y}$ if $x_1\subseteq y_1, \ldots, x_k\subseteq y_k$, and we say that  a function $f:\mathcal{S}^k\rightarrow \mathcal{S}$ is \emph{monotonic} if $\overline{x}\subseteq \overline{y}$ implies $f(\overline{x})\subseteq f(\overline{y})$.

Scott then proved the following helpful characterizations of the open and closed sets and the continuous functions \cite[pp. 158-159]{Scott1975ab}, from which it immediately follows that all continuous functions are monotonic:
\begin{prop}\label{prop:openclosed} Suppose that $\mathcal{U}, \mathcal{C}\subseteq \mathcal{S}^k$. Then (i) $\mathcal{U}$ is open iff for all $\overline{x} \in \mathcal{S}^k$, one has $\overline{x}\in \mathcal{U}$ iff $\exists \; \overline{n} \; (d_{\overline{n}}\subseteq \overline{x} \; \wedge \; d_{\overline{n}} \in \mathcal{U})$. Further, (ii) $\mathcal{C}$ is closed iff for all $\overline{x} \in \mathcal{S}^k$, one has $\overline{x}\in \mathcal{C}$ iff $\forall \; \overline{n} \; (d_{\overline{n}}\subseteq \overline{x} \mbox{ implies } d_{\overline{n}} \in \mathcal{C})$.
\end{prop}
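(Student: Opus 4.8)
The plan is to prove (i) directly from the definition of the product topology on $\mathcal{S}^k$, and then obtain (ii) by passing to complements. First I would record two elementary structural facts about the basic opens $[d_{\overline{n}}]=\{\overline{x}\in\mathcal{S}^k : d_{\overline{n}}\subseteq\overline{x}\}$. The first is that each $[d_{\overline{n}}]$ is \emph{upward closed} under coordinatewise inclusion: if $d_{\overline{n}}\subseteq\overline{x}$ and $\overline{x}\subseteq\overline{y}$ then $d_{\overline{n}}\subseteq\overline{y}$, so $[d_{\overline{n}}]$ is upward closed, and since arbitrary unions of upward closed sets are upward closed, every open set is upward closed. The second is that $d_{\overline{n}}\in[d_{\overline{n}}]$, which holds by reflexivity of $\subseteq$; this is the point that lets the ``generator'' of a basic open witness the existential quantifier in the statement. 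Both facts rely on the $d_n$ enumerating \emph{all} finite subsets, so that $d_{\overline{n}}$ ranges over all tuples of finite sets and is itself an element of $\mathcal{S}^k$.

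For the forward direction of (i), I would assume $\mathcal{U}$ is open. If $\overline{x}\in\mathcal{U}$, then by the definition of the product topology there is a basic open with $\overline{x}\in[d_{\overline{n}}]\subseteq\mathcal{U}$; then $d_{\overline{n}}\subseteq\overline{x}$ and $d_{\overline{n}}\in[d_{\overline{n}}]\subseteq\mathcal{U}$, which supplies the required witness. Conversely, if $d_{\overline{n}}\subseteq\overline{x}$ and $d_{\overline{n}}\in\mathcal{U}$ for some $\overline{n}$, then upward closure of $\mathcal{U}$ gives $\overline{x}\in\mathcal{U}$. For the reverse direction of (i), I would assume the stated biconditional holds for all $\overline{x}$ and verify the identity $\mathcal{U}=\bigcup\{[d_{\overline{n}}] : d_{\overline{n}}\in\mathcal{U}\}$: containment of the left in the right is exactly the ``only if'' half of the hypothesized biconditional, while containment of the right in the left is the ``if'' half applied to any $\overline{x}\in[d_{\overline{n}}]$ with $d_{\overline{n}}\in\mathcal{U}$. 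Since the right-hand side is a union of basic opens, $\mathcal{U}$ is open.

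Finally, I would deduce (ii) from (i) by complementation: $\mathcal{C}$ is closed iff $\mathcal{U}=\mathcal{S}^k\setminus\mathcal{C}$ is open, and applying (i) to $\mathcal{U}$ and rewriting $\overline{x}\in\mathcal{U}$ as $\overline{x}\notin\mathcal{C}$ (and likewise for $d_{\overline{n}}$) turns the open-set criterion into the biconditional $\overline{x}\notin\mathcal{C}\iff\exists\,\overline{n}\,(d_{\overline{n}}\subseteq\overline{x}\wedge d_{\overline{n}}\notin\mathcal{C})$; negating both sides and pushing the negation through the existential (classical logic at the meta-level) yields exactly $\overline{x}\in\mathcal{C}\iff\forall\,\overline{n}\,(d_{\overline{n}}\subseteq\overline{x}\Rightarrow d_{\overline{n}}\in\mathcal{C})$. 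There is no serious obstacle here: the whole argument is bookkeeping once the two structural observations are in hand, and the only point requiring a moment's care is the reflexivity observation $d_{\overline{n}}\in[d_{\overline{n}}]$, since it is what makes the finite generators themselves available as the witnesses named in the statement.
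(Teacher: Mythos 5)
Your proof is correct. There is, however, no in-paper argument to compare it against: the paper states this proposition as a known result of Scott, with a citation to \citet[pp. 158-159]{Scott1975ab}, and gives no proof. Your argument is the natural one and is essentially the standard proof: the two structural observations --- upward closure of the basic cones $[d_{\overline{n}}]$ (hence of all opens, as unions of upward closed sets), and reflexivity $d_{\overline{n}}\in[d_{\overline{n}}]$, which makes the finite generators available as witnesses --- are exactly what is needed; the identity $\mathcal{U}=\bigcup\{[d_{\overline{n}}]:d_{\overline{n}}\in\mathcal{U}\}$ correctly handles the converse direction of (i); and (ii) follows from (i) by complementation and pushing a negation through the existential. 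The only step worth flagging is the appeal to ``the definition of the product topology'': you are implicitly using the fact that products of basis elements of the factors form a basis of a \emph{finite} product topology, which is standard and harmless here since $k$ is finite, and it is also how the paper itself treats the sets $[d_{\overline{n}}]$ (e.g.\ in Proposition~\ref{eqn:helperprop}).
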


\begin{prop}\label{eqn:helperprop}
Suppose that $f:\mathcal{S}^k\rightarrow \mathcal{S}$. Then the following conditions are equivalent:
\begin{enumerate}
\item[] (i) For all $\overline{x}\in \mathcal{S}^k$, one has $f(\overline{x})=\bigcup_{d_{\overline{n}}\subseteq \overline{x}} f(d_{\overline{n}})$.
\item[] (ii) The function $f:\mathcal{S}^k\rightarrow \mathcal{S}$ is continuous.
\item[] (iii) For all $\overline{x}\in \mathcal{S}^k$, one has $m\in f(\overline{x})$ iff $\exists \; \overline{n} \; (d_{\overline{n}}\subseteq \overline{x} \; \wedge \; m\in f(d_{\overline{n}}))$.
\item[] (iv) For all $\overline{x}\in \mathcal{S}^k$, one has $d_m\subseteq f(\overline{x})$ iff $\exists \; \overline{n} \; (d_{\overline{n}}\subseteq \overline{x} \; \; \wedge \; d_m \subseteq f(d_{\overline{n}}))$.
\end{enumerate}
\end{prop}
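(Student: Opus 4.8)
The plan is to establish all four conditions as equivalent through the chain (i)$\Leftrightarrow$(iii), (iv)$\Leftrightarrow$(ii), and (iii)$\Leftrightarrow$(iv), using Proposition~\ref{prop:openclosed} to handle the link to continuity. No monotonicity of $f$ is assumed at the outset; the content of the proposition is partly that (iii) forces such monotonicity on finite arguments.

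First I would observe that (i) and (iii) are literally the same assertion. Condition~(i) is the set equality $f(\overline{x})=\bigcup_{d_{\overline{n}}\subseteq\overline{x}} f(d_{\overline{n}})$, and unwinding membership in the union, $m\in\bigcup_{d_{\overline{n}}\subseteq\overline{x}} f(d_{\overline{n}})$ holds exactly when $\exists\,\overline{n}\,(d_{\overline{n}}\subseteq\overline{x}\wedge m\in f(d_{\overline{n}}))$. Hence (i) is just the elementwise restatement (iii). Next I would connect continuity to (iv): since the sets $[d_m]$ form a basis for the topology on $\mathcal{S}$, the map $f$ is continuous iff each preimage $f^{-1}([d_m])=\{\overline{x}\in\mathcal{S}^k: d_m\subseteq f(\overline{x})\}$ is open in $\mathcal{S}^k$. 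Applying Proposition~\ref{prop:openclosed}(i) to $\mathcal{U}=f^{-1}([d_m])$, and noting that $d_{\overline{n}}\in f^{-1}([d_m])$ means exactly $d_m\subseteq f(d_{\overline{n}})$, openness of this preimage for a given $m$ is precisely the biconditional $d_m\subseteq f(\overline{x})\Leftrightarrow\exists\,\overline{n}\,(d_{\overline{n}}\subseteq\overline{x}\wedge d_m\subseteq f(d_{\overline{n}}))$. Quantifying over all $m$ and all $\overline{x}$ yields (ii)$\Leftrightarrow$(iv).

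Finally I would prove (iii)$\Leftrightarrow$(iv). The direction (iv)$\Rightarrow$(iii) is immediate: the singleton $\{m\}$ occurs as some $d_j$ in the fixed enumeration of finite sets, and $m\in f(\overline{x})$ iff $\{m\}\subseteq f(\overline{x})$, so instantiating (iv) at this $d_j$ gives (iii). For (iii)$\Rightarrow$(iv), the key step is that (iii) evaluated at a finite argument $d_{\overline{n}}$ yields monotonicity on finite approximations: if $d_{\overline{n}'}\subseteq d_{\overline{n}}$ and $\ell\in f(d_{\overline{n}'})$, then the existential in (iii) applied at $d_{\overline{n}}$ is witnessed, so $\ell\in f(d_{\overline{n}})$. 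The backward half of (iv) then follows by pushing each $\ell\in d_m$ through this property and then through (iii) at $\overline{x}$. For the forward half, given $d_m\subseteq f(\overline{x})$, each $\ell\in d_m$ has by (iii) a finite witness $d_{\overline{n}_\ell}\subseteq\overline{x}$ with $\ell\in f(d_{\overline{n}_\ell})$; since $d_m$ is finite I would take the componentwise union $d_{\overline{n}}=\bigcup_{\ell\in d_m} d_{\overline{n}_\ell}$, which is again a finite tuple appearing in the enumeration and still satisfies $d_{\overline{n}}\subseteq\overline{x}$, and then conclude $d_m\subseteq f(d_{\overline{n}})$ by applying the monotonicity-on-approximations property to each $\ell$.

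The main obstacle is exactly this forward direction of (iii)$\Rightarrow$(iv): one must amalgamate the separate pointwise witnesses $\overline{n}_\ell$ into a single finite witness $\overline{n}$, and it is here that both the finiteness of $d_m$ and the monotonicity of $f$ on finite arguments (itself a nontrivial consequence of (iii)) are essential. The remaining implications are elementwise bookkeeping or direct appeals to the characterization of open sets in Proposition~\ref{prop:openclosed}.
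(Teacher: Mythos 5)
Your proof is correct. Note that there is no in-paper argument to compare it against: the paper states this proposition (like Proposition~\ref{prop:openclosed}) without proof, importing both from Scott's work, so your write-up in effect supplies a proof the paper omits. Your chain of equivalences is sound. The identification of (i) with (iii) is indeed just reading the set equality elementwise; (ii)$\Leftrightarrow$(iv) is exactly the statement that each preimage $f^{-1}([d_m])=\{\overline{x}: d_m\subseteq f(\overline{x})\}$ is open, as characterized by Proposition~\ref{prop:openclosed}(i), and this suffices for continuity because the $[d_m]$ form a basis of $\mathcal{S}$ and preimages commute with unions. Your treatment of (iii)$\Rightarrow$(iv) correctly isolates the one substantive point: the witnesses $d_{\overline{n}_\ell}$ for the finitely many $\ell\in d_m$ must be amalgamated into the componentwise union $d_{\overline{n}}$ (finite, hence in the enumeration, and still contained in $\overline{x}$), after which the monotonicity of $f$ on finite arguments---which you rightly observe is itself a consequence of (iii), since $d_{\overline{n}'}\subseteq d_{\overline{n}}$ makes $\overline{n}'$ a witness for the existential in (iii) evaluated at $d_{\overline{n}}$---pushes each $\ell$ into $f(d_{\overline{n}})$. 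The degenerate case $d_m=\emptyset$ is also harmless, since the empty union gives $d_{\overline{n}}=(\emptyset,\ldots,\emptyset)\subseteq\overline{x}$ and the required inclusion is vacuous.
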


Using these propositions, we can define the application function and show that the axioms of a pca are satisfied. In this, we fix a computable \emph{bijection} $\langle \cdot, \cdot\rangle: \omega\times \omega\rightarrow \omega$, and we iterate in the natural way, e.g. $\langle n,m,\ell\rangle = \langle \langle n,m\rangle, \ell\rangle$. Then we define the application function as follows:
\begin{equation}\label{eqn:appREpca}
ab =  \{m: \exists \; n\geq 0 \; (\langle n,m\rangle \in a \; \wedge \; d_n\subseteq b)\}
\end{equation}
When $a$ is recursively enumerable, the thought behind this operation is that $ab=c$ means that ``we can effectively enumerate $c$ whenever we are given any enumeration of $b$'' \cite[p. 827]{Odifreddi1999aa}, with the variables changed in the quotation to match~(\ref{eqn:appREpca}), cf. also \citet[\S{9.7} pp. 145 ff]{Rogers1987aa}, \citet[p. 155, p. 160]{Scott1975ab}. From~(\ref{eqn:appREpca}), an induction on $k\geq 1$ shows that for all $\overline{b}=(b_1, \ldots, b_k)\in \mathcal{S}^k$ one has
\begin{equation}\label{eqn:appREpca2}
a b_1 \cdots b_k =  \{m: \exists \; \overline{n} \; (\langle \overline{n},m\rangle \in a \; \wedge \; d_{\overline{n}}\subseteq \overline{b})\}
\end{equation}
Note that unlike the other paradigmatic examples of pcas, the application operation~(\ref{eqn:appREpca}) here is total.


Like with Proposition~\ref{prop:oostenfacts}, it's helpful to articulate the connection between the continuous functions and the application operation. However, in this setting the connection is much tighter, due to the fact that the application operation~(\ref{eqn:appREpca}) is total. In particular, we have the following proposition due to \cite[p. 160]{Scott1975ab}.
\begin{prop}\label{prop:veryhlpeful112}
(I)~For any continuous function $f:\mathcal{S}^k \rightarrow \mathcal{S}$ there is $a=\mathrm{graph}(f)=\{\langle \overline{n}, m\rangle: m\in f(d_{\overline{n}})\}$ such that for all $\overline{b}=(b_1, \ldots, b_k)\in \mathcal{S}^k$, one has $f(\overline{b})=a b_1 \cdots b_k$. (II)~For any $a\in \mathcal{S}$, the function $\mathrm{fun}(a):\mathcal{S}^k\rightarrow \mathcal{S}$ given by $\mathrm{fun}(a)(\overline{b})=a b_1 \cdots b_k$ is continuous. (III)~For any continuous function $f:\mathcal{S} \rightarrow \mathcal{S}$, one has $\mathrm{fun}(\mathrm{graph}(f)) =f$. (IV) The function $\mathrm{graph} \circ \mathrm{fun}: \mathcal{S}\rightarrow \mathcal{S}$ is continuous and for all $a\in \mathcal{S}$ one has
\begin{equation}\label{eqn:helpfulchar}
\langle n,m\rangle \in \mathrm{graph}(\mathrm{fun}(a)) \mbox{ iff } \exists \; \ell\; (\langle \ell,m\rangle \in a \; \wedge \; d_{\ell}\subseteq d_n)
\end{equation}
\end{prop}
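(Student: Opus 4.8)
The plan is to establish the four claims (I)–(IV) of Proposition~\ref{prop:veryhlpeful112} largely by unwinding the definitions in~(\ref{eqn:appREpca}) and~(\ref{eqn:appREpca2}), leaning heavily on the characterization of continuity provided by Proposition~\ref{eqn:helperprop}. The guiding idea is that $\mathrm{graph}$ and $\mathrm{fun}$ are inverse bookkeeping operations: $\mathrm{fun}(a)$ reads off the continuous function coded by $a$ via application, while $\mathrm{graph}(f)$ records, for each basic open $[d_{\overline{n}}]$, the values $f$ takes on the finite approximation $d_{\overline{n}}$.

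For part~(I), I would fix a continuous $f:\mathcal{S}^k\rightarrow \mathcal{S}$ and set $a=\{\langle\overline{n},m\rangle : m\in f(d_{\overline{n}})\}$. Plugging this $a$ into~(\ref{eqn:appREpca2}), I compute $ab_1\cdots b_k=\{m : \exists\,\overline{n}\;(\langle\overline{n},m\rangle\in a\wedge d_{\overline{n}}\subseteq\overline{b})\}=\{m:\exists\,\overline{n}\;(m\in f(d_{\overline{n}})\wedge d_{\overline{n}}\subseteq\overline{b})\}$, and then invoke the equivalence of (ii) and (iii) in Proposition~\ref{eqn:helperprop} to recognize this set as exactly $f(\overline{b})$. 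For part~(II), given arbitrary $a\in\mathcal{S}$ I would verify that $\mathrm{fun}(a)(\overline{b})=ab_1\cdots b_k$ satisfies criterion~(iii) of Proposition~\ref{eqn:helperprop}: by~(\ref{eqn:appREpca2}), $m\in ab_1\cdots b_k$ iff $\exists\,\overline{n}\,(\langle\overline{n},m\rangle\in a\wedge d_{\overline{n}}\subseteq\overline{b})$, and the existence of such an $\overline{n}$ is equivalent to $m$ lying in $\mathrm{fun}(a)(d_{\overline{n}})$ for some $d_{\overline{n}}\subseteq\overline{b}$, which is precisely continuity.

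For part~(III), specializing to $k=1$, I would compute $\mathrm{fun}(\mathrm{graph}(f))(b)=\mathrm{graph}(f)\,b=\{m:\exists\,n\,(\langle n,m\rangle\in\mathrm{graph}(f)\wedge d_n\subseteq b)\}=\{m:\exists\,n\,(m\in f(d_n)\wedge d_n\subseteq b)\}$, which equals $f(b)$ by criterion~(iii) (equivalently~(i)) of Proposition~\ref{eqn:helperprop} applied to the continuous $f$. For part~(IV), the continuity of $\mathrm{graph}\circ\mathrm{fun}$ follows once I establish the explicit characterization~(\ref{eqn:helpfulchar}): I would expand the left side as $m\in\mathrm{fun}(a)(d_n)=a\,d_n$, apply~(\ref{eqn:appREpca}) to get $m\in a\,d_n$ iff $\exists\,\ell\,(\langle\ell,m\rangle\in a\wedge d_\ell\subseteq d_n)$, and match this to the right side of~(\ref{eqn:helpfulchar}). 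From~(\ref{eqn:helpfulchar}), membership of $\langle n,m\rangle$ in $\mathrm{graph}(\mathrm{fun}(a))$ is witnessed by a finite approximation $d_\ell\subseteq d_n$, which is exactly the form of criterion~(iii) of Proposition~\ref{eqn:helperprop}, yielding continuity of the composite as a function of $a$.

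The steps are individually routine set-theoretic manipulations, so I do not expect a serious obstacle; \textbf{the main point requiring care} is keeping the finite-approximation quantifiers straight in part~(IV), where one must not conflate the inner finite set $d_\ell$ (ranging over approximations to $d_n$) with the outer index $n$, and where the correct reading of $\mathrm{graph}(\mathrm{fun}(a))$ requires recognizing that applying $\mathrm{fun}(a)$ to the \emph{finite} set $d_n$ still involves an existential over sub-approximations $d_\ell\subseteq d_n$ rather than collapsing to a single evaluation. Verifying~(\ref{eqn:helpfulchar}) carefully is what makes the continuity claim in~(IV) transparent, so I would treat that identity as the technical crux of the proof.
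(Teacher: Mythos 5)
The paper never proves this proposition at all: it is stated with a citation to Scott's 1975 paper (p.~160) and then used as a black box, so there is no in-paper argument to compare against. Your proposal supplies the missing verification, and it is essentially correct and is the standard one. Parts (I)--(III) are exactly the right unwinding: compute $ab_1\cdots b_k$ from~(\ref{eqn:appREpca2}), substitute the definition of $\mathrm{graph}(f)$, and close with criterion~(iii) of Proposition~\ref{eqn:helperprop}; note that (III) is literally the $k=1$ instance of (I). One small point in (II): the equivalence you assert (``the existence of such an $\overline{n}$ is equivalent to $m$ lying in $\mathrm{fun}(a)(d_{\overline{n}})$ for some $d_{\overline{n}}\subseteq\overline{b}$'') is true but not a tautology; left-to-right one takes the witness $\overline{n}$ itself and uses $d_{\overline{n}}\subseteq d_{\overline{n}}$ to get $m\in\mathrm{fun}(a)(d_{\overline{n}})$, and right-to-left one unwinds $m\in\mathrm{fun}(a)(d_{\overline{n}})$ to a pair $\langle\overline{m}',m\rangle\in a$ with $d_{\overline{m}'}\subseteq d_{\overline{n}}\subseteq\overline{b}$ and uses transitivity of $\subseteq$.

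The one place where your justification is genuinely misstated is the continuity claim in (IV). Criterion~(iii) of Proposition~\ref{eqn:helperprop}, applied to $\mathrm{graph}\circ\mathrm{fun}\colon\mathcal{S}\rightarrow\mathcal{S}$, requires that membership of $\langle n,m\rangle$ in $\mathrm{graph}(\mathrm{fun}(a))$ be witnessed by a \emph{finite subset of $a$}; the approximation $d_{\ell}\subseteq d_{n}$ that you point to approximates the argument $d_n$, not $a$ --- which is precisely the conflation your final paragraph warns against and which your stated argument then commits. The repair is immediate from~(\ref{eqn:helpfulchar}): the condition $\exists\,\ell\,(\langle \ell,m\rangle\in a \wedge d_{\ell}\subseteq d_{n})$ depends on $a$ only through membership of the single element $\langle\ell,m\rangle$, so taking the finite set $d_{n'}=\{\langle\ell,m\rangle\}\subseteq a$ gives $\langle n,m\rangle\in\mathrm{graph}(\mathrm{fun}(d_{n'}))$, while conversely any finite $d_{n'}\subseteq a$ with $\langle n,m\rangle\in\mathrm{graph}(\mathrm{fun}(d_{n'}))$ yields the witness $\langle\ell,m\rangle\in d_{n'}\subseteq a$. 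With that one sentence corrected, the proof is complete.
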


\noindent This proposition allows us to quickly see that $\mathcal{S}$ is a pca. For~$k$, simply apply the Proposition~\ref{prop:veryhlpeful112}.I to the continuous map $(a,b)\mapsto a$. For~$s$, the function $G(a,b)=a\cdot b$ is continuous by Proposition~\ref{prop:veryhlpeful112}.II. Then the map $H(a,b,c) = G(G(a,c), G(b,c))$ is continuous and so by Proposition~\ref{prop:veryhlpeful112}.I there is $s$ such that $sabc = (ac)(bc)$. Hence, $\mathcal{S}$ is a pca. 

However, the original interest in $\mathcal{S}$ is related to the untyped lambda calculus, and Scott and Meyer produced a series of axioms describing when a pca yields a model of the untyped lambda-calculus. These are the axioms at issue in the following proposition, where in a pca we define $1=1_1=s(k(skk))$ and $1_{n+1}=s(k1_1)(s(k1_n))$:
\begin{prop}\label{prop:scottmeyer} 
The pca $\mathcal{S}$ satisfies the axioms $\forall \; x,y \; kxy=x$ and $\forall \; x,y,z \; sxyz = (xz)(yz)$ and $1_2 k = k$ and $1_3 s = s$ and the Meyer-Scott axiom $(\forall \; x \; ax=bx)\Rightarrow 1a=1b$. 
\end{prop}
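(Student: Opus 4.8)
The plan is to dispose of the first two identities at once and then concentrate everything on a single lemma about the combinators $1_n$, from which the other three assertions follow quickly.

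First, the identities $kxy=x$ and $sxyz=(xz)(yz)$ are literally conditions (ii) and (iii) in the definition of a pca, and these were already verified for $\mathcal{S}$ in the paragraph deriving the pca structure from Proposition~\ref{prop:veryhlpeful112}; so nothing further is required there. For the three remaining claims I would first record, using only the combinatory axioms (hence valid in any pca), the recursion $1_{n+1}ab = 1_n(ab)$: unwinding $1_{n+1}=s(k1_1)(s(k1_n))$ via $sxyz=(xz)(yz)$ and $kxy=x$ gives $1_{n+1}ab = 1_1\big(s(k1_n)a\big)\,b$, and since $1_1cb=cb$ (as $skk$ is an identity) this equals $s(k1_n)a\,b=1_n(ab)$. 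By induction this yields $1_n a\,x_1\cdots x_n = a x_1\cdots x_n$; in particular $\mathrm{fun}(1a)=\mathrm{fun}(a)$.

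The heart of the argument is the following \emph{Lemma}: for every $a\in\mathcal{S}$ one has $1a = \mathrm{graph}(\mathrm{fun}(a))$, and more generally $1_n a$ is the canonical graph (in the sense of Proposition~\ref{prop:veryhlpeful112}.I) of the continuous $n$-ary map $(x_1,\ldots,x_n)\mapsto a x_1\cdots x_n$. For the base case $n=1$ I would first establish the auxiliary fact that every $s$-term is already canonical, i.e. $suv=\mathrm{graph}(\lambda w.(uw)(vw))$ for all $u,v$: we know $\mathrm{fun}(suv)=\lambda w.(uw)(vw)$ from $sxyz=(xz)(yz)$, and canonicity — that $suv$ is closed under the first-coordinate saturation of~(\ref{eqn:helpfulchar}) — follows by feeding $s=\mathrm{graph}(H)$ for $H(x,y,z)=(xz)(yz)$ into the explicit application formula~(\ref{eqn:appREpca}), using the monotonicity and continuity characterizations of Proposition~\ref{prop:openclosed} and Proposition~\ref{eqn:helperprop}. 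Granting this, $1a=s(k(skk))a$ unwinds: writing $I=skk$ we have $kI\,w=I$, so $1a=\mathrm{graph}(\lambda w.(kI\,w)(aw))=\mathrm{graph}(\lambda w.\,I(aw))=\mathrm{graph}(\lambda w.\,aw)=\mathrm{graph}(\mathrm{fun}(a))$. The general $n$ then follows by induction from the recursion $1_{n+1}a\,x_0=1_n(ax_0)$ together with a currying identity matching the graph of the curried family against the canonical $(n+1)$-ary graph, bookkept through the iterated pairing $\langle\overline n,m\rangle$ of~(\ref{eqn:appREpca2}).

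With the Lemma in hand the three claims are immediate. For the Meyer--Scott axiom, if $ax=bx$ for all $x$ then $\mathrm{fun}(a)=\mathrm{fun}(b)$, whence $1a=\mathrm{graph}(\mathrm{fun}(a))=\mathrm{graph}(\mathrm{fun}(b))=1b$. For $1_2 k=k$, the Lemma identifies $1_2 k$ with the canonical binary graph of $(x,y)\mapsto kxy=x$, i.e. of the first projection — but this is exactly the element by which $k$ was defined, so $1_2 k=k$; and $1_3 s=s$ is the identical argument for $(x,y,z)\mapsto sxyz=(xz)(yz)=H(x,y,z)$, whose canonical graph was taken as $s$. The main obstacle is the canonicity content of the Lemma: the easy half always yields only that $1_n a$ and the relevant canonical graph induce the same function, and upgrading this to genuine set-theoretic equality is where the closure property~(\ref{eqn:helpfulchar}) and the careful tracking of how $\langle\overline n,m\rangle$ behaves under single versus iterated application are indispensable. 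One must also confirm that the specific witnesses $k,s$ produced in the pca construction are literally their canonical graphs rather than merely function-equivalent to them, since that is what makes the matchings $1_2k=k$ and $1_3 s=s$ exact.
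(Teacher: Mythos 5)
Your proposal is correct, but there is no proof in the paper to measure it against: Proposition~\ref{prop:scottmeyer} is stated with a citation to \citet[Corollary 18.1.8 p. 473, Theorem 5.6.3 p. 117]{Barendregt1981aa}, and the one ingredient the paper records afterwards --- that $1x=cx$ where $cx=\mathrm{graph}(\mathrm{fun}(x))$ --- is itself cited to Barendregt rather than proved. That ingredient is exactly the $n=1$ case of your Lemma, so what you have produced is a self-contained reconstruction, inside the paper's own notation, of the argument the paper outsources; and the reconstruction is sound. The recursion $1_{n+1}ab=1_n(ab)$ holds as an exact identity of sets because in $\mathcal{S}$ the equations $kxy=x$ and $sxyz=(xz)(yz)$ are exact (application is total, and $k,s$ come from Proposition~\ref{prop:veryhlpeful112}.I). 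Your auxiliary fact is true and is the crux: with $H(x,y,z)=(xz)(yz)$ and $s=\mathrm{graph}(H)$, equation~(\ref{eqn:appREpca2}) gives $suv=\{\langle\ell,m\rangle:\exists\,n_1,n_2\,(d_{n_1}\subseteq u\wedge d_{n_2}\subseteq v\wedge m\in H(d_{n_1},d_{n_2},d_\ell))\}$, and monotonicity of $H$ together with clause~(iii) of Proposition~\ref{eqn:helperprop} identifies this set with $\mathrm{graph}(\lambda w.\,H(u,v,w))$, which is precisely your ``saturation'' step. From there $1a=\mathrm{graph}(\lambda w.(k(skk)w)(aw))=\mathrm{graph}(\mathrm{fun}(a))$ because the two functions inside the graphs are literally equal; Meyer--Scott follows at once, and $1_2k=k$, $1_3s=s$ come out as equalities of sets precisely because --- as you rightly insist must be checked --- the paper defines $k$ and $s$ to \emph{be} the canonical graphs of the first projection and of $H$, not merely elements inducing those functions. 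Two refinements. First, your inductive step is cleanest run entirely at the set level: $1_{n+1}a=1_1(s(k1_n)a)=s(k1_n)a=\mathrm{graph}(\lambda w.\,1_n(aw))$, where the middle equality uses that canonical graphs are fixed points of $1_1$ by Proposition~\ref{prop:veryhlpeful112}.III; this makes the extensional-to-intensional ``upgrade'' you worry about automatic, rather than something to be reargued at each stage. Second, the currying bookkeeping requires the tuples $\langle\overline{n},m\rangle$ of~(\ref{eqn:appREpca2}) to be read right-nested, as $\langle n_1,\langle n_2,\ldots,m\rangle\rangle$; the paper's stated left-nested convention $\langle n,m,\ell\rangle=\langle\langle n,m\rangle,\ell\rangle$ is a slip, since only the right-nested reading makes~(\ref{eqn:appREpca2}) and Proposition~\ref{prop:veryhlpeful112}.I true for arity at least two, so you should say explicitly that you use that reading. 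As for what each route buys: the paper's citation keeps \S\ref{sec:heytingfrompca} lean, while your proof makes the proposition self-contained and lays bare the intensional point that everything here is an exact set identity because the distinguished combinators are canonical graphs --- the same intensional-versus-extensional distinction that Proposition~\ref{prop:counterextension} exploits immediately afterwards.
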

For a proof of this proposition and the result that this constitutes a characterization of models of the untyped lambda calculus, see \citet[Corollary 18.1.8 p. 473]{Barendregt1981aa} and \citet[Theorem 5.6.3 p. 117]{Barendregt1981aa}. In our context, it will be useful to have a simpler description of the action of~$1=s(k(skk))$. By Proposition~\ref{prop:veryhlpeful112}.I \& III, there is $c$ such that $cx=\mathrm{graph}(\mathrm{fun}(x))$. It can then be shown that $1x=cx$ \cite[Proposition 18.1.9.i p. 473, Lemma 5.2.8.ii p. 95]{Barendregt1981aa}, and so on $\mathcal{S}$, the consequent of the Meyer-Scott axiom can be expressed as $\mathrm{graph}(\mathrm{fun}(a))=\mathrm{graph}(\mathrm{fun}(b))$. By considering the case of $a=\emptyset$ and $b=\omega$, one can deduce from this and~(\ref{eqn:helpfulchar}) the following elementary observation, which we shall use later in \S\ref{sec:untypedlambda} to provide an example of the non-stability of negated atomics (cf. Proposition~\ref{prop:failsurenegstable}):
\begin{prop}\label{prop:counterextension}
There are $a,b\in \mathcal{S}$ such that $1a\neq 1b$ and for all $x\in \mathcal{S}$ one has $ax\neq bx$.
\end{prop}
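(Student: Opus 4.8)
The plan is to take $a = \emptyset$ and $b = \omega$, the bottom and top elements of $\mathcal{S} = P(\omega)$, as suggested by the remark preceding the statement, and to verify the two required inequalities directly from the explicit application formula~(\ref{eqn:appREpca}) together with the characterization $1x = \mathrm{graph}(\mathrm{fun}(x))$ and equation~(\ref{eqn:helpfulchar}).

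First I would check that $ax \neq bx$ for every $x \in \mathcal{S}$. Plugging $a = \emptyset$ into~(\ref{eqn:appREpca}) gives $\emptyset x = \{m : \exists \; n \; (\langle n,m\rangle \in \emptyset \wedge d_n \subseteq x)\} = \emptyset$, since the membership clause is never satisfied. For $b = \omega$, every pair $\langle n,m\rangle$ lies in $\omega$, so~(\ref{eqn:appREpca}) collapses to $\omega x = \{m : \exists \; n \; d_n \subseteq x\}$. Because the empty set occurs among the $d_n$, say as $d_{n_0} = \emptyset$, we have $d_{n_0} \subseteq x$ for every $x$, whence $\omega x = \omega$. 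Thus $ax = \emptyset \neq \omega = bx$ holds uniformly in $x$, which is the second of the two conditions.

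Next I would verify $1a \neq 1b$. Using the identity $1x = \mathrm{graph}(\mathrm{fun}(x))$ recorded just above the statement, it suffices to compute the two graphs from~(\ref{eqn:helpfulchar}). For $a = \emptyset$, the right-hand side of~(\ref{eqn:helpfulchar}) requires some $\langle \ell, m\rangle \in \emptyset$ and so is never met; hence $1\emptyset = \mathrm{graph}(\mathrm{fun}(\emptyset)) = \emptyset$. For $b = \omega$, every $\langle \ell, m\rangle$ belongs to $\omega$, and taking $\ell = n$ gives $d_\ell \subseteq d_n$, so the right-hand side of~(\ref{eqn:helpfulchar}) holds for all $n,m$; hence $1\omega = \omega$. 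Therefore $1a = \emptyset \neq \omega = 1b$, completing the argument.

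There is no serious obstacle here: once the pair $(\emptyset, \omega)$ is chosen, both verifications are immediate unwindings of the defining formulas. The only points demanding a little care are the bookkeeping fact that the empty set appears among the $d_n$ (so that $\omega x = \omega$ even when $x = \emptyset$, which is what is needed to get $ax \neq bx$ at \emph{every} $x$ and not merely at nonempty $x$), and the correct invocation of $1x = \mathrm{graph}(\mathrm{fun}(x))$ rather than working with $1 = s(k(skk))$ directly. It is worth stressing that this is not in tension with the Meyer--Scott axiom of Proposition~\ref{prop:scottmeyer}: that axiom only constrains pairs for which $ax = bx$ holds for \emph{all} $x$, whereas here $ax \neq bx$ for all $x$.
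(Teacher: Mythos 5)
Your proposal is correct and takes essentially the same route as the paper, whose own ``proof'' is just the sketch preceding the statement: choose $a=\emptyset$ and $b=\omega$ and deduce both inequalities from $1x=\mathrm{graph}(\mathrm{fun}(x))$ together with~(\ref{eqn:helpfulchar}) and the application formula~(\ref{eqn:appREpca}). You fill in exactly those computations, and your care about the bookkeeping point that $\emptyset$ occurs among the $d_n$ (so that $\omega x=\omega$ even at $x=\emptyset$) is precisely the detail that makes the ``for all $x$'' clause go through.
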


Before turning to a discussion of how pca's give rise to Heyting prealgebras, let's adopt the following abbreviation: if $e$ is from $\mathcal{A}$ and $X,Y\subseteq \mathcal{A}$, then we say that 
\begin{equation}\label{eqn:handynotation}
e:X\leadsto Y \mbox{ iff } \forall \; a\in X \; ea\hspace{-1mm}\downarrow\in Y
\end{equation}
In other standard treatments of pca's, this is sometimes described simply by saying that $e$ \emph{realizes} $X\leq Y$ \cite[p. 6]{Oosten2008aa}. But since this notion will figure heavily in the modal semantics in \S\ref{sec:intuitiontomodal}, it is useful to have some explicit notation for it. With this in place, we can state two variations of the well-known result that pca's allow one to generate Heyting algebras. In the case of $\mathcal{A}=\mathcal{K}_1$, the first of these is denoted by $R$ in Flagg's original paper (cf. \cite[Theorem 9.1 p. 135]{Flagg1985aa}).
\begin{definition}\label{recHeytingpre}
Let $\mathcal{A}$ be a pca. Then the Heyting prealgebra structure on the powerset $P(\mathcal{A})$ of $\mathcal{A}$ is given by the following:
\begin{align}
& X\leq Y \mbox{ iff } \exists \; e\in \mathcal{A} \; e : X\leadsto Y\notag \\
& X \wedge Y = \{pab : a\in X \; \wedge \; b\in Y\} \notag \\
& X \vee Y =  \{pka \ : a\in X\}\cup \{p\breve{k}b : b\in Y\} \notag \\
& \bot = \emptyset \notag \\
& \top = \mathcal{A} \notag \\
& X\Rightarrow Y = \{e\in \mathcal{A} : e:X\leadsto Y\} \notag
\end{align}
\end{definition}
In this, recall from the discussion at the outset of this section that $k$, $\breve{k}$ code case breaks and that $p$ is the pairing function with inverses $p_0$ and $p_1$. Finally, note that for every $X\in P(\mathcal{A})$, one has that $X\equiv \bot$ or $X\equiv \top$. Indeed, if $X$ is empty, then of course $X=\bot$. So suppose that $X$ is non-empty with element $a$. By Proposition~\ref{prop:onpcas}, choose $e$ in $\mathcal{A}$ such that $ex=a$ for all $x$ from $\mathcal{A}$, so that $e: \top \leadsto X$ and hence $\top\leq X$. And of course the identity function witnesses $X\leq \top$. Hence, up to equivalence, the Heyting prealgebra $P(\mathcal{A})$ is only two-valued, and if one took its quotient one would then obtain a Boolean algebra. But the Heyting prealgebra $P(\mathcal{A})$ is still interesting because in \S\ref{sec:intuitiontomodal} we will assign formulas $\varphi(y)$ to elements $\|\varphi(y)\|$ of this Heyting prealgebra, and we will often be looking at how $\|\varphi(y)\|$ varies with $y$ uniformly, where $y$ comes from some  domain $N$ (cf. Definition~\ref{DefValuationR}). And just because for all $X\in P(\mathcal{A})$ one has that $X\equiv \bot$ or $X\equiv \top$, does not mean that for all sequences $\{X_y\in P(\mathcal{A}): y\in N\}$ it is the case that $(\forall \; y\in N \; X_y\equiv \bot)$ or $(\forall \; y\in N \; X_y\equiv \top)$.

By consulting the proof of Proposition 1.2.1 from~\cite[p. 6]{Oosten2008aa}, which verifies that $P(\mathcal{A})$ is actually a Heyting prealgebra, one can see that the Heyting prealgebra structure on $P(\mathcal{A})$ is highly uniform, in that there are $e_k$ from $\mathcal{A}$ for $1\leq k \leq 12$ such that for all $X,Y,Z\subseteq \mathcal{A}$ and all $a,a^{\prime}$ from $\mathcal{A}$ one has
\begin{eqnarray}
e_1:X\leadsto X\vee Y, & \hspace{10mm} & e_2:Y\leadsto X\vee Y  \label{hrec1}\notag\\
a:X\leadsto Z  \; \wedge \; a^{\prime}:Y\leadsto Z  & \mbox{ implies } &  e_3(paa^{\prime}):X\vee Y\leadsto Z \label{hrec1a}\notag\\
a:Z\leadsto X  \; \wedge \; a^{\prime}:Z\leadsto Y  & \mbox{ implies } &  e_4(paa^{\prime}):Z\leadsto X\wedge Y \label{hrec1b}\notag\\
e_5:X\wedge Y\leadsto X, & \hspace{10mm} & e_6:X\wedge Y\leadsto Y\label{hrec2} \notag\\
e_7:X\leadsto \top, & \hspace{5mm} & e_8:\bot \leadsto X, \hspace{10mm} e_9:X\leadsto X\label{hrec3}\notag \\ 
a:X\leadsto Y \; \wedge \; a^{\prime}:Y\leadsto Z & \mbox{ implies } & e_{10}(paa^{\prime}):X\leadsto Z\label{hrec4} \notag\\
a^{\prime}:X\wedge Y\leadsto Z & \mbox{ implies } & e_{11}a^{\prime}:Y\leadsto (X\Rightarrow Z) \label{hrec5}\notag\\
a^{\prime}:Y\leadsto (X\Rightarrow Z) & \mbox{ implies } & e_{12}a^{\prime}:X\wedge Y\leadsto Z\label{hrec6} \notag
\end{eqnarray}
Further, all of $e_1, \ldots, e_{12}$ can be taken to be terms in the ample relational signature of pcas (cf. immediately after Proposition~\ref{prop:onpcas}) and indeed the same terms will work for any pca.

From this uniformity, we also obtain the following associated Heyting prealgebra structure. In Flagg's original paper, this was denoted by $R(\mathcal{X})$ (cf. \citet[p. 122, Theorem 9.3 p. 136]{Flagg1985aa}).
\begin{definition}\label{FlaggHeyting}
Let $\mathcal{A}$ be a pca and let $\mathcal{X}$ be a non-empty set. Let $\mathbb{F}(\mathcal{X})$ or \emph{the Flagg Heyting prelagebra over $\mathcal{X}$} be the set of functions $\{f:\mathcal{X}\rightarrow P(\mathcal{A})\}$. Then the Heyting prealgebra structure on $\mathbb{F}(\mathcal{X})$ is given by the following:
\vspace{-8mm}
\begin{multicols}{2}
\begin{align}
& f \leq g \mbox{ iff } \exists \; e\in \mathcal{A} \; \forall \; x\in \mathcal{X} \; e:f(x)\leadsto g(x)  \notag \\
& (f \wedge g)(x) = f(x)\wedge g(x)   \notag \\
& (f \vee g)(x) = f(x)\vee g(x) \notag 
\end{align}
\columnbreak
\begin{align}
& & \mbox{\;} \notag \\
& \bot(x) = \bot \ \  = \ \ \emptyset \notag \\
& \top(x) = \top \ \ = \ \ \mathcal{A}\notag  \\
& (f \Rightarrow g)(x) = f(x)\Rightarrow g(x) \notag 
\end{align}
\end{multicols}
\end{definition}
\vspace{-8mm}

\noindent Usually in what follows, the pca $\mathcal{A}$ will be clear from context, and so failing to display the dependence of $\mathbb{F}(\mathcal{X})$ on $\mathcal{A}$ will not cause any confusion. As another natural piece of notation, we extend the notation $e:X\leadsto Y$ where $X,Y\subseteq \mathcal{A}$ to $e:f\leadsto g$ where $f,g$ from $\mathbb{F}(\mathcal{X})$ by stipulating that this happens iff for all $x$ from $\mathcal{X}$ one has $e:f(x)\leadsto g(x)$. With this notation, we have $f\leq g$ iff there is $e$ from $\mathcal{A}$ such that $e:f\leadsto g$.

In what follows, considerations pertaining to uniformity will play an important role in the proofs. The equivalences and reductions (\ref{prop:diamonds1})-(\ref{prop:diamonds12}) can all be done in weak intuitionistic theory, and this implies that there are uniform witnesses to these equivalences and reductions in $\mathbb{F}(\mathcal{X})$. For example, line~(\ref{prop:diamonds2}) not only gives us $f \leq \ominus_g f$ for all $f,g \in \mathbb{F}(\mathcal{X})$, but also gives us that there is a single witness $e$ from $\mathcal{A}$ such that  $e\colon f \leadsto \ominus_{g}f$ for all $f,g \in \mathbb{F}(\mathcal{X})$. For a formal justification of this, see Appendix~\ref{sec:appendixheyting} and in particular Proposition~\ref{prop:formaljustificationunfirm}.

In what follows, we'll say that an equivalence or inequality in $\mathbb{F}(\mathcal{X})$ holds \emph{uniformly} when there are such uniform witnesses. A similar locution is used when we have sequences of elements from $\mathbb{F}(\mathcal{X})$. For instance, suppose we know that $f_i \leq g$ uniformly in $i\in I$, in~that there is a single witness $e$ from $\mathcal{A}$ with $e:f_i\leadsto g$ for all $i\in I$.  Then we likewise know by~(\ref{prop:diamonds1}) that there is a single witness $e^{\prime}$ such that $e^{\prime}: \ominus_D f_i \leadsto \ominus_D g$ uniformly in $i\in I$ and $D\in P(\mathcal{A})$, and we express this more compactly by saying that $\ominus_D f_i \leq \ominus_D g$ holds \emph{uniformly} in $i\in I$ and $D\in P(\mathcal{A})$.  Finally, given a sequence of elements $\{f_i : i\in I\}$ of $\mathbb{F}(\mathcal{X})$, we define its union and intersection pointwise by $(\bigcup_{i\in I} f_i)(x)=\bigcup_{i\in I} f_i(x)$ and $(\bigcap_{i\in I} f_i)(x)=\bigcap_{i\in I} f_i(x)$. Then uniform reductions on sequences are sufficient for reductions concerning their intersection and union. In particular, supposing that $f_i\leq h$ uniformly, then one has $\bigcap_{i\in I} f_i\leq \bigcup_{i\in I} f_i\leq h$. Similarly, supposing that $h\leq f_i$ uniformly, then one has $h\leq \bigcap_{i\in I} f_i\leq \bigcup_{i\in I} f_i$.

We close this section with two elementary propositions on  $\mathbb{F}(\mathcal{X})$ that will prove useful in what follows, and that illustrate these uniformity considerations:
\begin{prop}\label{mylittlehelper} For any two sequences $\{g_i\in \mathbb{F}(\mathcal{X}): i\in I\}$ and $\{f_i\in \mathbb{F}(\mathcal{X}): i\in I\}$ one has uniformly in $D\in P(\mathcal{A})$ that $\ominus_D \bigcup_{i\in I} (g_i \wedge f_i) \equiv  \ominus_D \bigcup_{i\in I} (g_i \wedge \ominus_D f_i)$.
\end{prop}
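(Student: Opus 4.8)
The plan is to establish the two reductions $\ominus_D \bigcup_{i\in I}(g_i \wedge f_i) \leq \ominus_D \bigcup_{i\in I}(g_i \wedge \ominus_D f_i)$ and $\ominus_D \bigcup_{i\in I}(g_i \wedge \ominus_D f_i) \leq \ominus_D \bigcup_{i\in I}(g_i \wedge f_i)$ separately, taking care that each is witnessed uniformly in $i\in I$ and $D\in P(\mathcal{A})$. By the uniformity of equations (\ref{prop:diamonds1})--(\ref{prop:diamonds12}) recorded after Definition~\ref{FlaggHeyting}, the witnesses I invoke are built from the fixed terms $e_1,\ldots,e_{12}$ and are therefore independent of $i$, of $D$, and of the two sequences; this is exactly what lets the pointwise reductions pass to the unions via the union principle described just before this proposition.

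For the forward inequality I would first note $f_i \leq \ominus_D f_i$ uniformly by equation~(\ref{prop:diamonds2}), whence $g_i \wedge f_i \leq g_i \wedge \ominus_D f_i$ uniformly (meet is monotone, witnessed by $e_4$ together with the projections). Since each $g_i \wedge \ominus_D f_i$ is included in $\bigcup_{j\in I}(g_j \wedge \ominus_D f_j)$, composing gives $g_i \wedge f_i \leq \bigcup_{j\in I}(g_j \wedge \ominus_D f_j)$ uniformly in $i$, so the union principle yields $\bigcup_{i\in I}(g_i \wedge f_i) \leq \bigcup_{j\in I}(g_j \wedge \ominus_D f_j)$. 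Applying $\ominus_D$ and its monotonicity~(\ref{prop:diamonds1}) then gives the first inequality.

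The reverse inequality is the heart of the argument, and it rests on the uniform pointwise lemma $g_i \wedge \ominus_D f_i \leq \ominus_D(g_i \wedge f_i)$. To prove this, I would argue $g_i \wedge \ominus_D f_i \leq \ominus_D g_i$ (project onto $g_i$, then apply~(\ref{prop:diamonds2})) and $g_i \wedge \ominus_D f_i \leq \ominus_D f_i$ (projection), so the meet property gives $g_i \wedge \ominus_D f_i \leq \ominus_D g_i \wedge \ominus_D f_i$, and equation~(\ref{prop:diamonds4}) identifies $\ominus_D g_i \wedge \ominus_D f_i$ with $\ominus_D(g_i \wedge f_i)$. Next, $g_i \wedge f_i \leq \bigcup_{j\in I}(g_j \wedge f_j)$ with monotonicity~(\ref{prop:diamonds1}) gives $\ominus_D(g_i \wedge f_i) \leq \ominus_D \bigcup_{j\in I}(g_j \wedge f_j)$ uniformly in $i$; composing with the lemma and applying the union principle yields $\bigcup_{i\in I}(g_i \wedge \ominus_D f_i) \leq \ominus_D \bigcup_{j\in I}(g_j \wedge f_j)$. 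Applying $\ominus_D$, using monotonicity once more, and collapsing the double occurrence of $\ominus_D$ through the idempotency $\ominus_D \ominus_D x \equiv \ominus_D x$ of~(\ref{prop:diamonds5}) then delivers the second inequality.

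The computation itself is elementary; the one place I would watch carefully is the bookkeeping that keeps every witness uniform in both $i$ and $D$ at once. Because each ingredient I use --- namely (\ref{prop:diamonds1}), (\ref{prop:diamonds2}), (\ref{prop:diamonds4}), (\ref{prop:diamonds5}), the projections, and the union principle --- is uniform with $D$-independent witnesses, and because composition (via $e_{10}$) and pairing into a meet (via $e_4$) preserve uniformity, finitely many such steps compose to a single uniform witness. Thus the only genuine obstacle is notational discipline rather than mathematical difficulty.
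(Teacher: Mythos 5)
Your proposal is correct and follows essentially the same route as the paper's own proof: both directions rest on the pointwise reductions $g_i \wedge f_i \leq g_i \wedge \ominus_D f_i$ (via~(\ref{prop:diamonds2})) and $g_i \wedge \ominus_D f_i \leq \ominus_D(g_i \wedge f_i)$ (via~(\ref{prop:diamonds2}) with~(\ref{prop:diamonds4})), followed by the union principle, monotonicity~(\ref{prop:diamonds1}), and idempotence~(\ref{prop:diamonds5}). Your spelled-out derivation of the key lemma through the two projections and the meet property is just a more explicit rendering of the paper's one-line citation of the same facts, and your uniformity bookkeeping matches the paper's.
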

\begin{proof}
The identity function witnesses that $(g_i \wedge f_i)\leq \bigcup_i (g_i \wedge f_i)$ uniformly in $i\in I$. Then $(g_i \wedge \ominus_D f_i) \leq \ominus_D (g_i \wedge f_i) \leq \ominus_D \bigcup_{i\in I} (g_i \wedge f_i)$ uniformly in $i \in I, D\in P(\mathcal{A})$ by~(\ref{prop:diamonds2}) in conjunction with~(\ref{prop:diamonds4}), and then by (\ref{prop:diamonds1}). Then $\bigcup_{i\in I}  (g_i \wedge \ominus_D f_i) \leq \ominus_D \bigcup_{i\in I} (g_i \wedge f_i)$ uniformly in $D\in P(\mathcal{A})$. Then uniformly in $D\in P(\mathcal{A})$ we have $\ominus_D \bigcup_{i\in I}  (g_i \wedge \ominus_D f_i) \leq \ominus_D \ominus_D \bigcup_{i\in I} (g_i \wedge f_i) \equiv \ominus_D \bigcup_{i\in I} (g_i \wedge f_i)$ by (\ref{prop:diamonds1}) and (\ref{prop:diamonds5}). Conversely, uniformly in $i\in I$ and $D\in P(\mathcal{A})$ we have $(g_i \wedge f_i)\leq (g_i \wedge \ominus_D f_i)$ by~(\ref{prop:diamonds2}). Then $\bigcup_{i\in I} (g_i \wedge f_i)\leq \bigcup_{i\in I} (g_i \wedge \ominus_D  f_i)$ uniformly in $D\in P(\mathcal{A})$. Then uniformly in $D\in P(\mathcal{A})$ we have $\ominus_D \bigcup_{i\in I} (g_i \wedge f_i)\leq \ominus_D \bigcup_{i\in I} (g_i \wedge \ominus_D f_i)$.
\end{proof}


\begin{prop}\label{prop:proponintersection}
For any sequence $\{f_i\in \mathbb{F}(\mathcal{X}): i\in I\}$, one has $\bigcap_{i\in I} f_i\equiv \bigcap_{D\in P(\mathcal{A})} \bigcap_{i\in I} \ominus_D f_i$.
\end{prop}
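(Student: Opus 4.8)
The plan is to establish the two reductions $\bigcap_{i\in I} f_i \leq \bigcap_{D\in P(\mathcal{A})} \bigcap_{i\in I} \ominus_D f_i$ and $\bigcap_{D\in P(\mathcal{A})} \bigcap_{i\in I} \ominus_D f_i \leq \bigcap_{i\in I} f_i$ separately, regarding the double intersection as a single intersection indexed by pairs $(D,i)\in P(\mathcal{A})\times I$ so that the uniformity principle for sequences recorded just before the proposition applies (namely: if $h\leq g_j$ uniformly in $j$ then $h\leq \bigcap_j g_j$, and dually if $g_j\leq h$ uniformly then $\bigcap_j g_j\leq h$). Throughout I read $\ominus_D f$ pointwise, so that $(\ominus_D f)(x)=\ominus_D(f(x))=(f(x)\Rightarrow D)\Rightarrow D$ computed in $P(\mathcal{A})$, and I may assume $I\neq\emptyset$, since for $I=\emptyset$ both sides collapse to $\top$.

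For the forward reduction, the identity combinator $skk$ witnesses $\bigcap_i f_i\leq f_{i_0}$ uniformly in $i_0$, because $\bigcap_i f_i(x)\subseteq f_{i_0}(x)$ as sets and $skk\,a=a$. By~(\ref{prop:diamonds2}) there is a single realizer for $f_{i_0}\leq \ominus_D f_{i_0}$ uniformly in $i_0$ and $D$. Composing these two uniform reductions (uniformity of composition coming from the combinator $e_{10}$ in the list before Definition~\ref{FlaggHeyting}) yields $\bigcap_i f_i\leq \ominus_D f_{i_0}$ uniformly in the pair $(D,i_0)$, and the uniformity principle then gives $\bigcap_i f_i\leq \bigcap_{(D,i_0)} \ominus_D f_{i_0}=\bigcap_D\bigcap_i \ominus_D f_i$.

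The reverse reduction is the substantive direction, and its key move is to instantiate the bound variable $D$ at the value $f_{i_0}(x)$, exploiting that the intersection ranges over \emph{all} of $P(\mathcal{A})$, including truth-values that depend on the point $x\in\mathcal{X}$. Fix $i_0$. Taking the term $D=f_{i_0}(x)$ and the index $i=i_0$ shows, for every $x$, the set-theoretic containment
\[
\Bigl(\bigcap_D\bigcap_i \ominus_D f_i\Bigr)(x)\subseteq \ominus_{f_{i_0}(x)}\bigl(f_{i_0}(x)\bigr)=\bigl(f_{i_0}(x)\Rightarrow f_{i_0}(x)\bigr)\Rightarrow f_{i_0}(x).
\]
Since $skk$ realizes $f_{i_0}(x)\leadsto f_{i_0}(x)$, it lies in $f_{i_0}(x)\Rightarrow f_{i_0}(x)$, so any $a$ in the right-hand set satisfies $a(skk)\hspace{-1mm}\downarrow\in f_{i_0}(x)$ by the definition of the implication in $P(\mathcal{A})$. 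By Proposition~\ref{prop:onpcas} there is a fixed element $g\in\mathcal{A}$ with $ga=a(skk)$; being independent of $x$ and $i_0$, this $g$ witnesses $\bigcap_D\bigcap_i \ominus_D f_i\leq f_{i_0}$ uniformly in $i_0$. A final application of the uniformity principle gives $\bigcap_D\bigcap_i \ominus_D f_i\leq \bigcap_{i_0} f_{i_0}=\bigcap_i f_i$.

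I expect the reverse reduction to be the only real obstacle. Conceptually it rests on the fact that $\bigcap_D \ominus_D y\equiv y$ in any Heyting prealgebra, which holds because $\ominus_y y\equiv y$ (using $(y\Rightarrow y)\equiv\top$ and $\top\Rightarrow y\equiv y$); the work is to see that this pointwise identity is realized \emph{uniformly} across $\mathcal{X}$ and across the sequence. The uniformity is exactly what makes the realizability version succeed: the single combinator $g$ implementing $a\mapsto a(skk)$ handles every point $x$ and every member of the sequence at once, so no choice of realizers depending on $x$ is required.
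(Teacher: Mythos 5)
Your proof is correct and follows essentially the same route as the paper's: the forward reduction via the uniform witness to~(\ref{prop:diamonds2}), and the reverse reduction via the key instantiation $D=f_i(x)$ inside the intersection over all of $P(\mathcal{A})$. The only cosmetic difference is that where the paper simply cites the uniform witness to~(\ref{prop:diamonds12}) (i.e.\ $e:\ominus_Z Z\leadsto Z$ for all $Z$), you reconstruct that witness explicitly as $a\mapsto a(skk)$ via Proposition~\ref{prop:onpcas}, which is exactly how that uniform realizer is obtained anyway.
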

\begin{proof}
To see that $\bigcap_{i\in I} f_i\leq \bigcap_{D\in P(\mathcal{A})} \bigcap_{i\in I} \ominus_D f_i$, simply note that we have $f_i(x)\leq \ominus_D f_i(x)$ uniformly in $D\in P(\mathcal{A})$ and $x\in \mathcal{X}$ by line~(\ref{prop:diamonds2}). Conversely, let $e$ be a witness to~(\ref{prop:diamonds12}) in $P(\mathcal{A})$, so that $e:\ominus_Z Z\leadsto Z$ for all $Z\in P(\mathcal{A})$. Then suppose that $n$ is in the intersection $\bigcap_{D\in P(\mathcal{A})} \bigcap_{i\in I} \ominus_D f_i(x)$. To see that $en$ is in intersection $\bigcap_{i\in I} f_i(x)$, let $i\in I$ be given and consider $D=f_i(x)$. Then since $n$ is in $\ominus_D f_i(x)=\ominus_D D$, we then have that $en$ is in $D=f_i(x)$, which is what we wanted to show.
\end{proof}

\section{Construction of Boolean Algebras with Modal Operator}\label{sec:heytingtobooleanmodal}

In this section, we fix a pca $\mathcal{A}$ and present a general construction of a series of Boolean prealgebras associated to arbitrary non-empty sets $\mathcal{X}$ using Proposition~\ref{prop:diamondsB}. In what follows, we adopt the convention introduced in the last section that if $\mathcal{X}$ is an non-empty set, then $\mathbb{F}(\mathcal{X})$ denotes the set of functions $\{f:\mathcal{X}\rightarrow P(\mathcal{A})\}$, and we suppress the dependence of $\mathbb{F}(\mathcal{X})$ on the underlying pca $\mathcal{A}$. To get a sense of the notation, note that if $\mathcal{X}, \mathcal{Y}$ are two non-empty sets, then under our conventions $\mathbb{F}(\mathcal{X}\times \mathcal{Y})$ denotes the set of functions $\{f:\mathcal{X}\times \mathcal{Y} \rightarrow P(\mathcal{A})\}$. Often in what follows, we will be taking $\mathcal{Y}=P(\mathcal{A})$, and so it is important to take explicit note of the relevant conventions, since we're proceeding by suppressing the role $P(\mathcal{A})$ plays as the codomain of all the functions in $\mathbb{F}(\mathcal{X}\times P(\mathcal{A}))$, but we're explicitly displaying the role that $P(\mathcal{A})$ plays as a component of the domain of the functions in $\mathbb{F}(\mathcal{X}\times P(\mathcal{A}))$.

So let $\mathcal{X}$ be given and let $\pi:\mathcal{X}\times P(\mathcal{A})\rightarrow P(\mathcal{A})$ be the projection map $\pi(x,Z)=Z$, so that $\pi$ is a member of $\mathbb{F}(\mathcal{X}\times P(\mathcal{A}))$. Then $\ominus_{\pi}:\mathbb{F}(\mathcal{X}\times P(\mathcal{A}))\rightarrow \mathbb{F}(\mathcal{X}\times P(\mathcal{A}))$ is the map defined by $\ominus_{\pi}(f) = (f\Rightarrow \pi)\Rightarrow \pi$. Then define as in line~(\ref{defn:H_d}) the set
\begin{equation}\label{def B}
\mathbb{B}(\mathcal{X})=\{f\in \mathbb{F}(\mathcal{X}\times P(\mathcal{A})): \ominus_{\pi} f \equiv f\}
\end{equation}
Then as a consequence of Proposition~\ref{prop:diamondsB},  one has that $\mathbb{B}(\mathcal{X})$ possesses the structure of a Boolean prealgebra. In Flagg's original paper, this was denoted by $B(\mathcal{X})$ and presented only in the proof but not the statement of \citet[Theorem 10.1 p. 138]{Flagg1985aa}. See in particular the discussion of the projection map as the ``preferred element'' on the top of \citet[p. 139]{Flagg1985aa}.

Often in what follows we will focus on a special case of the above construction wherein $\mathcal{X}$ is a singleton. Since it will be the subject of special focus, and since choosing a specific singleton with which to work would be awkward, we present the following definition. Let $\id:P(\mathcal{A})\rightarrow P(\mathcal{A})$ be the identity map $\id(Z)=Z$, which is an element of $\mathbb{F}(P(\mathcal{A}))$. Then recall that $\ominus_{\id}:\mathbb{F}(P(\mathcal{A}))\rightarrow \mathbb{F}(P(\mathcal{A}))$ is the map defined by $\ominus_{\id}(f) = (f\Rightarrow \id)\Rightarrow \id$. Then define as in line~(\ref{defn:H_d}) the set
\begin{equation}\label{def Breal3}
\mathbb{B}=\{f\in \mathbb{F}(P(\mathcal{A})): \ominus_{\id} f \equiv f\}
\end{equation}
Then $\mathbb{B}$ too has the structure of a Boolean prealgebra.

Before proceeding to the construction on the modal operator, let's briefly note two elementary propositions pertaining to closure conditions in these Boolean prealgebras:

\begin{prop}\label{prop:uniformclosure} (Proposition on $\mathbb{B}(\mathcal{X})$ being closed under uniform intersections). Suppose that a sequence $\{g_i\in \mathbb{B}(\mathcal{X}): i\in I\}$ is in $\mathbb{B}(\mathcal{X})$ uniformly, in that there is $e$ from $\mathcal{A}$ such that all for all $i\in I$ and $D\in P(\mathcal{A})$ and $x\in \mathcal{X}$ one has $e: \ominus_D g_i(x,D)\leadsto g_i(x,D)$. Then the intersection $\bigcap_{i\in I} g_i$ is also in $\mathbb{B}(\mathcal{X})$.
\end{prop}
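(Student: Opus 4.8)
The plan is to reduce the claim to a single nontrivial reduction and then exploit uniformity to pass to the intersection. Recall first that membership $f \in \mathbb{B}(\mathcal{X})$ unfolds, by~(\ref{def B}), to $\ominus_\pi f \equiv f$, where the operations on $\mathbb{F}(\mathcal{X}\times P(\mathcal{A}))$ are pointwise, so that $\ominus_\pi f(x,Z) = \ominus_Z f(x,Z)$. Since the reduction $f \leq \ominus_\pi f$ always holds, and holds uniformly, by~(\ref{prop:diamonds2}), the entire content of $f\in\mathbb{B}(\mathcal{X})$ lies in the reverse reduction $\ominus_\pi f \leq f$; and the hypothesis of the proposition says exactly that the single element $e$ witnesses $\ominus_\pi g_i \leq g_i$ for every $i\in I$ at once. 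Writing $h=\bigcap_{i\in I} g_i$, which is a legitimate member of $\mathbb{F}(\mathcal{X}\times P(\mathcal{A}))$ since it is defined pointwise by $h(x,Z)=\bigcap_{i\in I} g_i(x,Z)$, it therefore suffices to establish the single reduction $\ominus_\pi h \leq h$, the reverse inequality being automatic from~(\ref{prop:diamonds2}).

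To obtain $\ominus_\pi h \leq h$ I would build a uniform chain of reductions below each $g_i$ and then collapse it to the intersection. First, $h \leq g_i$ holds uniformly in $i$, witnessed by the identity, because $h(x,Z)\subseteq g_i(x,Z)$ as subsets of $\mathcal{A}$. Next I would apply the uniform version of $\ominus$-monotonicity~(\ref{prop:diamonds1}); the key point here is that the witness to~(\ref{prop:diamonds1}) is uniform in the subscript as well as in $i$, so from $h\leq g_i$ uniformly we get $\ominus_D h \leq \ominus_D g_i$ uniformly in both $i$ and $D\in P(\mathcal{A})$, and we are then free to specialize $D$ to the running second coordinate $Z$ without destroying uniformity. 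This yields $\ominus_\pi h \leq \ominus_\pi g_i$ uniformly in $i$. Composing this, via the uniform witness to transitivity of $\leq$, with the hypothesized single witness $e$ for $\ominus_\pi g_i \leq g_i$, I obtain a single element of $\mathcal{A}$ witnessing $\ominus_\pi h \leq g_i$ simultaneously for all $i\in I$.

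Finally, because $\ominus_\pi h$ is one fixed element of $\mathbb{F}(\mathcal{X}\times P(\mathcal{A}))$ that reduces below every $g_i$ with a single uniform witness, the principle recorded at the close of \S\ref{sec:heytingfrompca}---that a uniform family of reductions $h'\leq f_i$ entails $h'\leq \bigcap_{i\in I} f_i$---gives $\ominus_\pi h \leq \bigcap_{i\in I} g_i = h$. Combined with $h\leq\ominus_\pi h$ from~(\ref{prop:diamonds2}), this gives $\ominus_\pi h \equiv h$, i.e. $h\in\mathbb{B}(\mathcal{X})$. I expect the main obstacle to be bookkeeping of uniformity rather than any deep content: each intermediate reduction must be shown to hold with a \emph{single} witness independent of $i$ (and, for the monotonicity step, independent of the subscript $D$), since it is precisely this uniformity that licenses the concluding passage from ``below each $g_i$'' to ``below $\bigcap_{i} g_i$.'' Without it the final step fails, and indeed this is the reason the hypothesis is phrased as uniform membership in $\mathbb{B}(\mathcal{X})$ rather than mere membership of each $g_i$.
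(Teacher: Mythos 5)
Your proposal is correct and follows essentially the same route as the paper's own proof: the identity witnesses $h\leq g_i$ uniformly, the uniform witness to~(\ref{prop:diamonds1}) gives $\ominus_D h\leq \ominus_D g_i$ uniformly in $i$ and $D$, composition with the hypothesized $e$ gives a single witness to $\ominus_D h\leq g_i$ for all $i$, and that same witness then lands in the intersection, i.e. witnesses $\ominus_D h\leq h$. Your explicit framing of the two bookkeeping points---that only the direction $\ominus_\pi h\leq h$ needs proof by~(\ref{prop:diamonds2}), and that the final step is an instance of the uniform-reductions-to-intersections principle from the end of \S\ref{sec:heytingfrompca}---is left implicit in the paper but is exactly what its argument relies on.
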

\begin{proof}
Let $h(x,D)=\bigcap_{i\in I} g_i(x,D)$. Then the identity map is a witness to $h(x,D)\leq g_i(x,D)$ for all $i\in I$ and $D\in P(\mathcal{A})$ and $x\in \mathcal{X}$. Then by (\ref{prop:diamonds1}) there is a witness $e^{\prime}$ such that $e^{\prime}:\ominus_D h(x,D)\leadsto \ominus_D g_i(x,D)$ for all $i\in I$ and $D\in P(\mathcal{A})$ and $x\in \mathcal{X}$. Then composing $e^{\prime}$ with the postulated witness $e$ yields a uniform witness $e^{\prime\prime}$ such that $e^{\prime\prime}: \ominus_D h(x,D)\leadsto g_i(x,D)$ for all $i\in I$ and $D\in P(\mathcal{A})$ and $x\in \mathcal{X}$. Then since $h(x,D)=\bigcap_{i\in I} g_i(x,D)$, we also have that $e^{\prime\prime}: \ominus_D h(x,D) \leadsto h(x,D)$ for all $D\in P(\mathcal{A})$ and $x\in \mathcal{X}$.
\end{proof}

The following proposition provides a closure condition which we appeal to repeatedly in what follows. The name we give to this proposition reflects the fact that it's \emph{not} assumed that the antecedent is a member of the Boolean prealgebra:
\begin{prop}\label{prop:mixing} (Proposition on Freedom in the Antecedent)
Suppose that $f\in \mathbb{B}(\mathcal{X})$ and $Q: \mathcal{X}\times P(\mathcal{A}) \rightarrow P(\mathcal{A})$. Let $h: \mathcal{X}\times P(\mathcal{A})\rightarrow P(\mathcal{A})$ be defined by $h(x,D) = (Q(x,D)\Rightarrow f(x,D))$. Then $h\in \mathbb{B}(\mathcal{X})$. 
\end{prop}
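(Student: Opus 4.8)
The plan is to verify directly that $h$ satisfies the defining membership condition for $\mathbb{B}(\mathcal{X})$ from~(\ref{def B}), namely $\ominus_{\pi} h \equiv h$ in $\mathbb{F}(\mathcal{X}\times P(\mathcal{A}))$. Since the operations on $\mathbb{F}(\mathcal{X}\times P(\mathcal{A}))$ are pointwise and $\pi(x,D)=D$, unwinding the definition of $\ominus_{\pi}$ shows that $(\ominus_{\pi} h)(x,D) = \ominus_D\big(h(x,D)\big) = \ominus_D\big(Q(x,D)\Rightarrow f(x,D)\big)$, so what must be produced is a single (uniform) witness $e$ from $\mathcal{A}$ with $e: \ominus_D\big(Q(x,D)\Rightarrow f(x,D)\big) \leadsto \big(Q(x,D)\Rightarrow f(x,D)\big)$ for all $x\in\mathcal{X}$ and $D\in P(\mathcal{A})$. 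The reverse reduction $h \leq \ominus_{\pi} h$ is immediate and uniform from line~(\ref{prop:diamonds2}), so only this direction carries content.

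The conceptual point I would emphasize is that an implication whose \emph{consequent} is $\ominus_d$-stable is itself $\ominus_d$-stable, regardless of its antecedent — this is precisely the ``freedom in the antecedent'' of the title. To make this precise pointwise, fix $(x,D)$ and abbreviate $Q=Q(x,D)$, $f=f(x,D)$. I would run the chain
\[
\ominus_D(Q\Rightarrow f) \;\leq\; \ominus_D Q \Rightarrow \ominus_D f \;\leq\; Q \Rightarrow \ominus_D f \;\leq\; Q \Rightarrow f,
\]
where the first inequality is~(\ref{prop:diamonds6}), the second uses $Q \leq \ominus_D Q$ from~(\ref{prop:diamonds2}) together with the contravariance of $\Rightarrow$ in its antecedent~(\ref{help2}), and the third uses monotonicity of $\Rightarrow$ in its consequent (an elementary consequence of~(\ref{help1}) and the adjunction defining $\Rightarrow$) applied to $\ominus_D f \leq f$. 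The crucial observation is that $\ominus_D f \leq f$ is exactly the nontrivial half of the hypothesis $f\in \mathbb{B}(\mathcal{X})$, while $Q$ enters only through $Q \leq \ominus_D Q$, which holds for \emph{every} function $Q$; nothing like $\ominus_D Q \leq Q$ is ever needed, which is why $Q$ may be arbitrary.

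Finally I would lift this pointwise chain to a uniform reduction in $\mathbb{F}(\mathcal{X}\times P(\mathcal{A}))$. Each of the three reductions above holds with a uniform witness independent of $x$ and $D$: the witnesses for~(\ref{prop:diamonds6}),~(\ref{prop:diamonds2}), and~(\ref{help2}) are uniform by the discussion of uniformity following Definition~\ref{FlaggHeyting} (and Proposition~\ref{prop:formaljustificationunfirm}), and the hypothesis $f\in\mathbb{B}(\mathcal{X})$ supplies a single witness $e_f$ with $e_f:\ominus_D f(x,D)\leadsto f(x,D)$ for all $x,D$. Composing these finitely many uniform witnesses yields the required $e$, giving $\ominus_{\pi} h \leq h$; together with $h\leq \ominus_{\pi} h$ this establishes $\ominus_{\pi} h \equiv h$, so $h\in\mathbb{B}(\mathcal{X})$. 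The only real obstacle is the bookkeeping of uniformity — making sure each step of the pointwise chain is backed by an $x,D$-independent element of $\mathcal{A}$ so that the composite witness exists — but this is routine given the uniform versions of~(\ref{help1})--(\ref{prop:diamonds10}) already secured in the earlier sections.
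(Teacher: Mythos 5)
Your proof is correct and follows essentially the same route as the paper's: both hinge on~(\ref{prop:diamonds6}), the reduction $Q\leq \ominus_D Q$ from~(\ref{prop:diamonds2}), and the hypothesis $\ominus_D f\leq f$ supplied by $f\in\mathbb{B}(\mathcal{X})$, all composed uniformly. The only difference is presentational: the paper conjoins with $Q(x,D)$ and invokes the adjunction defining $\Rightarrow$, whereas you repackage those same moves as contravariance/monotonicity of $\Rightarrow$ applied along a chain of implications.
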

\begin{proof}
We must show $\ominus_D h(x,D)\leq h(x,D)$ uniformly in $x$ and $D$. By~(\ref{prop:diamonds6}) and~(\ref{prop:diamonds2}), 
\begin{equation}
(\ominus_D h(x,D)) \wedge Q(x,D) \leq ((\ominus_D Q(x,D))\Rightarrow \ominus_D f(x,D)) \wedge \ominus_D Q(x,D) \leq \ominus_D f(x,D)
\end{equation}
which is $\leq f(x,D)$ since $f\in \mathbb{B}(\mathcal{X})$. Then we have that $\ominus_D h(x,D)\leq (Q(x,D)\Rightarrow f(x,D))$ and so we are done.
\end{proof}

Now we proceed to the construction of the $\Box$ operator. This is defined in terms of the following embedding, which we dub ``$\mu$'' since it's a helpful mnemonic in this context for ``modal'' and since it's unlikely to be confused with any of our other notation. In Flagg's original paper, this map was denoted by $j_{\mathcal{X}}^{\ast}$ and mentioned in the proof of Theorem 10.1 on \citet[p. 139]{Flagg1985aa}.
\begin{definition}\label{jdef}
Let $\mu\colon\mathbb{F}(\mathcal{X})\rightarrow \mathbb{B}(\mathcal{X})$ be defined by $(\mu(f))(x,D)=\ominus_{D} (f(x))$.
\end{definition}
\noindent Sometimes we also write the action of $\mu$ as $f\mapsto \mu_{f}$ instead of $f\mapsto \mu(f)$. Note that $\mu$ varies with $\mathcal{X}$, but since $\mu$'s definition is uniform in the underlying set~$\mathcal{X}$, we don't mark this dependence explicitly in the notation for the map. Further, for any $f \in \mathbb F(\mathcal X)$, let $\overline f \in \mathbb F(\mathcal X \times P(\mathcal{A}))$ be given by $\overline f(x,D) = f(x)$. This allows us to present an equivalent definition of $\mu$ as $\mu(f) = \ominus_\pi \overline f$. The map $f\mapsto \overline{f}$ also helps to keep track of the typing, as one can see by consulting Figure~\ref{figure:1} which records the relation between these various maps. Finally, before proceeding, let's note that $\mu$ does indeed have codomain $\mathbb{B}(\mathcal{X})$. It suffices to show that $\ominus_{\pi}(\mu(f))\leq \mu(f)$. But we may calculate that
\begin{equation}
\ominus_{\pi}(\mu(f))(x,D) =[(\mu(f))(x,D)\Rightarrow \pi(x,D)]\Rightarrow \pi(x,D)= \ominus_D \ominus_D f(x) 
\end{equation}
which is $\leq (\mu(f))(x,D)$ uniformly in $D$ by (\ref{prop:diamonds3}).

The following proposition records some elementary properties of the $\mu$ map. In this, recall that the join and falsum on the codomain $\mathbb{B}(\mathcal{X})$ of $\mu$ are described in line~(\ref{structure0}) of Proposition~\ref{prop:diamondsB}.
\begin{prop}\label{eqn:embedthn}
Let $f,g \in \mathbb F(\mathcal X).$ Then we have:
\vspace{-8mm}
\begin{multicols}{2}
\begin{align}
& f\leq g \mbox{ iff } \mu(f)\leq \mu(g) \label{j1}\\
& \mu(f\wedge g) \equiv \mu(f) \wedge \mu(g) \label{j2}\\
& \mu(f\vee g) \equiv \mu(f) \vee^{\pi} \mu(g)\label{j2.5}
\end{align}
\columnbreak
\begin{align}
& & \mbox{\;} \notag \\
& \mu(\top) \equiv \top \label{j3} \\
& \mu(\bot) \equiv \bot^{\pi} \equiv \pi \label{j4}
\end{align}
\end{multicols}
\vspace{-10mm}
\end{prop}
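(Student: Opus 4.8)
The plan is to prove the five claims~(\ref{j1})--(\ref{j4}) by unwinding the definition $\mu(f)(x,D) = \ominus_D(f(x))$ and reducing each statement about $\mu$ to a statement about the underlying operator $\ominus_D$ on $P(\mathcal{A})$, invoking the uniform versions of the elementary properties~(\ref{prop:diamonds1})--(\ref{prop:diamonds10}). The guiding principle is that since the target $\mathbb{B}(\mathcal{X})$ inherits its order by restriction from $\mathbb{F}(\mathcal{X}\times P(\mathcal{A}))$, an inequality $\mu(f) \leq \mu(g)$ holds precisely when there is a single witness $e$ from $\mathcal{A}$ with $e\colon \ominus_D f(x) \leadsto \ominus_D g(x)$ for all $x\in\mathcal{X}$ and $D\in P(\mathcal{A})$. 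Thus every claim becomes an assertion about the existence of uniform realizers, and the key is to track these realizers carefully.

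First I would prove the forward direction of~(\ref{j1}): if $f\leq g$ in $\mathbb{F}(\mathcal{X})$, there is a witness $e$ with $e\colon f(x)\leadsto g(x)$ uniformly in $x$, and by monotonicity~(\ref{prop:diamonds1}) applied uniformly, there is $e'$ with $e'\colon \ominus_D f(x)\leadsto \ominus_D g(x)$ uniformly in $x$ and $D$, which is exactly $\mu(f)\leq\mu(g)$. The converse direction of~(\ref{j1}) is where I expect the main subtlety, since from $\mu(f)\leq\mu(g)$ we only get a realizer for $\ominus_D f(x)\leadsto \ominus_D g(x)$, and we must recover $f\leq g$. The trick is to specialize the free parameter $D$: taking $D=f(x)$, we have $f(x)\leq\ominus_{f(x)} f(x) = \ominus_D D$ by~(\ref{prop:diamonds2}), feed this through the assumed realizer to land in $\ominus_D g(x)$, and then apply the witness to~(\ref{prop:diamonds12}), namely $e_{12}\colon\ominus_Z Z\leadsto Z$ — but here the codomain is $\ominus_D g(x)$ rather than $\ominus_D D$, so I would instead use~(\ref{prop:diamonds10}), which gives $\ominus_D g(x)\Rightarrow D \equiv g(x)\Rightarrow D$, to extract membership in $g(x)$. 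The care needed is to ensure these manipulations compose into a single $x$-uniform realizer, which follows because the witnesses to~(\ref{prop:diamonds2}),~(\ref{prop:diamonds10}), and~(\ref{prop:diamonds12}) are all uniform terms in the pca as noted after Definition~\ref{FlaggHeyting}.

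Next, I would handle~(\ref{j2}) and~(\ref{j3}) using the pointwise definitions and the corresponding facts about $\ominus_D$. For~(\ref{j2}), since $\mu(f\wedge g)(x,D) = \ominus_D((f\wedge g)(x)) = \ominus_D(f(x)\wedge g(x))$, the distribution law~(\ref{prop:diamonds4}) gives this is equivalent to $\ominus_D f(x)\wedge\ominus_D g(x)$, which is exactly $(\mu(f)\wedge\mu(g))(x,D)$ since $\wedge$ on $\mathbb{B}(\mathcal{X})$ is by restriction; the equivalences being uniform, we are done. For~(\ref{j3}), $\mu(\top)(x,D) = \ominus_D(\top) \equiv \top$ by~(\ref{prop:diamonds7}), giving $\mu(\top)\equiv\top$.

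Finally I would dispatch~(\ref{j4}) and~(\ref{j2.5}), which involve the modified join $\vee^{\pi}$ and falsum $\bot^{\pi}$ from~(\ref{structure0}). For~(\ref{j4}), compute $\mu(\bot)(x,D) = \ominus_D(\bot) = \bot^D$, and by the part of Proposition~\ref{prop:diamondsB} establishing $\bot^D\equiv D$, this equals $D = \pi(x,D)$, giving the chain $\mu(\bot)\equiv\bot^{\pi}\equiv\pi$. For~(\ref{j2.5}), I unwind $\mu(f\vee g)(x,D) = \ominus_D(f(x)\vee g(x))$, while on the other side $(\mu(f)\vee^{\pi}\mu(g))(x,D) = \ominus_D(\mu(f)(x,D)\vee\mu(g)(x,D)) = \ominus_D(\ominus_D f(x)\vee\ominus_D g(x))$ by the definition of $\vee^{\pi}$ in~(\ref{structure0}). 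These two agree because, reasoning as in the proof of Proposition~\ref{prop:diamondsB} where $x\vee^d y\in\mathbb{H}_d$ was verified, one has $\ominus_D(\ominus_D f(x)\vee\ominus_D g(x))\equiv\ominus_D(f(x)\vee g(x))$: the inequality $f(x)\vee g(x)\leq\ominus_D f(x)\vee\ominus_D g(x)$ from~(\ref{prop:diamonds2}) gives one direction via~(\ref{prop:diamonds1}), and the reverse uses~(\ref{prop:diamonds1}),~(\ref{prop:diamonds5}), together with $\ominus_D f(x)\vee\ominus_D g(x)\leq\ominus_D(f(x)\vee g(x))$ from~(\ref{prop:diamonds9}). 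Throughout, the only genuine work is bookkeeping to confirm every equivalence comes with a realizer uniform in $x$ and $D$, which the weak-intuitionistic provability of~(\ref{prop:diamonds1})--(\ref{prop:diamonds10}) guarantees.
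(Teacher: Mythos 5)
Your handling of (\ref{j2}), (\ref{j3}), (\ref{j4}), and (\ref{j2.5}) is correct and tracks the paper closely; the only variation is that your reverse direction of (\ref{j2.5}) is a direct computation via (\ref{prop:diamonds9}) and (\ref{prop:diamonds5}), where the paper instead invokes the supremum property of $\vee^{\pi}$ together with the forward half of (\ref{j1}) --- both routes work. The forward direction of (\ref{j1}) is also exactly the paper's argument.

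The converse direction of (\ref{j1}), however --- which you rightly flag as the one subtle point --- has a genuine gap. You specialize the hypothesis at $D = f(x)$, obtaining $f(x) \leq \ominus_{f(x)} f(x) \leq \ominus_{f(x)} g(x)$. But this conclusion is vacuous: $f(x) \leq \ominus_{f(x)} g(x) = (g(x) \Rightarrow f(x)) \Rightarrow f(x)$ holds for \emph{arbitrary} $f$ and $g$ (send $a$ to the constant map given by $ka$), so no further manipulation can recover $f \leq g$ from it. In particular, your proposed repair via (\ref{prop:diamonds10}) does not do the job: with $d = f(x)$, line (\ref{prop:diamonds10}) asserts $\ominus_{f(x)} g(x) \Rightarrow f(x) \; \equiv \; g(x) \Rightarrow f(x)$, an equivalence of implications \emph{into} $f(x)$; it supplies no map from $\ominus_{f(x)} g(x)$ into $g(x)$, and indeed $\ominus_d y \leq y$ is false in general (e.g.\ $\ominus_{\top}\bot \equiv \top \not\leq \bot$). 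The correct specialization is $D = g(x)$, as in the paper: then $f(x) \leq \ominus_{g(x)} f(x) \leq \ominus_{g(x)} g(x) \leq g(x)$, where the first step is (\ref{prop:diamonds2}), the second is your hypothesis, and the last is precisely (\ref{prop:diamonds12}) --- whose applicability is exactly what your choice of parameter destroyed, since (\ref{prop:diamonds12}) requires the subscript of $\ominus$ and its argument to coincide. With $D = g(x)$ the three witnesses compose into a single realizer independent of $x$, and the rest of your uniformity bookkeeping goes through unchanged.
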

\begin{proof}
For (\ref{j1}), first suppose that $f\leq g$. Then uniformly in $x\in \mathcal{X}$ we have that $f(x)\leq g(x)$. Then by (\ref{prop:diamonds1}), uniformly in $x\in \mathcal{X}$ and $D\in P(\mathcal{A})$, we have that $\ominus_D f(x)\leq \ominus_D g(x)$, which is just to say that $\mu(f)\leq \mu(g)$.

For the other direction, suppose $\mu(f)\leq \mu(g)$. Then uniformly for $x \in \mathcal{X}$ and $D \in P(\mathcal{A})$, we have $\ominus_D f(x)\leq \ominus_D g(x)$. In particular, let $D = g(x)$. So $\ominus_{g(x)} f(x) \leq \ominus_{g(x)} g(x)$. Hence by line~(\ref{prop:diamonds2}) and~(\ref{prop:diamonds12}), we have $f(x) \leq \ominus_{g(x)} f(x) \leq \ominus_{g(x)} g(x) \leq g(x)$. 

For conjunction, we want to show for all $(x,D) \in \mathcal X \times P(\mathcal{A})$ that $\mu(f \wedge g)(x,D) = \mu(f)(x,D) \wedge \mu(g)(x,D)$. But this holds because by (\ref{prop:diamonds4}), we have that $\mu(f\wedge g)(x,D) = \ominus_D ((f\wedge g)(x)) = \ominus_D(f(x) \wedge g(x)) \equiv \ominus_D f(x) \wedge \ominus_D g(x)$. 

For disjunction, first note by line~(\ref{prop:diamonds2}) and the basic properties of join that $f(x)\vee g(x)\leq \ominus_{D}f(x) \vee \ominus_D g(x)$, so by line~(\ref{prop:diamonds1}) we have $\ominus_D (f(x) \vee g(x)) \leq \ominus_D(\ominus_{D}f(x) \vee \ominus_D g(x))$, which is just to say that $\mu(f\vee g)\leq \mu(f)\vee^{\pi} \mu(g)$. For the other direction, begin by observing that $f,g\leq f\vee g$ implies that $\mu(f), \mu(g)\leq \mu(f\vee g)$ by (\ref{j1}), so that since $\vee^{\pi}$ denotes the join in $\mathbb{B}(\mathcal{X})$ we have that $\mu(f)\vee^{\pi} \mu(g)\leq \mu(f\vee g)$. 

For top and bottom, simply note that $\mu(\top)(x,D)=\ominus_D \top \equiv \top$ uniformly in $D$ by~(\ref{prop:diamonds7}), and $\mu(\bot)(x,D)=\ominus_D \bot = (\bot^{\pi})(x,D)$, since $\bot^\pi = \ominus_{\pi} \bot$.
\end{proof}

For the definition of the $\Box$ operator, we need only one further preliminary definition, namely the $\mathrm{inf}$ map. This map is designated as $j_{\mathcal{X}^{\ast}}$ in the proof of Theorem 10.1 on \citet[p. 139]{Flagg1985aa}.
\begin{definition}\label{inf(f)}
Let $f \in  \mathbb B (\mathcal X)$ and let $x\in \mathcal{X}$. Define the map $\mathrm{inf}\colon \mathbb{B}(\mathcal{X})\rightarrow \mathbb{F}(\mathcal{X})$ by $(\inf(f))(x) = \bigcap_{Z\in P(\mathcal{A})} f(x,Z)$.
\end{definition}
\noindent Finally, we define the $\Box$ operator as follows (cf. the definition in Theorem 10.2 of \citet[p. 140]{Flagg1985aa}):
\begin{definition}\label{defn:boxremark}
 Define the mapping $\Box\colon \mathbb{B}(\mathcal{X}) \rightarrow \mathbb{B}(\mathcal{X})$ by $\Box = \mu \circ \inf$, or equivalently $(\Box f)(x,D) = (\ominus_\pi \overline{\inf f})(x,D) = \ominus_D (\bigcap_{Z\in P(\mathcal{A})}f(x,Z))$.
\end{definition}
\noindent Again, one might consider consulting Figure~\ref{figure:1} to aid in keeping track of the relations between these various maps.

\begin{figure}
\begin{boxedminipage}{\textwidth}
\begin{displaymath}
    \xymatrix{ \mathbb{B}(\mathcal{X})=\{f\in \mathbb{F}(\mathcal{X}\times P(\mathcal{A})) \ \vert \ominus_{\pi} f\equiv f\}\ar[d]^{\mathrm{inf}}\ar@/_4pc/[dd]_{\Box} &  \\
               \mathbb{F}(\mathcal{X})\ar[d]^{\mu} \ar[dr]^{\overline{\cdot}} &  \\
               \mathbb{B}(\mathcal{X}) & \mathbb{F}(\mathcal{X}\times P(\mathcal{A}))\ar[l]_{\ominus_{\pi} \cdot} }
\end{displaymath}

\vspace{2mm}

The inf function: $(\inf f)(x) = \bigcap_{Z\in P(\mathcal{A})} f(x,Z)$ 

\vspace{2mm}

The overline function $\overline{f}(x,D) = f(x)$

\vspace{2mm}

The minus-sub-$\pi$ function: $(\ominus_{\pi}(f))(x,D) = \ominus_D f(x,D)$

\vspace{2mm}

The $\mu$ function: $\mu(f)=\ominus_{\pi}\overline{f}$ or $(\mu(f))(x,D)= \ominus_D f(x)$ 

\vspace{2mm}

The box operator: $\Box = \mu\circ \inf$ or $\Box f =\ominus_{\pi} \overline{\inf f}$  or $(\Box f)(x,D) = \ominus_D (\bigcap_{Z\in P(\mathcal{A})} f(x,Z))$

\vspace{2mm}

\caption{The Diagram of Maps Used to Define the Box Operator}\label{figure:1}
\end{boxedminipage}
\end{figure}

\mbox{\;}

The following proposition gives us some formal indication that the $\Box$ operator acts like a modal operator. Recall that \emph{valid} is a synonym for having top value in the relevant prealgebra. A schema of propositional modal logic can be represented by a formula $\varphi(f_1, \ldots, f_n)$ of propositional modal logic wherein $f_1, \ldots, f_n$ are the basic propositional letters. Let's then say that a formula $\varphi(f_1, \ldots, f_n)$ is \emph{uniformly valid} in $\mathbb{B}(\mathcal{X})$ if there is a single element $e$ of $\mathcal{A}$ such that for all $f_1, \ldots, f_n$ from $\mathbb{B}(\mathcal{X})$, one has that $e$ is a witness to $\varphi(f_1, \ldots, f_n)$ having top value.
\begin{prop}\label{S4}
All of the axioms and theorems of the propositional modal system $\mathsf{S4}$ are uniformly valid on $\mathbb B(\mathcal X)$. Further, (i) one has $f\leq g$ implies $\Box f\leq \Box g$ uniformly in $f, g$. Finally, (ii) one has $g \equiv \Box g$ iff there is $h\in \mathbb F(\mathcal{X})$ such that $g \equiv \mu(h)$.
\end{prop}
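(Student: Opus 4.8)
The plan is to show that $\Box=\mu\circ\inf$ is an \emph{interior operator} on the Boolean prealgebra $\mathbb{B}(\mathcal{X})$, i.e.\ that it is monotone and satisfies $\Box f\leq f$, $\Box f\equiv\Box\Box f$, $\Box\top\equiv\top$, and $\Box(f\wedge g)\equiv\Box f\wedge\Box g$, all with uniform witnesses. Since $\mathbb{B}(\mathcal{X})$ is a Boolean prealgebra by Proposition~\ref{prop:diamondsB}, every classical propositional tautology is uniformly valid; and from these five facts one derives, by the usual modal soundness argument, that necessitation, the $\mathsf{K}$-axiom, the $\mathsf{T}$-axiom and the $\mathsf{4}$-axiom are all uniformly sound, whence every theorem of $\mathsf{S4}$ is uniformly valid. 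The single most useful lemma throughout is that $\inf\circ\mu\equiv\id$ uniformly: indeed $(\inf\mu(h))(x)=\bigcap_{Z}\ominus_Z(h(x))$, and the singleton case of Proposition~\ref{prop:proponintersection} yields $h(x)\equiv\bigcap_{Z}\ominus_{Z}(h(x))$ with uniform witnesses.

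First I would record the two monotonicity facts. The map $\mu$ is monotone by (\ref{j1}), and $\inf$ is monotone since a single witness to $f\leq g$ sends $\bigcap_Z f(x,Z)$ into $\bigcap_Z g(x,Z)$; composing gives part~(i), that $f\leq g$ implies $\Box f\leq\Box g$ uniformly. The $\mathsf{4}$-axiom then follows quickly, as $\Box\Box f=\mu(\inf(\mu(\inf f)))\equiv\mu(\inf f)=\Box f$, using $\inf\mu\equiv\id$ on $\inf f\in\mathbb{F}(\mathcal X)$ together with the fact that $\mu$ respects $\equiv$; this yields $\Box f\leq\Box\Box f$ and its converse uniformly. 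For the $\mathsf{K}$-ingredients I would show that $\inf$ preserves finite meets and $\top$: the inclusion $\bigcap_Z(f\wedge g)(x,Z)\leq(\bigcap_Z f(x,Z))\wedge(\bigcap_Z g(x,Z))$ is witnessed uniformly by the projection and pairing terms $e_4,e_5,e_6$ from the uniform Heyting structure on $P(\mathcal A)$ recorded after Definition~\ref{recHeytingpre}, the reverse inclusion by the identity, and $\bigcap_Z\top\equiv\top$ directly. Combined with (\ref{j2}) and (\ref{j3}) this gives $\Box(f\wedge g)\equiv\Box f\wedge\Box g$ and $\Box\top\equiv\top$. Normality and necessitation are then formal: from meet-preservation, $\Box(f\Rightarrow^{\pi}g)\wedge\Box f\equiv\Box((f\Rightarrow^{\pi}g)\wedge f)\leq\Box g$ by monotonicity and modus ponens in the Boolean prealgebra, and $f\equiv\top$ implies $\Box f\equiv\Box\top\equiv\top$.

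The one genuinely delicate point, and the step I expect to be the main obstacle, is the uniform validity of the $\mathsf{T}$-axiom $\Box f\leq f$. Structurally it is easy: since $\inf f(x)=\bigcap_Z f(x,Z)\subseteq f(x,D)$ and $f(x,D)\leq\ominus_D f(x,D)$ by (\ref{prop:diamonds2}), monotonicity of $\ominus_D$ via (\ref{prop:diamonds1}) together with (\ref{prop:diamonds3}) gives $(\Box f)(x,D)=\ominus_D(\inf f(x))\leq\ominus_D f(x,D)$, that is, $\Box f\leq\ominus_\pi f$ uniformly in $f$. What remains is $\ominus_\pi f\leq f$, which is precisely the membership condition $f\in\mathbb{B}(\mathcal{X})$; the subtlety is that a \emph{uniform} witness to this closure is not immediate from the bare definition $\ominus_\pi f\equiv f$, since that equivalence supplies one realizer per $f$. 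I would address this by working with $\ominus_\pi$-closed representatives: for $f$ of the form $\ominus_\pi g$ the inequality $\ominus_\pi f=\ominus_\pi\ominus_\pi g\leq\ominus_\pi g=f$ is witnessed uniformly by (\ref{prop:diamonds3}), independently of $g$. Tracking the interplay between this closure and the notion of uniform validity is exactly where the real care is needed, and I would expect most of the technical effort to concentrate here.

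Finally, for part~(ii): if $g\equiv\Box g$ then $g\equiv\mu(\inf g)$, so $h=\inf g\in\mathbb{F}(\mathcal X)$ works. Conversely, if $g\equiv\mu(h)$ then, since $\inf$ respects $\equiv$ and $\inf\mu\equiv\id$, we obtain $\inf g\equiv\inf\mu(h)\equiv h$, and hence $\Box g=\mu(\inf g)\equiv\mu(h)\equiv g$, using that $\mu$ respects $\equiv$ by (\ref{j1}). Note that no uniformity is asserted in (ii), so this direction is unencumbered by the obstacle above.
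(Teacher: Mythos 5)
Your treatment of $\mathsf{K}$, $\mathsf{4}$, necessitation, part~(i) and part~(ii) is correct, and it is essentially the paper's own argument in a cleaner packaging. Where you isolate the lemma $\inf\circ\mu\equiv\id$ (uniformly) and derive the $\mathsf{4}$-axiom and part~(ii) from it, the paper proves $\mathsf{4}$ directly from a uniform witness to~(\ref{prop:diamonds2}) and proves the converse half of~(ii) using the $\mathsf{T}$-axiom; the two routes are interchangeable, since your lemma is exactly the singleton case of Proposition~\ref{prop:proponintersection}, the same fact the paper exploits in Proposition~\ref{prop:proponintersectionembed}. Your verification that $\inf$ preserves meets and $\top$, and the derivation of normality from meet-preservation plus monotonicity, matches the paper's direct computation for $\mathsf{K}$; part~(i) is proved identically in both.

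Where the two genuinely diverge is the $\mathsf{T}$-axiom, and there your caution is better placed than the paper's confidence. The paper's proof ends with $\Box f=\ominus_{\pi}\overline{\inf f}\leq\ominus_{\pi}f\equiv f$, citing the membership condition~(\ref{def B}) for the last step; but that condition supplies one realizer per $f$, which is precisely the non-uniformity you flag. Moreover the defect is not an artifact of the proof strategy: uniformity in the literal sense of the paper's definition (a single realizer working for all $f\in\mathbb{B}(\mathcal{X})$) is unattainable for $\mathsf{T}$. Over $\mathcal{K}_1$, fix $C=\{5\}$ and set $f_i(x,D)=\{n+i:n\in\ominus_D C\}$; each $f_i$ lies in $\mathbb{B}(\mathcal{X})$, with a membership realizer built from indices for $n\mapsto n+i$ and $n\mapsto n\dot-i$ together with~(\ref{prop:diamonds1}) and~(\ref{prop:diamonds3}), but the index $z$ of the constant-$0$ function lies in $(\Box f_i)(x,\{0\})$ for every $i$ (it sends everything into $\{0\}$), while every element of $f_i(x,\{0\})$ is at least $i$, so $\bigcap_i f_i(x,\{0\})=\emptyset$ and no single index can realize $\Box f_i\leq f_i$ for all $i$. (The same caveat applies to the paper's appeal to ``the prealgebra being Boolean'' for classical tautologies: double negation elimination $(f\Rightarrow\bot^{\pi})\Rightarrow\bot^{\pi})\Rightarrow f$ is, up to uniform equivalence, just the membership condition $\ominus_{\pi}f\leq f$ again.) So the statement that is both true and actually needed is the relativized one your fix delivers: realizers for the $\mathsf{S4}$-theorems computable from membership realizers for the arguments, with~(\ref{prop:diamonds3}) supplying the uniform witness when $f$ is literally of the form $\ominus_{\pi}g$. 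This is harmless downstream, because every valuation produced by the semantics comes equipped with exactly such a realizer (the elements $i_{\varphi}$ of Proposition~\ref{prop:uniformme}), which is what the soundness proof of Theorem~\ref{prop:S4soundness} in fact uses. In short: your proposal is correct wherever the paper's proof is correct, and is appropriately careful at the one point where the paper's proof silently passes over a genuine and irreparable non-uniformity.
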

\begin{proof}
First we show that the modal axioms $\mathsf{K}, \mathsf{T}, \mathsf{4}$ are uniformly valid, and then we verify (i)-(ii), and then we finish the proof by showing that all the theorems of the propositional modal system $\mathsf{S4}$ are uniformly valid on $\mathbb B(\mathcal X)$. For $\mathsf{K}$, it suffices to show that $\Box f \wedge \Box (f\Rightarrow g)  \leq \Box g$.  First note that  $\bigcap_{Z\in P(\mathcal{A})}(f(x,Z) \wedge (f\Rightarrow g)(x,Z)) \leq  \bigcap_{Z\in P(\mathcal{A})} g(x,Z)$. Thus one has the following, where the equivalence follows from line~(\ref{prop:diamonds4}) and the inequality follows from line~(\ref{prop:diamonds1}):
\begin{equation} 
\Box f \wedge \Box (f\Rightarrow g) \equiv \ominus_\pi(\overline{\inf f} \wedge \overline{\inf (f\Rightarrow g)}) \leq \ominus_\pi \overline{\inf g} =\Box g
\end{equation}
For the $\mathsf{T}$-axiom, it suffices to show $\Box f \leq  f$. So suppose $f\in  \mathbb{B}(\mathcal{X})$ and note that $\overline{\inf f} \leq f$ by means of the identity function. Then by lines~(\ref{prop:diamonds1}) and~(\ref{def B}) we have $\Box f = \ominus_{\pi} \overline{\inf f} \leq  \ominus_{\pi} f \equiv f$. Finally, for the $\mathsf{4}$-axiom, it suffice to show that $\Box f \leq \Box \Box f$. By~(\ref{prop:diamonds2}), choose a uniform witness $e$ for all $I\leq \ominus_Z I$ when $I,Z\in P(\mathcal{A})$. Then by setting $I= (\inf f)(x)$, we have that $e$ is also a uniform witness over all $Z \in P(\mathcal{A})$ and $x\in \mathcal{X}$ for $\inf f(x) \leq \ominus_Z \inf f(x)$. Hence we have:
\begin{equation}
(\inf f)(x) \leq \bigcap_{Z\in P(\mathcal{A})} \ominus_Z (\inf f)(x) =  \bigcap_{Z\in P(\mathcal{A})} (\Box f) (x,Z) = (\inf \Box f)(x) 
\end{equation}
Thus by line~(\ref {prop:diamonds1}) one has $\Box f = \ominus_{\pi} \overline{\inf f} \leq \ominus_{\pi} \overline{\inf \Box f} = \Box \Box f$. 

Turning to~(i), suppose first that $f\leq g$. Then $f(x,Z)\leq g(x,Z)$ uniformly in~$x,Z$, so that $\mathrm{inf}(f)\leq \mathrm{inf}(g)$ and hence $\overline {\inf f}\leq \overline{\inf g}$. From this it follows by line~(\ref{prop:diamonds1}) that we uniformly have $\ominus_\pi \overline {\inf f} \leq \ominus_\pi \overline {\inf g}$, hence $\Box f\leq \Box g$. 

For (ii), the first direction is trivial: if $\Box g \equiv g$, then let $h =\inf(g)$. For the converse, suppose that $g\equiv \mu_h$ for some $h \in \mathbb F(\mathcal X)$. Then it suffices to show $\Box \mu_h \equiv \mu_h$. By the $\mathsf{T}$-axiom we may focus on showing that $\mu_h\leq \Box \mu_h$, which by definition is the claim that $\ominus_\pi \overline h \leq \ominus_\pi \overline {\inf \mu_h}$. For this it suffices by~(\ref{prop:diamonds1}) to show that $h(x) \leq (\inf \mu_h)(x) = \bigcap_{Z \in P(\mathcal{A})} \ominus_Z h(x)$ for all $x \in \mathcal X$. But this follows from the fact that~(\ref{prop:diamonds2}) holds uniformly.

Finally, we show by induction on length of proof that all of the theorems of the propositional modal system $\mathsf{S4}$ are uniformly valid on $\mathbb B(\mathcal X)$. The base cases correspond to the $\mathsf{K}, \mathsf{T}, \mathsf{4}$ axioms and the classical propositional tautologies; the earlier parts of this proof handle the former and the prealgebra being Boolean takes care of the latter. The induction steps amount to showing that the inference rules, namely modus ponens and necessitation, preserve uniform validity. The case of modus ponens follows from the axioms governing the conditional in a Heyting prealgebra. For the necessitation rule, suppose that $\varphi(f_1, \ldots, f_n)$ has top value in $\mathbb B(\mathcal X)$ uniformly in $f_1, \ldots, f_n$. We must then show that $\Box \varphi(f_1, \ldots, f_n)$ has top value uniformly in $f_1, \ldots, f_n$. But since $\varphi(f_1, \ldots, f_n)$ has top value, $\top \leq \varphi(f_1, \ldots, f_n)$. Then by (i), one has $\Box \top \leq \Box \varphi(f_1, \ldots, f_n)$ uniformly in $f_1, \ldots, f_n$. But by (ii), one has that $\Box \top\equiv \top$ since $\top \equiv \mu(\top)$ by~(\ref{j3}).
\end{proof}

Finally, we close this section by noting how the $\Box$-operator permits us to describe the behavior of the $\mu$-embedding on conditionals and intersections.
\begin{prop}\label{prop:actionconiditionals} (Proposition on Action of Embedding on Conditionals)
\begin{equation}\label{corthebobm}
\mu(f\Rightarrow g) \equiv \Box (\mu(f)\Rightarrow \mu(g))
\end{equation}
\end{prop}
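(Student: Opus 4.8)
The plan is to evaluate both sides of~(\ref{corthebobm}) pointwise using the formulas collected in Figure~\ref{figure:1}, and thereby reduce the claimed equivalence to a single identity in the Heyting prealgebra $P(\mathcal{A})$. Unwinding the definitions, for $(x,D)\in \mathcal{X}\times P(\mathcal{A})$ the left-hand side is $\mu(f\Rightarrow g)(x,D) = \ominus_D(f(x)\Rightarrow g(x))$. For the right-hand side, I would use that the conditional on $\mathbb{B}(\mathcal{X})$ is inherited by restriction from the pointwise conditional on $\mathbb{F}(\mathcal{X}\times P(\mathcal{A}))$, so that $(\mu(f)\Rightarrow\mu(g))(x,Z) = \ominus_Z f(x)\Rightarrow \ominus_Z g(x)$; feeding this through the definitions of $\inf$ and $\Box$ gives
\[
\Box(\mu(f)\Rightarrow\mu(g))(x,D) \;=\; \ominus_D\Big(\bigcap_{Z\in P(\mathcal{A})}\big(\ominus_Z f(x)\Rightarrow \ominus_Z g(x)\big)\Big).
\]
Since $\ominus_D$ is applied to both expressions and is monotone with a uniform witness by~(\ref{prop:diamonds1}), it suffices to establish, uniformly in $x$, the equivalence in $P(\mathcal{A})$
\[
f(x)\Rightarrow g(x) \;\equiv\; \bigcap_{Z\in P(\mathcal{A})}\big(\ominus_Z f(x)\Rightarrow \ominus_Z g(x)\big).
\]
Writing $a=f(x)$ and $b=g(x)$, this is a statement about arbitrary $a,b\in P(\mathcal{A})$, and I would prove its two reductions separately, tracking uniform realizers via the conventions on intersections recorded after Proposition~\ref{prop:proponintersection}.

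For the reduction $\leq$, it is enough to exhibit one realizer witnessing $a\Rightarrow b\leq \ominus_Z a\Rightarrow \ominus_Z b$ for every $Z$, since a uniform reduction to each member of the intersection yields a reduction to the intersection. This is immediate by composing~(\ref{prop:diamonds2}) and~(\ref{prop:diamonds6}): one has $a\Rightarrow b\leq \ominus_Z(a\Rightarrow b)\leq \ominus_Z a\Rightarrow \ominus_Z b$, with both steps uniform in $Z$.

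The reduction $\geq$ is where the content lies, and it is the main obstacle, since naive specializations of $Z$ fail: taking $Z=\bot$ would demand the intuitionistically invalid $\neg\neg b\leq b$. The idea I would use is to specialize the intersection to the index $Z=b$. Because the intersection is literally a subset of $\mathcal{A}$, the identity map witnesses $\bigcap_Z(\ominus_Z a\Rightarrow \ominus_Z b)\leq \ominus_b a\Rightarrow \ominus_b b$ uniformly in $x$. Now $\ominus_b b\equiv b$, where $\ominus_b b\leq b$ is~(\ref{prop:diamonds12}) and $b\leq \ominus_b b$ is~(\ref{prop:diamonds2}); hence the right side reduces to $\ominus_b a\Rightarrow b$. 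Using $a\leq \ominus_b a$ from~(\ref{prop:diamonds2}), meeting with $a$, and applying~(\ref{help1}), I obtain
\[
\Big(\bigcap_Z(\ominus_Z a\Rightarrow \ominus_Z b)\Big)\wedge a \;\leq\; (\ominus_b a\Rightarrow b)\wedge \ominus_b a \;\leq\; b,
\]
so that the adjunction for $\Rightarrow$ gives $\bigcap_Z(\ominus_Z a\Rightarrow \ominus_Z b)\leq a\Rightarrow b$. Every step carries a uniform realizer, assembled from those for~(\ref{prop:diamonds12}),~(\ref{prop:diamonds2}),~(\ref{help1}) together with the pairing and projection combinators, so the displayed equivalence holds uniformly in $x$. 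Applying $\ominus_D$ by~(\ref{prop:diamonds1}) and recalling the two pointwise computations then yields~(\ref{corthebobm}).
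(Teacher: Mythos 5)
Your proof is correct, and its nontrivial half is the same as the paper's: for the reduction $\Box(\mu(f)\Rightarrow\mu(g))\leq\mu(f\Rightarrow g)$, the paper also specializes the intersection $\bigcap_{Z}(\ominus_Z f(x)\Rightarrow\ominus_Z g(x))$ at $Z=g(x)$ and exploits (\ref{prop:diamonds2}) and (\ref{prop:diamonds12}) to recover $f(x)\Rightarrow g(x)$ (the paper phrases this via the antitonicity (\ref{help2}) in the antecedent, you via meeting with $a$ and the adjunction realized by $e_{11}$; these are interchangeable). Where you genuinely diverge is in the other direction and in the overall packaging. The paper first shows $\mu_{f\Rightarrow g}\leq\mu_f\Rightarrow\mu_g$ (from $f\wedge(f\Rightarrow g)\leq g$ via (\ref{prop:diamonds1}) and (\ref{prop:diamonds4})) and then promotes this to $\mu_{f\Rightarrow g}\leq\Box(\mu_f\Rightarrow\mu_g)$ by invoking Proposition~\ref{S4}: elements in the image of $\mu$ are box-stable (part (ii)), and $\Box$ is monotone (part (i)). You instead avoid the modal machinery entirely, proving the single pointwise prealgebra identity $a\Rightarrow b\equiv\bigcap_Z(\ominus_Z a\Rightarrow\ominus_Z b)$ — the forward half coming from (\ref{prop:diamonds2}) composed with (\ref{prop:diamonds6}) uniformly in $Z$ — and then applying $\ominus_D$ to both sides via (\ref{prop:diamonds1}). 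Your route is more elementary and self-contained, making visible that the proposition is really a fact about the generalized double-negation operator alone; the paper's route is shorter on the page because it reuses the already-established $\mathsf{S4}$ apparatus, and it illustrates the intended pattern of working with $\mu$, $\inf$, and $\Box$ as maps rather than unwinding to pointwise values. Both arguments track uniform realizers correctly, which is the one place such a proof could silently go wrong.
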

\begin{proof}
Since $f \wedge (f\Rightarrow g)\leq g$, we have that $\overline f \wedge \overline{f\Rightarrow g}\leq \overline g$. By an application of lines~(\ref{prop:diamonds1}) and~(\ref{prop:diamonds4}) we have $\ominus_\pi \overline f \wedge \ominus_\pi \overline{f\Rightarrow g}\leq \ominus_\pi \overline g$, i.e., $\mu_f\wedge \mu_{f\Rightarrow g}\leq \mu_g$, which amounts to $\mu_{f\Rightarrow g}\leq \mu_f \Rightarrow \mu_g$. Then by parts~(i)-(ii) of the previous proposition we have $\mu_{f\Rightarrow g}\equiv \Box \mu_{f\Rightarrow g} \leq \Box (\mu_f \Rightarrow \mu_g)$. For the converse, note that $f(x)\leq \ominus_{g(x)} f(x)$ by line~(\ref{prop:diamonds2}), and that this holds uniformly, which implies that $(\ominus_{g(x)} f(x) \Rightarrow g(x)) \leq (f(x)\Rightarrow g(x))$. Since $g(x)\equiv \ominus_{g(x)} g(x)$ uniformly by~(\ref{prop:diamonds12}), we have by substitution that $(\ominus_{g(x)} f(x) \Rightarrow \ominus_{g(x)} g(x)) \leq (f(x)\Rightarrow g(x))$. By considering $Z=g(x)$, we thus see that  $\bigcap_{Z\in P(\mathcal{A})}(\ominus_Z f(x)\Rightarrow \ominus_Z g(x)) \leq (f(x)\Rightarrow g(x))$, i.e., $\bigcap_{Z\in P(\mathcal{A})} (\ominus_\pi \overline f (x,Z) \Rightarrow \ominus_\pi\overline g (x,Z)) \leq (f(x) \Rightarrow g(x))$. So $\mathrm{inf} (\ominus_\pi \overline f(x)\Rightarrow \ominus_\pi \overline g(x)) \leq (f(x)\Rightarrow g(x))$. From this it follows that  ${\inf (\mu_f\Rightarrow \mu_g)}\leq (f\Rightarrow g)$ and hence that $\Box (\mu_f\Rightarrow \mu_g) = \ominus_\pi \overline{\inf (\mu_f\Rightarrow \mu_g)}\leq \ominus_\pi  \overline{f\Rightarrow g} = \mu_{f\Rightarrow g}$.
\end{proof}

\begin{prop}\label{prop:proponintersectionembed} (Proposition on Action of Embedding on Intersections). Suppose that the sequence $\{f_i\in \mathbb{F}(\mathcal{X}): i\in I\}$ is such that $\bigcap_{i\in I} \mu(f_i)$ is in $\mathbb{B}(\mathcal{X})$. Then $\mu(\bigcap_{i\in I} f_i) \equiv \Box \bigcap_{i\in I} \mu(f_i)$.
\end{prop}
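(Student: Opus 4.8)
The plan is to reduce the claim to Proposition~\ref{prop:proponintersection} by exploiting the factorization $\Box=\mu\circ\inf$ from Definition~\ref{defn:boxremark} together with line~(\ref{j1}), which records that $\mu$ both preserves and reflects the order $\leq$ and hence preserves the equivalence $\equiv$. By the hypothesis, $g:=\bigcap_{i\in I}\mu(f_i)$ is a legitimate element of $\mathbb{B}(\mathcal{X})$, so that $\Box g$ is defined and equals $\mu(\inf g)$. Thus it suffices to show that $\inf g\equiv\bigcap_{i\in I} f_i$ in $\mathbb{F}(\mathcal{X})$, for then applying $\mu$ and invoking~(\ref{j1}) in both directions yields $\Box g=\mu(\inf g)\equiv\mu(\bigcap_{i\in I} f_i)$, which is exactly the desired conclusion.

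So I would first compute $\inf g$ pointwise. Since intersections in $\mathbb{B}(\mathcal{X})\subseteq\mathbb{F}(\mathcal{X}\times P(\mathcal{A}))$ are taken pointwise in the argument $(x,Z)$, and since $(\mu(f_i))(x,Z)=\ominus_Z(f_i(x))$ by Definition~\ref{jdef}, one has $g(x,Z)=\bigcap_{i\in I}\ominus_Z(f_i(x))$. Feeding this into Definition~\ref{inf(f)} gives
\begin{equation}\notag
(\inf g)(x)=\bigcap_{Z\in P(\mathcal{A})} g(x,Z)=\bigcap_{Z\in P(\mathcal{A})}\bigcap_{i\in I}\ominus_Z(f_i(x)).
\end{equation}
The point to observe is that the $\ominus_Z$ appearing here is precisely the pointwise operator featuring in Proposition~\ref{prop:proponintersection}, so that, as elements of $\mathbb{F}(\mathcal{X})$, we have the identity $\inf g=\bigcap_{Z\in P(\mathcal{A})}\bigcap_{i\in I}\ominus_Z f_i$.

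Then I would invoke Proposition~\ref{prop:proponintersection} directly: it gives $\bigcap_{i\in I} f_i\equiv\bigcap_{Z\in P(\mathcal{A})}\bigcap_{i\in I}\ominus_Z f_i=\inf g$ in $\mathbb{F}(\mathcal{X})$, with uniform witnesses. This is exactly the equivalence $\inf g\equiv\bigcap_{i\in I} f_i$ needed above, which completes the argument. I do not anticipate any genuine obstacle here: the mathematical content is already contained in Proposition~\ref{prop:proponintersection}, and the present statement is essentially its image under the embedding $\mu$. The only matters requiring care are bookkeeping ones, namely confirming that the intersection in $\mathbb{B}(\mathcal{X})$, the map $\inf$, and the map $\mu$ are all computed pointwise so that the nested intersections line up syntactically with Proposition~\ref{prop:proponintersection}, and noting that the hypothesis $\bigcap_{i\in I}\mu(f_i)\in\mathbb{B}(\mathcal{X})$ is precisely what licenses applying $\Box$ to this element and rewriting it as $\mu\circ\inf$.
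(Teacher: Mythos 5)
Your proof is correct and follows essentially the same route as the paper's: both reduce the claim, via the factorization $\Box=\mu\circ\inf$ and the order-preservation/reflection of $\mu$ in line~(\ref{j1}), to the equivalence $\bigcap_{i\in I} f_i\equiv \inf(\bigcap_{i\in I}\mu(f_i))$, which is exactly the statement of Proposition~\ref{prop:proponintersection}. Your version merely spells out the pointwise bookkeeping more explicitly than the paper does.
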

\begin{proof}
First note that by the monotonicity of $\mu$ (cf. line~(\ref{j1})) and the definition of $\Box=\mu\circ \mathrm{inf}$, the desired equivalence follows from the equivalence $\bigcap_{i\in I} f_i\equiv \mathrm{inf} (\bigcap_{i\in I} \mu(f_i))$ in $\mathbb{F}(\mathcal{X})$. But this equivalence just is $\bigcap_{i\in I} f_i\equiv \bigcap_{D\in P(\mathcal{A})} \bigcap_{i\in I} \ominus_D f_i$, which follows directly from Proposition~\ref{prop:proponintersection}.
\end{proof}


\section{The Status of S5}\label{secS5isfalse}

In Proposition~\ref{S4}, we showed that $\mathsf{S4}$ was uniformly valid on the structure $\mathbb B(\mathcal X)$. In this section, we show that $\mathsf{S5}$ is not uniformly valid on $\mathbb B(\mathcal X)$, but that there is a special case in which it is valid in a non-uniform sense. Recall in general that in the setting of $\mathsf{S4}$, the $\mathsf{S5}$ schema is equivalent to $\varphi \Rightarrow \Box \Diamond \varphi$. Before beginning, let us define the following element of $P(\mathcal A)$: \begin{equation}\label{defn:IamdefnofQ}
Q = \bigcap_{Z\in P(\mathcal A)} [(\top \Rightarrow Z) \Rightarrow (\top \Rightarrow Z)]
\end{equation}
Note that $Q$ is nonempty (e.g., it contains at least the identity element $skk$).

\begin{prop}\label{S5 lemma}
Given a function $f \in \mathbb B (\mathcal X)$, let us define $M_f\colon \mathcal X \to P(\mathcal A)$ to be $\inf(f \Rightarrow \bot^\pi)$. Then one has that $M_f(x) = \bigcap_{Z\in P(\mathcal A)} (f(x,Z) \Rightarrow (\top \Rightarrow Z))$ and 
\begin{equation}
   (\Box \Diamond f) (x, D) = \left\{
     \begin{array}{lr}
       \ominus_D Q & : M_f(x) = \bot \\
       \ominus_D \bot & : M_f(x) \neq \bot 
     \end{array}
   \right.
\end{equation}
\end{prop}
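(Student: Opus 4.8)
The plan is to carry out the entire computation at the level of \emph{literal} subsets of $\mathcal{A}$, unwinding the definitions of $\inf$ (Definition~\ref{inf(f)}), $\Box$ (Definition~\ref{defn:boxremark}), and the Boolean negation $\neg_\pi$ on $\mathbb{B}(\mathcal{X})$, and only afterwards to split on whether $M_f(x)$ is empty. The one genuine temptation to resist is replacing subexpressions by merely $\equiv$-equivalent ones \emph{inside} the intersections that define $\inf$: since intersection is not invariant under $\equiv$, such a move would, for instance, collapse $Q$ to $\top$ and yield the wrong literal value. Keeping everything literal is the crux.

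First I would verify the displayed formula for $M_f$. By Definition~\ref{recHeytingpre} the falsum of $\mathbb{B}(\mathcal{X})$ satisfies $\bot^\pi(x,Z) = (\ominus_\pi \bot)(x,Z) = \ominus_Z \emptyset = (\emptyset \Rightarrow Z) \Rightarrow Z = \top \Rightarrow Z$ as literal sets, where I use $\emptyset \Rightarrow Z = \mathcal{A} = \top$ (the vacuous realizer set). Since $\Rightarrow$ on $\mathbb{B}(\mathcal{X})$ is computed by restriction, $(f \Rightarrow \bot^\pi)(x,Z) = f(x,Z) \Rightarrow (\top \Rightarrow Z)$, and applying $\inf$ gives $M_f(x) = \bigcap_{Z}(f(x,Z) \Rightarrow (\top \Rightarrow Z))$, as claimed. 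Note in particular that $f \Rightarrow \bot^\pi$ is exactly the Boolean negation $\neg_\pi f$.

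Next I would compute $\Diamond f = \neg_\pi \Box \neg_\pi f = \Box(f \Rightarrow \bot^\pi) \Rightarrow \bot^\pi$ pointwise. By Definition~\ref{defn:boxremark} and the previous paragraph, $(\Box(f \Rightarrow \bot^\pi))(x,Z) = \ominus_Z(\inf(f \Rightarrow \bot^\pi)(x)) = \ominus_Z M_f(x)$, whence $(\Diamond f)(x,Z) = \ominus_Z M_f(x) \Rightarrow (\top \Rightarrow Z)$. Applying $\Box$ once more reduces the whole statement to evaluating the literal set $(\Box \Diamond f)(x,D) = \ominus_D(\bigcap_{Z}(\ominus_Z M_f(x) \Rightarrow (\top \Rightarrow Z)))$.

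Finally I would split on $M_f(x)$. If $M_f(x) = \bot = \emptyset$, then $\ominus_Z M_f(x) = \ominus_Z \emptyset = \top \Rightarrow Z$ literally, so each factor is exactly $(\top \Rightarrow Z) \Rightarrow (\top \Rightarrow Z)$ and the intersection over $Z$ is, by the definition~(\ref{defn:IamdefnofQ}) of $Q$, precisely $Q$; hence $(\Box \Diamond f)(x,D) = \ominus_D Q$. If instead $M_f(x) \neq \bot$, i.e.\ $M_f(x)$ is nonempty, it suffices to inspect the single term $Z = \emptyset$: there $\top \Rightarrow \emptyset = \emptyset$ and $\ominus_\emptyset = \neg\neg$, so $(\Diamond f)(x,\emptyset) = ((M_f(x) \Rightarrow \emptyset) \Rightarrow \emptyset) \Rightarrow \emptyset = \neg\neg\neg M_f(x) = \emptyset$, using that the negation in $P(\mathcal{A})$ of a nonempty set is $\emptyset$ while $\neg \emptyset = \mathcal{A}$. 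Since $\bigcap_Z(\Diamond f)(x,Z) \subseteq (\Diamond f)(x,\emptyset) = \emptyset$, the intersection is $\emptyset = \bot$ and thus $(\Box \Diamond f)(x,D) = \ominus_D \bot$. The part to watch, and the main obstacle, is exactly this insistence on literal equalities: the degenerate argument $Z = \emptyset$ is what annihilates the intersection in the second case, and it is the exact (not merely $\equiv$-) shape $(\top \Rightarrow Z) \Rightarrow (\top \Rightarrow Z)$ that produces $Q$ rather than $\top$ in the first.
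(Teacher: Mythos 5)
Your proposal is correct and follows essentially the same route as the paper's own proof: unwind $\Box\Diamond f$ pointwise to $\ominus_D\bigcap_{Z}(\ominus_Z M_f(x)\Rightarrow(\top\Rightarrow Z))$, then split on whether $M_f(x)$ is empty, obtaining $Q$ literally in the empty case and killing the intersection via the single term $Z=\bot$ in the nonempty case (your $\neg\neg\neg M_f(x)=\bot$ is the same computation the paper writes as $(\top\Rightarrow(\top\Rightarrow\bot))=\bot$). Your emphasis on working with literal set equalities rather than $\equiv$-equivalences is exactly the discipline the paper's computation implicitly relies on.
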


\begin{proof}
That the two definitions of $M_f$ are equivalent uses the fact that $\bot^\pi (x,D) = \ominus_D \bot = ((\bot \Rightarrow D) \Rightarrow D) = (\top \Rightarrow D)$. We will use this fact again repeatedly in what follows. Recall that in general the symbol $\Diamond$ is short for $\neg \Box \neg$ and that $\neg f$ in the structure $\mathbb B(\mathcal X)$ is shorthand for $f \Rightarrow \bot^\pi$. Thus one has $\Box \Diamond f = \Box[\Box(f \Rightarrow \bot^\pi) \Rightarrow \bot^\pi]$ and hence
\begin{equation}
\Box \Diamond f = \ominus_\pi \overline{\inf(\ominus_\pi \overline{\inf (f\Rightarrow \bot^\pi)} \Rightarrow \bot^\pi}) = \ominus_\pi \overline{\inf(\ominus_\pi \overline{M_f} \Rightarrow \bot^\pi})
\end{equation}
Now fix $x \in \mathcal X$. Then $\Box \Diamond f (x,D) = \ominus_D \bigcap_{Z\in P(\mathcal A)} (\ominus_Z M_f(x) \Rightarrow (\top \Rightarrow Z))$ for all $D$. If $M_f(x) = \bot$, then $\Box \Diamond f(x,D) = \ominus_D \bigcap_{Z\in P(\mathcal A)} ((\top \Rightarrow Z) \Rightarrow (\top \Rightarrow Z)) = \ominus_D Q$. If  $M_f(x) \neq \bot$, then note that when $Z = \bot$, we have 
\begin{equation}
(\ominus_Z M_f(x) \Rightarrow (\top \Rightarrow Z)) =  ( (\bot \Rightarrow \bot) \Rightarrow (\top \Rightarrow \bot)) = (\top \Rightarrow (\top \Rightarrow \bot)) = \bot
\end{equation}
Hence, $\Box\Diamond f(x,D)$ simplifies to $\ominus_D\bot$.
\end{proof}

Then let's show:
\begin{prop}
$\mathsf {S5}$ is not uniformly valid in $\mathbb B(\mathcal X)$.
\end{prop}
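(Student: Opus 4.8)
The plan is to show directly that no single realizer can witness the $\mathsf{S5}$ schema across all of $\mathbb B(\mathcal X)$. Since $\mathsf{S4}$ is already uniformly valid (Proposition~\ref{S4}), $\mathsf{S5}$ may be taken to be the schema $f \Rightarrow \Box\Diamond f$. First I would record the reduction: if a single $e$ witnessed the top value of $f\Rightarrow\Box\Diamond f$ for every $f\in\mathbb B(\mathcal X)$, then, applying $e$ to a fixed element (say $skk$) and using the uniform currying/uncurrying witnesses built into the Heyting structure, one obtains a single $e'$ with $e'\colon f(x,D)\leadsto(\Box\Diamond f)(x,D)$ for \emph{all} $f\in\mathbb B(\mathcal X)$ and all $(x,D)$. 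So it suffices to refute the existence of such a uniform $e'$.

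Next I would introduce two test functions and read off their $\Box\Diamond$-values from Proposition~\ref{S5 lemma}. Take first the projection $\pi\in\mathbb B(\mathcal X)$, $\pi(x,D)=D$; here $M_\pi(x)=\bigcap_{Z}(Z\Rightarrow(\top\Rightarrow Z))$ is nonempty, since the combinator $k$ lies in it (for $z\in Z$ one has $kz\colon\top\leadsto Z$), so Proposition~\ref{S5 lemma} gives $(\Box\Diamond\pi)(x,D)=\ominus_D\bot=(\top\Rightarrow D)$. Take second the constant $\top\in\mathbb B(\mathcal X)$, $\top(x,D)=\mathcal A$; here the $Z=\bot$ factor of $M_\top(x)$ is $\top\Rightarrow(\top\Rightarrow\bot)=\top\Rightarrow\emptyset=\emptyset$, so $M_\top(x)=\bot$ and Proposition~\ref{S5 lemma} gives $(\Box\Diamond\top)(x,D)=\ominus_D Q$. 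Confirming $\pi,\top\in\mathbb B(\mathcal X)$ is routine from (\ref{prop:diamonds12}), (\ref{prop:diamonds2}), and (\ref{prop:diamonds7}).

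The contradiction then comes from evaluating the two demands on $e'$ at singleton arguments. Applying $e'$ to $\pi$ with $D=\{a\}$ forces $e'a\in\top\Rightarrow\{a\}$, i.e.\ $(e'a)b=a$ for every $a,b\in\mathcal A$; so $e'$ must act like $k$. Applying $e'$ to $\top$ with $D=\{c\}$ forces $e'a\in(Q\Rightarrow\{c\})\Rightarrow\{c\}$ for every $a$; and since $kc\in Q\Rightarrow\{c\}$ (because $(kc)q=c$ for $q\in Q$), this yields $(e'a)(kc)=c$. But the first conclusion, read at $b=kc$, gives $(e'a)(kc)=a$. Hence $a=c$ for all $a,c\in\mathcal A$, contradicting nontriviality of the pca (e.g.\ $k\neq\breve k$). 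Thus no uniform $e'$, and so no uniform witness for $\mathsf{S5}$, can exist.

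The main obstacle is conceptual rather than computational: one must see that the failure is \emph{purely} one of uniformity, not of pointwise validity. At each individual point the inclusion $f(x,D)\le(\Box\Diamond f)(x,D)$ does hold up to equivalence — when $M_f(x)\neq\bot$ one has $f(x,D)\equiv D\equiv\ominus_D\bot$ by (\ref{prop:diamonds2}) and (\ref{prop:diamonds12}), and when $M_f(x)=\bot$ the target $\ominus_D Q$ is equivalent to $\top$ by (\ref{prop:diamonds7}) — so nothing is won by exhibiting a single $f$. The crux is recognizing that one realizer would have to simultaneously return constant functions (to handle the honest necessity $\pi$) and collapse its inputs to a fixed value (to handle the vacuous possibility $\top$), and that these requirements are jointly unsatisfiable; the reduction to $e'$ and the two $M_f$ computations are the only places demanding care.
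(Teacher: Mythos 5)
Your proof is correct and takes essentially the same route as the paper's: both invoke Proposition~\ref{S5 lemma} on the same two test elements up to equivalence --- the constant $\top$ (with $M=\bot$, so $\Box\Diamond$-value $\ominus_D Q$) and an element equivalent to $\bot^\pi$ (you use $\pi(x,D)=D$, the paper uses $\top\Rightarrow D$) --- and then derive the contradiction by evaluating a putative uniform realizer at singleton sets $\{a\}$, $\{c\}$ using constant combinators. The only differences are cosmetic, e.g.\ you make explicit the passage from uniform top-value of the schema to a uniform witness $e'\colon f\leadsto\Box\Diamond f$, which the paper takes for granted.
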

\begin{proof}
We define two distinct elements $f_0,f_1$ of $\mathbb{B}(\mathcal{X})$, namely
\begin{equation}\label{eqn:arewedoneyet}
f_0(x,D) = \top, \hspace{10mm} f_1(x,D) = (\top \Rightarrow D)
\end{equation}
First note that $f_0,f_1 \in \mathbb B(\mathcal X)$. In the case of $f_0$, any element of $\mathcal A$ trivially witnesses $\ominus_D \top \leq \top$. For the case of $f_1$, we have that
\begin{equation}
(\ominus_\pi f_1)(x,D) = \ominus_D (\top \Rightarrow D) = \ominus_D ((\bot \Rightarrow D) \Rightarrow D) = \ominus_D\ominus_D \bot \equiv \ominus_D\bot = f_1(x,D)
\end{equation}
Now, using the notation of Proposition~\ref{S5 lemma}, we have that 
\begin{equation}
M_{f_0}(x) =  \bigcap_{Z\in P(\mathcal A)} (\top \Rightarrow (\top \Rightarrow Z)) = \bot 
\end{equation}
and  \begin{equation} M_{f_1}(x) = \bigcap_{Z\in P(\mathcal A)}((\top \Rightarrow Z) \Rightarrow (\top \Rightarrow Z)) = Q \neq \bot,
\end{equation}
where $Q$ is defined as in line~(\ref{defn:IamdefnofQ}). Hence by Proposition~\ref{S5 lemma}, we have  $\Box \Diamond {f_0}(x,D) = \ominus_D Q$ and $\Box \Diamond {f_1}(x,D) = (\top \Rightarrow D)$.

Now suppose for the sake of contradiction that $\mathsf{S5}$ were uniformly valid in $\mathbb{B}(\mathcal{X})$. Then there would be an $e$ from $\mathcal{A}$ such that both $e:f_0\leadsto \Box \Diamond f_0$ and $e:f_1\leadsto \Box \Diamond f_1$. Thus for all $D \in P(\mathcal A)$ and all $x\in \mathcal{X}$, we have both $e : f_0(x,D) \leadsto \ominus_D Q$ and  $e: f_1(x,D) \leadsto (\top \Rightarrow D)$. The first of these means we have $e \colon \top \leadsto \ominus_D Q$ for every $D \in P(\mathcal A)$. The second of these means that we have $e\colon (\top \Rightarrow D) \leadsto (\top \Rightarrow D)$ for every $D \in P(\mathcal A)$.

Now let $a$ and $b$ be two distinct members of $\mathcal A$. By Proposition~\ref{prop:onpcas}, choose $e^\prime \in \mathcal A$ such that $e^\prime n = a$ for all $n \in \mathcal A$. Then $e^{\prime}$ is in $(\top \Rightarrow \{a\})$, so by the above we have both $e  e^{\prime}\in (\top\Rightarrow \{a\})$ and $e e^{\prime}\in \ominus_{\{b\}} Q = ((Q\Rightarrow \{b\}) \Rightarrow \{b\})$. This is a contradiction, because for any $m\in (Q\Rightarrow \{b\})$ we obtain both $e e^{\prime} m =b$ and $e e^{\prime} m=a$.
\end{proof}

Now, there's a natural enough non-uniform version of validity for $\mathsf{S5}$, and it turns out that it holds when $\mathcal{X}$ has exactly one element and fails whenever $\mathcal{X}$ has more than one element:
\begin{prop}\label{prop:S5degncase}
(I) Suppose that $\mathcal{X}$ has more than one element. Then there is $f\in \mathbb{B}(\mathcal{X})$ such that there is no $e$ from $\mathcal{A}$ such that $e:f\leadsto \Box \Diamond f$.

(II) Suppose that $\mathcal{X}$ has only one element. Then for each $f\in \mathbb{B}(\mathcal{X})$ there is $e$ from $\mathcal{A}$ such that $e: f\leadsto \Box \Diamond f$. 
\end{prop}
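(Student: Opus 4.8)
The plan is to read everything off the explicit formula for $\Box\Diamond f$ supplied by Proposition~\ref{S5 lemma}. That formula shows $(\Box\Diamond f)(x,D)$ depends on $x$ only through the dichotomy $M_f(x)=\bot$ versus $M_f(x)\neq\bot$: in the first case it equals $\ominus_D Q$, and in the second it equals $\ominus_D\bot$, which (as in the proof of Proposition~\ref{S5 lemma}) is just $(\top\Rightarrow D)$. The whole statement then reduces to asking, for each of the two possible target values, whether a single $e\in\mathcal{A}$ can realize $f(x,D)\leadsto(\Box\Diamond f)(x,D)$ across all $D$, and---for part (I)---across two points at which the two different target values are simultaneously demanded.

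For part (II), write $\mathcal{X}=\{*\}$, so that $M_f(*)$ has one definite value and exactly one case of the dichotomy applies. If $M_f(*)\neq\bot$, I would pick any $c\in M_f(*)$; unwinding $M_f(*)=\bigcap_{Z}(f(*,Z)\Rightarrow(\top\Rightarrow Z))$ shows $c\colon f(*,D)\leadsto(\top\Rightarrow D)$ for every $D$, which is exactly $c\colon f\leadsto\Box\Diamond f$, so $e=c$ works. If instead $M_f(*)=\bot$, the target is $\ominus_D Q=(Q\Rightarrow D)\Rightarrow D$; here I would use Proposition~\ref{prop:onpcas} to choose $e$ with $eam=m(skk)$ for all $a,m$, and then observe that since $skk\in Q$, for any $a\in f(*,D)$ and any $m\in(Q\Rightarrow D)$ one gets $eam=m(skk)\in D$, so $ea\in\ominus_D Q$ as required. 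The singleton hypothesis is precisely what lets me commit to a single case and hence to a single realizer.

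For part (I), fix distinct $x_0,x_1\in\mathcal{X}$ and glue the two witnesses $f_0,f_1$ from the proof that $\mathsf{S5}$ is not uniformly valid: define $f(x,D)=\top$ for $x=x_0$ and $f(x,D)=(\top\Rightarrow D)$ for $x\neq x_0$. I would first check $f\in\mathbb{B}(\mathcal{X})$, using that both $\top$ and $(\top\Rightarrow D)=\ominus_D\bot$ are uniformly closed under $\ominus_\pi$ and that $\top=\mathcal{A}$ absorbs any output, so a single pair of witnesses handles both pieces. The computations already recorded in that earlier proof give $M_f(x_0)=\bigcap_Z(\top\Rightarrow(\top\Rightarrow Z))=\bot$ and $M_f(x_1)=\bigcap_Z((\top\Rightarrow Z)\Rightarrow(\top\Rightarrow Z))=Q\neq\bot$, so by Proposition~\ref{S5 lemma} one has $(\Box\Diamond f)(x_0,D)=\ominus_D Q$ and $(\Box\Diamond f)(x_1,D)=(\top\Rightarrow D)$. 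A hypothetical $e$ realizing $f\leadsto\Box\Diamond f$ would then satisfy, at $x_0$, $e\colon\top\leadsto\ominus_D Q$ for all $D$, and at $x_1$, $e\colon(\top\Rightarrow D)\leadsto(\top\Rightarrow D)$ for all $D$---precisely the two conditions from which the earlier proof derived a contradiction. I would replay that argument: picking distinct $a,b$, a constant-$a$ function $e'$, and some $m\in(Q\Rightarrow\{b\})$, one forces both $ee'm=a$ and $ee'm=b$.

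The main obstacle is not a single hard computation but keeping the bookkeeping honest at two points. In part (I) the genuinely load-bearing step is confirming that the glued $f$ still lies in $\mathbb{B}(\mathcal{X})$, i.e.\ that double-negation closure holds \emph{uniformly} over both pieces---this is what lets me import the earlier contradiction rather than reprove it. In part (II) the delicate case is $M_f(*)=\bot$, where the realizer must be manufactured from the mere nonemptiness of $Q$ (via $skk\in Q$) rather than from any element of $M_f$; this is exactly where the positive and negative behaviors of $\mathsf{S5}$ pivot on the size of $\mathcal{X}$.
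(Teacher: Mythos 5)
Your proposal is correct and takes essentially the same approach as the paper: part (I) uses the identical glued function and replays the non-uniform-validity contradiction, and part (II) case-splits on the dichotomy supplied by Proposition~\ref{S5 lemma}. The only differences are cosmetic: in part (II) you realize the case $M_f(*)\neq\bot$ directly by an element $c\in M_f(*)$ and the case $M_f(*)=\bot$ via $skk\in Q$, whereas the paper routes the former through the equivalence $f\equiv\bot^\pi$ and the latter through a uniform witness to line~(\ref{prop:diamonds2}) applied to a member $j$ of $Q$.
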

\begin{proof}
For (I), let $x_0, x_1$ be two distinct elements of $\mathcal{X}$ and define \emph{\`a la} equation~(\ref{eqn:arewedoneyet}) the following element $f$:
\begin{equation}
f(x,D)=\begin{cases}
\top      & \text{if $x=x_0$}, \\
\top \Rightarrow D      & \text{otherwise}.
\end{cases}
\end{equation}
Then the proof proceeds exactly as in the proof of the previous proposition. 

For (II), $\mathcal X$ is a singleton set $\{x\}$. Let $f \in \mathbb B(\mathcal X)$. Using the notation of Proposition~\ref{S5 lemma}, in case $f \equiv \bot^\pi$, we easily have $M_f(x) \neq \bot$. So by Proposition~\ref{S5 lemma}, we then have $f(x,D) \leq (\Box \Diamond f)(x,D)$. In the alternate case, wherein $f \not \equiv \bot^\pi$, there must be some $Z \in P(\mathcal A)$ such that $f(x,Z) \not \leq \bot^\pi (Z)$. But then $M_f(x) = \bot$, so Proposition~\ref{S5 lemma} gives $(\Box \Diamond f)(x,D) = \ominus_D Q$. Let $j^{\prime}$ be any  uniform witness to line~(\ref{prop:diamonds2}) and $j$ be any member of $Q$. By Proposition~\ref{prop:onpcas}, choose $j^{\prime\prime}$ in $\mathcal{A}$ such that $j^{\prime\prime} n = j^{\prime} j$. Hence $j^{\prime\prime}$ witnesses $f \leq \Box \Diamond f$.
\end{proof}

It's worth emphasizing that case~II of the above proposition is exactly the situation of $\mathbb B$ from~(\ref{def Breal3}). As we will find in the next section, the semantics for closed sentences will produce functions precisely in $\mathbb{B}$. Hence, we will have that $\mathsf{S5}$ is valid for sentences in the non-uniform sense.


\section{The Modal Semantics}\label{sec:intuitiontomodal}

\noindent In this section, we provide a semantics for modal formulas $\varphi(\overline x)$ by defining for each such formula  a corresponding function $\|\varphi(\overline x)\|_\mu$ from one of the Boolean prealgebras $\mathbb B (\mathcal X)$. We begin with the following definition, which can be found in~\cite[volume~2 Definition 6.2-3 pp. 709-711]{Troelstra1988aa}, although they work with Heyting algebras rather than Heyting prealgebras, and they do not include the information about the quantifiers. The information about the quantifiers only comes into play in the last clauses of Definition~\ref{DefValuationR} and constraints on the quantifiers are only put in place later in the section (cf. Definition~\ref{defn:Qproperties}).  In this section, we'll be working with arbitrary first-order signatures~$L$, which as usual are just given by a collection of constant, relation, and function symbols along with a specification of the arities of the relation and function symbols.

\begin{definition}\label{defn:heytinggeneral}
Let $\mathbb{H}$ be a Heyting prealgebra and let $L$ be a signature. Then an \emph{$\mathbb{H}$-valued $L$-structure} $\mathcal{N}$ \emph{with quantifier $\mathbb{Q}$} is given by an underlying set $N$, a map $\| \cdot=\cdot \|: N^2\rightarrow \mathbb{H}$, a map $\mathbb{Q}: N\rightarrow \mathbb{H}$, and a distinguished element of $N$ for each constant symbol, a map $\|R(\cdot, \ldots, \cdot)\|:N^n\rightarrow \mathbb{H}$ for each $n$-ary relation symbol $R$, and a map $f:N^n\rightarrow N$ for every $n$-ary function symbol~$f$, such that for each $n$-ary relation symbol $R$ and each $n$-ary function symbol $f$, and all $a,b,c, a_1, \ldots, a_n, b_1, \ldots, b_n$ in $N$  one has
\begin{align}
&  \| a = a \|  =  \top, \hspace{5mm}  \|a=b\| = \|b=a\|, \hspace{5mm} \|a=b\| \wedge \|b=c\|\leq \|a=c\| \notag \\
& \| a_1=b_1 \| \wedge \cdots \wedge \|a_n=b_n\| \wedge \|R(a_1, \ldots, a_n)\| \leq  \|R(b_1, \ldots, b_n)\|  \notag \\
 & \| a_1=b_1 \| \wedge \cdots \wedge \|a_n=b_n\|  \leq  \|f(a_1, \ldots, a_n)=f(b_1, \ldots, b_n)\| \notag 
\end{align}
\end{definition}

Let $\mathcal{A}$ be a pca. We are interested in the case of $\mathbb{H}=P(\mathcal{A})$. In this setting, there is a natural strengthening of the notion in Definition~\ref{defn:heytinggeneral} wherein we require that there be uniform witnesses to the above conditions. So we define:
\begin{definition}\label{defn:uniform}
Let $L$ be a signature and let $\mathcal{A}$ be a pca. A \emph{uniform $P(\mathcal{A})$-valued $L$-structure $\mathcal{N}$ with quantifier $\mathbb{Q}$} is given by an underlying set $N$, a map $\| \cdot=\cdot \|: N^2\rightarrow P(\mathcal{A})$, a map $\mathbb{Q}: N\rightarrow P(\mathcal{A})$, and elements $e_{\mathrm{ref1}}, e_{\mathrm{ref2}}, e_{\mathrm{sym}}, e_{\mathrm{tran}}$ from $\mathcal{A}$, and a distinguished element of $N$ for each constant symbol, a map $\|R(\cdot, \ldots, \cdot)\|:N^n\rightarrow P(\mathcal{A})$ and an element $e_R$ from $\mathcal{A}$ for each $n$-ary relation symbol $R$, and a map $f:N^n\rightarrow N$ and element $e_f$ from $\mathcal{A}$ for every $n$-ary function symbol~$f$, such that for each $n$-ary relation symbol $R$ and each $n$-ary function symbol $f$, and all $a,b,c, a_1, \ldots, a_{n}, b_1, \ldots, b_{n}$ in $N$  one has
\begin{align}
& e_{\mathrm{ref1}}:\| a = a \|  \leadsto  \top, \hspace{10mm} e_{\mathrm{ref2}}: \top \leadsto \| a = a \| \notag \\
& e_{\mathrm{sym}}:\|a=b\| \leadsto \|b=a\|, \hspace{5mm} e_{\mathrm{tran}}: \|a=b\| \wedge \|b=c\|  \leadsto  \|a=c\| \notag \\
& e_R: \| a_1=b_1 \| \wedge \cdots \wedge \|a_n=b_n\| \wedge \|R(a_1, \ldots, a_n)\| \leadsto  \|R(b_1, \ldots, b_n)\|\notag  \\
& e_f:\| a_1=b_1 \| \wedge \cdots \wedge \|a_{n}=b_{n}\| \leadsto  \| f(a_1, \ldots, a_n)=f(b_1, \ldots, b_n)\| \notag 
\end{align}
\end{definition}

The following definition contains the semantics for uniform $P(\mathcal{A})$-valued structures. We follow the usual conventions in assuming that each of our languages has a constant symbol for each of the elements in the model under consideration. As one can see, the quantifier $\mathbb{Q}$ is providing the semantics for the existential and universal quantifiers.
\begin{definition}\label{DefValuationR}
Let $\mathcal{N}$ be a uniform $P(\mathcal{A})$-valued $L$-structure with quantifier~$\mathbb{Q}$. For every $L$-formula $\varphi(\overline{x})\equiv \varphi(x_1, \ldots, x_n)$ with all free variables displayed, define the map $\|\varphi(\overline{x})\| \colon N^n\rightarrow P(\mathcal{A})$, a member of $\mathbb{F}(N^n)$, inductively as follows, wherein the base cases for atomics come from Definition~\ref{defn:uniform}:
\vspace{-8mm}
\begin{multicols}{2}
\begin{align}
& \| \bot\| = \bot \notag \\
& \| (\varphi \wedge \psi)(\overline{a})\|  = \| \varphi(\overline{a})\| \wedge \| \psi(\overline{a})\|  \notag \\
& \| (\varphi \vee \psi)(\overline{a})\|  =  \| \varphi(\overline{a})\| \vee \| \psi(\overline{a})\| \notag 
\end{align}
\columnbreak
\begin{align}
& & \mbox{\;} \notag \\
& \| (\varphi \Rightarrow \psi)(\overline{a})\| \; = \; \| \varphi(\overline{a})\| \Rightarrow \| \psi(\overline{a})\|  \notag \\
& \| \exists  z \; \varphi(\overline{a}, z)\| = \bigcup_{c\in N} ( \mathbb{Q}(c) \wedge \|\varphi(\overline{a},c)\| ) \notag \\
& \| \forall  z \; \varphi(\overline{a}, z)\| = \bigcap_{c\in N} (\mathbb{Q}(c) \Rightarrow  \| \varphi(\overline{a},z)\|) \notag 
\end{align}
\end{multicols}
\vspace{-8mm}
\end{definition}

Then we argue that there are witnesses from the underlying pca for all the substitutions:
\begin{prop}\label{prop:uniform valuation}
Let $\mathcal{N}$ be a uniform $P(\mathcal{A})$-valued $L$-structure with quantifier~$\mathbb{Q}$. For every formula $\varphi(\overline{x})\equiv \varphi(x_1, \ldots, x_n)$ there is an element $e_{\varphi}$ of $\mathcal{A}$ such that for all $a_1, \ldots, a_n, b_1, \ldots, b_n$ from $N$ one has
\begin{equation}\label{helper 110}
e_{\varphi}: \| a_1=b_1 \| \wedge \cdots \wedge \|a_n=b_n\| \wedge \|\varphi(a_1, \ldots, a_n)\| \leadsto  \|\varphi(b_1, \ldots, b_n)\|
\end{equation}
\end{prop}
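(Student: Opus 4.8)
The plan is to argue by induction on the complexity of $\varphi$, appealing throughout to the uniformity of the Heyting prealgebra structure on $P(\mathcal{A})$ recorded by the witnesses $e_1, \ldots, e_{12}$ in \S\ref{sec:heytingfrompca} (together with the general uniformity principle of Proposition~\ref{prop:formaljustificationunfirm} for any other intuitionistically valid manipulation, such as distributivity or the adjunction between $\wedge$ and $\Rightarrow$). Writing $E$ for the antecedent conjunction $\|a_1 = b_1\| \wedge \cdots \wedge \|a_n = b_n\|$, the target is a single $e_\varphi$ with $e_\varphi : E \wedge \|\varphi(\overline{a})\| \leadsto \|\varphi(\overline{b})\|$ valid for all $\overline{a}, \overline{b}$. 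Before the induction on formulas I would first establish the analogous statement for terms: for each term $t(\overline{x})$ there is a uniform $e_t$ with $e_t : E \leadsto \|t(\overline{a}) = t(\overline{b})\|$. This is proved by a subsidiary induction on terms, using the projection witnesses to handle a variable $x_i$ (landing in $\|a_i = b_i\|$), the witnesses $e_7$ and $e_{\mathrm{ref2}}$ to handle a constant $c$ (passing through $\top$ to $\|c = c\|$), and the pairing witnesses together with $e_f$ from Definition~\ref{defn:uniform} to handle a function application $f(s_1, \ldots, s_m)$.

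For the atomic base cases, $\bot$ is immediate since the antecedent then has empty value and any element vacuously realizes $\emptyset \leadsto \emptyset$. For a relational atomic $R(t_1, \ldots, t_m)$ I would apply the term lemma to extract from $E$ a witness for each $\|t_j(\overline{a}) = t_j(\overline{b})\|$, combine these with the carried-along value $\|R(t_1(\overline{a}), \ldots, t_m(\overline{a}))\|$ using the pairing witnesses, and then apply $e_R$. The equality atomic $t = s$ is the one atomic case requiring care: from $E$ the term lemma delivers $\|t(\overline{a}) = t(\overline{b})\|$ and $\|s(\overline{a}) = s(\overline{b})\|$, and then $e_{\mathrm{sym}}$ and two applications of $e_{\mathrm{tran}}$ assemble $\|t(\overline{b}) = t(\overline{a})\| \wedge \|t(\overline{a}) = s(\overline{a})\| \wedge \|s(\overline{a}) = s(\overline{b})\| \leadsto \|t(\overline{b}) = s(\overline{b})\|$.

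The connective cases for $\wedge$ and $\vee$ are routine compositions of the induction hypotheses $e_{\psi_1}, e_{\psi_2}$ with the projection, pairing, injection, and case-split witnesses (using uniform distributivity of $\wedge$ over $\vee$ for the disjunction). The genuinely delicate case is the conditional $\psi_1 \Rightarrow \psi_2$, where contravariance in the antecedent forces a use of the symmetry of identity. By the uniform adjunction between $\wedge$ and $\Rightarrow$ it suffices to produce a uniform witness for $E \wedge (\|\psi_1(\overline{a})\| \Rightarrow \|\psi_2(\overline{a})\|) \wedge \|\psi_1(\overline{b})\| \leadsto \|\psi_2(\overline{b})\|$. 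Here I would first use $e_{\mathrm{sym}}$ componentwise to turn $E$ into $E' = \|b_1 = a_1\| \wedge \cdots \wedge \|b_n = a_n\|$; then, because the induction hypothesis for $\psi_1$ is a statement universally quantified over $\overline{a}, \overline{b}$, the \emph{same} witness $e_{\psi_1}$ gives $E' \wedge \|\psi_1(\overline{b})\| \leadsto \|\psi_1(\overline{a})\|$; applying the conditional via line~(\ref{help1}) yields $\|\psi_2(\overline{a})\|$; and finally $e_{\psi_2}$ in the original direction yields $\|\psi_2(\overline{b})\|$.

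For the quantifiers, the key preliminary observation is that instantiating the induction hypothesis $e_\psi$ for $\psi(\overline{x}, z)$ at a common value $c$ for the bound variable and using $e_{\mathrm{ref2}} : \top \leadsto \|c = c\|$ produces a witness $e_\psi'$ with $e_\psi' : E \wedge \|\psi(\overline{a}, c)\| \leadsto \|\psi(\overline{b}, c)\|$ \emph{uniformly in} $c \in N$; note that no contravariance arises from the bound variable since the same $c$ appears on both sides. In the existential case, using the identity $E \wedge \bigcup_c X_c = \bigcup_c (E \wedge X_c)$ one composes $e_\psi'$ with the injection witness into the union, and uniformity in $c$ lets the reduction pass to the union. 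In the universal case one combines, for each fixed $c$, the identity witness projecting $\bigcap_{c'}(\mathbb{Q}(c') \Rightarrow \|\psi(\overline{a}, c')\|)$ to its $c$-th component, line~(\ref{help1}) to discharge $\mathbb{Q}(c)$, and $e_\psi'$; uniformity in $c$ then yields the reduction to the intersection. The main obstacle throughout is not the inequalities themselves—these are all standard—but the bookkeeping needed to guarantee that a \emph{single} realizer works across all $\overline{a}, \overline{b}$ (and, in the quantifier cases, across all $c$), which is exactly what the uniformity of the prealgebra witnesses and the symmetry/reflexivity realizers of Definition~\ref{defn:uniform} secure.
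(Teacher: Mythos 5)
Your proposal is correct and follows essentially the same route as the paper's proof: a preliminary induction on terms, the same symmetry-plus-transitivity chain for equality atomics, the same trick of re-instantiating the universally quantified induction hypothesis (together with $e_{\mathrm{sym}}$) to handle contravariance in the conditional, and the same uniform-in-$c$ treatment of the quantifier cases. Your explicit use of $e_{\mathrm{ref2}}$ to supply the $\|c=c\|$ realizer for the bound variable is a minor point of extra care that the paper's write-up glosses over, but it is not a different argument.
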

\begin{proof}
The base step for atomic formulas $R(\overline a)$ follows directly from $e_R$ in Definition~\ref{defn:uniform}. For $\bot$ the assertion is trivial. For atomics $t(\overline{x})=s(\overline{x})$, first by an induction on complexity of terms, one shows that for each term $t(\overline{x})$ one has a uniform witness
\begin{equation*}
e_t:\| a_1=b_1 \| \wedge \cdots \wedge \|a_n=b_n\|  \leadsto  \|t(a_1, \ldots, a_n)=t(b_1, \ldots, b_n)\|
\end{equation*}
For instance, if $t(x)=f(g(x))$, then one obtains $e_t$ which witnesses the reduction $\|a=b\|\leq \|g(a)=g(b)\|\leq \|f(g(a))=f(g(b))\|$, by composing $e_f$ and $e_g$ from Definition~\ref{defn:uniform} in the pca. Second, suppose that $t(x), s(x)$ are terms. Then by using $e_t, e_s$ in conjunction with $e_{\mathrm{sym}}, e_{\mathrm{trans}}$ one obtains a uniform witness to the reduction
\begin{align*}
& \|a=b\| \wedge \|t(a)=s(a)\| \leq  \|a=b\|  \wedge \|t(a)=t(b)\|\wedge \|t(a)=s(a)\| \wedge \|s(a)=s(b)\|
 \notag \\
& \leq \|a=b\|  \wedge \|t(b)=t(a)\|\wedge \|t(a)=s(a)\| \wedge \|s(a)=s(b)\| \leq \|t(b)=s(b)\|
\end{align*}

For the steps corresponding to the propositional connectives, let us first abbreviate $E_{a,b}=\|a=b\|$ and $\Phi_a=\|\varphi(a)\|$, $\Phi_b=\|\varphi(b)\|$, $\Psi_a=\|\psi(a)\|$, and $\Psi_b=\|\psi(b)\|$. For conjunction, note that $E_{a,b} \wedge \Phi_a\leq \Phi_b$ and $E_{a,b}\wedge \Psi_a\leq \Psi_b$ implies $E_{a,b}\wedge \Phi_a \wedge \Psi_a \leq \Phi_b\wedge \Psi_b$. For disjunction, note that $E_{a,b} \wedge \Phi_a\leq \Phi_b$ and $E_{a,b}\wedge \Psi_a\leq \Psi_b$ implies the following: $E_{a,b}\wedge (\Phi_a\vee \Psi_a) \leq (E_{a,b} \wedge \Phi_a)\vee (E_{a,b}\wedge \Phi_b)\leq \Phi_b\vee \Psi_b$. For the conditional, one notes first that the inductive hypothesis also gives $E_{a,b}\wedge \Phi_b \leq \Phi_a$. Then one has $E_{a,b} \wedge (\Phi_a\Rightarrow \Psi_a) \wedge \Phi_b \leq E_{a,b} \wedge (\Phi_a\Rightarrow \Psi_a) \wedge \Phi_a \leq E_{a,b} \wedge \Psi_a \leq \Psi_b$, which of course implies that $E_{a,b} \wedge (\Phi_a\Rightarrow \Psi_a)\leq (\Phi_b\Rightarrow \Psi_b)$.

For the quantifiers, first consider the existential quantifier, and suppose that the pair $pmn$ is in $\|a=b\|\wedge \|\exists \; x \; \varphi(a,x)\|$. Then for some $c\in N$ one has that $n$ is in $\mathbb{Q}(c) \wedge \|\varphi(a,c)\|$. Then $e_{\varphi}(pm(p_1n))$ is in $\|\varphi(b,c)\|$ by induction hypothesis and so $p((p_0n)(e_{\varphi}(pm(p_1n))))$ is in $\|\exists \; x \; \varphi(b,x)\|$. For the universal quantifier, suppose that $pmn$ is in $\|a=b\|\wedge \|\forall \; x \; \varphi(a,x)\|$. Then for all $c\in N$ one has that $n$ is in $(\mathbb{Q}(c)\Rightarrow \|\varphi(a,c)\|)$. Let $t(x_1, x_2, x_3, x_4)\equiv x_1(px_2(x_3x_4))$, and by Proposition~\ref{prop:onpcas} choose $f$ such that $f{x_1}{x_2}{x_3}{x_4} =t(x_1, x_2, x_3, x_4)$. Then one has $f{e_{\varphi}}mn{\ell} = e_{\varphi}(p m (n\ell))$, so that by induction hypothesis for all $c\in N$ one has that $f{e_{\varphi}}mn$ is in  $(\mathbb{Q}(c)\Rightarrow \|\varphi(b,c)\|)$.
\end{proof}

 Now we finally come to the modal semantics:
\begin{definition}\label{thetheorem}
Let $\mathcal{N}$ be a uniform $P(\mathcal{A})$-valued $L$-structure with quantifier $\mathbb{Q}$ and underlying set $N$. Then we define \emph{the modal $\mathbb{B}$-valued $L$-structure $\mu[\mathcal{N}]$ with quantifier~$\mathbb{Q}$} by defining a valuation map $\|\varphi(x_1, \ldots, x_n)\|_{\mu}\colon N^n \times P(\mathcal{A}) \to P(\mathcal{A})$ for each modal $L$-formula $\varphi(\overline{x})\equiv \varphi(x_1, \ldots, x_n)$ as follows, wherein $t,s$ are $L$-terms and $R$ is an $L$-relation symbol, and where we write the action as $(\overline{a},D)\mapsto \|\varphi(\overline{a})\|_{\mu}(D)$:
\begin{align}
& \hspace{10mm} \| t(\overline{a})=s(\overline{a})\|_\mu(D) = \ominus_D (\|t(\overline{a}) = s(\overline{a}) \|) \notag \\ 
& \hspace{10mm}\| R(\overline{a})\|_{\mu}(D) = \ominus_D (\|R(\overline a)\|), \hspace{5mm} \|\bot \|_{\mu}(D)  = \ominus_D \bot \notag  \\
& \hspace{10mm} \| (\varphi \wedge \psi)(\overline{a}) \|_{\mu}(D) = (\|\varphi(\overline{a})\|_{\mu}(D) \wedge \|\psi(\overline{a})\|_{\mu}(D))\notag \\
&\hspace{10mm}  \| (\varphi \vee \psi)(\overline{a}) \|_{\mu}(D)  = (\| \varphi(\overline{a})\|_{\mu}(D)  \vee^\pi \|\psi(\overline{a})\|_{\mu}(D)) \notag \\
& \hspace{10mm} \| (\varphi \Rightarrow \psi)(\overline{a}) \|_{\mu}(D)  = (\| \varphi(\overline{a})\|_{\mu}(D) \Rightarrow \|\psi(\overline{a})\|_{\mu}(D)) \notag  \\ 
& \hspace{10mm}\| \exists z \; \varphi(\overline{a}, z)\|_{\mu}(D) =  \ominus_D \bigcup_{c\in N} [\mathbb{Q}(c) \wedge \|\varphi(\overline{a},c)\|_{\mu}(D)] \notag \\
& \hspace{10mm}\| \forall z \; \varphi(\overline{a}, z)\|_{\mu}(D) =  \bigcap_{c\in N} [\mathbb{Q}(c) \Rightarrow \|\varphi(\overline{a},c)\|_{\mu}(D)]\notag  \\ 
&\hspace{10mm} \| \Box \varphi(\overline{a})\|_{\mu}(D) = \Box (\|\varphi(\overline{a})\|_{\mu}(D)) \notag
\end{align}
\end{definition}

The following proposition says that, as the name suggests, the valuation maps lie in the Boolean algebras $\mathbb{B}(\mathcal{X})$ and $\mathbb{B}$~(defined in~(\ref{def B}) and (\ref{def Breal3}) from \S\ref{sec:heytingtobooleanmodal}):
\begin{prop}\label{prop:uniformme}
(I) For each modal $L$-formula $\varphi(x_1, \ldots, x_n)$ with all free variables displayed, the function $\|\varphi(x_1, \ldots, x_n)\|_{\mu}\colon N^n \times P(\mathcal{A}) \to P(\mathcal{A})$ is uniformly in $\mathbb B(N^n)$, in that for each $\varphi(\overline{x})$ there is an element~$i_{\varphi}$ of $\mathcal{A}$ such that for all $\overline{a}$ from $N$ and all $D$ from $P(\mathcal{A})$ one has that  $i_{\varphi}: \ominus_D \|\varphi(\overline{a})\|_{\mu} (D)\leadsto \|\varphi(\overline{a})\|_{\mu} (D)$.

(II) For each modal $L$-sentence $\varphi$, one has that $\|\varphi\|_{\mu}$ is an element of $\mathbb{B}$.
\end{prop}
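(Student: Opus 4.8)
The plan is to prove (I) by induction on the structure of the modal formula $\varphi$, and then to obtain (II) as the special case $n=0$: when $\varphi$ is a sentence, $N^0$ is a singleton, the projection $\pi$ becomes the identity map $\id$ on $P(\mathcal{A})$, and $\mathbb{B}(N^0)$ is literally the prealgebra $\mathbb{B}$ of~(\ref{def Breal3}). So everything reduces to the uniform membership claim in (I), i.e. producing for each $\varphi(\overline{x})$ a single $i_\varphi\in\mathcal{A}$ witnessing $\ominus_D\|\varphi(\overline{a})\|_\mu(D)\leadsto\|\varphi(\overline{a})\|_\mu(D)$ for all $\overline{a},D$ simultaneously; the reverse reduction $\|\varphi(\overline{a})\|_\mu(D)\leq\ominus_D\|\varphi(\overline{a})\|_\mu(D)$ always holds uniformly by~(\ref{prop:diamonds2}).

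The observation that organizes the induction is that several clauses of Definition~\ref{thetheorem} produce a function of the syntactic shape $\ominus_\pi h$ for some $h\in\mathbb{F}(N^n\times P(\mathcal{A}))$: this is so for the atomic clauses and $\bot$ (where $h$ does not depend on $D$), for disjunction (where $h=\|\varphi\|_\mu\vee\|\psi\|_\mu$, since $\vee^\pi$ is by definition $\ominus_\pi(\cdot\vee\cdot)$), for the existential quantifier (where $h=\bigcup_{c\in N}[\mathbb{Q}(c)\wedge\|\varphi(\overline{a},c)\|_\mu]$), and for the box (where $h=\overline{\inf\|\varphi\|_\mu}$). For any such $\ominus_\pi h$ one has $\ominus_\pi(\ominus_\pi h)\equiv\ominus_\pi h$ uniformly, with a witness drawn directly from the uniform idempotence reduction~(\ref{prop:diamonds3}). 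Hence all of these cases are dispatched at once by a single witness that does not even depend on the subformulas, and in particular the inductive hypothesis on membership is not needed for them.

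The remaining three clauses genuinely invoke the inductive hypothesis. For conjunction, $\ominus_D(\|\varphi\|_\mu(D)\wedge\|\psi\|_\mu(D))\equiv\ominus_D\|\varphi\|_\mu(D)\wedge\ominus_D\|\psi\|_\mu(D)$ uniformly by~(\ref{prop:diamonds4}), and composing this with the inductive witnesses $i_\varphi,i_\psi$ (combined via the pairing combinator $p$) yields $i_{\varphi\wedge\psi}$. For the conditional, I would appeal to the Proposition on Freedom in the Antecedent (Proposition~\ref{prop:mixing}): since by the inductive hypothesis the \emph{consequent} $\|\psi\|_\mu$ is uniformly in $\mathbb{B}(N^n)$ while the antecedent $\|\varphi\|_\mu$ may be arbitrary, that proposition---whose proof uses only the uniform reductions~(\ref{prop:diamonds6}) and~(\ref{prop:diamonds2}) together with $i_\psi$---delivers a uniform $i_{\varphi\Rightarrow\psi}$. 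For the universal quantifier, each conjunct $\mathbb{Q}(c)\Rightarrow\|\varphi(\overline{a},c)\|_\mu(D)$ is again of the form (arbitrary antecedent) $\Rightarrow$ (member of $\mathbb{B}$), so Proposition~\ref{prop:mixing} places it uniformly in $\mathbb{B}(N^n)$ with one witness valid for all $c\in N$ (since $i_\varphi$ is already uniform in $c$); then Proposition~\ref{prop:uniformclosure} on closure under uniform intersections shows $\bigcap_{c\in N}[\mathbb{Q}(c)\Rightarrow\|\varphi(\overline{a},c)\|_\mu(D)]$ lies in $\mathbb{B}(N^n)$, again uniformly.

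The one point demanding genuine care---and where I expect the real content to lie---is maintaining \emph{uniformity} throughout: the witness $i_\varphi$ must be a single element of $\mathcal{A}$ serving all tuples $\overline{a}\in N^n$ and all truth-values $D\in P(\mathcal{A})$ at once. This is underwritten by the fact, recorded around Proposition~\ref{prop:formaljustificationunfirm}, that every elementary reduction~(\ref{prop:diamonds1})--(\ref{prop:diamonds12}) holds uniformly on $P(\mathcal{A})$, together with the fact that the pca lets one pair and compose finitely many uniform witnesses into one. The only step that intersects over an arbitrary (possibly infinite) index set is the universal quantifier, and the hypothesis of Proposition~\ref{prop:uniformclosure} is precisely the uniform membership of the family $\{\mathbb{Q}(c)\Rightarrow\|\varphi(\overline{a},c)\|_\mu\}_{c\in N}$---which is exactly what the inductive hypothesis plus the uniform form of Proposition~\ref{prop:mixing} supplies. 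Thus the uniformity bookkeeping, rather than any single reduction, is the crux of the argument.
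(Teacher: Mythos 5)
Your proposal is correct and follows essentially the same route as the paper's proof: the same induction, with the clauses whose valuations begin with $\ominus_D$ (atomics, $\bot$, disjunction, existential, box) dispatched by idempotence, conjunction by line~(\ref{prop:diamonds4}), and the universal quantifier by Propositions~\ref{prop:mixing} and~\ref{prop:uniformclosure}. The only cosmetic divergences are that the paper handles the conditional via the equivalence~(\ref{eqn:seqasdfdas}) rather than via Proposition~\ref{prop:mixing}, and obtains (II) by writing a sentence as $\psi(c)$ and evaluating $\|\psi(x)\|_{\mu}$ at the constant $c$ rather than by identifying $\mathbb{B}(N^0)$ with $\mathbb{B}$; both devices amount to the same reduction of (II) to (I).
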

\begin{proof}
Let's begin with (I). This is obvious in the case of the atomics, as well as in the case of the disjunctions, existentials, and the box since the $\|\cdot\|_{\mu}$-valuations all begin with $\ominus_D$ in these cases. For the universal quantifier, one simply appeals to the induction hypothesis and the proposition on $\mathbb{B}(\mathcal{X})$ being closed under uniform intersections (Proposition~\ref{prop:uniformclosure}) and the Proposition on Freedom in the Antecedent (Proposition~\ref{prop:mixing}). The inductive steps for the conjunctions and conditionals follow easily from lines (\ref{prop:diamonds4}) and (\ref{eqn:seqasdfdas}) along with the induction hypothesis. This finishes the argument for (I). 

For (II), any $L$-sentence $\varphi$ can be written as $\psi(c)$ for an $L$-formula $\psi(x)$ and a constant symbol $c$. By (I), $\|\psi(x)\|_{\mu}: N\times P(\mathcal{A}) \rightarrow P(\mathcal{A})$ and is an element of $\mathbb{B}(N)$, i.e., one has $\ominus_D \|\psi(a)\|_{\mu}(D)\leq \|\psi(a)\|_{\mu}(D)$ uniformly in $a$ from $N$ and $D$ from $P(\mathcal{A})$. Then by evaluating at constant~$c$, we obtain $\|\varphi\|_{\mu}:P(\mathcal{A}) \rightarrow P(\mathcal{A})$ with $\ominus_D \|\varphi\|_{\mu}(D)\leq \|\varphi\|_{\mu}(D)$ uniformly in $D$ from $P(\mathcal{A})$. But this is precisely the condition to be an element of $\mathbb{B}$, as defined in~(\ref{def Breal3}). 
\end{proof}

Note that Part~(II) of this proposition, in conjunction with Proposition~\ref{prop:S5degncase}, implies that if $\varphi$ is a modal $L$-sentence, then $\|\varphi\Rightarrow \Box \Diamond \varphi\|_{\mu}$ has top value in $\mathbb{B}$, so that $\mathsf{S5}$ holds for each sentence taken one by one.

Further, we can do substitution in the modal structure just as in the original structure:
\begin{prop}\label{prop:subsnecwow}
For every modal $L$-formula $\varphi(\overline{x})\equiv \varphi(x_1, \ldots, x_n)$ there is an element $e_{\varphi}$ of $\mathcal{A}$ such that for all $\overline{a}, \overline{b}$ from $N$ and all $D \in P(\mathcal{A})$, one has:
\begin{equation}\label{eqn:whatwewanttoverifynow}
e_\varphi\colon \bigwedge_{1\leq i \leq n} \|a_i = b_i\|_{\mu}(D) \wedge \|\varphi(\overline a)\|_{\mu}(\overline D) \leadsto  \|\varphi(\overline b)\|_{\mu}(\overline D)
\end{equation}
\end{prop}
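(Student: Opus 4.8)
The plan is to induct on the complexity of $\varphi$, closely following Proposition~\ref{prop:uniform valuation} but carrying out all the manipulations inside the Boolean prealgebra $\mathbb{B}(N^n)$, into which every $\|\varphi(\overline a)\|_\mu$ falls by Proposition~\ref{prop:uniformme}(I). Write $E_i = \|a_i=b_i\|$, so that the modal premise is $\|a_i=b_i\|_\mu(D)=\ominus_D E_i$, and set $E^\mu=\bigwedge_i \ominus_D E_i$. The one structural fact I will lean on repeatedly is that each $\ominus_D E_i$ is $\ominus_D$-fixed by~(\ref{prop:diamonds5}) and hence so is the finite meet $E^\mu$ by~(\ref{prop:diamonds4}); this lets me slide $\ominus_D$ past $E^\mu$ at will. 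As in Proposition~\ref{prop:uniform valuation}, each reduction I produce is intuitionistically provable and therefore has a uniform realizer (Proposition~\ref{prop:formaljustificationunfirm}), so I will suppress the uniformity bookkeeping and simply note that $e_\varphi$ is assembled from the inductive witness and finitely many fixed pca-terms. Throughout I read the $\overline D$ in~(\ref{eqn:whatwewanttoverifynow}) as the single parameter $D$.

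For the base cases, the atomic valuations have the shape $\ominus_D(\text{bare valuation})$, so I would take the bare substitution witness of Proposition~\ref{prop:uniform valuation} for $R(\overline a)$ (and likewise for $t(\overline a)=s(\overline a)$), apply $\ominus_D$ by~(\ref{prop:diamonds1}), and rewrite $\ominus_D(\bigwedge_i E_i \wedge \|R(\overline a)\|)\equiv E^\mu \wedge \|R(\overline a)\|_\mu(D)$ using~(\ref{prop:diamonds4}); the case of $\bot$ is trivial. The propositional connectives then go through as in Proposition~\ref{prop:uniform valuation}, now reading $\wedge$, $\Rightarrow$, and $\vee^\pi$ as the operations of $\mathbb{B}(N^n)$: conjunction contracts the two copies of $E^\mu$, disjunction uses distributivity of $\wedge$ over $\vee^\pi$ in the Heyting prealgebra $\mathbb{B}(N^n)$, and the conditional additionally needs the reverse reduction $E^\mu \wedge \|\varphi(\overline b)\|_\mu(D)\leq \|\varphi(\overline a)\|_\mu(D)$, obtained from the inductive hypothesis via the symmetry $\|a_i=b_i\|_\mu(D)\equiv\|b_i=a_i\|_\mu(D)$ (which follows from $e_{\mathrm{sym}}$ and~(\ref{prop:diamonds1})).

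Because both quantifier clauses evaluate the subformula at the \emph{same} parameter $D$, they cause no trouble. For $\exists$, the inductive hypothesis applied with $z$ substituted identically (so that the conjunct $\|c=c\|_\mu(D)\equiv\top$ disappears) gives $E^\mu \wedge \|\varphi(\overline a,c)\|_\mu(D)\leq \|\varphi(\overline b,c)\|_\mu(D)$ uniformly in $c$; conjoining $\mathbb{Q}(c)$, including into the union, and distributing $\wedge$ over $\bigcup$ (which is literal in $P(\mathcal{A})$ since $\wedge$ is given by pairing) yields $E^\mu \wedge \bigcup_c[\mathbb{Q}(c)\wedge\|\varphi(\overline a,c)\|_\mu(D)]\leq \bigcup_c[\mathbb{Q}(c)\wedge\|\varphi(\overline b,c)\|_\mu(D)]$, and applying $\ominus_D$ while sliding it past $E^\mu$ finishes the case. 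For $\forall$, I would use the $\Rightarrow$-adjunction to reduce to proving, uniformly in $c$, that $E^\mu \wedge \|\forall z\,\varphi(\overline a,z)\|_\mu(D)\wedge \mathbb{Q}(c)\leq \|\varphi(\overline b,c)\|_\mu(D)$; projecting the intersection onto its $c$-th component and applying~(\ref{help1}) produces $\|\varphi(\overline a,c)\|_\mu(D)$, after which the inductive hypothesis closes the step.

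The box step is where I expect the real difficulty, since
\[
\|\Box\varphi(\overline a)\|_\mu(D)=\ominus_D \textstyle\bigcap_{Z\in P(\mathcal{A})}\|\varphi(\overline a)\|_\mu(Z)
\]
folds in \emph{all} parameters $Z$, whereas the premise $\ominus_D E_i$ is pinned to the single parameter $D$; as $\ominus_D E_i$ and $\ominus_Z E_i$ are in general incomparable, one cannot simply feed the premise into the inductive hypothesis at parameter $Z$. The way around this is to first establish the intermediate reduction
\[
\textstyle\bigwedge_i E_i \wedge \bigcap_{Z}\|\varphi(\overline a)\|_\mu(Z)\ \leq\ \bigcap_{Z}\|\varphi(\overline b)\|_\mu(Z)
\]
with the \emph{bare} equalities $E_i$, which no longer mentions $D$: to hit the $Z$-th component it suffices, after projecting the left intersection onto $\|\varphi(\overline a)\|_\mu(Z)$, to pass from $E_i$ up to $\ominus_Z E_i$ by inflationarity~(\ref{prop:diamonds2}) and then invoke the inductive hypothesis at parameter $Z$. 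Finally, applying $\ominus_D$ to this reduction by~(\ref{prop:diamonds1}) and distributing with~(\ref{prop:diamonds4}) simultaneously manufactures the correct modal premise $\bigwedge_i \ominus_D E_i = E^\mu$ and the correct box values on both sides, which is exactly~(\ref{eqn:whatwewanttoverifynow}) for $\Box\varphi$. The crucial idea, and the one genuine departure from the non-modal Proposition~\ref{prop:uniform valuation}, is thus to run the intermediate step with bare equalities precisely so that~(\ref{prop:diamonds2}) can bridge the premise's parameter $D$ and the box's bound parameter $Z$.
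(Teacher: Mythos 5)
Your proof is correct, and its skeleton---induction on complexity, atomics via Proposition~\ref{prop:uniform valuation} together with (\ref{prop:diamonds1}) and (\ref{prop:diamonds4}), propositional and quantifier cases run as in Proposition~\ref{prop:uniform valuation}, with $\Box$ as the one genuinely new case---matches the paper's. The difference is in how the box case is discharged, and there your route is not merely different: it repairs a step that in the paper is literally false. The paper's proof asserts that the inductive witness $e_\varphi$ directly satisfies
\begin{equation*}
e_{\varphi}\colon \|a=b\|_{\mu}(D) \wedge \bigcap_{Z\in P(\mathcal{A})}\|\varphi(a)\|_{\mu}(Z) \leadsto \bigcap_{Z\in P(\mathcal{A})}\|\varphi(b)\|_{\mu}(Z),
\end{equation*}
that is, it feeds the $D$-pinned modal equality premise into the inductive hypothesis at every parameter $Z$. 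As you observe, $\ominus_D\|a=b\|$ gives no purchase on $\ominus_Z\|a=b\|$ for $Z\neq D$, and in fact the displayed reduction can fail for \emph{any} witness: in $\mathcal{N}_0$ take $a=0$, $b=1$, $\varphi(x)\equiv (x=0)$, $D=\top$; then the antecedent is $\ominus_{\top}\bot \wedge \bigcap_Z \ominus_Z\top$, which is nonempty (the first conjunct contains any index of a total function, the second contains an index for $g\mapsto g\, i_0$ with $i_0$ fixed), while the consequent is $\bigcap_Z \ominus_Z\bot = \bigcap_Z(\top\Rightarrow Z)=\emptyset$ (take $Z=\emptyset$), and no realizer maps a nonempty set into the empty set. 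Your detour through the \emph{bare} equalities---first proving $\bigwedge_i\|a_i=b_i\| \wedge \bigcap_Z\|\varphi(\overline a)\|_\mu(Z) \leq \bigcap_Z\|\varphi(\overline b)\|_\mu(Z)$, using inflationarity (\ref{prop:diamonds2}) to climb from $\|a_i=b_i\|$ up to $\ominus_Z\|a_i=b_i\|$ inside each $Z$-component, and only afterwards applying $\ominus_D$ and distributing with (\ref{prop:diamonds4}) so that the modal premises $\|a_i=b_i\|_\mu(D)$ and the two box-values appear simultaneously---is exactly what makes the step sound, since the bare equality sits below $\ominus_Z\|a_i=b_i\|$ uniformly in $Z$, which is the bridge the $D$-pinned premise cannot supply. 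The proposition itself is unharmed (your argument proves it); it is only the paper's intermediate claim that needs your reformulation.
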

\begin{proof}
For atomic $\varphi$, the result follows from Proposition~\ref{prop:uniform valuation} and lines~(\ref{prop:diamonds1}) and (\ref{prop:diamonds4}), since for atomic $\varphi$ one has that Definition~\ref{defn:uniform} says that $\|\varphi(\overline{a})\|_{\mu}(D)=\ominus_D \|\varphi(\overline{a})\|$. The conjunction, conditional, and universal quantifier cases follow as in the proof of Proposition~\ref{prop:uniform valuation}. Disjunctions and existentials are nearly as straightforward, keeping in mind the monotonicity of $\ominus_\pi$ (line~(\ref{prop:diamonds1})). Thus we only need to verify the condition on $\Box$. Without loss of generality, suppose that $\varphi(x)$ has only one free variable. By the induction hypothesis, there is an element $e_{\varphi}$ of $\mathcal{A}$ such that for all $a,b$ in $N$ and $D$ in $P(\mathcal{A})$ one has
\begin{equation}
e_{\varphi}: \|a=b\|_{\mu}(D) \wedge \|\varphi(a)\|_{\mu}(D) \leadsto \|\varphi(b)\|_{\mu}(D)
\end{equation}
Since infimums are intersections, we can keep this uniformity in the following:
\begin{equation}
e_{\varphi}: \|a=b\|_{\mu}(D) \wedge \overline{\inf \|\varphi(a)\|_{\mu}}(D) \leadsto \overline{\inf \|\varphi(b)\|_{\mu}}(D)
\end{equation}
Further this uniformity persists while applying the $\ominus_D$-operator:
\begin{equation}
\ominus_D \|a=b\|_{\mu}(D) \wedge \ominus_D \overline{\inf \|\varphi(a)\|_{\mu}}(D) \leq \ominus_D \overline{\inf \|\varphi(b)\|_{\mu}}(D)
\end{equation}
But then this is also a uniform witness to 
\begin{equation}
\|a=b\|_{\mu}(D) \wedge \|\Box \varphi(a)\|_{\mu} (D)\leq \|\Box \varphi(b)\|_{\mu} (D)
\end{equation}
\end{proof}

The following proposition is an indicator of the compatibility of the semantics for the existential and universal quantifiers on the modal structures, given above in Definition~\ref{thetheorem}. For the non-modal structures, see Proposition~\ref{prop:soundness} below.
\begin{prop}
Let $\mathcal{N}$ be a uniform $P(\mathcal{A})$-valued $L$-structure with quantifier~$\mathbb{Q}$. Let $\varphi(\overline{x},y)$ be a modal $L$-formula. Then on the modal $\mathbb{B}$-valued $L$-structure $\mu[\mathcal{N}]$, both $(\forall \; y \; \varphi(\overline{x}, y)) \Leftrightarrow (\neg \exists \; y \; \neg \varphi(\overline{x}, y))$ and $(\exists \; y \; \varphi(\overline{x}, y)) \Leftrightarrow (\neg \forall \; y \; \neg \varphi(\overline{x}, y))$ are valid.
\end{prop}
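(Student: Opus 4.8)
The plan is to reduce both biconditionals to equivalences in the Boolean prealgebra $\mathbb{B}(N^n)$, since a biconditional is valid precisely when its two sides are equivalent there, and then to verify these equivalences by direct computation with the operator $\ominus_D$. Throughout, fix parameters $\overline{a}$ and an element $D\in P(\mathcal{A})$, and abbreviate $P_c=\|\varphi(\overline{a},c)\|_\mu(D)$ and $Q_c=\mathbb{Q}(c)$. Recall that in the modal language $\neg\psi$ abbreviates $\psi\Rightarrow\bot$, so by Definition~\ref{thetheorem} one has $\|\neg\psi\|_\mu(D)=\|\psi\|_\mu(D)\Rightarrow\ominus_D\bot$; and since $\ominus_D\bot\equiv D$ uniformly by Proposition~\ref{prop:diamondsB} (the assertion $\bot^\pi\equiv\pi$), negation in $\mu[\mathcal{N}]$ acts, up to uniform equivalence, as the operator $\neg^D(X):=X\Rightarrow D$. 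Note that $\ominus_D X=\neg^D\neg^D X$ by definition.

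The single fact doing most of the work is an exact De Morgan identity. Because $\Rightarrow$ on $P(\mathcal{A})$ is defined set-theoretically in Definition~\ref{recHeytingpre} and the plain union $\bigcup_c X_c$ functions as the supremum, one checks directly from the definitions that $(\bigcup_c X_c)\Rightarrow D=\bigcap_c(X_c\Rightarrow D)$ as sets, that is, $\neg^D\bigcup_c X_c=\bigcap_c\neg^D X_c$ exactly. I will combine this with the Heyting currying law $(X\wedge Y)\Rightarrow Z\equiv X\Rightarrow(Y\Rightarrow Z)$ and with the identities $\ominus_D\ominus_D\equiv\ominus_D$ (line~(\ref{prop:diamonds5})) and $\ominus_D(X)\Rightarrow D\equiv X\Rightarrow D$ (line~(\ref{prop:diamonds10})), all of which hold uniformly.

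For the first duality $(\exists y\,\varphi)\Leftrightarrow(\neg\forall y\,\neg\varphi)$ I would unfold both sides using Definition~\ref{thetheorem}: the left side is $\ominus_D\bigcup_c(Q_c\wedge P_c)=\neg^D\neg^D\bigcup_c(Q_c\wedge P_c)$, while the right side equals, up to uniform equivalence, $\neg^D\bigcap_c\neg^D(Q_c\wedge P_c)$ after currying $Q_c\Rightarrow(P_c\Rightarrow D)$ into $(Q_c\wedge P_c)\Rightarrow D$. Applying $\neg^D$ to the De Morgan identity with $X_c=Q_c\wedge P_c$ identifies these two expressions, so this direction holds; notably it uses neither the Booleanness of the structure nor any property of $P_c$.

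The second duality $(\forall y\,\varphi)\Leftrightarrow(\neg\exists y\,\neg\varphi)$ is where the real content lies. Here the left side is $\bigcap_c(Q_c\Rightarrow P_c)$, and the right side unfolds to $\neg^D\big(\ominus_D\bigcup_c(Q_c\wedge\neg^D P_c)\big)$; the outer negation absorbs the leading $\ominus_D$ coming from the existential valuation by line~(\ref{prop:diamonds10}), leaving $\neg^D\bigcup_c(Q_c\wedge\neg^D P_c)$. De Morgan and currying then rewrite this as $\bigcap_c\big(Q_c\Rightarrow\neg^D\neg^D P_c\big)=\bigcap_c(Q_c\Rightarrow\ominus_D P_c)$. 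The main obstacle, and the one place where the construction of $\mathbb{B}(\mathcal{X})$ is essential, is the final step: I must cancel the double negation, $\ominus_D P_c\equiv P_c$. This is exactly the statement that $\|\varphi(\overline{a},c)\|_\mu$ lies in $\mathbb{B}(N^n)$, which holds uniformly in $c$, $\overline{a}$, and $D$ by Proposition~\ref{prop:uniformme}(I). Granting it, the right side becomes $\bigcap_c(Q_c\Rightarrow P_c)$, matching the left. Finally, since every equivalence invoked (De Morgan, currying, lines~(\ref{prop:diamonds5}) and~(\ref{prop:diamonds10}), and the membership $\ominus_D P_c\equiv P_c$) carries a single witness valid for all $c$, $\overline{a}$, and $D$, the reductions on the indexed families pass to their intersections and unions, so both equivalences hold uniformly and the two biconditionals have top value in $\mathbb{B}(N^n)$.
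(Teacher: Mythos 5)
Your proof is correct, but it takes a genuinely different route from the paper's. The paper proves only the duality $(\forall \; y \; \varphi) \Leftrightarrow (\neg \exists \; y \; \neg \varphi)$ directly and obtains the other by substituting $\neg\varphi$ for $\varphi$ and negating both sides (a step that tacitly relies on the Boolean structure of $\mathbb{B}$ for double-negation elimination); moreover, its two reductions are carried out by constructing explicit realizers via Proposition~\ref{prop:onpcas} (e.g.\ $fen = (p_1 e)(n (p_0 e))$) and element-chasing. You instead prove both biconditionals directly and purely equationally, the engine being the exact set-level De Morgan identity $(\bigcup_c X_c)\Rightarrow D=\bigcap_c(X_c\Rightarrow D)$ in $P(\mathcal{A})$ together with uniform currying; this packages the realizer constructions once and for all (the paper's $f$ above is essentially a composite of the currying and De Morgan witnesses). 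The structural inputs are the same in both arguments: line~(\ref{prop:diamonds10}) to absorb the leading $\ominus_D$ of the existential valuation, and membership of the valuations in $\mathbb{B}$ (Proposition~\ref{prop:uniformme}) to cancel the residual double negation $\ominus_D P_c\equiv P_c$ --- the paper uses the latter in the guise of equations~(\ref{eqn:correctness2}) and~(\ref{eqn:correctness3}). What your route buys: it isolates exactly where the Boolean-type facts are needed, since your proof of $(\exists \; y \; \varphi)\Leftrightarrow(\neg\forall \; y \; \neg\varphi)$ uses neither (\ref{prop:diamonds10}) nor membership in $\mathbb{B}$, whereas the paper's substitution trick makes that equivalence appear to depend on them. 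What the paper's route buys: the explicit witnesses make the uniformity claims self-verifying, while your argument leans on the (correct, and noted in \S\ref{sec:heytingfrompca}) meta-facts that exact identities and uniform equivalences compose and pass through $\bigcap$, $\bigcup$, $\wedge$, and $\Rightarrow$. One trivial slip: $\|\varphi(\overline{x},y)\|_\mu$ lies in $\mathbb{B}(N^{n+1})$, not $\mathbb{B}(N^n)$, since $y$ is free in $\varphi$; nothing in your argument depends on this.
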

\begin{proof}
It suffices to prove the first since the second follows by replacing $\varphi$ with its negation in the first and by negating both sides of the biconditional. Further, to ease readability, we consider the special case where the tuple $\overline{x}$ consists just of a single variable $x$. By definition, one has $\|\neg \exists \; y \; \neg \varphi(x, y))\|_\mu(D) = [\ominus_D \bigcup_{y\in N} (\mathbb{Q}(y) \wedge (\|\varphi(x,y)\|_\mu(D) \Rightarrow \bot^{\pi}(D))) ]\Rightarrow \bot^{\pi}(D)$. By $\bot^{\pi}(D)\equiv D$ and~(\ref{prop:diamonds10}), one then has the equivalence, uniform in $x$ and~$D$:
\begin{equation}\label{eqn:correctness1}
\|\neg \exists \; y \; \neg \varphi(x, y))\|_\mu(D) \equiv [\bigcup_{y\in N} (\mathbb{Q}(y) \wedge (\|\varphi(x,y)\|_\mu(D) \Rightarrow D))]\Rightarrow D
\end{equation}
Since $\|\forall \; y \; \varphi(x,y)\|_\mu(D)$ is an element of $\mathbb{B}(N)$, one has another equivalence uniform in $x$ and $D$:
\begin{equation}\label{eqn:correctness2}
\|\forall \; y \; \varphi(x,y)\|_\mu(D) \equiv [ (\bigcap_{y\in N} (\mathbb{Q}(y)\Rightarrow \|\varphi(x,y)\|_\mu(D))) \Rightarrow D] \Rightarrow D
\end{equation}
By~(\ref{help2}), to show $\|\forall \; y \; \varphi(x,y)\|_\mu(D)\leq \|\neg \exists \; y \; \neg \varphi(x, y))\|_\mu(D)$, it suffices to show
\begin{equation}\label{eqn:correctness4}
[\bigcup_{y\in N} (\mathbb{Q}(y) \wedge (\|\varphi(x,y)\|_\mu(D) \Rightarrow D))]\leq  [\bigcap_{y\in N} (\mathbb{Q}(y)\Rightarrow \|\varphi(x,y)\|_\mu(D))] \Rightarrow D
\end{equation}
By Proposition~\ref{prop:onpcas}, choose $f$ such that $fen =  (p_1 e)(n (p_0 e))$. Suppose that $e$ is in the antecedent of~(\ref{eqn:correctness4}); we must show that $fe$ is in the consequent. So suppose that $n$ is in $[\bigcap_{y\in N} (\mathbb{Q}(y)\Rightarrow \|\varphi(x,y)\|_\mu(D))]$; we must show that $fen$ is in $D$. By hypothesis, $p_0 e$ is in $\mathbb{Q}(y_0)$ and $p_1 e$ is in $(\|\varphi(x,y_0)\|_\mu(D) \Rightarrow D)$ for some $y_0$ from $\mathcal{N}$. Then $n(p_0e)$ is in $\|\varphi(x,y_0)\|_\mu(D)$, and so $(p_1 e)(n (p_0 e)) $ is in $D$, which is just to say that $fen$ is in $D$. 

For the converse, since $\|\varphi(x,y)\|_\mu$ is a member of $\mathbb{B}(N\times N)$, one also has the equivalence:
\begin{equation}\label{eqn:correctness3}
\|\forall \; y \; \varphi(x,y)\|_\mu(D) \equiv \bigcap_{y\in N} (\mathbb{Q}(y)\Rightarrow [(\|\varphi(x,y)\|_\mu(D)\Rightarrow D)\Rightarrow D ] )
\end{equation}
By Proposition~\ref{prop:onpcas}, choose $f$ such that $fenm = epnm$. Suppose that $e$ is in the right-hand side of~(\ref{eqn:correctness1}); we show that $fe$ is in the right-hand side of~(\ref{eqn:correctness3}). So suppose that $y\in N$ is fixed and $n$ is in $\mathbb{Q}(y)$ and $m$ is in $(\|\varphi(x,y)\|_\mu(D) \Rightarrow D)$; we must show that $fenm$ is in $D$. Then $pnm$ is in $ (\mathbb{Q}(y) \wedge (\|\varphi(x,y)\|_\mu(D) \Rightarrow D))$, and so by the hypothesis on $e$, we have $epnm$ is in $D$, which is just to say that $fenm$ is in $D$. So we have $\|\neg \exists \; y \; \neg \varphi(x, y))\|_\mu(D)\leq \|\forall \; y \; \varphi(x,y)\|_\mu(D)$.
\end{proof}

The following proposition tells us that validities are equivalent to their necessitations.
\begin{prop}
Let $\mathcal{N}$ be a uniform $P(\mathcal{A})$-valued $L$-structure with quantifier~$\mathbb{Q}$. Suppose that $\varphi(\overline{x})$ be a modal $L$-formula such that $\|\varphi(\overline{a})\|_\mu(D)\equiv \top$ uniformly in $D$ and $\overline{a}$. Then $\|\Box \varphi(\overline{a})\|_\mu(D)\equiv\|\varphi(\overline{a})\|_\mu(D)\equiv \top$ uniformly in $D$ and $\overline{a}$. In particular, suppose that $\varphi$ is a modal $L$-sentence such that $\|\varphi\|_{\mu}(D)\equiv \top$ uniformly in~$D$. Then $\|\Box\varphi\|_{\mu}(D)\equiv \|\varphi\|_{\mu}(D)\equiv \top$ uniformly in $D$.
\end{prop}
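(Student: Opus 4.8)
The plan is to reduce the statement to the machinery already built in Proposition~\ref{S4} and Proposition~\ref{eqn:embedthn}, exploiting the fact that $\|\varphi(\overline a)\|_\mu$ lands in $\mathbb{B}(N^n)$ by Proposition~\ref{prop:uniformme}(I). The key observation is that ``$\|\varphi(\overline a)\|_\mu(D)\equiv\top$ uniformly in $D$ and $\overline a$'' is exactly the assertion that the function $\|\varphi(\overline x)\|_\mu$ equals the top element $\top$ of $\mathbb{B}(N^n)$ up to uniform equivalence. So I would first restate the hypothesis as $\top\leq\|\varphi(\overline x)\|_\mu$ uniformly, and then apply $\Box$.

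The main work is to show that $\Box$ preserves this top value. I would invoke Proposition~\ref{S4}(i), which gives that $f\leq g$ implies $\Box f\leq\Box g$ uniformly in $f,g$; applied to $\top\leq\|\varphi(\overline x)\|_\mu$ this yields $\Box\top\leq\Box\|\varphi(\overline x)\|_\mu$ uniformly. Next I would note $\Box\top\equiv\top$: this follows from Proposition~\ref{S4}(ii) together with line~(\ref{j3}), since $\top\equiv\mu(\top)$ witnesses that $\top$ is in the image of $\mu$ and hence is a fixed point of $\Box$. Chaining these gives $\top\leq\Box\|\varphi(\overline x)\|_\mu$, and since $\|\Box\varphi(\overline a)\|_\mu(D)=\Box(\|\varphi(\overline a)\|_\mu(D))$ by the last clause of Definition~\ref{thetheorem}, this is precisely $\top\leq\|\Box\varphi(\overline x)\|_\mu$ uniformly, i.e. $\|\Box\varphi(\overline a)\|_\mu(D)\equiv\top$ uniformly in $D$ and $\overline a$. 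The remaining equivalence $\|\Box\varphi(\overline a)\|_\mu(D)\equiv\|\varphi(\overline a)\|_\mu(D)$ then follows since both sides are equivalent to $\top$.

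For the sentence case, I would simply specialize: a sentence $\varphi$ has $\|\varphi\|_\mu\in\mathbb{B}$ by Proposition~\ref{prop:uniformme}(II), and the hypothesis $\|\varphi\|_\mu(D)\equiv\top$ uniformly in $D$ is the top-value condition in $\mathbb{B}$; the identical argument (or viewing $\mathbb{B}$ as the case $\mathcal{X}$ a singleton) delivers $\|\Box\varphi\|_\mu(D)\equiv\|\varphi\|_\mu(D)\equiv\top$ uniformly in $D$.

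I do not anticipate a genuine obstacle here, as this is essentially a corollary of the necessitation step already carried out inside the proof of Proposition~\ref{S4}; the only point demanding care is bookkeeping the uniformity, i.e. checking that the witness $e$ produced by Proposition~\ref{S4}(i) for $\Box\top\leq\Box\|\varphi(\overline x)\|_\mu$ is independent of $\overline a$ and $D$, which it is because the hypothesized witness to $\top\leq\|\varphi(\overline x)\|_\mu$ is uniform and composition in the pca preserves uniformity. The trickiest conceptual step is recognizing that ``uniform top value'' is the correct reading of the hypothesis so that Proposition~\ref{S4}(i)-(ii) apply verbatim.
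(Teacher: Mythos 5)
Your proposal is correct and takes essentially the same route as the paper: the paper's proof is a one-line application of Proposition~\ref{S4}(ii) with $g=\|\varphi(\overline{x})\|_{\mu}$ and $h=\top$ (the hypothesis says $\|\varphi(\overline{x})\|_{\mu}\equiv\top\equiv\mu(\top)$, so it is a fixed point of $\Box$), while you reach the same conclusion by replaying the necessitation step inside the proof of Proposition~\ref{S4}, i.e.\ monotonicity from part~(i) together with $\Box\top\equiv\top$ from part~(ii) and line~(\ref{j3}). Both arguments rest on exactly the same machinery, and your uniformity bookkeeping is handled correctly.
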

\begin{proof}
This is an application of Proposition~\ref{S4}.ii, by setting $g$ equal to the function $\|\varphi(\overline{x})\|_{\mu}: N^n \times P(\mathcal{A})\rightarrow P(\mathcal{A})$ and by setting $h$ equal to $\top$. 
\end{proof}

The next proposition records that the Converse Barcan Formula~$\mathsf{CBF}$~(\ref{eqn:CBF}) is valid and that the schema $(\exists \; x \; \Box \; \varphi(x))\Rightarrow (\Box \; \exists \; x \; \varphi(x))$ is valid. While we use the latter validity less frequently, we do employ it in the proof of Theorem~\ref{thm:zfr-valued} and Proposition~\ref{prop:failsurenegstable} below. For a counterexample to the Barcan formula~(\ref{eqn:BF}), see Proposition~\ref{prop:failureofbarcan} below. It is unknown to us whether the schema $(\forall \; x \; \Diamond \; \varphi(x))\Rightarrow (\Diamond \; \forall \; x \; \varphi(x))$ is valid. 
\begin{prop}\label{prop:CBF} Let $\mathcal{N}$ be a uniform $P(\mathcal{A})$-valued $L$-structure with quantifier~$\mathbb{Q}$. Then Converse Barcan Formula~$\mathsf{CBF}$~(\ref{eqn:CBF}) and the schema $(\exists \; x \; \Box \; \varphi(x))\Rightarrow (\Box \; \exists \; x \; \varphi(x))$ are valid on the modal $\mathbb{B}$-valued $L$-structure $\mu[\mathcal{N}]$.
\end{prop}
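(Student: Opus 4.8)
The plan is to reduce each validity to a uniform reduction between the valuations of the antecedent and consequent. Recall that a conditional $\psi \Rightarrow \theta$ has top value exactly when $\|\psi\|_\mu \leq \|\theta\|_\mu$, since the Heyting adjunction gives $\top \leq (a \Rightarrow b)$ iff $a \equiv \top \wedge a \leq b$; moreover every witness produced below is uniform in all parameters, so in fact uniform validity is obtained. To fix notation I would treat the case of a single bound variable, the general case with extra parameters $\overline{a}$ carried along being identical. For $c \in N$ write $F_c(D) = \|\varphi(c)\|_\mu(D)$ and $r_c = (\inf F_c) = \bigcap_{Z\in P(\mathcal{A})} F_c(Z)$. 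By Proposition~\ref{prop:uniformme} each $F_c$ is uniformly in $\mathbb{B}$, and by Definition~\ref{defn:boxremark} one has $\|\Box\varphi(c)\|_\mu(D) = \ominus_D r_c$.

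For $\mathsf{CBF}$, unwinding Definition~\ref{thetheorem} and Definition~\ref{defn:boxremark} gives
\[ \|\Box \forall x\,\varphi(x)\|_\mu(D) = \ominus_D \bigcap_{Z} \bigcap_{c \in N}[\mathbb{Q}(c) \Rightarrow F_c(Z)], \quad \|\forall x\,\Box\varphi(x)\|_\mu(D) = \bigcap_{c \in N}[\mathbb{Q}(c) \Rightarrow \ominus_D r_c]. \]
The first step is to push the inner $\bigcap_Z$ across the implication, using that in $P(\mathcal{A})$ one has the literal set-identity $\bigcap_Z(\mathbb{Q}(c) \Rightarrow F_c(Z)) = \mathbb{Q}(c) \Rightarrow \bigcap_Z F_c(Z) = \mathbb{Q}(c) \Rightarrow r_c$, both sides being the set of $e$ with $ex \in F_c(Z)$ for every $x \in \mathbb{Q}(c)$ and every $Z$. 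Hence for each fixed $c$, monotonicity of $\ominus_D$ (line~(\ref{prop:diamonds1})) gives $\|\Box\forall x\,\varphi(x)\|_\mu(D) \leq \ominus_D[\mathbb{Q}(c) \Rightarrow r_c]$, and then line~(\ref{prop:diamonds6}) followed by line~(\ref{prop:diamonds2}) together with line~(\ref{help2}) yields $\ominus_D[\mathbb{Q}(c) \Rightarrow r_c] \leq (\ominus_D\mathbb{Q}(c) \Rightarrow \ominus_D r_c) \leq (\mathbb{Q}(c) \Rightarrow \ominus_D r_c)$. Since these reductions are uniform in $c$, intersecting over $c$ delivers $\|\Box\forall x\,\varphi(x)\|_\mu(D) \leq \|\forall x\,\Box\varphi(x)\|_\mu(D)$ uniformly in $D$.

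For the schema $(\exists x\,\Box\varphi(x)) \Rightarrow (\Box\exists x\,\varphi(x))$, write $s(Z) = \bigcup_{c\in N}[\mathbb{Q}(c) \wedge F_c(Z)]$, so that
\[ \|\exists x\,\Box\varphi(x)\|_\mu(D) = \ominus_D \bigcup_{c}[\mathbb{Q}(c) \wedge \ominus_D r_c], \quad \|\Box\exists x\,\varphi(x)\|_\mu(D) = \ominus_D \bigcap_{Z}\ominus_Z s(Z). \]
The key move is to apply Proposition~\ref{mylittlehelper} with $g_c = \mathbb{Q}(c)$ and $f_c = r_c$, both independent of $D$, to collapse the inner double negation, giving $\|\exists x\,\Box\varphi(x)\|_\mu(D) \equiv \ominus_D \bigcup_c[\mathbb{Q}(c) \wedge r_c]$ uniformly in $D$. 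It then suffices, by monotonicity of $\ominus_D$, to show $\bigcup_c[\mathbb{Q}(c) \wedge r_c] \leq \bigcap_Z \ominus_Z s(Z)$ uniformly. For this I fix $Z$: since $r_c \leq F_c(Z)$ by definition of the infimum, one has $\mathbb{Q}(c) \wedge r_c \leq \mathbb{Q}(c) \wedge F_c(Z) \leq s(Z) \leq \ominus_Z s(Z)$ by line~(\ref{prop:diamonds2}), uniformly in $c$ and $Z$; taking the uniform union over $c$ and then intersecting over $Z$ gives the claim.

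The main obstacle is the existential--box direction, and specifically the need to move the outer $\ominus_D$ past the union so as to replace $\ominus_D r_c$ by $r_c$ inside; this is precisely the content of Proposition~\ref{mylittlehelper}, and without it the term $\ominus_D r_c$ is awkward to compare against the $\ominus_Z s(Z)$ appearing in $\|\Box\exists x\,\varphi(x)\|_\mu$. The remainder is routine bookkeeping, the only real care being to keep every witness uniform in $c$, $Z$, and $D$ so that the pointwise reductions assemble into reductions between the relevant intersections and unions, as licensed by the discussion of uniform reductions in \S\ref{sec:heytingfrompca}.
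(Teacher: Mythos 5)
Your proof is correct, and it follows the paper's overall strategy of unwinding the semantics and assembling uniform reductions via the $\ominus_d$ calculus, but both halves are executed by noticeably different routes. For $\mathsf{CBF}$, the paper conjoins $\mathbb{Q}(c')$ with the antecedent, composes, applies~(\ref{prop:diamonds2}), moves $\mathbb{Q}(c')$ back across the adjunction, and then applies $\ominus_D$ to \emph{both} sides, so it must finish by invoking the fact that $\|\forall\,x\,\Box\varphi(x)\|_{\mu}$ lies in $\mathbb{B}$ (or $\mathbb{B}(N^n)$ with parameters) in order to strip the outer $\ominus_D$ from the consequent. Your route --- the literal set identity $\bigcap_Z(\mathbb{Q}(c)\Rightarrow F_c(Z)) = \mathbb{Q}(c)\Rightarrow r_c$ followed by~(\ref{prop:diamonds6}), (\ref{prop:diamonds2}) and~(\ref{help2}) --- never applies $\ominus_D$ to the consequent, so the $\mathbb{B}$-membership step is not needed; that is a genuine, if small, simplification. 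For the schema $(\exists\,x\,\Box\varphi(x))\Rightarrow(\Box\,\exists\,x\,\varphi(x))$, the paper's key move is to absorb $\mathbb{Q}(c)$ into the double negation via $\mathbb{Q}(c)\wedge\ominus_D X \leq \ominus_D(\mathbb{Q}(c)\wedge X)$, i.e.\ lines~(\ref{prop:diamonds2}) and~(\ref{prop:diamonds4}), and it states its target reduction with consequent $\ominus_D\bigcap_E s(E)$ where $s(E)=\bigcup_x(\mathbb{Q}(x)\wedge\|\varphi(x)\|_{\mu}(E))$, leaving implicit the final comparison with the true value $\ominus_D\bigcap_E\ominus_E s(E)$; you instead collapse the inner $\ominus_D$ wholesale by Proposition~\ref{mylittlehelper} and then prove $\bigcup_c[\mathbb{Q}(c)\wedge r_c]\leq\bigcap_Z\ominus_Z s(Z)$ directly, handling the inner $\ominus_Z$ explicitly. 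Since Proposition~\ref{mylittlehelper} is itself proved from exactly those lines of the $\ominus$ calculus, the mathematics is equivalent, but your packaging reuses an existing lemma and is more scrupulous about the consequent's actual form. One nit: your opening gloss of the adjunction, that $\top\leq(a\Rightarrow b)$ iff $a\equiv\top\wedge a\leq b$, is garbled as written; the statement you actually use, and the correct one, is that $\top\leq(a\Rightarrow b)$ iff $\top\wedge a\leq b$ iff $a\leq b$.
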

\begin{proof}
Let's first argue for the Converse Barcan Formula~$\mathsf{CBF}$~(\ref{eqn:CBF}). Let $\varphi(x)$ be a modal formula, perhaps with parameters, which we suppress for the sake of readability. So we must show $ \| \Box \; \forall \; x \; \varphi(x) \|_{\mu}(D)\leq \| \forall \; x \; \Box \varphi(x) \|_{\mu}(D)$, uniformly in $D$ from $P(\mathcal{A})$. For all $c^{\prime}$ in $N$, by taking compositions we have the following:
\begin{equation}
[\mathbb{Q}(c^{\prime}) \wedge \bigcap_{E\in P(\mathcal{A})} \bigcap_{c\in N} (\mathbb{Q}(c) \Rightarrow \|\varphi(c)\|_{\mu}(E))] \leq \bigcap_{E\in P(\mathcal{A})} \|\varphi(c^{\prime})\|_{\mu}(E)
\end{equation}
Then by~(\ref{prop:diamonds2}) this is $\leq [\ominus_D \bigcap_{E\in P(\mathcal{A})} \|\varphi(c^{\prime})\|_{\mu}(E)]$. Then by moving the $\mathbb{Q}(c^{\prime})$ to the antecedent, and then taking intersections over all $c^{\prime}$ from $N$ and then applying the~$\ominus_D$-operator again, we obtain:
\begin{equation}
\ominus_D  [\bigcap_{E\in P(\mathcal{A})} \bigcap_{c\in N} (\mathbb{Q}(c) \Rightarrow \|\varphi(c)\|_{\mu}(E))]  \leq \ominus_D [\bigcap_{c^{\prime}\in N} (\mathbb{Q}(c^{\prime}) \Rightarrow \ominus_D \bigcap_{E\in P(\mathcal{A})} \|\varphi(c^{\prime})\|_{\mu}(E))]
\end{equation}
The antecedent is $ \| \Box \; \forall \; x \; \varphi(x) \|_{\mu}(D)$ and consequent is $\ominus_D \| \forall \; x \; \Box \varphi(x) \|_{\mu}(D)$, and the latter is equivalent to  $\| \forall \; x \; \Box \varphi(x) \|_{\mu}(D)$ since this is an element of $\mathbb{B}$. (If $\varphi(x)$ had $n$ parameters, then it would be an element of $\mathbb{B}(N^n)$).

Now let's argue for the schema $(\exists \; x \; \Box \; \varphi(x))\Rightarrow (\Box \; \exists \; x \; \varphi(x))$. It suffices to find a witness to the following reduction, uniformly in $D$ and~$c$:
\begin{equation}\label{eqn:helloreduction4}
\mathbb{Q}(c) \wedge \ominus_D \bigcap_{E\in P(\mathcal{A})} \|\varphi(c)\|_{\mu}(E)\leq \ominus_D \bigcap_{E \in P(\mathcal{A})} \bigcup_{x\in N} (\mathbb{Q}(x) \wedge \|\varphi(x)\|_{\mu}(E))
\end{equation}
But by~(\ref{prop:diamonds2}) and~(\ref{prop:diamonds4}), we have $\mathbb{Q}(c) \wedge \ominus_D \bigcap_{E} \|\varphi(c)\|_{\mu}(E)\leq \ominus_D (\mathbb{Q}(c) \wedge \bigcap_{E} \|\varphi(c)\|_{\mu}(E))$, uniformly in $D$ and $c$. Further, uniformly in $c$, the identity function is a witness to the reductions $(\mathbb{Q}(c) \wedge \bigcap_{E} \|\varphi(c)\|_{\mu}(E))\leq \bigcap_{E} (\mathbb{Q}(c) \wedge \|\varphi(c)\|_{\mu}(E))\leq \bigcap_{E} \bigcup_{x\in N} (\mathbb{Q}(x) \wedge \|\varphi(x)\|_{\mu}(E))$, and hence these reductions persist when prefaced by the $\ominus_D$-operator by~(\ref{prop:diamonds1}).
\end{proof}

The following definition describes a series of constraints that one can put on the quantifiers~$\mathbb{Q}$, and the idea of the subsequent theorem and proposition is that these constraints have consequences for what schemas of modal predicate logic are valid on the structure. There is no analogue of this definition in the original Flagg paper. See immediately after the proof of Theorem~\ref{thm:ECTisvalid} for a discussion of how this relates to what is in the original paper.
\begin{definition}\label{defn:Qproperties}
Let $\mathcal{N}$ be a uniform $P(\mathcal{A})$-valued $L$-structure with quantifier~$\mathbb{Q}$. If one has $\bigcup_{c\in N} \mathbb{Q}(c)\neq \emptyset$, then $\mathbb{Q}$ is said to be \emph{non-degenerate}. If $\bigcap_{c\in N} \mathbb{Q}(c)\neq \emptyset$, then $\mathbb{Q}$ is said to be \emph{uniform}. If $\mathbb{Q}(c)\equiv \top$ uniformly in $c\in N$, then the quantifier $\mathbb{Q}$ is said to be \emph{classical}. If for all $L$-terms $t(x_1, \ldots, x_n)$ with all free variables displayed there is $e_t\in \mathcal{A}$ such that $e_t: \mathbb{Q}(a_1)\wedge \cdots \wedge \mathbb{Q}(a_n) \leadsto \mathbb{Q}(t(a_1, \ldots, a_n))$ for all $a_1, \ldots a_n$ from $N$, then the quantifier $\mathbb{Q}$ is said to be \emph{term-friendly}.
\end{definition}

Obviously, all classical quantifiers are term-friendly and uniform, and all uniform quantifiers are non-degenerate.

\begin{thm}\label{prop:S4soundness} (Soundness Theorem for $Q^{\circ}_{eq}.\mathsf{S4}$ and $Q_{eq}.\mathsf{S4}$). Let $\mathcal{N}$ be a uniform $P(\mathcal{A})$-valued $L$-structure with non-degenerate quantifier~$\mathbb{Q}$. Then all the theorems of $Q^{\circ}_{eq}.\mathsf{S4}+\mathsf{CBF}$ are valid on the modal $\mathbb{B}$-valued $L$-structure $\mu[\mathcal{N}]$. Further, if $\mathbb{Q}$ is uniform, then all the theorems of $Q_{eq}.\mathsf{S4}$ are valid on the modal structure.
\end{thm}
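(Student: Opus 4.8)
The plan is to run the standard soundness induction on length of derivation: check that each axiom of $Q^{\circ}_{eq}.\mathsf{S4}+\mathsf{CBF}$ takes top value in $\mu[\mathcal{N}]$ (uniformly in the displayed free variables and in $D\in P(\mathcal{A})$), and that the three rules --- modus ponens, universal generalization, and necessitation --- preserve the property of taking top value. Modus ponens is immediate from the conditional in the Boolean prealgebra $\mathbb{B}(N^n)$. Necessitation is handled by Proposition~\ref{S4}: if $\top\leq\|\varphi\|_\mu$ uniformly then part~(i) gives $\Box\top\leq\Box\|\varphi\|_\mu=\|\Box\varphi\|_\mu$, while part~(ii) gives $\Box\top\equiv\top$. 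Universal generalization works because whenever $\|\varphi(\overline a,c)\|_\mu(D)\equiv\top$ uniformly, each conjunct $\mathbb{Q}(c)\Rightarrow\top\equiv\top$, so the intersection defining $\|\forall z\,\varphi(\overline a,z)\|_\mu(D)$ in Definition~\ref{thetheorem} is again $\equiv\top$.

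I would first give the explicit list of axioms of $Q^{\circ}.\mathsf{K}$ promised in the introduction (following Fitting--Mendelsohn and Corsi), and then dispose of everything already in hand. The classical propositional axioms hold because $\mathbb{B}(N^n)$ is a Boolean prealgebra; the modal schemata $\mathsf{K},\mathsf{T},\mathsf{4}$ are exactly Proposition~\ref{S4}; $\mathsf{CBF}$ is Proposition~\ref{prop:CBF}. For the reflexivity axiom $t=t$ of~(\ref{eqn:necid}), after evaluating the term to an element $c\in N$ the value is $\ominus_D\|c=c\|\equiv\ominus_D\top\equiv\top$, using $e_{\mathrm{ref2}}$ from Definition~\ref{defn:uniform} and line~(\ref{prop:diamonds7}). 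For the indiscernibility axiom~(\ref{eqn:subs}), $s=t\Rightarrow(\varphi(t)\Rightarrow\varphi(s))$ is precisely the currying (via the Heyting structure, plus $e_{\mathrm{sym}}$ to switch $s=t$ to $t=s$) of the uniform substitution witness supplied by Proposition~\ref{prop:subsnecwow}, applied to the elements that $s,t$ denote.

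The genuinely new content is the pure quantifier axioms, where pca-uniformity does the work. For the closed Universal Instantiation axiom~(\ref{eqn:UniversalInstantiationAxiom}), $\forall y\,((\forall x\,\varphi(x))\Rightarrow\varphi(y))$, it suffices to realize $\mathbb{Q}(c')\wedge\bigcap_{c}(\mathbb{Q}(c)\Rightarrow\|\varphi(c)\|_\mu(D))\leq\|\varphi(c')\|_\mu(D)$ uniformly: a realizer $n$ of the intersection lies in its $c=c'$ component $\mathbb{Q}(c')\Rightarrow\|\varphi(c')\|_\mu(D)$, so $nq\in\|\varphi(c')\|_\mu(D)$ for any $q\in\mathbb{Q}(c')$, and a pca element realizing $(q,n)\mapsto nq$ (Proposition~\ref{prop:onpcas}) is the required witness. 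The key structural point here is that the realizer of $\mathbb{Q}(c')$ is furnished by the outer binder $\forall y$, so \emph{no} hypothesis on $\mathbb{Q}$ is consumed; the distribution and vacuous-quantifier axioms are similar combinator assemblies. The one axiom that consumes non-degeneracy is the inhabitation theorem $\exists x\,(x=x)$: its value is $\ominus_D\bigcup_{c\in N}\mathbb{Q}(c)$, and this is $\equiv\top$ for every $D$ --- in particular for $D=\bot$ --- exactly when $\bigcup_{c\in N}\mathbb{Q}(c)\neq\emptyset$, which is non-degeneracy (if this set is inhabited by some $a_0$, then by line~(\ref{prop:diamonds2}) $a_0\in\ominus_D\bigcup_c\mathbb{Q}(c)$ uniformly, giving a constant witness).

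For the ``Further'' clause I would re-inspect only the single axiom that changes, the free-variable instantiation~(\ref{eqn:UniversalInstantiationAxiom2}), $(\forall x\,\varphi(x))\Rightarrow\varphi(y)$. Its validity requires $\bigcap_{c}(\mathbb{Q}(c)\Rightarrow\|\varphi(c)\|_\mu(D))\leq\|\varphi(a)\|_\mu(D)$ uniformly in the value $a$ assigned to $y$; a realizer $n$ of the intersection must now be fed some $q\in\mathbb{Q}(a)$, and to obtain a single witness good for all $a$ simultaneously one needs $q\in\bigcap_{c}\mathbb{Q}(c)$ --- precisely uniformity. Thus the two hypotheses on $\mathbb{Q}$ are each consumed at exactly one point. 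The main obstacle is not any individual calculation but the careful bookkeeping of which pca combinator realizes each quantifier axiom and the verification that it is independent of $\overline a$ and $D$; the delicate part is isolating the two loci --- inhabitation for non-degeneracy and free-variable instantiation for uniformity --- and confirming that the closed instantiation axiom really does avoid both hypotheses by drawing its existence witness from its own leading quantifier.
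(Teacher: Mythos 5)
Most of your proposal tracks the paper's own proof closely: the same soundness induction, the propositional part and necessitation via Proposition~\ref{S4}, $\mathsf{CBF}$ via Proposition~\ref{prop:CBF}, identity via $e_{\mathrm{ref2}}$ of Definition~\ref{defn:uniform} and Proposition~\ref{prop:subsnecwow}, the closed \textsc{Universal Instantiation} axiom~(\ref{eqn:UniversalInstantiationAxiom}) realized by feeding the intersection-realizer the element of $\mathbb{Q}(c')$ supplied by the outer $\forall y$ (which, as you correctly note, consumes no hypothesis on $\mathbb{Q}$), and uniformity consumed exactly at the free-variable instantiation~(\ref{eqn:UniversalInstantiationAxiom2}). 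But there is a genuine gap at what you call the key structural point: your claim that the \textsc{Vacuous Quantification} axiom is a ``similar combinator assembly'' needing no hypothesis on $\mathbb{Q}$, with non-degeneracy consumed only by $\exists x\,(x=x)$, is wrong, and it is wrong in a way that breaks your induction. The direction $\varphi \Rightarrow \forall x\,\varphi$ (for $x$ not free in $\varphi$) is indeed realized by $k$, but the converse direction $\forall x\,\varphi \Rightarrow \varphi$ genuinely requires non-degeneracy: a realizer $e$ of $\bigcap_{c\in N}(\mathbb{Q}(c)\Rightarrow\|\varphi\|_\mu(D))$ can only be converted into an element of $\|\varphi\|_\mu(D)$ by applying it to a member of some $\mathbb{Q}(c)$, and if every $\mathbb{Q}(c)$ is empty there is nothing to apply it to. Concretely, if $\mathbb{Q}(c)=\emptyset$ for all $c$, then $\|\forall x\,\bot\|_\mu(D)=\bigcap_{c}(\emptyset\Rightarrow\ominus_D\bot)=\mathcal{A}\equiv\top$, while $\|\bot\|_\mu(D)=\ominus_D\bot\equiv\bot^\pi(D)$ is the bottom of the Boolean prealgebra, so $\forall x\,\bot\Rightarrow\bot$ is \emph{not} valid; no combinator can witness it. This is exactly where the paper spends non-degeneracy: choose $n\in\mathbb{Q}(d)$ and realize the reduction by $e\mapsto en$ (via $f$ with $fne=en$ from Proposition~\ref{prop:onpcas}).

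The deeper problem is architectural. You committed to the Fitting--Mendelsohn axiom list, in which \textsc{Vacuous Quantification} is an axiom and $\exists x\,(x=x)$ is merely a theorem (derivable from it). In an induction on derivations one only verifies axioms and rules, so your proof never checks $\exists x\,(x=x)$ separately --- which means, as written, your argument uses non-degeneracy nowhere and would therefore ``prove'' soundness for degenerate quantifiers as well, contradicting the counterexample above. There is a legitimate variant of your idea: replace the vacuous-quantification biconditional by its easy direction plus the axiom $\exists x\,(x=x)$, prove this axiomatization deductively equivalent to the paper's (the hard direction of vacuous quantification is classically derivable from these), and then your locus for non-degeneracy becomes correct. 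But you neither state nor prove that equivalence, so the gap stands; the minimal repair is simply to verify the hard direction of \textsc{Vacuous Quantification} using non-degeneracy, as the paper does.
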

\begin{proof}
The propositional part follows from Proposition~\ref{S4}. For the predicate part of $Q^{\circ}_{eq}.\mathsf{S4}+\mathsf{CBF}$, first note that we have $\mathsf{CBF}$ by Proposition~\ref{prop:CBF}. The first axiom for $Q^{\circ}_{eq}.\mathsf{S4}$ in \citet[pp. 133-134]{Fitting1998aa}  is \textsc{Vacuous Quantification}, namely $\forall \; x \; \varphi \equiv \varphi$ when $\varphi$ does not contain $x$ free. But since $\mathbb{Q}$ is assumed to be non-degenerate, choose $n$ in $\mathbb{Q}(d)$ for some $d\in N$. To define a witness $\bigcap_{c\in N}(\mathbb{Q}(c)\Rightarrow \|\varphi\|_{\mu}(D))\leq \|\varphi\|_{\mu}(D)$, let $t(x,y)\equiv yx$ and by Proposition~\ref{prop:onpcas} choose $f$ such that $fne = en$. Then $fn$ is a witness. For the converse direction, $k$ suffices since if $a$ is in $\|\varphi\|_{\mu}(D)$ then $ka$ is in $(\mathbb{Q}(c)\Rightarrow \|\varphi\|_{\mu}(D))$ for all $c\in N$, since if $b\in \mathbb{Q}(c)$ then $kab = a$ is in $\|\varphi\|_{\mu}(D)$.

The second axiom in \citet[pp. 133-134]{Fitting1998aa}  is \textsc{Universal Distributivity}, namely $[\forall \; x \; (\varphi(x)\Rightarrow \psi(x))]\Rightarrow [(\forall \; x \; \varphi(x))\Rightarrow (\forall \; x \; \psi(x))]$. But the validity of this follows straightforwardly by taking compositions. The third axiom is \textsc{Permutation}, which says that $\forall \; x \; \forall \; y \; \varphi(x,y) \Leftrightarrow \forall \; y \; \forall \; x \; \varphi(x,y)$. Let $t(x,y,z)=yxz$, and choose $f$ such that $fenm = t(e,n,m) = emn$; then $f$ performs the desired reduction.

The fourth axiom is \textsc{Universal Instantiation Axiom}~(\ref{eqn:UniversalInstantiationAxiom}) from \S\ref{sec:intro}. So one must show that the following has top value:
\begin{equation}
\bigcap_{c\in N} [\mathbb{Q}(c) \Rightarrow [ (\bigcap_{d\in N} (\mathbb{Q}(d) \Rightarrow \|\varphi(d)\|_{\mu}(D) )) \Rightarrow (\|\varphi(c)\|_{\mu}(D))  ]]
\end{equation}
But given an element $n$ of $\mathbb{Q}(c)$ and an element $e$ of $(\bigcap_{d\in N} (\mathbb{Q}(d) \Rightarrow \|\varphi(d)\|_{\mu}(D) ))$, it of course follows that $en$ is an element of $\|\varphi(c)\|_{\mu}(D)$. Let $t(x,y)\equiv xy$, and choose $f$ from $\mathcal{A}$ such that $fen = en$ by Proposition~\ref{prop:onpcas}. Then $kf$ is an element of $\mathcal{A}$ which sends everything to $f$ in that $kfb=f$ for all $b$ from $\mathcal{A}$. Hence it is a witness to \textsc{Universal Instantiation Axiom} having top value.

The final components of the deductive system of \citet[pp. 133-134]{Fitting1998aa}  are the rules \textsc{Modus Ponens} and \textsc{Universal Generalization}. The former follows by the usual considerations related to composition. The latter is the rule to infer from $\varphi$ to $\forall \; x \; \varphi(x)$. Suppose then that $\|\varphi(c)\|_{\mu}(D)$ has top value in $P(\mathcal{A})$ uniformly in $c$ and $D$ via index $e$, so that $eb$ is in $\|\varphi(c)\|_{\mu}(D)$ for all $b$ from $\mathcal{A}$, uniformly in $c$ and $D$. Then $ke$ is constant function which sends everything to $e$. Then this is a witness to $\bigcap_{c\in N} (\mathbb{Q}(c)\Rightarrow \|\varphi(c)\|_{\mu}(D))$ having top value, in that $keb=e$ is in $(\mathbb{Q}(c)\Rightarrow \|\varphi(c)\|_{\mu}(D))$ for every $b$ from $\mathcal{A}$.

Now we verify~(\ref{eqn:necid})-(\ref{eqn:subs}). Obviously~(\ref{eqn:necid}) follows from $e_{\mathrm{ref2}}$ in Definition~\ref{defn:uniform} and from (\ref{prop:diamonds1}). As for~(\ref{eqn:subs}), this follows directly from Proposition~\ref{prop:subsnecwow}.

Finally, suppose that  $\mathbb{Q}$ is uniform. It suffices to show that the free-variable variant $(\forall \; x \; \varphi(x)) \Rightarrow \varphi(y)$ of the \textsc{Universal Instantiation Axiom} has top value. We must show that we have $\bigcap_{c\in N} (\mathbb{Q}(c)\Rightarrow \|\varphi(c)\|_{\mu}(D))\leq \|\varphi(d)\|_{\mu}(D)$ uniformly in $d,D$. So suppose that $e$ is in the antecedent. Since $\mathbb{Q}$ is uniform, choose an element of $n$ of $\bigcap_{c\in N} \mathbb{Q}(c)$. Then for each $d$ and $D$, one has that $en$ is an element $\|\varphi(d)\|_{\mu}(D)$. 
\end{proof}

Sometimes in what follows (cf. Proposition~\ref{prop:heytingaxioms}, Proposition~\ref{prop:newHA}, and the proof Theorem~\ref{thm:zfr-valued} in Appendix~\ref{sec:appendixmccarty}), we will need to apply a similar soundness theorem for the uniform $P(\mathcal{A})$-valued structures themselves. In the following proposition, the intuitionistic predicate calculus $\mathsf{IQC}$ with equality is given by the intuitionstic propositional calculus $\mathsf{IPC}$ formulated in a natural deduction system, together with the usual natural deduction rules for quantifiers, as well as the axioms~(\ref{eqn:necid})-(\ref{eqn:subs}) for identity (cf. \cite[volume 1 p. 48]{Troelstra1988aa}).
\begin{prop}\label{prop:soundness}
Suppose that $\mathcal{N}$ is a $P(\mathcal{A})$-valued $L$-structure with term-friendly non-degenerate quantifier~$\mathbb{Q}$. 

(I)~Suppose that $\varphi_1(\overline{x})$, \ldots, $\varphi_n(\overline{x}), \psi(\overline{x})$ are $L$-formulas, with all free variables amongst those displayed. Suppose that $\mathsf{IQC}, \varphi_1(\overline{x}), \ldots, \varphi_n(\overline{x})\vdash \psi(\overline{x})$. Then the $L$-sentence $\forall \; \overline{x} \; ((\bigwedge_{i=1}^n \varphi_i(\overline{x}))\Rightarrow \psi(\overline{x}))$ is valid on $\mathcal{N}$. 

(II)~Hence, if $\varphi_1, \ldots, \varphi_n$, $\psi$ are $L$-sentences and $\mathsf{IQC}, \varphi_1, \ldots, \varphi_n \vdash \psi$, and if $\varphi_1, \ldots, \varphi_n$ are valid on $\mathcal{N}$, then so too is $\psi$.
\end{prop}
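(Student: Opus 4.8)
The plan is to establish Part~(I) by induction on the natural-deduction derivation witnessing $\mathsf{IQC},\varphi_1(\overline x),\ldots,\varphi_n(\overline x)\vdash\psi(\overline x)$, and then to read off Part~(II) as the special case with no free variables. The statement I would carry through the induction is a uniform realizability claim strong enough to survive the quantifier rules: if $\Gamma\vdash\theta$ and $\overline x=(x_1,\ldots,x_m)$ is a tuple containing every variable occurring free anywhere in the derivation, then there is a single $e\in\mathcal A$ such that for all $\overline c\in N^m$ one has $e:\mathbb Q(c_1)\wedge\cdots\wedge\mathbb Q(c_m)\wedge\bigwedge_{\chi\in\Gamma}\|\chi(\overline c)\|\leadsto\|\theta(\overline c)\|$, the conjunctions being the meets of Definition~\ref{recHeytingpre}. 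The factors $\mathbb Q(c_i)$ are built in because the quantifier clauses of Definition~\ref{DefValuationR} are relativized to $\mathbb Q$; they are precisely what the instantiation rules will consume. Granting this, Part~(I) follows by currying the final realizer into the shape required for $\|\forall\overline x((\bigwedge_i\varphi_i)\Rightarrow\psi)\|=\bigcap_{\overline c}(\mathbb Q(c_1)\wedge\cdots\wedge\mathbb Q(c_m)\Rightarrow(\bigwedge_i\|\varphi_i(\overline c)\|\Rightarrow\|\psi(\overline c)\|))$ to have top value, using the uniform Heyting witnesses that move hypotheses across $\Rightarrow$ and commute with intersections; and Part~(II) is the case $m=0$, where Part~(I) yields that $(\bigwedge_i\varphi_i)\Rightarrow\psi$ is valid, and one then pairs the given realizers of the $\varphi_i$ and applies the conditional (line~(\ref{help1})) to realize $\psi$.

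For the inductive steps I would sort the rules into three families. The assumption rule and the propositional rules for $\wedge,\vee,\Rightarrow$ and $\bot$-elimination reduce to the fact that each $\mathbb F(N^m)$ is a Heyting prealgebra whose structure is uniform: the witnesses $e_1,\ldots,e_{12}$ exhibited in the discussion following Definition~\ref{recHeytingpre}, together with the fact that every $\mathsf{IPC}$-provable inequality has a uniform witness (Proposition~\ref{prop:formaljustificationunfirm}), let one translate each such rule into a fixed composite of combinators, since the $\|\cdot\|$-interpretation of the connectives is homophonic. The two equality rules are immediate: reflexivity from $e_{\mathrm{ref2}}$ of Definition~\ref{defn:uniform}, and the Leibniz substitution rule is exactly Proposition~\ref{prop:uniform valuation}.

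The heart of the matter, and the step I expect to be the main obstacle, is the four quantifier rules, where the hypotheses on $\mathbb Q$ come into play. For $\forall$-elimination (from $\forall y\,\chi(y)$ to $\chi(t)$) and $\exists$-introduction (from $\chi(t)$ to $\exists y\,\chi(y)$), applying the realizer of $\|\forall y\,\chi\|=\bigcap_c(\mathbb Q(c)\Rightarrow\|\chi(c)\|)$, respectively landing in $\|\exists y\,\chi\|=\bigcup_c(\mathbb Q(c)\wedge\|\chi(c)\|)$, requires a realizer of $\mathbb Q(t(\overline c))$, and this is supplied by term-friendliness via $e_t:\mathbb Q(c_1)\wedge\cdots\wedge\mathbb Q(c_m)\leadsto\mathbb Q(t(\overline c))$ (Definition~\ref{defn:Qproperties}) applied to the $\mathbb Q(c_i)$-factors already present in the induction hypothesis. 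The delicate cases are $\forall$-introduction and $\exists$-elimination, on account of their eigenvariable conditions: because the eigenvariable $y$ is not free in the open assumptions (nor, for $\exists$-elimination, in the conclusion), the realizer produced for the premise is uniform in the value $c'$ assigned to $y$, and this uniformity is exactly what lets one curry over that coordinate to build a realizer of $\bigcap_{c'}(\mathbb Q(c')\Rightarrow\|\chi(\overline c,c')\|)$, or destructure a realizer of $\bigcup_{c'}(\mathbb Q(c')\wedge\|\chi(\overline c,c')\|)$ using $p_0,p_1$; the required combinators are assembled by Proposition~\ref{prop:onpcas}.

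Finally, when the derivation uses auxiliary free variables (such as an eigenvariable) that appear in the derivation but not in the endsequent, one must discharge their $\mathbb Q$-factors in passing from the induction hypothesis on the large ambient tuple to the statement of Part~(I) on the displayed variables alone. Here non-degeneracy of $\mathbb Q$ is used exactly as in the Vacuous Quantification case of Theorem~\ref{prop:S4soundness}: choosing some $c_0$ with $\mathbb Q(c_0)\neq\emptyset$ and a witnessing $n_0\in\mathbb Q(c_0)$, one pre-pairs $n_0$ into the realizer to eliminate the dependence on the auxiliary coordinate. The only genuinely fiddly bookkeeping is tracking how the free-variable set expands and contracts through the derivation; fixing the ambient tuple $\overline x$ at the outset to contain every variable that ever occurs free, and invoking the eigenvariable conditions, makes each uniformity claim go through.
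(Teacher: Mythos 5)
Your proposal is correct and takes essentially the same route as the paper's own proof: induction on the natural-deduction derivation, with the propositional rules handled by the uniform Heyting-prealgebra witnesses, the equality rules by $e_{\mathrm{ref}2}$ and Proposition~\ref{prop:uniform valuation}, the term-instantiation rules ($\forall$-elimination, $\exists$-introduction) by term-friendliness, the eigenvariable rules ($\forall$-introduction, $\exists$-elimination) by uniformity over the eigenvariable's coordinate, and non-degeneracy to discharge vacuous or auxiliary variables. The only differences are organizational: you carry a flattened induction hypothesis with the $\mathbb{Q}$-factors as explicit conjuncts (the paper instead carries validity of the universally quantified conditional, which is the curried form of the same claim), and you defer all uses of non-degeneracy to one final discharging step, whereas the paper applies it inline in the ``there exists'' introduction rule and again in Part~(II).
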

\begin{proof}
The proof of~(I) is by induction on the length of the derivation. For the base case, the identity axiom~(\ref{eqn:necid}) follows from the clause pertaining to $e_{\mathrm{ref}2}$ in Definition~\ref{defn:uniform}, while the substitution axiom~(\ref{eqn:subs}) follows from Proposition~\ref{prop:uniform valuation}. The other base case is where $\psi(\overline{x})$ is one of the $\varphi_i(\overline{x})$, and in this case an appropriate projection function will witness the validity. The projection functions also allow one to expand the antecedent as needed. 

For the inductive steps, one considers first the propositional rules and then the quantifier rules. But the propositional rules follow from the observation made at the outset of \S\ref{sec:heyting} that Heyting prealgebras are sound for the intuitionstic propositional calculus $\mathsf{IPC}$. For the quantifier rules, it will be convenient to abbreviate the antecedent as $\Phi(\overline{x})\equiv \bigwedge_{i=1}^n \varphi_i(\overline{x})$ and to drop excess free variables to enhance readability.

For the ``for all'' elimination rule, we must show that if $\forall \; y \; (\Phi(y)\Rightarrow \forall \; x \; \psi(x))$ is valid on $\mathcal{N}$ then so is $\forall \; y \; \forall \; x \; (\Phi(y)\Rightarrow \psi(t(x)))$, where $t$ is an $L$-term. Since $\mathbb{Q}$ is term-friendly, choose $e_t$ such that $e_t:\mathbb{Q}(a)\leadsto \mathbb{Q}(t(a))$ for all $a$ from $\mathcal{N}$. By Proposition~\ref{prop:onpcas}, choose $f$ such that $fenmu = enu (e_tm)$. Supposing that $e$ is in $\mathbb{Q}(y)\Rightarrow (\|\Phi(y)\|\Rightarrow \bigcap_{x\in N} (\mathbb{Q}(x)\Rightarrow \|\psi(x)\|))$ for all $y$ from $\mathcal{N}$, we must show that $fe$ is in $\mathbb{Q}(y)\Rightarrow \bigcap_{x\in N} (\mathbb{Q}(x)\Rightarrow \|\Phi(y)\Rightarrow \psi(t(x))\|)$ for all $y$ from $\mathcal{N}$. So suppose that $y$ from $\mathcal{N}$ is fixed and $n$ is in $\mathbb{Q}(y)$. Then we must show that $fn$ is in $(\mathbb{Q}(x)\Rightarrow \|\Phi(y)\Rightarrow \psi(t(x))\|)$ for all $x$ in $\mathcal{N}$. So let $x$ in $\mathcal{N}$ and suppose that $m$ is in $\mathbb{Q}(x)$ and $u$ is in $\|\Phi(y)\|$. We must show that $fenmu$ is in $\|\psi(t(x))\|$. By choice of~$f$, this is the same as showing that $enu (e_tm)$ is in $\|\psi(t(x))\|$. But by hypothesis on $e,n,u$, we have that $enu$ is in $\mathbb{Q}(z)\Rightarrow \|\psi(z)\|$ for all $z$ from $\mathcal{N}$, and by hypothesis on $m$ and $e_t$, we have that $e_tm$ is in $\mathbb{Q}(t(x))$, so that we are done.

For the ``for all'' introduction rule, suppose our induction hypothesis gives us that $\forall \; y \; \forall \; x \; (\Phi(y)\Rightarrow \psi(x))$ is valid on $\mathcal{N}$; then we must show that $\forall \; y \; (\Phi(y)\Rightarrow \forall \; x \; \psi(x))$ is valid on $\mathcal{N}$. By Proposition~\ref{prop:onpcas}, choose $f$ such that $fenmu = enum$. Supposing that $e$ is in $\mathbb{Q}(y)\Rightarrow (\bigcap_{x\in N} (\mathbb{Q}(x) \Rightarrow (\|\Phi(y) \Rightarrow \psi(x)\|)))$ for all $y$ in $\mathcal{N}$, we show that $fe$ is in $\mathbb{Q}(y)\Rightarrow (\|\Phi(y)\| \Rightarrow \bigcap_{x\in N} (\mathbb{Q}(x) \Rightarrow \|\psi(x)\|))$ for all $y$ in $\mathcal{N}$. So fix $y$ in $\mathcal{N}$ and suppose that $n$ is in $\mathbb{Q}(y)$. We must show that $fen$ is in $(\|\Phi(y)\| \Rightarrow \bigcap_{x\in N} (\mathbb{Q}(x) \Rightarrow \|\psi(x)\|))$. So suppose that $m$ in $\|\Phi(y)\|$; we must show that $fenm$ is in $\mathbb{Q}(x) \Rightarrow \|\psi(x)\|$ for all $x$ in $\mathcal{N}$. So fix $x$ in $\mathcal{N}$, and suppose that $u$ is in $\mathbb{Q}(x)$; then we must show that $fenmu$ is in $\|\psi(x)\|$. But by choice of $f$, this is the same as showing that $enum$ is in $\|\psi(x)\|$. By hypothesis on $e$ and $n$ and $u$ we have $enu$ is in  $(\|\Phi(y) \Rightarrow \psi(x)\|)$, and by hypothesis on $m$, we have that $enum$ is in $\|\psi(x)\|$, which is what we wanted to show. 

For the ``there exists'' elimination rule, we must show that if both $\forall \; y \; (\Phi(y) \Rightarrow \exists \; x \; \psi(x))$ and $\forall \; y \; \forall \; z \; (\Phi(y) \wedge \psi(z) \Rightarrow \xi(y))$ are valid in $\mathcal{N}$, then $\forall \; y \; (\Phi(y) \Rightarrow \xi(y))$ is valid in $\mathcal{N}$. But supposing that $e_1$ and $e_2$ are witnesses to the former one may check that $fe_1e_2$ is a witness to the latter, where one chooses $f$ from Proposition~\ref{prop:onpcas} such that $fe_1e_2nu= e_2n (p_0e_1nu)(p u (p_1 e_1nu))$. One may do this by beginning with the antecedent of~(\ref{eqn:diagramchase3}), then moving to~(\ref{eqn:diagramchase1}) reading left-to-right, and then moving to to~(\ref{eqn:diagramchase2}) reading left-to-right, and then finishing at the consequent of~(\ref{eqn:diagramchase3}), where the idea is that the witnesses in the pca are written out below the parts of the formula which they are realizing. 
\begin{align}
& \forall \; \underset{n}{y} \; (\underset{u}{\Phi(y)} \Rightarrow \exists \; \underset{p_0e_1nu}{x} \; \underset{p_1 e_1nu}{\psi(x)}) \label{eqn:diagramchase1} \\
& \forall \; \underset{n}{y} \; \forall \; \underset{p_0e_1nu}{z} \; (\underset{u}{\Phi(y)} \wedge \underset{p_1 e_1nu}{\psi(z)} \Rightarrow \underset{fe_1e_2nu}{\xi(y)}) \label{eqn:diagramchase2} \\
& \forall \; \underset{n}{y} \; (\underset{u}{\Phi(y)} \Rightarrow \underset{fe_1e_2nu}{\xi(y)}) \label{eqn:diagramchase3}
\end{align}
This ``diagram chase'' method of verification is sometimes a helpful counterpoint to the types of verification exemplified in the previous two paragraphs.

For the ``there exists'' introduction rule, we must show that if $\forall \; y \; \forall \; x \;  (\Phi(y) \Rightarrow \psi(t(x)))$ is valid in $\mathcal{N}$ for some $L$-term $t(x)$, then so is $\forall \; y \; (\Phi(y) \Rightarrow \exists \; x \; \psi(x))$. Since $\mathbb{Q}$ is term-friendly, choose $e_t$ such that $e_t:\mathbb{Q}(a)\leadsto \mathbb{Q}(t(a))$ for all $a$ from $\mathcal{N}$. Since $\mathbb{Q}$ is non-degenerate, choose $m_0$ with $m_0$ in $\mathbb{Q}(x_0)$ for some $x_0$ from $\mathcal{N}$. By Proposition~\ref{prop:onpcas}, choose $f$ such that $fnu = p(e_t(m_0))(enm_0u)$. Supposing that $e$ is in $\mathbb{Q}(y)\Rightarrow (\bigcap_{x \in N} (\mathbb{Q}(x) \Rightarrow (\|\Phi(y)\Rightarrow \psi(t(x))\|)))$ for all $y$ from $\mathcal{N}$, we then show that $fe$ is in $\mathbb{Q}(y)\Rightarrow (\|\Phi(y)\| \Rightarrow (\bigcup_{x\in N} \mathbb{Q}(x) \wedge \|\psi(x)\|))$ for all $y\in N$. So suppose that $y$ from $\mathcal{N}$ is fixed and $n$ is in $\mathbb{Q}(y)$ and $u$ is in $\|\Phi(y)\|$. It suffices to show that $fnu$ is in $\mathbb{Q}(t(x_0)) \wedge \|\psi(t(x_0))\|$, which by definition of $f$ is to show that $e_t(m_0)$ is in $\mathbb{Q}(t(x_0))$ and $enm_0u$ is in $\|\psi(t(x_0))\|$. But both of these follow directly from our hypotheses on $e_t, m_0, x_0, n, u$.

This finishes the proof of part~(I). For part~(II), suppose $\varphi_1, \ldots, \varphi_n$, $\psi$ are $L$-sentences and $\mathsf{IQC}, \varphi_1, \ldots, \varphi_n \vdash \psi$, and that $\varphi_1, \ldots, \varphi_n$ are valid on $\mathcal{N}$. Let $x$ be a variable, which we may assume does not appear in any of these sentences. Then the $L$-sentence $\forall \; x \; \Phi$ is also valid on $\mathcal{N}$, which by part~(I) implies that $\forall \; x \; \psi$ is valid on $\mathcal{N}$. But then one may argue just as in the \textsc{Vacuous Quantification} part of the previous proposition that $\psi$ is also valid on $\mathcal{N}$, since we're assuming that the quantifier $\mathbb{Q}$ is non-degenerate.
\end{proof}



Let's now explicitly record the simplifying effect of the classical quantifiers on the semantics:

\begin{prop}\label{prop:BF}  Let $\mathcal{N}$ be a uniform $P(\mathcal{A})$-valued $L$-structure with classical quantifier~$\mathbb{Q}$. Then the quantifier clauses in the semantics have the equivalents $\|\exists \; z \; \varphi(\overline{a},z)\|\equiv \bigcup_{c\in N} \|\varphi(\overline{a}, c)\|$ and $\|\forall \; z \; \varphi(\overline{a},z)\|\equiv \bigcap_{c\in N} \|\varphi(\overline{a}, c)\|$. Similarly, the quantifier clauses in the semantics for the modal $\mathbb{B}$-valued $L$-structure $\mu[\mathcal{N}]$ have equivalents $\| \exists z \; \varphi(\overline{a}, z)\|_{\mu}(D)\equiv  \ominus_D \bigcup_{c\in N}\|\varphi(\overline{a},c)\|_{\mu}(D)$ and $\| \forall z \; \varphi(\overline{a}, z)\|_{\mu}(D)\equiv  \bigcap_{c\in N} \|\varphi(\overline{a},c)\|_{\mu}(D)$.
\end{prop}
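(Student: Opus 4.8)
The plan is to reduce the entire statement to two uniform absorption laws describing how a classical quantifier interacts with meet and with implication, and then to propagate these through the unions, intersections, and the $\ominus_D$-operator appearing in the quantifier clauses of Definition~\ref{DefValuationR} and Definition~\ref{thetheorem}. Since $\mathbb{Q}$ is classical, Definition~\ref{defn:Qproperties} gives $\mathbb{Q}(c)\equiv \top$ uniformly in $c\in N$; unpacking the equivalence, there is a single uniform witness to $\top\leadsto \mathbb{Q}(c)$, so applying it to any fixed element of $\mathcal{A}$ yields one element $n_0$ lying in $\mathbb{Q}(c)$ for every $c\in N$. (This is just the observation, recorded immediately after Definition~\ref{defn:Qproperties}, that classical quantifiers are uniform.)

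The two absorption laws I would establish are that, uniformly in $c\in N$ and $X\in P(\mathcal{A})$, one has $\mathbb{Q}(c)\wedge X\equiv X$ and $\mathbb{Q}(c)\Rightarrow X\equiv X$. For the meet, recall from Definition~\ref{recHeytingpre} that $\mathbb{Q}(c)\wedge X=\{pab: a\in \mathbb{Q}(c), b\in X\}$; then $\mathbb{Q}(c)\wedge X\leq X$ is witnessed by the projection $p_1$, while $X\leq \mathbb{Q}(c)\wedge X$ is witnessed by the term $b\mapsto p\,n_0\,b$ supplied by Proposition~\ref{prop:onpcas}, using the fixed $n_0\in \mathbb{Q}(c)$. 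For the implication, recall $\mathbb{Q}(c)\Rightarrow X=\{e: e:\mathbb{Q}(c)\leadsto X\}$; then $X\leq \mathbb{Q}(c)\Rightarrow X$ is witnessed by $a\mapsto ka$, since $ka$ sends every input to $a\in X$, and $\mathbb{Q}(c)\Rightarrow X\leq X$ is witnessed by the term $e\mapsto e\,n_0$ from Proposition~\ref{prop:onpcas}. None of these four witnesses depends on $c$ or $X$, so both equivalences hold uniformly.

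With these in hand, the four claimed equivalences follow immediately. In general, a single pca element $e$ witnessing $f_i\leq g_i$ for all $i$ also witnesses $\bigcup_{i} f_i\leq \bigcup_i g_i$ and $\bigcap_i f_i\leq \bigcap_i g_i$, by the same elementary reasoning as the union and intersection facts noted just before Proposition~\ref{mylittlehelper}. For the existential clause, the first absorption law gives $\mathbb{Q}(c)\wedge \|\varphi(\overline{a},c)\|\equiv \|\varphi(\overline{a},c)\|$ uniformly in $c$, and transferring to the union yields $\bigcup_{c\in N}(\mathbb{Q}(c)\wedge \|\varphi(\overline{a},c)\|)\equiv \bigcup_{c\in N}\|\varphi(\overline{a},c)\|$. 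The universal clause is handled identically using the second absorption law and transfer to the intersection. For the modal structure $\mu[\mathcal{N}]$, the same two laws apply verbatim with $X=\|\varphi(\overline{a},c)\|_{\mu}(D)$, uniformly in $c$ and $D$: transferring to the union (resp.\ intersection) gives the equivalence inside the $\ominus_D$ for the existential clause and directly for the universal clause, and in the existential case a final appeal to the monotonicity of $\ominus_D$ from line~(\ref{prop:diamonds1}) preserves the equivalence after the outer $\ominus_D$ is applied.

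Since nothing here is deep, the only point requiring care—and hence the nearest thing to an obstacle—is the bookkeeping of uniformity: each equivalence must be witnessed by a single pca element that is independent of $c$ (and, in the modal case, of $D$), so that it survives passage to the infinitary union and intersection and, for the existential clauses, through the outer $\ominus_D$. Once the four witnesses above are seen to be parameter-free, this propagation is automatic and the proposition follows.
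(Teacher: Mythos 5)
Your proof is correct and takes essentially the same route as the paper, whose entire proof is the one-line observation that a classical quantifier gives $\mathbb{Q}(c)$ top value uniformly, so the simplifications follow from the behavior of top in Heyting and Boolean prealgebras. Your absorption laws $\mathbb{Q}(c)\wedge X\equiv X$ and $\mathbb{Q}(c)\Rightarrow X\equiv X$, with explicit parameter-free witnesses and the propagation through unions, intersections, and $\ominus_D$, are exactly the details that the paper's terse appeal to ``behavior of top'' leaves implicit.
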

\begin{proof}
A classical quantifier gives $\mathbb{Q}(c)$ top value uniformly, so these simplifications follow from the behavior of top in Heyting and Boolean prealgebras.
\end{proof}

We close this section by noting the stability of atomic formulas under the semantics, together with some helpful characterizations pertaining to the case of negated atomics, which we will use in Proposition~\ref{prop:counterstabilityatomics} to produce some counterexamples to the stability of negated atomics.

\begin{prop} \label{prop:atomics}
Let $\mathcal{N}$ be a uniform $P(\mathcal{A})$-valued $L$-structure with quantifier~$\mathbb{Q}$. Let $R(\overline x)$ be an atomic formula. Then 

\vspace{2mm}

\noindent (i) \emph{Atomic Formulas are Stable}: $\| R (\overline x)\|_{\mu}  \equiv \|\Box R(\overline x)\|_{\mu}$.

\vspace{2mm}

\noindent (ii) \emph{Formula for Negated Atomics}: One has $\|\neg R(\overline{a})\|_{\mu} (D) \equiv \|R(\overline{a})\|\Rightarrow D$ uniformly for all $\overline{a}$ in $N$ and $D$ in $P(\mathcal{A})$. 

\vspace{2mm}

\noindent (iii) \emph{Formula for Necessitations of Negated Atomics}: Suppose that $N_0$ is a subclass of $N$, and for all $\overline{a}$ in $N_0$ one has that $\|R(\overline{a})\|\neq \bot$. Then uniformly for all $\overline{a}$ in $N_0$ and $D$ in $P(\mathcal{A})$ one has $\|\Box \neg R(\overline{a})\|_{\mu} (D) \equiv D$.
\end{prop}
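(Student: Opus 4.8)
The plan is to treat the three parts in order, reducing each to the algebraic facts about $\ominus_d$ established in \S\ref{sec:heyting} together with the concrete structure of $P(\mathcal{A})$ from Definition~\ref{recHeytingpre}, keeping track of uniformity throughout. The routine content is in (i) and (ii); the one genuinely combinatorial step is in (iii).

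For (i), I would observe that the modal valuation of an atomic already lies in the image of $\mu$. By the atomic clause of Definition~\ref{thetheorem} and Definition~\ref{jdef} one has $\|R(\overline a)\|_\mu(D)=\ominus_D\|R(\overline a)\|=(\mu(\|R(\overline x)\|))(\overline a,D)$, where $\|R(\overline x)\|$ denotes the non-modal valuation, a member of $\mathbb{F}(N^n)$; the same identity holds for equality atomics $t=s$. Thus $\|R(\overline x)\|_\mu\equiv\mu(h)$ with $h=\|R(\overline x)\|$, and Proposition~\ref{S4}.ii immediately yields $\|R(\overline x)\|_\mu\equiv\Box\|R(\overline x)\|_\mu$, which is $\|\Box R(\overline x)\|_\mu$ by the box clause of Definition~\ref{thetheorem}. (Alternatively, one could unwind $\Box=\mu\circ\inf$ and apply Proposition~\ref{prop:proponintersection} to a one-element sequence.)

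For (ii), I would expand $\neg R$ as $R\Rightarrow\bot$, so that by Definition~\ref{thetheorem} $\|\neg R(\overline a)\|_\mu(D)=\ominus_D\|R(\overline a)\|\Rightarrow\ominus_D\bot$. The first subgoal is the uniform equivalence $\ominus_D\bot\equiv D$ (i.e.\ $\bot^\pi\equiv\pi$): the reduction $D\leq\ominus_D\bot$ is line~(\ref{prop:diamonds8}) with $x=\bot$, and $\ominus_D\bot\leq\ominus_D D\leq D$ follows from lines~(\ref{prop:diamonds1}) and~(\ref{prop:diamonds12}). Substituting the equivalent $D$ for the consequent $\ominus_D\bot$ and then applying line~(\ref{prop:diamonds10}) gives $\ominus_D\|R(\overline a)\|\Rightarrow D\equiv\|R(\overline a)\|\Rightarrow D$, as desired, with all witnesses uniform in $\overline a$ and $D$ by the uniformity machinery of \S\ref{sec:heytingfrompca}. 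For (iii), I would compute from Definition~\ref{defn:boxremark} that $\|\Box\neg R(\overline a)\|_\mu(D)=\ominus_D\bigcap_{Z}\|\neg R(\overline a)\|_\mu(Z)$; by (ii) the inner intersection is uniformly $\bigcap_{Z\in P(\mathcal{A})}(\|R(\overline a)\|\Rightarrow Z)$. The crucial step, where the hypothesis $\|R(\overline a)\|\neq\bot$ enters, is that this intersection is empty: taking $Z=\emptyset$ and recalling from Definition~\ref{recHeytingpre} that $X\Rightarrow Z=\{e:e:X\leadsto Z\}$, an element of $X\Rightarrow\emptyset$ would have to send some $a\in X$ to an element of $\emptyset$, which is impossible once $X=\|R(\overline a)\|$ is nonempty. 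Hence $\bigcap_Z(\|R(\overline a)\|\Rightarrow Z)=\emptyset=\bot$, and the uniform witnesses realizing the containment into this empty set force $\bigcap_Z\|\neg R(\overline a)\|_\mu(Z)=\emptyset$ for every $\overline a\in N_0$, giving $\|\Box\neg R(\overline a)\|_\mu(D)=\ominus_D\bot\equiv D$ uniformly.

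The main obstacle is precisely the emptiness computation in (iii): it is the single place where we step outside purely formal Heyting-prealgebra manipulation and invoke the concrete definition of $\Rightarrow$ on $P(\mathcal{A})$, and care is needed to see that the conclusion is uniform over $\overline a\in N_0$. Here the uniformity is in fact automatic, since a reduction into $\bot=\emptyset$ can only hold when the antecedent set is literally empty, so any single index vacuously serves as a common witness.
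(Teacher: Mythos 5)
Your proposal is correct and follows essentially the same route as the paper's proof: part (i) via the observation that atomic valuations lie in the image of $\mu$ together with Proposition~\ref{S4}.ii, part (ii) by unwinding $\neg R$ and applying $\ominus_D\bot\equiv D$ and line~(\ref{prop:diamonds10}), and part (iii) by collapsing the intersection $\bigcap_E\|\neg R(\overline a)\|_\mu(E)$ to $\bot$ using the fact that a nonempty set admits no reduction to $\emptyset$. The only cosmetic difference is that in (iii) you route through part (ii) and evaluate at $Z=\emptyset$, whereas the paper evaluates the atomic clause directly at $E=\bot$ (computing $\ominus_\bot\|R(\overline a)\|=\top$, hence $\top\Rightarrow\bot\equiv\bot$); both hinge on exactly the same combinatorial fact.
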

\begin{proof}
For (i) this follows from Proposition~\ref{S4}.ii and the fact that for atomic $\varphi(\overline{x})$, we have that $\|\varphi(\overline{a})\|_{\mu}(D) = \mu(\|\varphi(\overline{a})\|)$ (cf. Definition~\ref{thetheorem}). For (ii), we simply chase out definitions and appeal to~(\ref{prop:diamonds10}) to obtain
$\|\neg R(\overline{a})\|_{\mu} (D) = [\ominus_D \|R(\overline{a})\| \Rightarrow \ominus_D \bot] \equiv [\ominus_D \|R(\overline{a})\| \Rightarrow D] \equiv [\|R(\overline{a})\| \Rightarrow D]$. For the (iii), suppose that $\|R(\overline{a})\|\neq \bot$ for all $\overline{a}$ from $N_0$. For $E=\bot$, we then have that $\ominus_E \|R(\overline{a})\|=\top$ and so 
$\|\neg R(\overline{a})\|_{\mu} (E) \equiv [\ominus_E \|R(\overline{a})\| \Rightarrow \ominus_E \bot] \equiv \top \Rightarrow \bot \equiv \bot$. Hence one then has that $\|\Box \neg R(\overline{a})\|_{\mu} (D) \equiv \ominus_D \bigcap_E \|\neg R(\overline{a})\|_{\mu} (E) =\ominus_D \bot \equiv D$.
\end{proof}


\section{The G\"odel Translation and Flagg's Change of Basis Theorem}\label{sec:GTplusCB}

The aim of this section is to show that an important theorem from Flagg's original paper, namely \cite[Theorem 5.4 p. 168]{Flagg1985aa}, generalizes to semantics from the previous section. In our view, it's expedient to separate this theorem into two parts, the first of which pertains to the G\"odel translation (cf. \citet[p. 288]{Troelstra2000aa}, \citet[p. 147]{Flagg1985aa}):

\begin{definition}\label{GodelTrans} (G\"odel Translation) 
For every non-modal $L$-formula $\varphi$, we define its G\"odel translation $\varphi^{\Box}$ to be the following modal $L$-formula in the same free variables:
\vspace{-8mm}
\begin{multicols}{2}
\begin{align}
& \varphi^{\Box} =  \varphi \mbox{ if $\varphi$ atomic}\notag \\
& (\varphi \wedge \psi)^{\Box} = \varphi^{\Box} \wedge \psi^{\Box} \notag\\
& (\varphi \vee \psi)^{\Box} = \varphi^{\Box} \vee \psi^{\Box} \notag
\end{align}
\columnbreak
\begin{align}
& \mbox{\;} \notag \\
& (\varphi \Rightarrow \psi)^{\Box} = \Box (\varphi^{\Box} \Rightarrow \psi^{\Box})\notag \\
& (\exists x \; \varphi)^{\Box} = \exists x \; \varphi^{\Box} \notag\\
& (\forall  x \; \varphi)^{\Box} = \Box (\forall x \; \varphi^{\Box})\notag
\end{align}
\end{multicols}
\end{definition}
\vspace{-8mm}

Note that if $R(x,y)$ is a binary atomic, then by definition we will have
$(\exists \; x \; (R(x,y) \wedge \varphi(x)))^{\Box}  = \exists \; x \; (R(x,y) \wedge \varphi^{\Box}(x))$, and by appealing to the equivalence $\Box \; \forall \; x \; \Box \; \varphi \equiv \; \Box \; \forall x \; \varphi$ in $Q^{\circ}_{eq}.\mathsf{S4}+\mathsf{CBF}$ or $Q_{eq}.\mathsf{S4}$, we may obtain 
\begin{equation}\label{eqn:remarkgodelatomics}
(\forall \; x \; (R(x,y) \Rightarrow \varphi(x)))^{\Box} = \Box (\forall \; x \; \Box (R(x,y) \Rightarrow \varphi^{\Box}(x))) \equiv  \; \Box \; \forall \; x \; (R(x,y) \Rightarrow \varphi^{\Box}(x))
\end{equation}
In the setting of set theory in \S\ref{sec:setheory}, the traditional application will be to the case in which the binary relation $R$ is just membership $\in$. Using the standard shorthand $\exists \; x \in y \;\varphi(x)$  for $\exists \; x \; (x \in y \wedge \varphi (x))$ and $\forall \; x\in y\; \varphi (x)$  for $\forall \; x\; (x \in y \Rightarrow \varphi(x))$, we see that  existential $\Delta_0$-formulas are treated compositionally by the G\"odel translation and universal $\Delta_0$-formulas are treated likewise but with an initial box-operator placed in front:
\begin{align}
(\exists \; x\in y \; \varphi(x))^{\Box} & \equiv \exists \; x \in y \;  \varphi^{\Box}(x)\label{eqn:remarkgodelatomics1} \\
(\forall \; x\in y \; \varphi(x))^{\Box} & \equiv \; \Box \; \forall \; x\in y \; \varphi^{\Box}(x)\label{eqn:remarkgodelatomics2}
\end{align}
 Similar remarks apply to $\Delta_0$-formulas in the setting of arithmetic of \S\ref{sec:arithmetic}, wherein the binary relation is just the less-than relation $<$. A related observation that we shall apply often in what follows is that blocks of quantifiers are treated compositionally by the G\"odel translation, modulo one box operator being placed in front of a block of universal quantifiers (again appealing to $\Box \; \forall \; x \; \Box \; \varphi \equiv \; \Box \; \forall x \; \varphi$):
\begin{align}
(\exists \; \overline{x} \; \varphi(\overline{x}))^{\Box} & \equiv \exists \; \overline{x} \; \varphi^{\Box}(\overline{x}) \\
(\forall \; \overline{x} \; \varphi(\overline{x}))^{\Box} & \equiv \Box \; \forall \; \overline{x} \; \varphi^{\Box}(\overline{x})
\end{align}

Flagg's result on the G\"odel translation was that, in the setting of arithmetic, a validity on the non-modal structure had a valid G\"odel translation on the modal structure. The below theorem indicates that the same relationship obtains generally in the semantics described in the previous section.

\begin{thm}\label{translationthingie}
Let $\mathcal{N}$ be a uniform $P(\mathcal{A})$-valued $L$-structure with quantifier~$\mathbb{Q}$. Then for every non-modal $L$-formula $\varphi(\overline{x})$, one has $\mu(\| \varphi(\overline{x})\|) = \|\varphi^{\Box}(\overline{x})\|_{\mu}$. Hence, for every non-modal $L$-sentence $\varphi$, one has that $\varphi$ is valid in the uniform $P(\mathcal{A})$-valued structure $\mathcal N$ iff $\varphi^{\Box}$ is valid on the modal $\mathbb{B}$-valued $L$-structure $\mu[\mathcal N]$.
\end{thm}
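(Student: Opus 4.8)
The plan is to establish the stronger clause-by-clause identity $\mu(\|\varphi(\overline x)\|) \equiv \|\varphi^{\Box}(\overline x)\|_{\mu}$ by induction on the non-modal formula $\varphi$, with every equivalence \emph{uniform} (witnessed by a fixed element of $\mathcal{A}$ independent of the parameters $\overline a$ and $D$), and then to read off the sentential statement from the embedding properties of $\mu$. Throughout, the left-hand side is computed by first forming $\|\varphi(\overline x)\|$ via Definition~\ref{DefValuationR} and then applying $\mu$, so that $(\mu\|\varphi\|)(\overline a,D)=\ominus_D\|\varphi(\overline a)\|$ by Definition~\ref{jdef}, while the right-hand side is computed directly from Definition~\ref{thetheorem} together with the G\"odel translation of Definition~\ref{GodelTrans}. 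For the base cases — atomics $R(\overline x)$, atomics $t(\overline x)=s(\overline x)$, and $\bot$ — the translation is the identity and the two sides are \emph{literally equal}, since both evaluate to $\ominus_D$ of the corresponding non-modal atomic value.

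For the propositional connectives the inductive step is immediate from the already-proven algebra of $\mu$. Conjunction uses $\mu(f\wedge g)\equiv\mu(f)\wedge\mu(g)$ (line~(\ref{j2})) and disjunction uses $\mu(f\vee g)\equiv\mu(f)\vee^{\pi}\mu(g)$ (line~(\ref{j2.5})), matching the $\wedge$- and $\vee^{\pi}$-clauses of Definition~\ref{thetheorem} after applying the induction hypothesis. The conditional is exactly where the box in $(\varphi\Rightarrow\psi)^{\Box}=\Box(\varphi^{\Box}\Rightarrow\psi^{\Box})$ is produced: by Proposition~\ref{prop:actionconiditionals}, $\mu(\|\varphi\|\Rightarrow\|\psi\|)\equiv\Box(\mu\|\varphi\|\Rightarrow\mu\|\psi\|)$, which by the induction hypothesis and the $\Rightarrow$- and $\Box$-clauses of Definition~\ref{thetheorem} equals $\|\Box(\varphi^{\Box}\Rightarrow\psi^{\Box})\|_{\mu}$. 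The existential case $(\exists z\,\varphi)^{\Box}=\exists z\,\varphi^{\Box}$ is handled by Proposition~\ref{mylittlehelper}: taking $g_c=\mathbb{Q}(c)$ and $f_c=\|\varphi(\overline a,c)\|$ yields $\ominus_D\bigcup_c(\mathbb{Q}(c)\wedge\|\varphi\|)\equiv\ominus_D\bigcup_c(\mathbb{Q}(c)\wedge\ominus_D\|\varphi\|)$, and the right side matches $\|\exists z\,\varphi^{\Box}\|_{\mu}(\overline a,D)$ once the uniform induction hypothesis $\|\varphi^{\Box}(\overline a,c)\|_{\mu}(D)\equiv\ominus_D\|\varphi(\overline a,c)\|$ is substituted under $\ominus_D\bigcup_c$; pushing a uniform equivalence through $\wedge$, $\bigcup$, and $\ominus_D$ is legitimate by the uniformity bookkeeping of Section~\ref{sec:heytingfrompca}.

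The universal case is the main obstacle, because the naive move — commuting $\ominus_D$ past $\mathbb{Q}(c)\Rightarrow(\cdot)$ — is false in general (one has only $\ominus_D(q\Rightarrow x)\leq q\Rightarrow\ominus_D x$, not equality, as a three-element chain shows). Instead I would exploit that $\mu$ is an order-embedding: by line~(\ref{j1}), $\mu(f)\equiv\mu(g)$ iff $f\equiv g$, and since $\Box=\mu\circ\inf$ (Definitions~\ref{inf(f)} and~\ref{defn:boxremark}), proving $\mu(\|\forall z\,\varphi\|)\equiv\Box\|\forall z\,\varphi^{\Box}\|_{\mu}=\mu(\inf\|\forall z\,\varphi^{\Box}\|_{\mu})$ reduces to the $\mathbb{F}(N^n)$-level equivalence
\[ \textstyle\bigcap_{c\in N}(\mathbb{Q}(c)\Rightarrow\|\varphi(\overline a,c)\|)\ \equiv\ \bigcap_{Z\in P(\mathcal{A})}\bigcap_{c\in N}(\mathbb{Q}(c)\Rightarrow\ominus_Z\|\varphi(\overline a,c)\|), \]
where the right side has already absorbed the induction hypothesis inside $\inf$. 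The direction $\leq$ is uniform from $x\leq\ominus_Z x$ (line~(\ref{prop:diamonds2})). The direction $\geq$ is proved in the style of Proposition~\ref{prop:proponintersection}: to hit the $c_0$-th factor, specialize the test parameter to $Z=\|\varphi(\overline a,c_0)\|$ and use $\ominus_Z Z\leq Z$ (line~(\ref{prop:diamonds12})), so that a fixed witness to~(\ref{prop:diamonds12}) composed with the appropriate projection sends the big intersection into $\mathbb{Q}(c_0)\Rightarrow\|\varphi(\overline a,c_0)\|$. This adaptive choice of $Z$, rather than any pointwise commutation, is what makes the quantifier clause go through.

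Finally, the sentential biconditional falls out of the formula identity. For a sentence $\varphi$ (the case $n=0$, where $\mathbb{F}(N^0)$ sits over $\mathbb{B}(N^0)=\mathbb{B}$, cf.\ Proposition~\ref{prop:uniformme}(II)), validity of $\varphi$ in $\mathcal{N}$ means $\|\varphi\|\equiv\top$, and validity of $\varphi^{\Box}$ in $\mu[\mathcal N]$ means $\|\varphi^{\Box}\|_{\mu}\equiv\top$. By the first part $\|\varphi^{\Box}\|_{\mu}\equiv\mu(\|\varphi\|)$, and since $\mu(\top)\equiv\top$ (line~(\ref{j3})) together with the equivalence-reflecting property (line~(\ref{j1})) gives $\mu(\|\varphi\|)\equiv\top$ iff $\|\varphi\|\equiv\top$, the two notions of validity coincide.
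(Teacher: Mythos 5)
Your proposal is correct and takes essentially the same route as the paper's own proof: identical base cases, the same appeal to lines~(\ref{j2}), (\ref{j2.5}) and Proposition~\ref{prop:actionconiditionals} for the connectives, Proposition~\ref{mylittlehelper} for the existential quantifier, and for the universal quantifier the same $\mathbb{F}$-level equivalence $\bigcap_{c}(\mathbb{Q}(c)\Rightarrow\|\varphi(\overline a,c)\|)\equiv\bigcap_{Z}\bigcap_{c}(\mathbb{Q}(c)\Rightarrow\ominus_Z\|\varphi(\overline a,c)\|)$, proved by line~(\ref{prop:diamonds2}) in one direction and the adaptive specialization $Z=\|\varphi(\overline a,c)\|$ via line~(\ref{prop:diamonds12}) in the other. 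The only cosmetic difference is that you lift this equivalence to the modal side by invoking the embedding property~(\ref{j1}), whereas the paper simply applies $\ominus_D$ to both sides using monotonicity~(\ref{prop:diamonds1}); these are the same maneuver in different clothing.
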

\begin{proof}
The proof is by induction on the complexity of $\varphi(\overline{x})$. For base cases  $t(\overline{x})=s(\overline{x})$, $R(\overline x)$, and $\bot$, the result follows immediately from the fact that the G\"odel translation is the identity in these cases. For $\varphi(\overline{x})$ a conjunction, disjunction, or conditional, the result follows from the definitions in the semantics as well as the results in lines~(\ref{j2}), (\ref{j2.5}), and~(\ref{corthebobm}) about how $\mu$ acts on conjunctions, disjunctions, and conditionals.  

For the universal quantifier, for the sake of readability consider the case of $\varphi(x)\equiv \forall \; y \; \psi(x,y)$. By the induction hypothesis $\mu(\|\psi(x,y)\|) \equiv \|\psi^{\Box}(x,y)\|_{\mu}$. Then we must show that  
\begin{equation}
\ominus_D (\bigcap_{b\in N} (\mathbb{Q}(b) \Rightarrow \|\psi(a,b)\|) \equiv \ominus_D \bigcap_{E\in P(\mathcal{A})} \bigcap_{b\in N} (\mathbb{Q}(b)\Rightarrow \ominus_E \|\psi(a,b)\| )
\end{equation}
For the left-to-right reduction, note that we have $(\mathbb{Q}(b) \Rightarrow \|\psi(a,b)\|)\leq (\mathbb{Q}(b)\Rightarrow \ominus_E \|\psi(a,b)\|)$ by line~(\ref{prop:diamonds2}) uniformly in $E$ and $a,b$. Then this persists when taking intersections over $b\in N$ and $E$ from $P(\mathcal{A})$ and by adding the $\ominus_D$ operator to both sides by line~(\ref{prop:diamonds1}). For the converse, first note that by line~(\ref{prop:diamonds12}) we have that $\bigcap_{E\in P(\mathcal{A})} (\bigcap_{b\in N} (\mathbb{Q}(b) \Rightarrow \ominus_E \|\psi(a,b)\|)) \leq \bigcap_{b\in N} (\mathbb{Q}(b) \Rightarrow \|\psi(a,b)\|)$. Hence by applying $\ominus_D$ to both sides, this becomes the desired converse.

For the existential quantifier, again consider the case of $\varphi(x)\equiv \exists \; y \; \psi(x,y)$. Then we evaluate $\mu(\|\exists \; y \; \psi(x,y)\|)(a,D)$ as follows, applying Proposition~\ref{mylittlehelper} in conjunction with the induction hypothesis to obtain that $\ominus_D \bigcup_{b\in N} (\mathbb{Q}(b) \wedge \|\psi(a,b)\|) \equiv \ominus_D \bigcup_{b\in N} (\mathbb{Q}(b) \wedge \ominus_D \|\psi(a,b)\|) \equiv  \ominus_D \bigcup_{b\in N} (\mathbb{Q}(b) \wedge \|\psi^{\Box}(a,b)\|_{\mu}(D))$, which is just $\|\exists \; y \; \psi^{\Box}(a,y)\|_{\mu}(D)$.
\end{proof}

The following theorem provides a way of expanding a structure $\mathcal N$ by adding a new predicate symbol to represent the necessitation of any given formula. This will be useful for many of the proofs in the subsequent sections, since it  implies that if any expansion of $\mathcal{N}$ validates a schema $J(\varphi)$, then the modal structure $\mu[\mathcal{N}]$ validates the schema $J^{\Box}(\Box \varphi)$. 

\begin{thm}\label{prop:changeofbasis}
(Change of Basis Theorem). Let  $\mathcal{N}$ be a uniform $P(\mathcal{A})$-valued $L$-structure with quantifier~$\mathbb{Q}$. Let $\varphi(\overline{x})$ be an $n$-ary modal $L$-formula with all free variables displayed and let $G(\overline{x})$ be a new $n$-ary predicate. Further, define $\mathbf{G}:N^n\rightarrow P(\mathcal{A})$ by $\mathbf{G}(\overline{a}) = \bigcap_{E\in P(\mathcal{A})} \|\varphi(\overline{a})\|_{\mu}(E)$ and let $\mathcal{N}(\mathbf{G})$ be the expansion of $\mathcal{N}$ to an $L\cup \{G\}$-structure by interpreting $G$ by $\mathbf{G}$. Then (i)~$\mathcal{N}(\mathbf{G})$ is a uniform $P(\mathcal{A})$-valued $L\cup \{G\}$-structure with quantifier $\mathbb{Q}$, and (ii)~the valuation of the atomic formula $G(\overline{x})$ on the modal $\mathbb{B}$-valued $L\cup \{G\}$-structure $\mu[N(\mathbf{G})]$ is the same as the valuation of the modal formula $\Box \varphi(\overline{x})$ on the modal $\mathbb{B}$-valued $L$-structure $\mu[\mathcal{N}]$.
\end{thm}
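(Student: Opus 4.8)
The plan is to observe that part~(ii) is essentially immediate from the definitions, while part~(i) is the substantive claim and reduces to the substitution lemma already proved for the modal semantics. I would begin by noting that since $\mathcal{N}$ is already a uniform $P(\mathcal{A})$-valued $L$-structure, every witness demanded by Definition~\ref{defn:uniform} for the $L$-vocabulary (namely $e_{\mathrm{ref1}}, e_{\mathrm{ref2}}, e_{\mathrm{sym}}, e_{\mathrm{tran}}$, the $e_R$, and the $e_f$) transfers verbatim to $\mathcal{N}(\mathbf{G})$, and the quantifier $\mathbb{Q}$ is unchanged. Hence to establish~(i) the only thing left is to produce a single realizer $e_G$ witnessing the congruence clause $e_G : \|a_1 = b_1\| \wedge \cdots \wedge \|a_n = b_n\| \wedge \mathbf{G}(\overline a) \leadsto \mathbf{G}(\overline b)$, uniformly in $\overline a, \overline b$, where $\mathbf{G}(\overline a) = \bigcap_{E \in P(\mathcal{A})} \|\varphi(\overline a)\|_\mu(E)$ is the interpretation of the new atomic $G$.

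To build $e_G$, I would invoke Proposition~\ref{prop:subsnecwow}, which furnishes a uniform realizer $e_\varphi$ for the reduction $\bigwedge_{1\leq i\leq n} \|a_i = b_i\|_\mu(D) \wedge \|\varphi(\overline a)\|_\mu(D) \leadsto \|\varphi(\overline b)\|_\mu(D)$, where this witness is uniform in $D$ as well as in $\overline a, \overline b$. Since $\|a_i = b_i\|_\mu(D) = \ominus_D \|a_i = b_i\|$ by Definition~\ref{thetheorem} and $\|a_i = b_i\| \leq \ominus_D \|a_i = b_i\|$ holds uniformly in $D$ by line~(\ref{prop:diamonds2}), I can precompose to obtain a witness $e'$, still uniform in $D$, for $\bigwedge_i \|a_i = b_i\| \wedge \|\varphi(\overline a)\|_\mu(D) \leadsto \|\varphi(\overline b)\|_\mu(D)$. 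The crucial point is that $e'$ does not depend on $D$: given $m \in \bigwedge_i \|a_i = b_i\|$ and $n \in \mathbf{G}(\overline a)$, the element $n$ lies in $\|\varphi(\overline a)\|_\mu(D)$ for every $D$, so $pmn$ realizes the antecedent at every $D$ simultaneously, whence $e'(pmn)$ lies in $\|\varphi(\overline b)\|_\mu(D)$ for every $D$, i.e.\ in the intersection $\mathbf{G}(\overline b)$. Packaging this with a pca term chosen via Proposition~\ref{prop:onpcas} that extracts the two components $p_0, p_1$ of a realizer of the conjunction and applies $e'$ yields the desired $e_G$.

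For part~(ii) I would simply unwind the two valuations. On $\mu[\mathcal{N}(\mathbf{G})]$ the atomic clause of Definition~\ref{thetheorem} gives $\|G(\overline a)\|_\mu(D) = \ominus_D \|G(\overline a)\| = \ominus_D \mathbf{G}(\overline a) = \ominus_D \bigcap_{E \in P(\mathcal{A})} \|\varphi(\overline a)\|_\mu(E)$. On $\mu[\mathcal{N}]$ the box clause of Definition~\ref{thetheorem} together with Definition~\ref{defn:boxremark} gives $\|\Box \varphi(\overline a)\|_\mu(D) = \ominus_D \bigcap_{Z \in P(\mathcal{A})} \|\varphi(\overline a)\|_\mu(Z)$. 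These are the same expression, so the two valuations coincide. I would note along the way that since $G$ does not occur in the $L$-formula $\varphi$, the valuation $\|\varphi\|_\mu$ is computed identically in $\mu[\mathcal{N}]$ and in $\mu[\mathcal{N}(\mathbf{G})]$ by compositionality, so there is no circularity in $\mathbf{G}$ being defined from $\|\varphi\|_\mu$.

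The only real work is in part~(i), and the one place demanding care is the passage from a reduction holding uniformly in $D$ to a reduction into the $D$-indexed intersection defining $\mathbf{G}$; this is exactly the uniformity phenomenon emphasized in \S\ref{sec:heytingfrompca}, and the argument goes through precisely because Proposition~\ref{prop:subsnecwow} delivers a single witness good for all $D$ at once.
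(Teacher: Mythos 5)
Your proposal is correct and follows essentially the same route as the paper: part~(i) is established by taking the uniform-in-$D$ witness from Proposition~\ref{prop:subsnecwow}, precomposing with the reduction $\|a_i=b_i\|\leq\ominus_D\|a_i=b_i\|=\|a_i=b_i\|_\mu(D)$ from line~(\ref{prop:diamonds2}), and observing that any element of the intersection $\mathbf{G}(\overline a)$ realizes $\|\varphi(\overline a)\|_\mu(D)$ for every $D$ simultaneously, so the output lands in the intersection $\mathbf{G}(\overline b)$; and part~(ii) is the same unwinding of the atomic clause, the definition of $\mathbf{G}$, and the definition of $\Box$. Your added remark that the $L$-witnesses transfer verbatim and that there is no circularity because $G$ does not occur in $\varphi$ is a harmless (and correct) elaboration of points the paper leaves implicit.
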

\begin{proof}
For (i), we need to ensure that there is an element $e$  of $\mathcal{A}$ such that for all $a_1, \ldots, a_n, b_1, \ldots, b_n$ from $N$, the element $e$ is a witness to the following reduction: $\|a_1=b_1\| \wedge \cdots \wedge \|a_n=b_n\| \wedge \| G(a_1, \ldots, a_n)\| \leq \|G(b_1, \ldots, b_n)\|$. Let $e$ be an element of $\mathcal{A}$ which is a witness to the following, uniformly in $D$ from Proposition~\ref{prop:subsnecwow}: $\|a_1=b_1\|_{\mu}(D) \wedge \cdots \wedge \|a_n=b_n\|_{\mu}(D) \wedge \|\varphi(a_1, \ldots, a_n)\|_{\mu}(D)\leq \|\varphi(b_1, \ldots, b_n)\|_{\mu}(D)$.
Then we have the following:
\begin{align}
& \|a_1=b_1\| \wedge \cdots \wedge \|a_n=b_n\| \wedge \|G(a_1, \ldots, a_n)\| \label{eqn:bigone1}\\ 
\leq & \|a_1=b_1\| \wedge \cdots \wedge \|a_n=b_n\| \wedge \bigcap_{E\in P(\mathcal{A})} \|\varphi(a_1, \ldots, a_n)\|_{\mu}(E) \label{eqn:bigone2}\\ 
 \leq & \ominus_D \|a_1=b_1\| \wedge \cdots \wedge \ominus_D \|a_n=b_n\| \wedge  \|\varphi(a_1, \ldots, a_n)\|_{\mu}(D) \label{eqn:bigone3}\\
 \leq & \|a_1=b_1\|_{\mu}(D) \wedge\cdots \wedge \|a_n=b_n\|_{\mu}(D) \wedge  \|\varphi(a_1, \ldots, a_n)\|_{\mu}(D) \label{eqn:bigone4}\\ 
 \leq & \|\varphi(b_1, \ldots, b_n)\|_{\mu}(D)\label{eqn:bigone5}
\end{align}
In this, line~(\ref{eqn:bigone2}) follows from interpreting $G$ by $\mathbf{G}$.  Further, line~(\ref{eqn:bigone3}) follows on its first $n$-components from line~(\ref{prop:diamonds2}) and on the last component by the identity function (since we're dealing with an intersection). Finally, line~(\ref{eqn:bigone4}) follows from the semantics for identity in the modal structure, while line~(\ref{eqn:bigone5}) follows from the property of element~$e$. The reduction from~(\ref{eqn:bigone1}) to (\ref{eqn:bigone5}) then suffices by taking intersections over all $D$ from $P(\mathcal{A})$ (since $\mathbf{G}$ is defined as an intersection). So this completes the verification that $\mathcal{N}(\mathbf{G})$ is a uniform $P(\mathcal{A})$-valued $L\cup \{G\}$-structure. 

For part~(ii) of the proposition, simply note the following, where for the sake of disambiguation we superscript all the valuations with names for their structures:
\begin{equation}
\|G(\overline{a})\|_{\mu}^{\mu[\mathcal{N}(\mathbf{G})]} (D) = \ominus_D \|G(\overline{a})\|^{\mathcal{N}(\mathbf{G})} = \ominus_D \bigcap_{E\in P(\mathcal{A})} \|\varphi(\overline{a})\|_{\mu}^{\mu[\mathcal{N}]}(E) = \|\Box \varphi(\overline{a})\|_{\mu}^{\mu[\mathcal{N}]} 
\end{equation}
In this equation, the first equality follows from the interpretation of atomics in the modal structures, the second follows from the definition of $\mathbf{G}$ which serves as the interpretation of $G$, and the last comes from the definition of the box (cf. Definition~\ref{inf(f)}).
\end{proof}



 \section{Epistemic Arithmetic and Epistemic Church's Thesis}\label{sec:arithmetic}

Let $f_1, f_2, \ldots$ be a standard enumeration of the primitive recursive functions, and let $L_{0}$ be the signature $\{0,S, f_1, f_2,\ldots\}$. This is the signature of Heyting arithmetic $\mathsf{HA}$ (cf. \citet[volume 1 p. 126]{Troelstra1988aa}). In this section we'll work exclusively with Kleene's first model $\mathcal{K}_1$ (cf. \S\ref{sec:heytingfrompca}), and the following structure shall be the focus of our study:
\begin{definition}
Let $\mathcal{N}_{0}$ be the uniform $P(\mathcal{K}_1)$-valued $L_{0}$-structure with domain $N=\omega$ and quantifier $\mathbb{Q}(n)=\{n\}$, wherein $0$ and the primitive recursive functions are interpreted as themselves, $S$ is interpreted as successor, and  equality is interpreted disjunctively: 
\begin{equation}\label{eqn:myhowidentity}
\| n=m \| =
\begin{cases}
\top      & \text{if $n=m$}, \\
\bot      & \text{if $n\neq m$}.
\end{cases}
\end{equation}
\end{definition}
\noindent To see that this is indeed a uniform $P(\mathcal{K}_1)$-valued structure, note that $e_{\mathrm{ref1}}, e_{\mathrm{sym}}, e_{\mathrm{tran}}$ can simply be taken to be indexes for the identity function, while $e_{\mathrm{ref2}}$ can be taken to be an index for the function which sends everything to zero. Note that in the terminology of Definition~\ref{defn:Qproperties}, the quantifier $\mathbb{Q}(n)=\{n\}$ is non-degenerate, non-uniform, non-classical, and term-friendly. It's non-degenerate because the union of the $\mathbb{Q}(n)$ is non-empty, and by the same token it's non-uniform because the intersection of the $\mathbb{Q}(n)$ is empty. To see that it is non-classical, suppose there were an index $e$ witnessing $e:\omega\leadsto \{n\}$ for each $n$. Then, e.g., $e0$ would be an element of both $\{0\}$ and $\{1\}$. Finally, this quantifier is term-friendly because all the terms $n\mapsto t(n)$ in the language determine a recursive function with index $e_t$, which witnesses $e_t: \{n\}\leadsto \{t(n)\}$. 

Let $L$ be an expansion of $L_{0}$ by any number of new relation or function symbols. Then an expansion $\mathcal{N}$ of $\mathcal{N}_0$ is given by specifying maps $\mathbf{G}: \omega^n\rightarrow P(\mathcal{K}_1)$ to provide the interpretation of $\mathbf{G}=\|G(\overline{x})\|$ for each new relation symbol, along with interpretations of the new function symbols. It's easy to see that \emph{any} such choice will produce a uniform $P(\mathcal{K}_1)$-valued $L$-structure. For ease of readability, suppose that a new predicate $G(x)$ is unary. We must show that there is a uniform witness to the reduction $\|m = n \| \wedge \|G(m)\| \leq \|G(n)\|$ for all $n,m\geq 0$. If $\|m = n \|$ is empty, then any index will be a witness to the reduction. However, if it's not empty, then $m=n$ and the sets $\|G(m)\|$ and $\|G(n)\|$ are equal, so the second projection function is a witness to the reduction. A similar argument works for the new atomics produced by new function symbols. However, note that if one expands the structure by symbols for non-recursive functions, then one will no longer have witnesses for the quantifiers being term-friendly. Hence, in this section, we work with expansions of the signature $L_0$ to signatures $L$ by new constant, relation, and function symbols, and we restrict attention to $L$-structures  $\mathcal{N}$ which are expansions of the $L_0$-structure $\mathcal{N}_0$ where the new functions are interpreted by \emph{recursive} functions.

The first result is that theorems of Heyting arithmetic have top value on these structures. If $L$ is an expansion of $L_0$, then of course $\mathsf{HA}$ in that signature simply contains, in addition, the instances of the induction schema in that signature.
\begin{prop}\label{prop:heytingaxioms}
 If $\varphi$ is an $L$-sentence such that $\mathsf{HA}\vdash \varphi$, then $\varphi$ is valid on $\mathcal{N}$.
\end{prop}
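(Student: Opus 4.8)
The goal is to show that every theorem of $\mathsf{HA}$ (in the expanded signature $L$) has top value on the uniform $P(\mathcal{K}_1)$-valued structure $\mathcal{N}$. Since $\mathsf{HA}$ is axiomatized over intuitionistic predicate logic $\mathsf{IQC}$, the natural strategy is to appeal to the Soundness Proposition for the non-modal structures, namely Proposition~\ref{prop:soundness}, which tells us that if a set of sentences is valid on $\mathcal{N}$ and intuitionistically entails $\varphi$, then $\varphi$ is also valid. So the whole task reduces to verifying that the \emph{non-logical axioms} of $\mathsf{HA}$ — the defining equations for $0$, $S$, and the primitive recursive function symbols $f_i$, together with every instance of the induction schema — each have top value on $\mathcal{N}$.

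**The axioms of PRA-style equations.** First I would handle the defining (purely universal, quantifier-free-matrix) axioms governing $0$, $S$, and the $f_i$. Because equality on $\mathcal{N}_0$ is interpreted disjunctively via~(\ref{eqn:myhowidentity}), an atomic equation $\|t(\overline n)=s(\overline n)\|$ is simply $\top$ when $t,s$ evaluate to the same natural number and $\bot$ otherwise. Since $0,S$, and the $f_i$ are interpreted \emph{as themselves} (genuine functions on $\omega$), each such defining equation is literally \emph{true} in the standard model $\omega$, so the corresponding value is $\top$ on every tuple of numerals. Thus each universally quantified defining equation $\forall\,\overline x\,(t(\overline x)=s(\overline x))$ receives value $\bigcap_{\overline c}(\mathbb{Q}(\overline c)\Rightarrow\top)$, which is $\equiv\top$; a constant index realizes it uniformly. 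Here I would use that $\mathbb{Q}$ is term-friendly and non-degenerate (as checked for $\mathcal{N}_0$) so that Proposition~\ref{prop:soundness} applies, and I would note that these properties persist in the expansion $\mathcal{N}$ precisely because we have restricted to interpretations of new function symbols by \emph{recursive} functions.

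**The induction schema — the main obstacle.** The real work is the induction schema, and this is where I expect the difficulty to lie. For a formula $\psi(x,\overline p)$ one must show that
\begin{equation*}
\big(\psi(0,\overline p)\wedge\forall\,x\,(\psi(x,\overline p)\Rightarrow\psi(Sx,\overline p))\big)\;\Rightarrow\;\forall\,x\,\psi(x,\overline p)
\end{equation*}
has top value uniformly. The natural approach is to exhibit a single realizer $e$ that, given realizers for the base case and the induction step, produces by a primitive-recursion (or recursion-theorem) construction a realizer for $\|\forall\,x\,\psi(x,\overline p)\|=\bigcap_{n}(\mathbb{Q}(n)\Rightarrow\|\psi(n,\overline p)\|)$. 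Because $\mathbb{Q}(n)=\{n\}$, a realizer for the universal must, on input the numeral $n$, output a realizer for $\|\psi(n,\overline p)\|$; one builds this by recursion on $n$, using the base-case realizer at $n=0$ and iterating the step realizer $n$ times. The delicacy is threefold: (i) the iteration must be uniform in $n$ and in the parameters $\overline p$, which is exactly what the recursion theorem / $s$-$m$-$n$ machinery for $\mathcal{K}_1$ supplies; (ii) one must correctly thread the realizer of $\mathbb{Q}(n)=\{n\}$ (i.e.\ $n$ itself) through the step; and (iii) the step realizer expects a realizer for $\|\psi(k)\|$ and returns one for $\|\psi(Sk)\|$, so one must confirm that $\|\psi(Sn,\overline p)\|$ is literally the value obtained by feeding the numeral $Sn$, using that $S$ is interpreted as genuine successor. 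I would carry this out by choosing, via Proposition~\ref{prop:onpcas} and the recursion theorem in $\mathcal{K}_1$, an index $e$ satisfying a recursion of the shape $e\,b\,s\,0 = b$ and $e\,b\,s\,(n{+}1) = s\,n\,(e\,b\,s\,n)$, and then verifying by induction on $n$ (in the metatheory) that $e\,b\,s\,n$ lands in $\|\psi(n,\overline p)\|$ whenever $b$ realizes the base case and $s$ realizes the step. Having validated all the non-logical axioms, an appeal to Proposition~\ref{prop:soundness}(II) — together with the standard fact that $\mathsf{HA}$ proves $\varphi$ using only these axioms over $\mathsf{IQC}$ — delivers the validity of any $\mathsf{HA}$-theorem $\varphi$, completing the proof.
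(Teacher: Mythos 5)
Your proposal is correct and follows essentially the same route as the paper's own proof: reduce to the non-logical axioms via Proposition~\ref{prop:soundness}, dispose of the defining equations using the disjunctive interpretation of equality, and realize the induction schema by a recursively defined iterator $j(e_0,e_1)0=e_0$, $j(e_0,e_1)(n+1)=(e_1 n)(j(e_0,e_1)(n))$, verified by metatheoretic induction on $n$. Your recursion $e\,b\,s\,(n{+}1)=s\,n\,(e\,b\,s\,n)$ is exactly the paper's construction, including the correct threading of $n$ as the realizer of $\mathbb{Q}(n)=\{n\}$ into the step realizer.
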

\begin{proof}
By Proposition~\ref{prop:soundness}, it suffices to ensure that the axioms of $\mathsf{HA}$ in the expanded signature are valid on the structure. For the axiom $S0\neq 0$, note that $\|S0=0\|$ is empty and so any index is a witness to $\|S0{=}0  \Rightarrow  \bot\|$. The identity function can again be used to verify the universal closures of any of the defining equations for the primitive recursive functions. Further, for induction, it suffices to find a witness to the following:
\begin{equation}\label{eqn:inductioninthenats}
\| \varphi(0) \| \wedge \bigcap_{n\geq 0} (\mathbb{Q}(n)\Rightarrow (\|\varphi(n)\|\Rightarrow \|\varphi(Sn)\|))\leq \bigcap_{n\geq 0} (\mathbb{Q}(n) \Rightarrow \|\varphi(n)\|)
\end{equation}
So let $p e_0 e_1$ be in the antecedent. Simply choose a recursive function $j$ such that $j(e_0, e_1) 0=e_0$ and $j(e_0, e_1)(n+1)= (e_1 n)( j(e_0, e_1) (n))$. Then one can verify by induction on $n\geq 0$ that $j(e_0, e_1)$ is in $\mathbb{Q}(n) \Rightarrow \|\varphi(n)\|$. Since $\mathbb{Q}(n)=\{n\}$, this is just to verify that for all $n\geq 0$, one has $j(e_0,e_1)(n) \in \|\varphi(n)\|$. For $n=0$, one has that $j(e_0, e_1)0 = e_0$ which is an element of $\|\varphi(0)\|$ by hypothesis. Suppose the result holds for $n$, so that $j(e_0, e_1)n\in \|\varphi(n)\|$. Let $\ell = j(e_0, e_1)n$. Since $n\in \mathbb{Q}(n)$ and $\ell\in \|\varphi(n)\|$, it follows by the hypothesis on $e_1$ that $e_1 n \ell\in  \|\varphi(n+1)\|$. But by definition of $j$ and $\ell$, this is just to say that $j(e_0, e_1) (n+1)\in \|\varphi(n+1)\|$.  
\end{proof}

In these next results, we appeal often to the G\"odel translation (cf. Definition~\ref{GodelTrans}) and to its simple  consequences that we noted circa~(\ref{eqn:remarkgodelatomics}).
\begin{thm}\label{thm:EA}
All the theorems of $\mathsf{EA}^{\circ}$ are valid on the modal structure $\mu[\mathcal{N}]$. Further, each instance of the following modal analogue of the induction axiom is valid on the modal structure $\mu[\mathcal{N}]$, where $\theta(x)$ can be any modal formula:
\begin{equation}\label{eqn:inductionmodal}
[\Box \; \theta(0) \wedge \Box \; \forall \; x \; (\Box \; \theta(x) \Rightarrow \Box \; \theta(Sx))]\Rightarrow [\Box \; \forall \; x \; \theta(x)]
\end{equation}
Finally, on the modal structure $\mu[\mathcal{N}]$, the following is not valid (cf. (\ref{eqn:UniversalInstantiationAxiom2})), in that it does not have top value uniformly in $y$:
\begin{equation}\label{eqn:Epredcounter}
(\forall \; x \; \mathsf{E}(x)) \Rightarrow \mathsf{E}(y)
\end{equation}
where recall $\mathsf{E}(x)$ denotes the existence predicate, which was defined immediately following~(\ref{eqn:subs}).
\end{thm}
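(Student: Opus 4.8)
The plan is to establish the three claims in order, drawing on the soundness and translation machinery already built up. For the first claim, that all theorems of $\mathsf{EA}^{\circ}$ are valid on $\mu[\mathcal{N}]$, I would combine Theorem~\ref{prop:S4soundness} with Proposition~\ref{prop:heytingaxioms}. Recall $\mathsf{EA}^{\circ}$ is $Q^{\circ}_{eq}.\mathsf{S4}$ plus $\mathsf{PA}$. Since $\mathcal{N}$ has a non-degenerate quantifier (noted for $\mathcal{N}_0$ and inherited by expansions), Theorem~\ref{prop:S4soundness} already delivers all theorems of $Q^{\circ}_{eq}.\mathsf{S4}+\mathsf{CBF}$. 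So what remains is the arithmetic content, i.e.\ the $\mathsf{PA}$ axioms in their modal guise. The natural route is via the G\"odel translation: Proposition~\ref{prop:heytingaxioms} shows every theorem of $\mathsf{HA}$ has top value on the non-modal structure $\mathcal{N}$, and Theorem~\ref{translationthingie} then transfers this to the validity of $\varphi^{\Box}$ on $\mu[\mathcal{N}]$. The step requiring care is matching the modal induction axioms in the extended language (which $\mathsf{EA}^{\circ}$ includes by Definition~\ref{defn:EA}) against what the translation gives; for purely arithmetic $\varphi$ the translation handles induction, but one must separately argue induction for \emph{all} modal formulas, which motivates the second claim.

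For the second claim, the validity of the modal induction axiom~(\ref{eqn:inductionmodal}) for arbitrary modal $\theta(x)$, I would argue directly in the manner of Proposition~\ref{prop:heytingaxioms}'s induction argument but now inside the Boolean prealgebra, exploiting that the formula is stated entirely in terms of $\Box\theta$. The key observation is that $\|\Box\theta(x)\|_{\mu}$ is of the form $\mu(h)$ for $h=\inf\|\theta(x)\|_{\mu}$, so by Proposition~\ref{S4}.ii these are precisely the fixed points of $\Box$, and the embedding $\mu$ preserves the relevant structure by Proposition~\ref{eqn:embedthn}. Concretely, I would invoke the Change of Basis Theorem (Theorem~\ref{prop:changeofbasis}): introduce a fresh predicate $G(x)$ interpreted by $\mathbf{G}(n)=\bigcap_{E}\|\theta(n)\|_{\mu}(E)$, so that $\|G(x)\|_{\mu}=\|\Box\theta(x)\|_{\mu}$. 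Then the modal induction axiom~(\ref{eqn:inductionmodal}) becomes the G\"odel translation of ordinary induction for the atomic $G$ on the expanded structure $\mathcal{N}(\mathbf{G})$, whose validity follows from Proposition~\ref{prop:heytingaxioms} applied in the expanded signature, then transferred by Theorem~\ref{translationthingie}. This is the cleanest path, since it reduces a modal induction to a non-modal one for an atomic predicate.

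For the third and most delicate claim, the failure of~(\ref{eqn:Epredcounter}), I would compute the valuation of $\mathsf{E}(y)\equiv \exists\,z\,(z=y)$ explicitly. The point is that the quantifier $\mathbb{Q}(n)=\{n\}$ is non-uniform, so existence is not uniformly witnessed. By the semantics, $\|\mathsf{E}(y)\|_{\mu}(D)=\ominus_D\bigcup_{c\in\omega}(\mathbb{Q}(c)\wedge\|c=y\|)$, and since $\|c=y\|$ is $\top$ exactly when $c=y$ and $\bot$ otherwise, this collapses to $\ominus_D\{y\}$, i.e.\ the value depends genuinely on $y$ through the singleton $\{y\}$. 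Meanwhile $\|\forall\,x\,\mathsf{E}(x)\|_{\mu}(D)=\bigcap_{c}(\mathbb{Q}(c)\Rightarrow\|\mathsf{E}(c)\|_{\mu}(D))$, which I would compute to be essentially $\bigcap_c(\{c\}\Rightarrow\ominus_D\{c\})$; a uniform witness here exists via line~(\ref{prop:diamonds2}). The failure of uniformity then comes down to showing there is no single index $e$ with $e:\|\forall x\,\mathsf{E}(x)\|_{\mu}(D)\leadsto\ominus_D\{y\}$ for all $y$ and $D$ simultaneously.

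The main obstacle is precisely this last non-existence of a uniform witness, and I expect the argument to mirror the diagonal contradictions used in the $\mathsf{S5}$-failure proofs (Propositions~\ref{S5 lemma} and following). The strategy is to suppose such an $e$ exists, feed it a fixed realizer of the antecedent, and then vary $y$ (and possibly $D$) to force $e$ to output, on a single input, two distinct natural numbers $n\neq m$ lying in disjoint singletons $\{y_1\}$ and $\{y_2\}$. Because $\mathbb{Q}(c)=\{c\}$ pins the quantifier witness to the numeral itself, the would-be uniform realizer would have to recover $y$ from data that does not determine $y$, yielding the contradiction. This is the genuinely non-mechanical step, since everything else reduces to applications of the general soundness and translation theorems; the care lies in correctly identifying the fixed antecedent realizer and exhibiting the two values of $y$ that break uniformity.
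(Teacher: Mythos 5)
Your treatments of the second and third claims are essentially the paper's own: the boxed induction schema~(\ref{eqn:inductionmodal}) is obtained from the G\"odel translation of ordinary induction for a fresh atomic $G$ together with the Change of Basis Theorem~\ref{prop:changeofbasis}, and the failure of~(\ref{eqn:Epredcounter}) is obtained by computing $\|\mathsf{E}(y)\|_{\mu}(D)\equiv\ominus_D\{y\}$, producing a uniform realizer of the antecedent from~(\ref{prop:diamonds2}), and then deriving a diagonal contradiction; to finish that last step one takes $D=\{y\}$ and applies the would-be uniform witness, composed with the antecedent realizer, to an index for the identity map, which would place a single fixed number in $\{y\}$ for every $y$.

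The gap is in the first claim. You correctly observe that $\mathsf{EA}^{\circ}$ (Definition~\ref{defn:EA}) includes induction axioms for \emph{all} formulas of the extended modal language, and that the G\"odel-translation route (Proposition~\ref{prop:heytingaxioms} plus Theorem~\ref{translationthingie}) only covers induction for non-modal formulas. But you then defer this issue to your second claim, and your second claim only establishes the \emph{boxed} schema~(\ref{eqn:inductionmodal}). That schema does not yield the unboxed induction $[\theta(0)\wedge\forall x\,(\theta(x)\Rightarrow\theta(Sx))]\Rightarrow\forall x\,\theta(x)$ for an arbitrary modal $\theta$: its antecedent requires $\Box\,\theta(0)$ and $\Box\,\forall x\,(\Box\,\theta(x)\Rightarrow\Box\,\theta(Sx))$, which are strictly stronger than $\theta(0)$ and $\forall x\,(\theta(x)\Rightarrow\theta(Sx))$ since $\theta$ need not be stable; and the Change of Basis trick is unavailable here because the unboxed antecedent cannot be rewritten in terms of the atomic $G$ interpreting $\Box\,\theta$. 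So the induction axioms of $\mathsf{EA}^{\circ}$ for modal formulas are never actually proved in your proposal. The repair is direct, and is what the paper does: exhibit a uniform witness, uniformly in $D$, to the reduction $\|\varphi(0)\|_{\mu}(D)\wedge\bigcap_{n}(\mathbb{Q}(n)\Rightarrow(\|\varphi(n)\|_{\mu}(D)\Rightarrow\|\varphi(Sn)\|_{\mu}(D)))\leq\bigcap_{n}(\mathbb{Q}(n)\Rightarrow\|\varphi(n)\|_{\mu}(D))$ for arbitrary modal $\varphi$. The same recursion-theorem index $j$ constructed in the proof of Proposition~\ref{prop:heytingaxioms} works verbatim, because that construction and its verification by induction on $n$ use nothing about the sets $\|\varphi(n)\|$ beyond their indexing by $n$, so they apply equally to the family $\|\varphi(n)\|_{\mu}(D)$ with $D$ held fixed.
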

\begin{proof}
As consequences of Heyting arithmetic, the axioms of Robinson's~Q are valid in the $P(\mathcal{K}_1)$-valued structure $\mathcal{N}$. Then they are valid on the modal structure $\mu[\mathcal{N}]$ since they are obviously implied by their G\"odel translation. For instance, the G\"odel translation of Q2 is equivalent to $\Box \; \forall \; x, y \; \Box (Sx=Sy \Rightarrow x=y)$, which implies Q2 by an application of the $\mathsf{T}$-axiom. For the induction axiom, we must show that for all modal formulas $\varphi(x)$ we have a uniform witness to the following reduction, uniform in $D$:
\begin{equation}\label{eqn:inductioninthenats2}
\| \varphi(0) \|_{\mu}(D) \wedge \bigcap_{n\geq 0} (\mathbb{Q}(n)\Rightarrow (\|\varphi(n)\|_{\mu}(D)\Rightarrow \|\varphi(Sn)\|_{\mu}(D)))\leq \bigcap_{n\geq 0} (\mathbb{Q}(n) \Rightarrow \|\varphi(n)\|_{\mu}(D))
\end{equation}
But the same index used to verify~(\ref{eqn:inductioninthenats}) also works in this case. As for~(\ref{eqn:inductionmodal}), this follows from the G\"odel translation of the induction axiom in conjunction with the Change of Basis Theorem~\ref{prop:changeofbasis}. 

As for~(\ref{eqn:Epredcounter}), suppose not. Then there would be index which witnesses the following reduction, uniformly in $n\geq 0$ and $D$ from $P(\mathcal{K}_1)$:
\begin{equation}
\bigcap_{m\geq 0} (\mathbb{Q}(m) \Rightarrow \ominus_D \bigcup_{\ell\geq 0} (\mathbb{Q}(\ell) \wedge \ominus_D \|m=\ell\|)) \leq \ominus_D \bigcup_{\ell\geq 0} (\mathbb{Q}(n) \wedge \ominus_D \|n=\ell\|)
\end{equation}
By appealing to Proposition~\ref{mylittlehelper}, this is equivalent to:
\begin{equation}
\bigcap_{m\geq 0} (\mathbb{Q}(m) \Rightarrow \ominus_D \bigcup_{\ell\geq 0} (\mathbb{Q}(\ell) \wedge \|m=\ell\|)) \leq \ominus_D \bigcup_{\ell\geq 0} (\mathbb{Q}(n) \wedge \|n=\ell\|)
\end{equation}
Then by the semantics for identity, there would then be an $e$ witnessing the reduction $\bigcap_{m\geq 0} (\mathbb{Q}(m) \Rightarrow \ominus_D \mathbb{Q}(m)) \leq \ominus_D \mathbb{Q}(n)$, uniformly in $n\geq 0$ and $D$ from $P(\mathcal{K}_1)$. But by~(\ref{prop:diamonds2}), choose $e^{\prime}$ such that $e^{\prime}$ is in $(\mathbb{Q}(m) \Rightarrow \ominus_D \mathbb{Q}(m))$ for all $m\geq 0$ and all $D$ in $P(\mathcal{K}_1)$. But then $ee^{\prime}$ is an element of $\ominus_D \mathbb{Q}(n)$ for all $n\geq 0$ and $D$ in $P(\mathcal{K}_1)$. Letting $i$ be an index for the identity map, by choosing $D=\mathbb{Q}(n)$, one has $ee^{\prime}i$ is an element of $\mathbb{Q}(n)=\{n\}$ for all $n\geq 0$, a contradiction.
\end{proof}

Let us now record some further notation. In the arithmetical setting, the $\Delta_0$-formulas are the smallest class containing the atomics, closed under the propositional connectives and closed under bounded quantifiers---i.e., if $\varphi(x)$ is $\Delta_0$, then so are $\exists \; x<y \; \varphi(x)$ and $\forall \; x<y \; \varphi(x)$, which are respectively abbreviations for $\exists \; x \; (x<y \; \wedge \; \varphi(x))$ and $\forall \; x \; (x<y\Rightarrow \varphi(x))$. Of course, in the setting of Heyting arithmetic, we take $<$ to be defined as a certain atomic, but it provably has all of the usual features, e.g., is a linear ordering etc. (cf. \citet[volume 1 pp. 124 ff]{Troelstra1988aa}). Just as one can show, in Heyting arithmetic, that the law of the excluded middle holds for quantifier-free formulas (cf. \citet[volume 1 pp. 128]{Troelstra1988aa}), so can one show the same of the $\Delta_0$-formulas. 

Similarly, in the arithmetical setting, the $\Sigma_1$-formulas are formulas of the form $\exists \; \overline{x} \; \varphi(\overline{x})$ where $\varphi(\overline{x})$ is $\Delta_0$, and the $\Pi_1$-formulas are formulas of the form $\forall \; \overline{x} \; \varphi(\overline{x})$, where $\varphi(\overline{x})$ is $\Delta_0$. If $\mathbb{N}$ is the standard model of arithmetic, then the usual argument from G\"odel's incompleteness theorems implies that if $R\subseteq \mathbb{N}^n$ is definable by both a $\Sigma_1$-formula $\varphi(\overline{x})$ and a $\Pi_1$-formula $\psi(\overline{x})$, then for all $\overline{n}$ from $\mathbb{N}^n$ one has that $R(\overline{n})$ implies $\mathsf{HA}\vdash \varphi(\overline{n})$, while $\neg R(\overline{n})$ implies $\mathsf{HA}\vdash \neg \psi(\overline{n})$. This circumstance is sometimes expressed by saying that the number-theoretic relation $R$ is \emph{strongly representable} in the theory $\mathsf{HA}$. By Proposition~\ref{prop:heytingaxioms}, it then follows further that $R(\overline{n})$ implies $\varphi(\overline{n})$ is valid in $\mathcal{N}$, while $\neg R(\overline{n})$ implies $\neg \psi(\overline{n})$ is valid $\mathcal{N}$. Hence, we may take any $\Delta_1$-definable predicate over the natural numbers to be definable in the structure $\mathcal{N}$, and we will express this as the \emph{strong representability} of $\Delta_1$-definable predicates in the structure $\mathcal{N}$.

The usual argument (cf. \citet[volume 1 p. 199, volume 2 pp. 725-726]{Troelstra1988aa}) then shows: \begin{prop}
Church's thesis is valid on the structure $\mathcal{N}$:
\begin{equation}\label{eqn:CTinFlaggarithmeticsection}
[\forall \; n \; \exists \; m \; \varphi(n,m)] \; \; \Rightarrow [\exists \; e \;  \forall \; n \; \exists \; m \; \exists \; p \; (T(e,n,p) \wedge U(p,m) \wedge \varphi(n,m))]
\end{equation}
\end{prop}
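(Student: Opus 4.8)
The plan is to unwind the realizability semantics of Definition~\ref{DefValuationR} under the specific choices $\mathbb{Q}(n) = \{n\}$ and the disjunctive interpretation of equality~(\ref{eqn:myhowidentity}), and then to run the classical Kleene-style realizability argument for Church's Thesis inside the pca $\mathcal{K}_1$. Since ``valid'' means value $\equiv \top$, i.e.\ the value has a realizer, it suffices to produce a single element $R$ of $\mathcal{K}_1$ witnessing the reduction from the value of the antecedent to that of the consequent, that is, $R \colon \|\forall\, n\, \exists\, m\, \varphi(n,m)\| \leadsto \|\exists\, e\, \forall\, n\, \exists\, m\, \exists\, p\,(\cdots)\|$.

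First I would record what realizing each side amounts to. Using $\{n\} \Rightarrow Y = \{c : cn\!\downarrow \in Y\}$ together with the definitions of $\wedge$ and the existential clause (Definition~\ref{recHeytingpre} and Definition~\ref{DefValuationR}), an element $a$ realizes the antecedent iff for every $n$ one has $an\!\downarrow = p\,m\,b$ with $m = p_0(an)$ and $b = p_1(an) \in \|\varphi(n,m)\|$. In particular $an\!\downarrow$ for all $n$, so $n \mapsto p_0(an)$ is a \emph{total} recursive function. The heart of the argument is the extraction of an index for it: by the $s$-$m$-$n$ theorem (cf.\ the combinatory completeness of Proposition~\ref{prop:onpcas}) there is, via a total recursive $g$ uniform in $a$, a number $e = g(a)$ with $\{e\}(n) = p_0(an) = m$ and the computation halting for every $n$.

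Next I would supply the realizers for the consequent. Since the primitive recursive predicates $T$ and $U$ are decidable and strongly representable in $\mathcal{N}$ (as recorded just before the proposition), and since $\{e\}(n)$ halts with value $m$, there is a computation code $q$ with $T(e,n,q)$ and $U(q,m)$ true, whence $\|T(e,n,q)\| \equiv \top$ and $\|U(q,m)\| \equiv \top$, each with a fixed trivial realizer; and $\varphi(n,m)$ is realized by $p_1(an)$. For fixed $n$ these pair up (via the pca pairing) to a realizer of the conjunction, and prefixing the witnesses $m$ and $q$ according to the existential clause yields a realizer of $\exists\, m\, \exists\, q\,(\cdots)$, computed effectively from $a$ and $n$. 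A further application of $s$-$m$-$n$ gives, via a total recursive $g'$, an index $g'(a)$ realizing $\forall\, n\, \exists\, m\, \exists\, q\,(\cdots)$, so that $Ra = p\,(g(a))\,(g'(a))$ realizes the outer existential $\exists\, e\,(\cdots)$, and $R$ is the sought uniform realizer. (I have renamed the computation-code variable $p$ of~(\ref{eqn:CTinFlaggarithmeticsection}) to $q$ to avoid clashing with the pairing element $p$ of the pca.)

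The main obstacle --- really the only point needing care --- is the uniformity-and-totality bookkeeping: one must check that $e$ and its accompanying realizer arise from $a$ by total recursive functions $g, g'$, so that the one element $R$ works for all realizers $a$ simultaneously, and that the extracted function $n \mapsto p_0(an)$ is genuinely total, which is exactly what realizing the universal quantifier $\forall\, n$ delivers. Everything else is the routine unwinding of the $\wedge$ and $\exists$ clauses of the semantics together with the decidability of $T$ and $U$.
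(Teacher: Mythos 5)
Your proposal is correct and follows essentially the same route as the paper's proof: unwind the semantics with $\mathbb{Q}(n)=\{n\}$ so that a realizer $a$ of the antecedent yields the total recursive function $n\mapsto p_0(an)$ together with realizers $p_1(an)$ of $\|\varphi(n,m)\|$, extract an index $e$ for that function, and use the strong representability of the (primitive recursive) predicates $T$ and $U$ to assemble, uniformly in $a$, a member of the consequent. The paper states this more tersely, but your $s$-$m$-$n$ bookkeeping is exactly the uniformity it invokes.
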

\begin{proof}
Suppose that $e$ is a member of the antecedent. This is an abbreviation for the set $\bigcap_{n\geq 0} [\mathbb{Q}(n) \Rightarrow \bigcup_{m\geq 0} (\mathbb{Q}(m)\wedge \|\varphi(n,m)\|)]$. Then index $e$ on input $n$ returns an element of $ \mathbb{Q}(m)\wedge \|\varphi(n,m)\|$ for some $m\in \omega$. Hence, the computable function $n\mapsto p_1(e n)$ returns an element of $\|\varphi(n,m)\|$ for the value $m=p_0(e n)$. In conjunction with strong representability, we can use this to uniformly obtain a member of the consequent.
\end{proof}

\begin{thm}\label{thm:ECTisvalid}
$\mathsf{ECT}$~(\ref{eqn:whatwegotnow}) is valid on the modal structure $\mu[\mathcal{N}]$.
\end{thm}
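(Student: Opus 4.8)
The plan is to obtain $\mathsf{ECT}$ from the already-established validity of (non-modal) Church's Thesis~(\ref{eqn:CTinFlaggarithmeticsection}) on the non-modal structures, routing through the Change of Basis Theorem~\ref{prop:changeofbasis} and the G\"odel translation (Theorem~\ref{translationthingie}). The guiding observation is that $\mathsf{ECT}$ is, up to a single outer box that the $\mathsf{T}$-axiom removes, precisely the G\"odel translation of Church's Thesis applied to a fresh atomic predicate $G$ engineered to denote the modal formula $\Box\varphi$.

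First I would fix the modal formula $\varphi(n,m)$ occurring in $\mathsf{ECT}$~(\ref{eqn:whatwegotnow}) and apply the Change of Basis Theorem~\ref{prop:changeofbasis}: introduce a new binary predicate $G$ interpreted by $\mathbf{G}(n,m)=\bigcap_{E\in P(\mathcal{K}_1)}\|\varphi(n,m)\|_\mu(E)$, so that $\mathcal{N}(\mathbf{G})$ is again a uniform $P(\mathcal{K}_1)$-valued structure and the modal valuation of the atomic $G(n,m)$ on $\mu[\mathcal{N}(\mathbf{G})]$ agrees with that of $\Box\varphi(n,m)$ on $\mu[\mathcal{N}]$. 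Since $\mathcal{N}(\mathbf{G})$ carries the same quantifier $\mathbb{Q}(n)=\{n\}$ and the same $L_0$-reduct as $\mathcal{N}$, the strong representability of $T$ and $U$ is untouched, and the proof of the Proposition giving Church's Thesis goes through verbatim with $G(n,m)$ in place of $\varphi(n,m)$. Hence the non-modal instance
\begin{equation*}
J(G)\;\equiv\;[\forall\,n\,\exists\,m\,G(n,m)]\Rightarrow[\exists\,e\,\forall\,n\,\exists\,m\,\exists\,q\,(T(e,n,q)\wedge U(q,m)\wedge G(n,m))]
\end{equation*}
is valid on $\mathcal{N}(\mathbf{G})$.

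Next I would compute the G\"odel translation of $J(G)$ and identify it with a box applied to the $G$-substituted form $\mathsf{ECT}[G]$ of $\mathsf{ECT}$ (i.e. $\mathsf{ECT}$ with each $\Box\varphi(n,m)$ replaced by $G(n,m)$). Using that $T,U,G$ are atomic and hence fixed by $(\cdot)^{\Box}$, and the remarks around~(\ref{eqn:remarkgodelatomics}), the antecedent translates to $\Box(\forall\,n\,\exists\,m\,G(n,m))$ and the consequent to $\exists\,e\,\Box\,\forall\,n\,\exists\,m\,\exists\,q\,(T(e,n,q)\wedge U(q,m)\wedge G(n,m))$, while the outermost implication contributes one further box; thus $J(G)^{\Box}\equiv\Box(\mathsf{ECT}[G])$. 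By Theorem~\ref{translationthingie}, $J(G)^{\Box}$ is valid on $\mu[\mathcal{N}(\mathbf{G})]$, and then by the general consequence of the Change of Basis Theorem recorded before its statement (equivalently, Theorem~\ref{translationthingie} together with part~(ii) of Theorem~\ref{prop:changeofbasis}, which replaces the atomic $G$ by $\Box\varphi$ throughout and leaves every other clause of the valuation unchanged) the formula $\Box(\mathsf{ECT})$ is valid on $\mu[\mathcal{N}]$. Finally the $\mathsf{T}$-axiom $\Box\chi\Rightarrow\chi$, uniformly valid by Proposition~\ref{S4}, strips the outer box and yields the desired validity of $\mathsf{ECT}$ on $\mu[\mathcal{N}]$.

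The main obstacle is the G\"odel-translation bookkeeping in the third step: one must check that the boxes land in exactly the right places so that $J(G)^{\Box}$ coincides with $\Box(\mathsf{ECT}[G])$, with neither spurious nor missing occurrences. Here the relevant facts are that $T,U,G$ are atomic, that the existential blocks $\exists\,m$ and $\exists\,m\,\exists\,q$ over a conjunction of atomics are translated compositionally, and that the single leading universal quantifier $\forall\,n$ in both the antecedent and the consequent acquires precisely one box; no absorption of nested boxes is needed in this instance because each universal is immediately followed by an existential block, whose translation carries no leading box. Once this translation is pinned down, the two remaining moves—transferring validity to $\mu[\mathcal{N}]$ and removing the outer box via $\mathsf{T}$—are immediate applications of the cited results.
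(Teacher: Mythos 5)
Your proposal is correct and follows essentially the same route as the paper's own proof: validity of non-modal Church's Thesis~(\ref{eqn:CTinFlaggarithmeticsection}) for an atomic $G$ on an expansion of $\mathcal{N}$, the G\"odel translation (Theorem~\ref{translationthingie}) with the bookkeeping around~(\ref{eqn:remarkgodelatomics}), and the Change of Basis Theorem~\ref{prop:changeofbasis} to replace the atomic $G$ by $\Box\varphi$. The only differences are presentational: you invoke Change of Basis up front to build $\mathcal{N}(\mathbf{G})$ and explicitly strip the outer box via the $\mathsf{T}$-axiom, whereas the paper works with a generic atomic in an arbitrary expansion and leaves the box-stripping implicit.
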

\begin{proof}
One proceeds by computing the G\"odel translation of~(\ref{eqn:CTinFlaggarithmeticsection}) in the particular case where $\varphi(n,m)$ is an atomic $G(n,m)$, which perhaps contains parameters which are suppressed for the sake of readability. This is a conditional of the form $\Phi\Rightarrow \Psi$, and so its G\"odel translation will be of the form $\Box (\Phi^{\Box}\Rightarrow \Psi^{\Box})$. So let's proceed by computing $\Phi^{\Box}$ and $\Psi^{\Box}$ separately. Since atomics are not changed by the G\"odel translation, we have that $\Phi^{\Box}$ is $\Box [\forall \; n \; \exists \; m \;  G(n,m))]$. Now let's turn to  $\Psi^{\Box}$, which has the form $\exists \; e \; \Box \; \forall \; n \; \exists \; m \; \exists \; p \; (T(e,n,p) \wedge U(p,m) \wedge G(n,m))^{\Box})$. Since $T(e,n,p)$ and $U(p,m)$ are primitive recursive, we can drop the G\"odel translation on these, and similarly for the atomic $G$. Hence, in sum, on the modal structure $\mu[\mathcal{N}]$, we have
\begin{equation}
[\Box (\forall \; n \; \exists \; m \;  G(n,m))]\Rightarrow  [\exists \; e \; \Box \; \forall \; n \; \exists \; m \; \exists \; p \; (T(e,n,p) \wedge U(p,m) \wedge G(n,m))]
\end{equation}
Then we obtain $\mathsf{ECT}$~(\ref{eqn:whatwegotnow}) from this by the Change of Basis Theorem~\ref{prop:changeofbasis}.
\end{proof}

As mentioned in the introductory section, \cite{Flagg1985aa} rather suggested that his result established the consistency of $\mathsf{EA}+\mathsf{ECT}$. See, in particular, the first rule governing ``for all'' on \cite[p. 146]{Flagg1985aa}. Given that Theorem~\ref{thm:EA} displays an explicit counterexample to the free-variable variant~(\ref{eqn:UniversalInstantiationAxiom2}) of the \textsc{Universal Instantiation Axiom}~(\ref{eqn:UniversalInstantiationAxiom}), there must obviously be some place where our treatment differs. In short, Flagg identifies two concepts which we have distinguished, namely the semantics for identity and the quantifier, and in particular, Flagg assumes that the semantics for identity are rather defined by $\|n =n\|=\{n\}$ (cf. the semantics for identity on \citet[p. 150]{Flagg1985aa}). 

Now, as mentioned previously, the Heyting prealgebra $P(\mathcal{K}_1)$ is only two valued up to equivalence, and so for each particular $n\geq 0$, we have that $\{n\}\equiv \top$. But this equivalence is not uniform, as one can readily see by the discussion of the first paragraph of this section showing that the quantifier $\mathbb{Q}(n)=\{n\}$ is non-classical. But in the uniform $P(\mathcal{K}_1)$-semantics, one needs to have that $\|n=n\|\equiv \top$ uniformly, since this is just the condition pertaining to $e_{\mathrm{ref1}}, e_{\mathrm{ref2}}$ in Definition~\ref{defn:uniform}. The reason for our insistence on this condition is that we want $x=x$ to be valid on our modal structures, since this is an instance of~(\ref{eqn:necid}), an axiom of both $Q^{\circ}_{eq}.\mathsf{S4}$ and $Q_{eq}.\mathsf{S4}$.

On the modal structures, the semantics for $x=x$ is given by $\|x=x\|_\mu(D)$ which by definition is $\ominus_D \|x=x\|$. This is valid because $e_{\mathrm{ref1}}$ from Definition~\ref{defn:uniform} is a witness to the first reduction in $\top \leq \|x=x\|\leq \ominus_D \|x=x\|$, where the last reduction follows uniformly in $D$ by (\ref{prop:diamonds2}). By contrast, $\ominus_D \{x\}$ is not uniformly equivalent to $\top$, since if it were then we would have a uniform witness~$e$ to the reduction $(\{n\}\Rightarrow \{n\})\leq \{n\}$, and by taking $j$ to be an index for the identity function we would have $ej=n$ for any $n$, a contradiction. Thus if one follows Flagg in defining $\|x =x\|=\{x\}$ and one further defines $\|x=x\|_\mu(D)\equiv \ominus_D(\|x=x\|)$, then $x=x$ would not be valid, and thus the semantics would not be sound for $Q^{\circ}_{eq}.\mathsf{S4}$. However, from Flagg's definition of his deductive system, it seems that $x=x$ is a consequence of this deductive system (cf. first rule for identity on \citet[p. 145]{Flagg1985aa}). 

Another subtlety is that Flagg seems to suggest that one should define identity in the modal structure by  $\|x=x\|_\mu(D) =\{x\}$ (cf. clause~(I) of identity in \citet[Definition 4.2 p. 162]{Flagg1985aa}). However, this is not an element of the Boolean algebra $\mathbb{B}(N)$. For, suppose that one has uniformly in $x$ from $N$ and $D$ from $P(\mathcal{A})$ that $(\{x\}\Rightarrow D)\Rightarrow D \leq \{x\}$. But by choosing $D=\bot$, one has that this reduction can be rewritten as $(\bot \Rightarrow \bot) \leq \{x\}$ and since $(\bot \Rightarrow \bot)= \top$, this implies that the sets $\{x\}$ are not disjoint as $x$ varies, which is patently false. 

For these reasons, we have deviated from Flagg's own treatment by distinguishing between the role of the quantifier and the role of the semantics for the identity relation, and we insist that there always be uniform witnesses for the identity relation.

To close off this section, let's now note a result about the stability of $\Sigma_1$-formulas, and its consequences for the Barcan Formula. It's also helpful to note this because it shows us that in the proof of Proposition~\ref{thm:ECTisvalid}, it wasn't important that we took the Kleene T-predicate to be represented as a primitive recursive term, but rather we could have used any $\Sigma_1$-formula witnessing the strong representability of the Kleene $T$-predicate. This proposition is stated in \citet[p. 149]{Flagg1985aa}:

\begin{prop}\label{thm:sigma1stable}
The $\Sigma_1$-formulas are stable in the modal structure $\mu[\mathcal{N}]$ and are moreover equivalent to their G\"odel translations. That is, for every $\Sigma_1$-formula $\varphi(\overline{x})$, we have that $\forall \; \overline{x} \; [\varphi(\overline{x}) \Leftrightarrow (\Box \varphi(\overline{x})) \Leftrightarrow \varphi^{\Box}(\overline{x})]$ is valid in the modal structure $\mu[\mathcal{N}]$.
\end{prop}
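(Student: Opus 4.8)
The plan is to reduce the three-way equivalence to its content at the level of the quantifier-free ($\Delta_0$) matrix, and to prove that matrix claim using the provable decidability of $\Delta_0$-formulas to manufacture uniform realizers. One direction is essentially free: by Theorem~\ref{translationthingie}, for a non-modal (in particular $\Sigma_1$) formula $\varphi$ one has $\|\varphi^{\Box}\|_{\mu} = \mu(\|\varphi\|)$, and since this lies in the image of $\mu$, Proposition~\ref{S4}(ii) shows it is stable, i.e.\ $\Box\|\varphi^{\Box}\|_{\mu} \equiv \|\varphi^{\Box}\|_{\mu}$. Consequently, once I establish the single equivalence $\|\varphi\|_{\mu} \equiv \|\varphi^{\Box}\|_{\mu}$, the equivalence with $\Box\varphi$ comes for free: applying $\Box$ (monotone, by Proposition~\ref{S4}(i)) and using stability gives $\|\Box\varphi\|_{\mu} = \Box\|\varphi\|_{\mu} \equiv \Box\|\varphi^{\Box}\|_{\mu} \equiv \|\varphi^{\Box}\|_{\mu} \equiv \|\varphi\|_{\mu}$. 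So the whole proposition reduces to showing that each $\Sigma_1$-formula is equivalent to its G\"odel translation.

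Next I would peel off the leading existential block. Writing $\varphi \equiv \exists\,\overline{y}\,\delta(\overline{x},\overline{y})$ with $\delta$ a $\Delta_0$-formula, the G\"odel translation acts as the identity on existential blocks, so $\varphi^{\Box} \equiv \exists\,\overline{y}\,\delta^{\Box}$ (cf.\ the remarks following Definition~\ref{GodelTrans}). Because the modal semantics of $\exists$ is $\ominus_D$ applied to a union of the bodies, a uniform equivalence $\|\delta(\overline{a},\overline{c})\|_{\mu} \equiv \|\delta^{\Box}(\overline{a},\overline{c})\|_{\mu}$ (uniform in $\overline{c}$) pushes through the union $\bigcup_{\overline{c}}$ and the outer $\ominus_D$ by monotonicity (line~(\ref{prop:diamonds1})), yielding $\|\varphi\|_{\mu} \equiv \|\varphi^{\Box}\|_{\mu}$. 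Thus it remains to prove, for every $\Delta_0$-formula $\delta$, that $\|\delta(\overline{a})\|_{\mu}(D) \equiv \ominus_D\|\delta(\overline{a})\|$ uniformly in $\overline{a}$ and $D$, the right-hand side being $\mu(\|\delta\|) = \|\delta^{\Box}\|_{\mu}$ by Theorem~\ref{translationthingie}.

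This last claim I would prove by induction on the structure of $\delta$. Atomics hold by definition, and conjunction and disjunction follow from the homomorphism properties of $\mu$ (lines~(\ref{j2}) and~(\ref{j2.5})) together with the induction hypothesis. The delicate cases are the conditional and the bounded universal quantifier, where the crucial extra input is that $\Delta_0$-formulas are \emph{provably decidable}: $\mathsf{HA}\vdash \forall\,\overline{x}\,(\alpha(\overline{x})\vee\neg\alpha(\overline{x}))$ for every $\Delta_0$-subformula $\alpha$, so by Proposition~\ref{prop:heytingaxioms} this disjunction is valid on $\mathcal{N}$, and reading off the case-marker $k$ or $\breve{k}$ from the realizer (recall the definition of $\vee$ in Definition~\ref{recHeytingpre}) supplies a \emph{single} element of $\mathcal{K}_1$ that decides, uniformly in $\overline{a}$, whether $\|\alpha(\overline{a})\| \equiv \top$ or $\|\alpha(\overline{a})\| \equiv \bot$. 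For $\alpha\Rightarrow\beta$, the induction hypothesis gives $\|\alpha\Rightarrow\beta\|_{\mu}(D) \equiv (\ominus_D\|\alpha\| \Rightarrow \ominus_D\|\beta\|)$, and a short computation using lines~(\ref{prop:diamonds7}),~(\ref{prop:diamonds8}), and~(\ref{prop:diamonds12}) shows this coincides with $\ominus_D(\|\alpha\|\Rightarrow\|\beta\|)$ in both cases $\|\alpha\|\equiv\top$ and $\|\alpha\|\equiv\bot$; the uniform decision procedure for $\alpha$ then glues the two branch-realizers into one. For $\forall\,x<y\,\beta$, unfolded as $\forall\,x\,(x<y\Rightarrow\beta)$, I would use that the factors with $c\geq y$ are forced to $\top$ (there $\|c<y\|\equiv\bot$, so $\ominus_D\bot\equiv D \leq \ominus_D\|\beta\|$), so that the nominally infinite intersection $\bigcap_{c\in N}$ collapses to the \emph{finite} intersection over $c<y$, across which $\ominus_D$ commutes by iterating line~(\ref{prop:diamonds4}).

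I expect the main obstacle to be bookkeeping the \emph{uniformity} of the realizers in the inductive step, and in particular resisting the temptation to treat the bounded universal quantifier as an ordinary universal quantifier: the generic $\forall$-clause does not commute with $\ominus_D$ (this is exactly the phenomenon behind the failure of the Barcan Formula), so the finiteness of the range $c<y$ must be used essentially. The provable decidability of $\Delta_0$-formulas is precisely what converts the per-$\overline{a}$ case distinctions into the single uniform witnesses demanded by the statement, and keeping these witnesses primitive-recursive in the data (via Proposition~\ref{prop:onpcas}) is the part requiring care.
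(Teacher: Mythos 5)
Your proposal is correct in substance, but it takes a genuinely different route from the paper. The paper's proof is deductive: it works inside the theory $\mathsf{EA}^{+}$ (i.e.\ $\mathsf{EA}^{\circ}$ plus the G\"odel translations of all $\mathsf{HA}$-theorems, all already known to be valid on $\mu[\mathcal{N}]$ by Theorem~\ref{thm:EA} and Theorem~\ref{translationthingie}), and derives stability-plus-translation-equivalence there by induction on $\Delta_0$-complexity, so that no new realizers are ever constructed. Your proof is semantic, carried out at the level of realizers and prealgebra identities, and it also restructures the claim more cleanly than the paper does: you first reduce the three-way equivalence to the single equivalence $\|\varphi\|_{\mu}\equiv\|\varphi^{\Box}\|_{\mu}$ via Theorem~\ref{translationthingie} and Proposition~\ref{S4}(ii), then peel off the existential block, then induct on $\Delta_0$-structure. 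The two arguments share the decisive idea in the conditional case --- provable decidability of $\Delta_0$-formulas, which the paper uses internally as $\Box\,\forall\,\overline{x}\,(\varphi(\overline{x})\vee\Box\,\neg\varphi(\overline{x}))$ and you use externally as a uniform case-marker realizer --- but they diverge on the bounded universal quantifier: the paper applies the already-validated induction schema to $\theta(x)\equiv[\forall\,y<x\,\varphi(y)]\Rightarrow\Box(\forall\,y<x\,\varphi(y))$, whereas you collapse the intersection to the finitely many factors $c<y$ and commute $\ominus_D$ across them. The paper's route buys economy (all uniformity bookkeeping is absorbed into soundness results proved once); yours buys explicitness about where two-valuedness and finiteness actually enter.

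One soft spot you should repair before the bounded-universal step can be accepted: ``iterating line~(\ref{prop:diamonds4})'' is an iteration whose length is the bound $y$, so the witnesses produced this way form a family varying with $y$, and Proposition~\ref{prop:onpcas} (combinatory completeness for a \emph{fixed} term) cannot by itself fuse that family into the single witness the statement demands. What saves the argument is that, since $\mathbb{Q}(n)=\{n\}$, the realizer for the universal closure receives $y$ (and each $c$) as numerical input, so in $\mathcal{K}_1$ it can compute the $y$-fold composite by ordinary recursion on $y$ --- precisely the device used to build $j(e_0,e_1)$ in the proof of Proposition~\ref{prop:heytingaxioms}, and precisely what the paper's appeal to the validated induction axiom~(\ref{eqn:inductionmodal})/(\ref{eqn:inductioninthenats2}) packages for free. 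With that recursion (or an explicit appeal to the modal induction schema) inserted, your argument goes through.
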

\begin{proof}
For this proof, let's work axiomatically in the expansion  $\mathsf{EA}^{+}$ of $\mathsf{EA}^{\circ}$ by the G\"odel translations of all theorems of $\mathsf{HA}$. By the earlier results in this section, all the theorems of  $\mathsf{EA}^{+}$ are valid on the modal structure.

First we show that the results hold for all $\Delta_0$-formulas by induction on complexity of formulas. For atomics this follows from the stability of atomics and fact that the G\"odel translation doesn't change atomics. The inductive steps for conjunction and disjunction follow trivially from the inductive hypotheses and $Q^{\circ}_{eq}.\mathsf{S4}$. 

For conditionals, suppose that the result holds for $\varphi(\overline{x})$ and $\psi(\overline{x})$. It then suffices to show  $(\varphi(\overline{x})\Rightarrow \psi(\overline{x}))\Rightarrow \Box (\varphi(\overline{x})\Rightarrow \psi(\overline{x}))$. Since, as mentioned above, $\mathsf{HA}$ proves $\forall \; \overline{x} \; (\varphi(\overline{x}) \vee \neg \varphi(\overline{x}))$, we have that the G\"odel translation of this sentence holds in $\mathsf{EA}^{+}$, and this is equivalent to $\Box \; \forall \; \overline{x} \; (\varphi(\overline{x}) \vee \Box \neg \varphi(\overline{x}))$ since $\varphi(\overline{x})$ is by induction hypothesis equivalent to its own G\"odel translation and stable. In the case where $\varphi(\overline{x})$ holds we can then infer from $\varphi(\overline{x})\Rightarrow \psi(\overline{x})$ to $\psi(\overline{x})$ and hence to $\Box \psi(\overline{x})$ by induction hypothesis, then to $\Box (\varphi(\overline{x})\Rightarrow \psi(\overline{x}))$. In the case where $ \Box \neg \varphi(\overline{x})$ holds, we may directly infer that $\Box (\varphi(\overline{x})\Rightarrow \psi(\overline{x}))$. 

For the case of the bounded quantifiers, suppose that the result holds for $\varphi(y)$ and consider the case of the universal quantifier. Define the formula $\theta(x) \equiv [\forall \; y <x \; \varphi(y) ]\Rightarrow \Box  (\forall \; y<x \; \varphi(y)) $. It suffices to show that $\theta(0)$ and $\forall \; x \; (\theta(x)\Rightarrow \theta(Sx))$. Since $\mathsf{EA}^+$ proves $\forall \; y \; (\neg y<0)$, it also proves $\Box  (\forall \; y<0 \; \varphi(y))$. Now suppose that $\theta(x)$; we must show that $\theta(Sx)$. But since $\mathsf{EA}^{+}$ proves $y<Sx \Leftrightarrow (y<x \vee y=x)$, it also proves the necessitation of this. Hence in $\mathsf{EA}^{+}$, the formula $\theta(Sx)$ is equivalent to $[(\forall \; y <x \; \varphi(y)) \wedge \varphi(x)] \Rightarrow \Box (\forall \; y<x \; \varphi(y)) \wedge \Box \varphi(x)$, which follows from induction hypothesis. The case of the bounded existential quantifier, and the case for the $\Sigma_1$-formulas is exactly similar.
\end{proof}

The following proposition complements Proposition~\ref{prop:CBF}. One of Kleene's original examples of a sentence of arithmetic which was true but not realizable pertained to the halting set (cf. \citet[\S{9} pp. 115-116]{Kleene1945aa}, \citet[p. 71]{Kleene1943aa}). The idea of the below proof is to use Kleene's halting set example to show the invalidity of the Barcan Formula.

\begin{prop}\label{prop:failureofbarcan}
It is not the case that the Barcan Formula~$\mathsf{BF}$~(\ref{eqn:BF}) is valid on the modal $\mathbb{B}$-valued $L_0$-structure $\mu[\mathcal{N}_0]$.
\end{prop}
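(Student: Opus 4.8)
The plan is to refute this schema by producing a single instance that fails, and the engine of the failure will be Kleene's observation that the law of excluded middle for the halting predicate is true but unrealizable. Let $\psi(x)$ be the $\Sigma_1$-formula $\exists\,y\;T(x,x,y)$, which strongly represents the non-recursive halting set $H=\{n:\{n\}(n)\hspace{-1mm}\downarrow\}$, and let $\chi(x)$ be $\psi(x)\vee\neg\psi(x)$. I would take the instance of $\mathsf{BF}$ given by the modal $L_0$-formula $\varphi(x)\equiv\chi^{\Box}(x)$, noting that by Definition~\ref{GodelTrans} one has $\chi^{\Box}(x)\equiv\psi(x)\vee\Box\neg\psi(x)$, and that this is a legitimate modal $L_0$-formula since $T$ is primitive recursive.

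First I would unwind the two sides of the implication using Definition~\ref{thetheorem}. Writing $W_n=\bigcap_{Z\in P(\mathcal{K}_1)}\|\varphi(n)\|_{\mu}(Z)$ and using both $\bigcap_Z\bigcap_n=\bigcap_n\bigcap_Z$ and the set-identity $\{n\}\Rightarrow\bigcap_Z X_Z=\bigcap_Z(\{n\}\Rightarrow X_Z)$, a direct computation gives
\[
\|\forall\,x\;\Box\varphi(x)\|_{\mu}(D)=\bigcap_{n}(\{n\}\Rightarrow\ominus_D W_n),\qquad\|\Box\,\forall\,x\;\varphi(x)\|_{\mu}(D)=\ominus_D\bigcap_{n}(\{n\}\Rightarrow W_n).
\]
Call these $A(D)$ and $B(D)$. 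The key simplification is that, since $\chi$ is non-modal, the G\"odel Translation Theorem~\ref{translationthingie} yields $\|\varphi(n)\|_{\mu}=\mu(\|\chi(n)\|)$, so that $W_n=\bigcap_Z\ominus_Z\|\chi(n)\|\equiv\|\chi(n)\|$ \emph{uniformly} in $n$ by Proposition~\ref{prop:proponintersection}. Thus $A$ and $B$ are, up to uniform equivalence, the Barcan pair $\forall x\,\ominus_D\chi$ versus $\ominus_D\,\forall x\,\chi$ formed over the \emph{non-modal} values $\|\chi(n)\|$, which is exactly a double-negation-shift configuration once one specializes $D$.

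The heart of the argument is then to evaluate at the single point $D=\bot$, where $\ominus_{\bot}=\neg\neg$. Validity of the conditional $A\Rightarrow B$ in $\mathbb{B}$ amounts to $A\leq B$, i.e.\ to a single witness realizing $A(D)\leadsto B(D)$ for all $D$; such a witness would in particular realize $A(\bot)\leadsto B(\bot)$. Now $A(\bot)\equiv\|\forall\,x\;\neg\neg\chi(x)\|$ is nonempty: since $\mathsf{IQC}\vdash\forall\,x\;\neg\neg(\psi(x)\vee\neg\psi(x))$, this sentence is valid on $\mathcal{N}_0$ by the soundness result Proposition~\ref{prop:soundness}. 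On the other hand $B(\bot)\equiv\neg\neg\|\forall\,x\;\chi(x)\|$, and I would show $\|\forall\,x\;\chi(x)\|=\emptyset$: a realizer would be a \emph{total} recursive $g$ with $g\cdot n$ forced to be left-tagged exactly when $n\in H$, because $\|\psi(n)\|$ and $\|\neg\psi(n)\|$ are $\top,\emptyset$ or $\emptyset,\top$ according as $n\in H$ or not, so the surviving disjunct dictates the tag; reading off tags would then decide $H$ recursively, which is impossible. Since $\neg\neg\emptyset=\emptyset$, we get $B(\bot)=\emptyset$, and no element of $\mathcal{K}_1$ can realize $A(\bot)\leadsto\emptyset$ when $A(\bot)\neq\emptyset$. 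Hence there is no uniform witness to $A\leq B$, so $\mathsf{BF}$ is not valid on $\mu[\mathcal{N}_0]$.

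I expect the main obstacle to be conceptual rather than computational. One must resist the tempting but mistaken strategy of seeking a valid antecedent with an invalid consequent: in fact $A$ is itself \emph{not} valid here, since a uniform realizer of $\forall x\,\Box\varphi$ would already decide $H$, and a short argument shows $A$ and $B$ have the same validity status. The failure of $\mathsf{BF}$ is therefore genuinely a failure of the \emph{conditional} at an intermediate truth-value, namely the non-realizability of the double-negation shift, and the delicate part is recognizing that isolating $D=\bot$ and collapsing $W_n$ to the non-modal value $\|\chi(n)\|$ via Theorem~\ref{translationthingie} and Proposition~\ref{prop:proponintersection} reduces everything to Kleene's halting-set example.
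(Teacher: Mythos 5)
Your proof is correct, and it reaches the result by a genuinely different route than the paper. The paper argues by reductio at the level of the schema: assuming $\mathsf{BF}$ valid, it combines this with $\mathsf{CBF}$ (Proposition~\ref{prop:CBF}) and $\Sigma_1$-stability (Proposition~\ref{thm:sigma1stable}) in the chain (\ref{eqn:BCD1})--(\ref{eqn:BCD6}) to show that excluded middle for the halting predicate is equivalent, on $\mu[\mathcal{N}_0]$, to its own G\"odel translation; since $\mathsf{PA}$ proves that sentence, it is valid on the modal structure, hence so is its translation, and Theorem~\ref{translationthingie} then pushes validity down to the non-modal structure $\mathcal{N}_0$, where a realizer would decide the halting set. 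You instead exhibit a single explicit failing instance, $\varphi(x)\equiv\psi(x)\vee\Box\neg\psi(x)$, compute both sides of $\mathsf{BF}$ from Definition~\ref{thetheorem} (using Theorem~\ref{translationthingie} at the level of formulas, together with Proposition~\ref{prop:proponintersection} to collapse $W_n$ to $\|\chi(n)\|$ uniformly), and then evaluate at the single truth value $D=\bot$, where the antecedent is nonempty (double-negated excluded middle is $\mathsf{IQC}$-provable, and Proposition~\ref{prop:soundness} applies since $\mathbb{Q}(n)=\{n\}$ is non-degenerate and term-friendly) while the consequent is empty (Kleene's tag-reading argument). Both proofs run on the same engine---the unrealizability of $\forall\,x\;(\psi(x)\vee\neg\psi(x))$---but yours needs neither $\mathsf{CBF}$, nor $\Sigma_1$-stability, nor any schema-level reductio, and it yields strictly more information: a concrete counterexample instance, plus the diagnosis (which you state correctly: a uniform realizer of your antecedent $A$ would decide the halting set by taking $D=\|\chi(n)\|$ and applying $A$'s value at $n$ to the identity index) that the failure occurs at an intermediate truth value rather than as a valid-antecedent/invalid-consequent split. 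What the paper's route buys is brevity given the machinery already in place, and a demonstration of how the schema interacts deductively with the G\"odel translation and stability results. One cosmetic correction to your write-up: when $n$ lies in the halting set, $\|\psi(n)\|$ is not literally $\top$ but only a nonempty set of tagged pairs (hence $\equiv\top$ only non-uniformly); your argument uses only emptiness versus nonemptiness and the forced tag, so nothing is affected.
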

\begin{proof}
So suppose that this was valid. This reductio hypothesis in conjunction with Proposition~\ref{prop:CBF} implies that $[\forall \; x \; \Box \; \varphi(x)] \Leftrightarrow [\Box \; \forall \; x \;\varphi(x)]$ is also valid. 

Consider the halting set $\emptyset^{\prime}$, which for the sake of definiteness we take to have membership conditions $y\in \emptyset^{\prime}$ iff $\exists \; s \; T(y,y,s)$, where $T$ is again Kleene's $T$-predicate. But then we can argue that ``every number is either in or not in the halting set'' is equivalent to its own G\"odel translation:
\begin{align}
 \|(\forall \; y \; y\in \emptyset^{\prime} \vee y\notin \emptyset^{\prime})^{\Box}\|_{\mu}
\equiv & \|\Box \; \forall \; y \; (\exists \; s \;   T(y,y,s) \vee \Box ( (\exists \; t \; T(y,y,t))\Rightarrow \bot)\|_{\mu} \label{eqn:BCD1}\\
\leq & \| \forall \; y \; (\exists \; s \; T(y,y,s))\vee ((\exists \; t \; T(y,y,t))\Rightarrow \bot)\|_{\mu} \label{eqn:BCD25}\\
\leq & \| \forall \; y \; \forall \; t \; \exists \; s \; (T(y,y,s)\vee \neg T(y,y,t))\|_{\mu} \label{eqn:BCD2}\\
\leq & \| \forall \; y \; \forall \; t \; \Box \; \exists \; s \; (T(y,y,s)\vee \neg T(y,y,t))\|_{\mu} \label{eqn:BCD3}\\
\leq & \| \Box \; \forall \; y \; \forall \; t \; \exists \; s \; (T(y,y,s)\vee \neg T(y,y,t))\|_{\mu} \label{eqn:BCD4}\\
\leq & \| \Box \; \forall \; y \; (\exists \; s \; (T(y,y,s))\vee (\forall \; t \; \Box \; \neg T(y,y,t))\|_{\mu} \label{eqn:BCD5}\\
\leq & \| \Box \; \forall \; y \; (\exists \; s \; (T(y,y,s))\vee (\Box \; \forall \; t \; \neg T(y,y,t))\|_{\mu} \label{eqn:BCD6}\\
\leq & \| \Box \; \forall \; y \; (\exists \; s \; (T(y,y,s))\vee \Box \; ((\exists \; t \; T(y,y,t))\Rightarrow \bot) \|_{\mu}
\end{align}
In this, the inference from~(\ref{eqn:BCD1}) to~(\ref{eqn:BCD25}) follows from dropping the boxes via the $\mathsf{T}$-axiom, and the next step is just by manipulating the quantifiers in the usual way that is permitted in classical logic. The inference from~(\ref{eqn:BCD2}) to~(\ref{eqn:BCD3}) follows from Proposition~\ref{thm:sigma1stable}, and the inference from~(\ref{eqn:BCD3}) to~(\ref{eqn:BCD4}) follows from the reductio hypothesis. The inference from~(\ref{eqn:BCD4}) to~(\ref{eqn:BCD5}) follows from Proposition~\ref{thm:sigma1stable} again, and the inference from (\ref{eqn:BCD5}) to~(\ref{eqn:BCD6}) follows again from the reductio hypothesis, and the final step is again by usual equivalences in classical logic.

Since $\mathsf{PA}$ proves that ``every number is either in or not in the halting set,'' it is valid on the modal $\mathbb{B}$-valued $L_0$-structure $\mu[\mathcal{N}_0]$. Since it is equivalent on this structure to its G\"odel translation, by Theorem~\ref{translationthingie} this sentence is valid on the $P(\mathcal{A})$-valued $L_0$-structure $\mathcal{N}_0$. 

Let $e$ be a witness to this validity, and note that $\|y\in \emptyset^{\prime}\| \equiv \top$ iff $y\in \emptyset^{\prime}$, and $\|y\notin \emptyset^{\prime}\| \equiv \top$ iff $y\notin \emptyset^{\prime}$. By Proposition~\ref{prop:onpcas}, choose $e^{\prime}$ such that $e^{\prime}y = p_0 (ey)$. Then for all $y\geq 0$, one has that $e^{\prime}y = k$ iff $y\in \emptyset^{\prime}$, while $e^{\prime}y = \breve{k}$ iff $y\notin \emptyset^{\prime}$, contradicting the non-computability of the halting set. 
\end{proof}

As mentioned in the introductory section, Theorem~\ref{thm:Flaggthm} is then a direct consequence of this proposition in conjunction with Theorem~\ref{thm:ECTisvalid} and Theorem~\ref{thm:EA}.



\section{Epistemic Set Theory and Epistemic Church's Thesis}\label{sec:setheory}

Letting $\mathcal{A}$ be an arbitrary pca, in this section we look at the modalization of a uniform $P(\mathcal{A})$-valued set-theoretic structure, with the goal being to establish Theorem~\ref{thm:main}. So let $\mathcal{A}$ be a pca and let $\kappa>\left|\mathcal{A}\right|$ be strongly inaccessible. Then we define the following sequence of sets:
\begin{equation}\label{eqn:booleanvaluedmccarty}
V^{\mathcal{A}}_0 =\emptyset, \hspace{5mm} V^{\mathcal{A}}_{\alpha+1} =  P(\mathcal{A}\times V^{\mathcal{A}}_{\alpha}), \hspace{5mm} V^{\mathcal{A}}_{\lambda} = \bigcup_{\beta<\lambda} V^{\mathcal{A}}_{\beta} \mbox{ if $\lambda$ limit}
\end{equation}
This definition is due to \citet[p. 87]{McCarty1984aa}, \citet[p. 157]{McCarty1986aa} in the case of $\mathcal{A}=\mathcal{K}_1$. The observation that McCarty's construction essentially works without modification for an arbitrary pca $\mathcal{A}$ is indicated in \citet[\S{5} p. 293]{Rathjen2006aa}. Obviously the definition in~(\ref{eqn:booleanvaluedmccarty}) mirrors that of the Boolean-valued models of set theory, e.g., \citet[p. 21]{Bell1985aa}.

As usual, we define the rank of an element of $\bigcup_{\beta<\kappa} V^{\mathcal{A}}_{\beta}$ to be the least $\beta$ such that $V^{\mathcal{A}}_{\beta}$ contains it. By an easy induction on rank, one sees that all elements of $\bigcup_{\beta<\kappa} V^{\mathcal{A}}_{\beta}$ are subsets of $\mathcal{A}\times V^{\mathcal{A}}_{\beta}$ for some $\beta<\kappa$. We use the notation $\langle\cdot,\cdot\rangle$ for the set-theoretic ordered pair, so that $x\in \mathcal{A} \times V^{\mathcal{A}}_{\beta}$ iff $x=\langle e,y\rangle$ for some $e\in \mathcal{A}$ and $y\in V^{\mathcal{A}}_{\beta}$. This shouldn't be conflated with the pairing element $p$ in the pca $\mathcal{A}$ itself.

In this section, we always work with the classical quantifier~$\mathbb{Q}(x) =\top$. Hence as mentioned in Proposition~\ref{prop:BF}, this has the effect of removing the expression $\mathbb{Q}(x)$ from the clauses for $\exists$ and $\forall$. Further, the natural signature for the structure $V^{\mathcal{A}}_{\kappa}$ is the signature~$L_0$ consisting just of the binary membership relation. Hence, to put a uniform $P(\mathcal{A})$-valued structure on $V^{\mathcal{A}}_{\kappa}$, it suffices to specify the interpretation of the identity function and the binary  membership relation. One does this by defining the maps 
 $\| \cdot=\cdot \|: V^{\mathcal{A}}_{\kappa}\times V^{\mathcal{A}}_{\kappa} \rightarrow P(\mathcal{A})$ and $\|\cdot \in\cdot \|:V^{\mathcal{A}}_{\kappa}\times V^{\mathcal{A}}_{\kappa}\rightarrow P(\mathcal{A})$
 recursively by the following, which again may with profit be compared to the case of the Boolean valued models of set theory in \citet[p. 23]{Bell1985aa}:
\begin{align}
\|a\in b\| & =  \{p{e_0}{e_1} : \exists  c \; \langle e_0, c\rangle \in b \; \wedge \; e_1\in \| a=c\|)\} \label{def:elementhood}\\
\|a= b\| & = \{e_0 : \forall \; \langle n,c\rangle \in a \; e_0{n}\in \|c\in b\|\} \wedge \{e_1 : \forall \; \langle n, c\rangle \in b \; e_1{n}\in \|c\in a\|\} \notag
\end{align}
Further, for a set $Z\subseteq \mathcal{A}$, we let $p_0Z$ be the set of $x\in \mathcal{A}$ such that $pxy\in Z$ for some $y\in \mathcal{A}$, and similarly, we let $p_1Z$ be the set of $y\in \mathcal{A}$ such that $pxy\in Z$ for some $x\in \mathcal{A}$. Hence, one has that $p_0 \|a=b\| = \{e_0 : \forall \; \langle n,c\rangle \in a \; e_0{n}\in \|c\in b\|\}$ and $p_1\|a=b\| = \{e_1 : \forall \; \langle n, c\rangle \in b \; e_1{n}\in \|c\in a\|\}$.

Now we can verify that this generates a uniform $P(\mathcal{A})$-valued structure. While the statement of this proposition uses notions specific to this paper (such as uniform $P(\mathcal{A})$-valued structures), in the case of $\mathcal{A}=\mathcal{K}_1$, its proof is that of \citet[92-95]{McCarty1984aa}, and resembles the usual proof of substitution in Boolean-valued models, as in \citet[Theorem 1.17 p. 24]{Bell1985aa}. We include the proof in Appendix~\ref{sec:appendixmccarty}, since much of the modal semantics rides on uniformity considerations and since these uniformity considerations aren't explicit in the statement of the results in the unpublished \citet[92-95]{McCarty1984aa}, and while they are explicit in the statement of the results in \citet[Lemma 4.2 p. 291]{Rathjen2006aa} a complete proof is not given there.
\begin{prop}\label{prop:itsuniform} $V^{\mathcal{A}}_{\kappa}$ is a uniform $P(\mathcal{A})$-valued $L_0$-structure. 
\end{prop}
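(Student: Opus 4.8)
The data required by Definition~\ref{defn:uniform} for the signature $L_0 = \{\in\}$ (one binary relation, no function symbols) are the underlying set $V^{\mathcal{A}}_{\kappa}$, the maps $\|\cdot = \cdot\|$ and $\|\cdot \in \cdot\|$ already fixed in line~(\ref{def:elementhood}), the classical quantifier $\mathbb{Q}(x) = \top$ (on which the displayed conditions impose nothing), and uniform realizers $e_{\mathrm{ref1}}, e_{\mathrm{ref2}}, e_{\mathrm{sym}}, e_{\mathrm{tran}}$ for the equality clauses together with a single realizer $e_{\in}$ for the congruence of membership. Two of these are immediate. For $e_{\mathrm{ref1}}\colon \|a=a\| \leadsto \top$ one takes any total element, e.g.\ $skk$, since $\top = \mathcal{A}$. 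For $e_{\mathrm{sym}}$ one observes that the two conjuncts defining $\|a=b\|$ in line~(\ref{def:elementhood})---namely $p_0\|a=b\|$ witnessing ``$a \subseteq b$'' and $p_1\|a=b\|$ witnessing ``$b \subseteq a$''---are precisely the conjuncts of $\|b=a\|$ in the opposite order, so the element with $e_{\mathrm{sym}}x = p(p_1 x)(p_0 x)$, obtained from Proposition~\ref{prop:onpcas}, realizes $\|a=b\|\leadsto\|b=a\|$ uniformly and with no induction.

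For reflexivity I would produce a single $\mathrm{refl}\in\mathcal{A}$ lying in $\|a=a\|$ for every $a$, and then set $e_{\mathrm{ref2}} = k\,\mathrm{refl}$. Reading off line~(\ref{def:elementhood}), if $\langle n,c\rangle\in a$ then taking the witness $c'=c$ shows $pn\rho \in \|c\in a\|$ for any $\rho\in\|c=c\|$; hence, writing $g_w$ for the element with $g_w n = pnw$, the element $g_{\mathrm{refl}}$ witnesses $a\subseteq a$ provided $\mathrm{refl}\in\|c=c\|$ for every $c$ named in $a$. The plan is therefore to solve the self-referential specification $\mathrm{refl}=p\,g_{\mathrm{refl}}\,g_{\mathrm{refl}}$ by applying the recursion theorem (a standard consequence of Proposition~\ref{prop:onpcas}, valid in any pca) to the total map $w\mapsto p\,g_w\,g_w$, and then to confirm $\mathrm{refl}\in\|a=a\|$ by induction on $\mathrm{rank}(a)$, using that each $c$ with $\langle n,c\rangle\in a$ has strictly smaller rank.

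The substance of the proposition is $e_{\in}$, and obtaining it forces transitivity of equality and congruence of membership to be treated together. I would establish simultaneously: (T)~$\|a=b\|\wedge\|b=c\|\leq\|a=c\|$; (C$_1$)~$\|a=b\|\wedge\|c\in a\|\leq\|c\in b\|$ (congruence in the set argument); and (C$_2$)~$\|a=b\|\wedge\|a\in c\|\leq\|b\in c\|$ (congruence in the element argument). Each is proved by unwinding line~(\ref{def:elementhood}): for (C$_1$), a realizer of $\|c\in a\|$ names some $d$ with $\langle e_0,d\rangle\in a$, the ``$a\subseteq b$'' half of $\|a=b\|$ carries this to a named $d'\in b$, and patching the residual $\|c=d\|$ and $\|d=d'\|$ into $\|c=d'\|$ invokes (T) at $c,d,d'$; here $\mathrm{rank}(d)<\mathrm{rank}(a)$ and $\mathrm{rank}(d')<\mathrm{rank}(b)$. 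The point is that every cross-reference strictly lowers $\mathrm{rank}(a)+\mathrm{rank}(b)+\mathrm{rank}(c)$: (T) at $a,b,c$ calls (C$_1$) at $b,c,d$ with $\mathrm{rank}(d)<\mathrm{rank}(a)$; (C$_1$) calls (T) as above; and (C$_2$) calls (T), via $e_{\mathrm{sym}}$, at $b,a,d$ with $\mathrm{rank}(d)<\mathrm{rank}(c)$. A single induction on this ordinal sum therefore closes the recursion. The required $e_{\in}$ is then the composite that first applies (C$_2$) to replace $a_1$ by $b_1$ in $\|a_1\in a_2\|$, yielding $\|b_1\in a_2\|$, and then applies (C$_1$) to replace $a_2$ by $b_2$, yielding $\|b_1\in b_2\|$.

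The main obstacle I anticipate is uniformity: the realizers for (T), (C$_1$), (C$_2$) must be single elements of $\mathcal{A}$ valid at every rank, even though each calls the others at smaller ranks. The device is a simultaneous fixed point---package the three sought realizers as one element $w = p\,e_{\mathrm{tran}}\,(p\,e_{C_1}\,e_{C_2})$, express the way each component consumes and produces the relevant pairs as a term in $w$ via Proposition~\ref{prop:onpcas}, and apply the recursion theorem to obtain $w$; the rank induction of the previous paragraph then verifies that each component realizes its statement uniformly in the elements involved. Extracting $e_{\mathrm{tran}} = p_0 w$ and assembling $e_{\in}$ from $e_{C_1}, e_{C_2}$ as above completes the verification that $V^{\mathcal{A}}_{\kappa}$ is a uniform $P(\mathcal{A})$-valued $L_0$-structure.
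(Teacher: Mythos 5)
Your proof is correct, and it runs on the same two engines as the paper's own: the recursion theorem of the pca to manufacture self-referential realizers (your $\mathrm{refl}$ is literally the paper's $pjj$ with $jn=pn(pjj)$), and induction on rank to verify them; your handling of $e_{\mathrm{ref1}}$, $e_{\mathrm{ref2}}$, and $e_{\mathrm{sym}}$ is identical to the paper's. The genuine difference is in how the mutual dependence of transitivity and membership-congruence is closed off. You run a three-way simultaneous fixed point for $(\mathrm{T})$, $(\mathrm{C}_1)$, $(\mathrm{C}_2)$ together with one induction on $\mathrm{rank}(a)+\mathrm{rank}(b)+\mathrm{rank}(c)$. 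The paper instead observes that transitivity can be closed off by itself: inlining your $(\mathrm{C}_1)$-call into $(\mathrm{T})$ --- unpack $\|d\in b\|$ to a named $d'\in b$ with $\|d=d'\|$, push the name of $d'$ across $p_0\|b=c\|$ to a named $d''\in c$ with $\|d'=d''\|$ --- shows that $(\mathrm{T})$ at $(a,b,c)$ calls only $(\mathrm{T})$ at $(d,d',d'')$, where all three ranks strictly drop. Hence the recursion theorem is applied to $\tau$ alone, and once $\tau$ is available uniformly for \emph{all} triples, both congruence directions (the paper's $\iota$, which is exactly your composite of $(\mathrm{C}_2)$ followed by $(\mathrm{C}_1)$) are obtained as explicit terms in $\sigma$ and $\tau$ with no further recursion or rank induction at all. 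So your remark that obtaining $e_{\in}$ ``forces'' transitivity and congruence to be treated together is the one claim the paper refutes: the simultaneous packaging is sound (your cross-calls do strictly decrease the rank sum, so the induction is well-founded) and gives symmetric bookkeeping, but it pays for a heavier fixed-point construction, whereas the paper's sequencing isolates the genuinely recursive content in transitivity and makes congruence a one-line corollary.
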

\begin{proof}
See Appendix~\ref{sec:appendixmccarty}.
\end{proof}

The following theorem is due to McCarty in the case of $\mathcal{A}=\mathcal{K}_1$. In this case, the result is stated without proof in \citet[Proposition 3.1 p. 158]{McCarty1986aa}, while the proof is given in \citet[pp. 96-97]{McCarty1984aa}, which remains unpublished. Hence, for the sake of completeness, we give the proof in Appendix~\ref{sec:appendixmccarty}. In the statement of McCarty's theorem, the theory $\mathsf{IZF}$ has as axioms the usual $\mathsf{ZF}$ axioms, but with collection in lieu of replacement, and with the induction scheme instead of the foundation axiom (cf. \citet[p. 58]{McCarty1984aa}, \citet[p. 158]{McCarty1986aa}). The reader should be warned that in other sources, such as \citet[Chapter VIII]{Beeson1985aa}, the name ``$\mathsf{IZF}$'' is rather used for a two-sorted theory, with one sort for numbers and another for sets. It's also  worth mentioning that the only place where we employ the hypothesis that $\kappa>\left|\mathcal{A}\right|$ is in the verification of collection.

\begin{thm}\label{thm:zfr-valued} All the theorems of $\mathsf{IZF}$ are valid on the uniform $P(\mathcal{A})$-valued $L_0$-structure $V_{\kappa}^{\mathcal{A}}$, and indeed on any expansion of this structure.
\end{thm}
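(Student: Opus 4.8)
The plan is to verify each axiom of $\mathsf{IZF}$ directly on the uniform $P(\mathcal{A})$-valued structure $V^{\mathcal{A}}_{\kappa}$, exhibiting for each axiom an explicit element of $\mathcal{A}$ which realizes it. Since the quantifier is classical ($\mathbb{Q}(x)\equiv\top$), Proposition~\ref{prop:BF} lets us drop the $\mathbb{Q}$-clauses, so that $\|\exists\,z\,\varphi(z)\|\equiv\bigcup_{c}\|\varphi(c)\|$ and $\|\forall\,z\,\varphi(z)\|\equiv\bigcap_{c}\|\varphi(c)\|$; this is what makes the realizability combinators uniform over the whole class $V^{\mathcal{A}}_{\kappa}$. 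By Proposition~\ref{prop:soundness}, it suffices to check that each $\mathsf{IZF}$-axiom is valid (has top value, with a uniform witness), since the logical apparatus $\mathsf{IQC}$ is already handled by that proposition together with the observation that $V^{\mathcal{A}}_{\kappa}$ is a uniform $P(\mathcal{A})$-valued structure (Proposition~\ref{prop:itsuniform}). The final clause ``and indeed on any expansion'' is free: adding new relation or function symbols changes neither the membership nor the identity valuations, and the soundness argument for the $\mathsf{IZF}$-axioms only references $\in$ and $=$.

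First I would dispatch the structural axioms. \emph{Extensionality} follows from the symmetry of the recursive definition of $\|a=b\|$ in~(\ref{def:elementhood}) together with the already-established substitution witnesses from Proposition~\ref{prop:uniform valuation}; one unwinds $\|a=b\|$ as the conjunction of two ``$\subseteq$'' halves and matches these against $\|\forall z\,(z\in a\Leftrightarrow z\in b)\|$. \emph{Pairing}, \emph{Union}, and \emph{Separation} are handled by the standard move of producing, for given parameters $a,b,\ldots\in V^{\mathcal{A}}_{\kappa}$, an explicit witnessing set in $V^{\mathcal{A}}_{\kappa+1}\subseteq V^{\mathcal{A}}_{\kappa}$ whose first coordinates carry the realizers: e.g.\ for pairing one forms $\{\langle k a', a\rangle, \langle \breve k b', b\rangle\}$ using the case-split elements $k,\breve k$, and checks the membership conditions reduce to identity reductions already witnessed. \emph{Power set} uses the inaccessibility of $\kappa$ to collect all subobjects below a fixed rank into a single element of $V^{\mathcal{A}}_{\kappa}$, and \emph{Infinity} is witnessed by the canonical internal $\omega$ built by the finite-ordinal recursion, with successor realized by a primitive recursive combinator. \emph{$\in$-Induction} is the one scheme that is genuinely intuitionistic in flavor: one realizes it by a recursion along the well-founded rank of elements of $V^{\mathcal{A}}_{\kappa}$, using the recursion theorem in $\mathcal{A}$ (Proposition~\ref{prop:onpcas} gives the combinators, and a fixed-point element gives the recursion) to build a single realizer that, given a realizer of the inductive hypothesis, produces a realizer of the conclusion at every set simultaneously.

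The main obstacle will be \emph{Collection}, which is exactly where the cardinality hypothesis $\kappa>|\mathcal{A}|$ is used. The difficulty is that from a realizer $e$ of $\forall\,y\in u\,\exists\,z\,\varphi(y,z)$ one must produce a single set $x\in V^{\mathcal{A}}_{\kappa}$ containing enough witnesses $z$; the realizer $e$ applied to the name of each $y\in u$ and each realizer of $y\in u$ yields (via the first projection) a code pointing at some witness, but these witnesses range over all of $V^{\mathcal{A}}_{\kappa}$ and there are potentially $|\mathcal{A}|$-many realizers to feed in per element of $u$. The plan is to bound the ranks: for each $y$ in the (set-sized, since it lives below $\kappa$) support of $u$ and each $a\in\mathcal{A}$, the witness named by applying $e$ has some rank $<\kappa$; since $\kappa$ is regular and $|\mathcal{A}\times\mathrm{supp}(u)|<\kappa$ by inaccessibility, the supremum of these ranks is still $<\kappa$, so all required witnesses sit inside some $V^{\mathcal{A}}_{\beta}$ with $\beta<\kappa$, and their union forms the desired $x$. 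Assembling the uniform realizer for the collecting set from the projections of $e$ is then a bookkeeping exercise with the pairing combinators. Because this and the other verifications reproduce McCarty's argument (and resemble the Boolean-valued case in \citet[Theorem 1.17 p. 24]{Bell1985aa}) essentially verbatim once uniformity is tracked, I would relegate the detailed combinator constructions to Appendix~\ref{sec:appendixmccarty} and state here only the strategy and the role of inaccessibility.
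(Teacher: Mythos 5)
Your proposal is correct and follows essentially the same route as the paper's own proof in Appendix~\ref{sec:appendixmccarty}: reduction to axiom-by-axiom verification via Proposition~\ref{prop:soundness}, witnessing sets whose first coordinates carry realizer tags, $\in$-induction via the recursion theorem in $\mathcal{A}$ combined with induction on rank, and rank-bounding by regularity together with $\kappa>\left|\mathcal{A}\right|$ for collection (the paper fixes the collecting set in advance by bounding over all realizers $e$ of the antecedent, whereas you let it depend on $e$; both work because the existential is interpreted as a union). Two slips worth correcting, though neither affects the strategy: the witnessing sets for pairing, union, and separation lie in $V^{\mathcal{A}}_{\alpha+1}$ for suitable $\alpha<\kappa$ and hence in $V^{\mathcal{A}}_{\kappa}$ --- the containment $V^{\mathcal{A}}_{\kappa+1}\subseteq V^{\mathcal{A}}_{\kappa}$ as you wrote it is false --- and power set needs only the rank-bounding lemma (Proposition~\ref{lem:mccartylem}) plus $\kappa$ being a limit, since the paper notes explicitly that the hypothesis $\kappa>\left|\mathcal{A}\right|$ is used only in verifying collection.
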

\begin{proof}
Again, see Appendix~\ref{sec:appendixmccarty}.
\end{proof}

Since we're always working with classical quantifiers in this section, by Theorem~\ref{prop:S4soundness} the modal structures will be sound for $Q_{eq}.\mathsf{S4}$. Thus in the modal theories considered in this section, the background modal predicate logic will be $Q_{eq}.\mathsf{S4}$ (as opposed to $Q^{\circ}_{eq}.\mathsf{S4}+\mathsf{CBF}$). Now we show that this structure models the epistemic set theory $\mathsf{eZF}$ (cf. Definition~\ref{defn:eZF}), which forms part of Theorem~\ref{thm:main}:
\begin{prop}\label{prop:eZF}
All the axioms of $\mathsf{eZF}$ are valid on the modal $\mathbb{B}$-valued $L_0$-structure $\mu[V_{\kappa}^{\mathcal{A}}]$. \end{prop}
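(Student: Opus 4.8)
The plan is to derive each axiom of $\mathsf{eZF}$ from the corresponding axiom of $\mathsf{IZF}$, which is already valid on $V_\kappa^{\mathcal A}$ and on all of its expansions by Theorem~\ref{thm:zfr-valued}. The three workhorses are: (a)~the G\"odel translation theorem (Theorem~\ref{translationthingie}), which turns a validity on the non-modal structure into the validity of its G\"odel translation on $\mu[V_\kappa^{\mathcal A}]$; (b)~the Change of Basis Theorem (Theorem~\ref{prop:changeofbasis}), which introduces a fresh atomic predicate $G$ whose modal valuation coincides with $\|\Box\varphi\|_\mu$, so that the G\"odel translation of an $\mathsf{IZF}$-schema in $G$ becomes a schema about $\Box\varphi$; and (c)~the Soundness Theorem (Theorem~\ref{prop:S4soundness}), which applies because the quantifier $\mathbb Q(x)=\top$ is classical and hence uniform, so that $Q_{eq}.\mathsf{S4}$ and its consequences — in particular $\Box\,\forall x\,\Box\varphi\equiv\Box\,\forall x\,\varphi$ and the stability of atomics (Proposition~\ref{prop:atomics}(i)) — are available as background reasoning. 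For each $\mathsf{eZF}$ axiom $A$ I would exhibit an $\mathsf{IZF}$-theorem $J(G)$ whose G\"odel translation $J^{\Box}$, after substituting $\Box\varphi$ for $G$ and simplifying, entails $A$ in $Q_{eq}.\mathsf{S4}$.

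For the structural and separation axioms — Modal Extensionality, Pairing, Unions, Modal Power Set, Comprehension$^\Box$, and Infinity — this is essentially bookkeeping. Here I would G\"odel-translate the corresponding $\mathsf{IZF}$ axiom, use the $\Delta_0$-simplifications~(\ref{eqn:remarkgodelatomics1})--(\ref{eqn:remarkgodelatomics2}) together with the stability of atomics to absorb the boxes that the translation sprinkles over the biconditional matrices, and finally strip the outer box supplied by the universal-quantifier clause of the translation using the $\mathsf{T}$-axiom. Comprehension$^\Box$ and Modal Power Set additionally invoke Change of Basis with $G$ interpreting $\Box\varphi$ (respectively $\Box(z\subseteq x)$); for Power Set the one thing to check is that the interpretation $\mathbf G(z,x)=\bigcap_E\|\forall u(u\in z\Rightarrow u\in x)\|_\mu(E)$ is equivalent, non-modally, to $\|z\subseteq x\|$ — which follows from Proposition~\ref{prop:proponintersection} and the elementary inequalities on $\ominus$ — so that ordinary $\mathsf{IZF}$ separation from the power set does produce the required witness.

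Scedrov's Modal Foundation requires one genuinely modal step. The G\"odel translation of $\in$-induction for $G=\Box\varphi$ yields $\Box\,\forall x\,(\Box(\forall y\in x\,\varphi(y))\Rightarrow\Box\varphi(x))\Rightarrow\Box\,\forall x\,\varphi(x)$, whose inductive hypothesis carries the conclusion $\Box\varphi(x)$ rather than the $\varphi(x)$ demanded by the axiom. Since the hypothesis of the axiom is itself necessitated, I would close this gap with the $\mathsf 4$-axiom: from $\Box(\Box(\forall y\in x\,\varphi(y))\Rightarrow\varphi(x))$ together with $\Box p\Rightarrow\Box\Box p$ one obtains $\Box(\forall y\in x\,\varphi(y))\Rightarrow\Box\varphi(x)$ under the outer box, so the weaker-looking hypothesis of Scedrov's axiom in fact implies the stronger hypothesis produced by the translation, while the two conclusions agree modulo $\Box\,\forall x\,\Box\varphi\equiv\Box\,\forall x\,\varphi$.

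The main obstacle will be Collection$^\Box$, because here the black-box combination of Change of Basis and G\"odel translation is not enough. That route delivers only the version with a \emph{boxed} antecedent, $\Box\,\forall y\in u\,\exists z\,\Box\varphi(y,z)\Rightarrow\exists x\,\Box\,\forall y\in u\,\exists z\in x\,\Box\varphi(y,z)$, and the unboxed antecedent of Collection$^\Box$ is not stable — a realizer of $\|\forall y\in u\,\exists z\,\Box\varphi(y,z)\|_\mu(D)$ hides its existential witnesses behind $\ominus_D$, so one cannot directly collect them. I therefore expect to prove Collection$^\Box$ by a direct realizability argument that re-runs McCarty's collection proof at the modal level: evaluating at $D=\bot$ strips the double negation and shows that for each $c$ with $\|c\in u\|\neq\emptyset$ the set $\bigcup_{c'}\bigcap_E\|\varphi(c,c')\|_\mu(E)$ is nonempty, so genuine witnesses exist; the inaccessibility $\kappa>\left|\mathcal A\right|$ then bounds their ranks by a single $\beta<\kappa$; and taking $x$ to be the canonical code for the sets of rank $<\beta$, whose membership admits a \emph{uniform} realizer, lets me transport the antecedent realizer through the $\ominus_D$-layer into a realizer of the conclusion uniformly in $D$. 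Verifying that this canonical bounding set has a uniform membership realizer, and that the entire construction stays uniform in $D$, is the delicate part.
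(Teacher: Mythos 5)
For every axiom except Collection$^{\Box}$, your proposal is essentially the paper's proof: the $\mathsf{IZF}$ validities of Theorem~\ref{thm:zfr-valued} are pushed through the G\"odel translation (Theorem~\ref{translationthingie}) with the $\Delta_0$-simplifications around~(\ref{eqn:remarkgodelatomics}), and Change of Basis handles the schematic formula in Comprehension$^{\Box}$; your $\mathsf{4}$-axiom bridge for Scedrov's Modal Foundation is the correct filling-in of the paper's terse ``the arguments for the other two are similar,'' and it works precisely because Scedrov's antecedent is itself necessitated. (One minor simplification: your Change-of-Basis detour for Modal Power Set is unnecessary, since the box in front of $\forall u\,(u\in z\Rightarrow u\in x)$ is produced by the translation of the universal quantifier itself, so Modal Power Set follows from the translation of ordinary power set exactly as Pairing does.)

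Collection$^{\Box}$ is where you genuinely depart from the paper, and your diagnosis is correct --- indeed it exposes a real lacuna in the paper's own treatment, which covers this axiom only by the ``similar'' remark. Since the G\"odel translation boxes the antecedent of every conditional, translating $\mathsf{IZF}$-Collection for an atomic $G$ and applying Change of Basis yields only $[\Box(\forall y\in u\,\exists z\,\Box\varphi(y,z))]\Rightarrow[\exists x\,\Box(\forall y\in u\,\exists z\in x\,\Box\varphi(y,z))]$. Passing from this to the unboxed antecedent of Definition~\ref{defn:eZF} would require $[\forall y\in u\,\exists z\,\Box\varphi(y,z)]\Rightarrow\Box[\forall y\in u\,\exists z\,\Box\varphi(y,z)]$, a relativized Barcan-type implication that is not a theorem of $Q_{eq}.\mathsf{S4}$ and that nothing in the paper supplies; and unlike Scedrov's axiom, the $\mathsf{4}$-axiom trick is unavailable because this antecedent does not begin with a box. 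So a direct realizability argument really is required, and your McCarty-style plan is the right one; the schema is in fact valid, so the Proposition stands, but on your argument rather than the paper's recipe.

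Two points in your plan need tightening, and they are exactly where the delicacy you flag lives. First, the rank bound must be per \emph{realizer}, not per \emph{witness}: writing $F(c,c')=\bigcap_E\|\varphi(c,c')\|_\mu(E)$, the transport map needs $\beta<\kappa$ such that for every $c$ with $\|c\in u\|\neq\emptyset$ and every $n\in\bigcup_{c'}F(c,c')$ there is $c''$ of rank $<\beta$ with $n\in F(c,c'')$; this supremum is below $\kappa$ because Proposition~\ref{lem:mccartylem} bounds the rank of the relevant $c$ and $\kappa>\left|\mathcal{A}\right|$ is regular. Your ``evaluate at $D=\bot$'' step, which only shows that \emph{some} witness exists for each such $c$, is neither needed nor sufficient: a single element of $\mathcal{A}$ must send each antecedent realizer $n$ to a consequent realizer, so the bound must cover every $n$. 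Second, $\|\forall y\in u\,\cdots\|_\mu(D)$ is an intersection over \emph{all} $c\in V_\kappa^{\mathcal{A}}$, and when $D$ is inhabited so is $\ominus_D\|c\in u\|$ even for $c$ with $\|c\in u\|=\emptyset$, where the rank bound says nothing. This is repaired by the uniform equivalence $(\ominus_D X\Rightarrow\ominus_D Y)\equiv(X\Rightarrow\ominus_D Y)$ --- an easy consequence of~(\ref{prop:diamonds2}), (\ref{prop:diamonds3}), and~(\ref{prop:diamonds6}), uniformly witnessed via Proposition~\ref{prop:formaljustificationunfirm} --- which lets you assume a genuine element of $\|c\in u\|$ in antecedent position, so that the empty case becomes vacuous and the bounded case is the one already handled. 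With these repairs your construction goes through uniformly in $D$, in $u$, and in the parameters of $\varphi$.
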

\begin{proof}
For Axiom~I (Modal Extensionality), this is implied by the G\"odel translation of the usual axiom of extensionality, since we can write its antecedent in a $\Delta_0$-fashion, namely as $(\forall \; z\in x \; z\in y)\wedge (\forall \; z\in y \; z\in x)$, keeping in mind the observations on G\"odel translations of $\Delta_0$-formulas from~(\ref{eqn:remarkgodelatomics}). 

As for Axiom~IV (Pairing) and Axiom~V (Union) of $\mathsf{eZF}$, note first that the usual non-modal versions of pairing and union assert the that for any set $\overline{x}$ there is a set $y=f(\overline{x})$, wherein the graph of the function~$f$ may be written in a $\Delta_0$-fashion. Then Axioms~IV and~V will follow directly from G\"odel translations of the usual non-modal versions of pairing and union. The arguments for Axiom VII (The Modal Power Set Axiom) and Axiom VIII (Infinity) are exactly parallel, where for the latter we make use of the second part of Proposition~\ref{prop:CBF} to get the second existential quantifier behind the initial box operator.

The final axioms of $\mathsf{eZF}$ are Comprehension$^{\Box}$, Collection$^{\Box}$, and Scedrov's Modal Foundation (Axiom III). We give the argument for Comprehension$^{\Box}$, since the arguments for the other two are similar. For Comprehension$^{\Box}$, first consider the following instance of ordinary non-modal comprehension, wherein we assume that the formula $G$ is atomic, perhaps in a signature extending that of set theory, and where for the sake of simplicity we assume that there is only one parameter variable:
\begin{equation}
\forall \; p \; \forall \; x \; \exists \; y \; \forall \; z \; (z\in y \Leftrightarrow (z\in x \; \wedge \; G(z, p)))
\end{equation}
The G\"odel translation of this implies the following, which is thus valid on the modal structure augmented with an interpretation for the atomic:
\begin{equation}
\forall \; p \; \forall \; x \; \exists \; y \; \Box \; \forall \; z \; (z\in y \Leftrightarrow (z\in x \; \wedge \; G(z, p)))
\end{equation}
Now suppose that $\varphi(z,p)$ is an arbitrary modal formula. By Flagg's Change of Basis Theorem~\ref{prop:changeofbasis}, we have the following is valid on the structure:
\begin{equation}
\forall \; p \; \forall \; x \; \exists \; y \; \Box \; (\forall \; z \; (z\in y \Leftrightarrow (z\in x \; \wedge \; \Box \varphi (z, p)))
\end{equation}
\end{proof}

In the remainder of the section, we build towards the proof of the other parts of Theorem~\ref{thm:main}. Given the previous proposition, it suffices then to establish a failure of the stability of negated atomics and the validity of $\mathsf{ECT}$~(\ref{eqn:whatwegotnow}) in the case $\mathcal{A}=\mathcal{K}_1$, and we do this in Proposition~\ref{prop:counterstabilityatomics} and Proposition~\ref{eqn:finallyECTsettheory}. In doing this, we work with the ersatzes of the individual natural numbers and the set of natural numbers within $V_{\kappa}^{\mathcal{A}}$. We first recall the definition of the so-called \emph{Curry numerals} in a pca $\mathcal{A}$ \cite[Definition 1.3.2 p. 12]{Oosten2008aa}, which can easily be shown to be distinct in any pca:
\begin{equation}\label{eqn:currynumerals}
\widetilde{0} = skk, \hspace{10mm} \widetilde{n+1} = p\breve{k}\widetilde{n}
\end{equation}
Like in \citet[Definition 3.6 pp. 105-106]{McCarty1984aa}, one then defines corresponding elements of $V_{\kappa}^{\mathcal{A}}$ as follows: 
\begin{equation}\label{eqn:mccartynumbers}
\overline{n} = \{\langle \widetilde{m}, \overline{m}\rangle: m<n\}, \hspace{5mm} \overline{\omega} =  \{\langle \widetilde{m}, \overline{m}\rangle: m<\omega\}
\end{equation}
As one can see by inspection of the proof of Theorem~\ref{thm:zfr-valued} presented in Appendix~\ref{sec:appendixmccarty}, these are the witnesses to the axiom of infinity in $V_{\kappa}^{\mathcal{A}}$.

First let's note the elementary proposition: 
\begin{prop}\label{prop:mnequality}
For all $m,n\geq 0$, one has (i)~$n<m$ iff $\|\overline{n}\in \overline{m}\| \neq \emptyset$, and (ii)~$n=m$ iff $\|\overline{n}=\overline{m}\|\neq \emptyset$. \end{prop}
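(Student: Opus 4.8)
The plan is to handle the two biconditionals simultaneously, in each case treating the implication from the numerical relation to nonemptiness separately from its converse. The crucial preliminary is that one has a \emph{single} element of $\mathcal{A}$ realizing reflexivity at every numeral at once. Indeed, by Proposition~\ref{prop:itsuniform} the structure $V_{\kappa}^{\mathcal{A}}$ is a uniform $P(\mathcal{A})$-valued structure, so it carries the element $e_{\mathrm{ref2}}$ of Definition~\ref{defn:uniform} with $e_{\mathrm{ref2}}\colon\top\leadsto\|a=a\|$ for \emph{every} $a\in V_{\kappa}^{\mathcal{A}}$. Fixing any $z\in\mathcal{A}$ (say $z=skk$) and putting $r=e_{\mathrm{ref2}}z$, I obtain $r\in\|\overline{j}=\overline{j}\|$ uniformly in $j$. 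Using Proposition~\ref{prop:onpcas}, I then fix $e_{0}\in\mathcal{A}$ with $e_{0}x=pxr$ for all $x\in\mathcal{A}$.

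First I would establish the implications from the numerical relations to nonemptiness, which are direct. Recall from~(\ref{eqn:mccartynumbers}) that $\overline{m}=\{\langle\widetilde{j},\overline{j}\rangle : j<m\}$. Suppose $n<m$. Then $\langle\widetilde{n},\overline{n}\rangle\in\overline{m}$, and since $r\in\|\overline{n}=\overline{n}\|$, the definition of $\|\cdot\in\cdot\|$ in~(\ref{def:elementhood}) gives $p\widetilde{n}r\in\|\overline{n}\in\overline{m}\|$, so the latter is nonempty. Now suppose $n=m$, and write $p_{0}\|\overline{n}=\overline{n}\|$ and $p_{1}\|\overline{n}=\overline{n}\|$ for the two conjuncts of~(\ref{def:elementhood}), so that $\|\overline{n}=\overline{n}\|\neq\emptyset$ exactly when both are nonempty, by the definition of $\wedge$ in Definition~\ref{recHeytingpre}. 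I claim $e_{0}$ lies in both. For each $j<n$ we have $e_{0}\widetilde{j}=p\widetilde{j}r$, and since $\langle\widetilde{j},\overline{j}\rangle\in\overline{n}$ and $r\in\|\overline{j}=\overline{j}\|$, this element lies in $\|\overline{j}\in\overline{n}\|$; as this is precisely the defining condition for membership in each conjunct, we conclude $pe_{0}e_{0}\in\|\overline{n}=\overline{n}\|$, which is therefore nonempty.

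For the converse implications I would argue by simultaneous induction on $n+m$ (the base case $n=m=0$ being immediate since $\overline{0}=\emptyset$). Unwinding $\|\overline{n}\in\overline{m}\|$ via~(\ref{def:elementhood}) and the shape of $\overline{m}$, a nonempty value forces some $k<m$ with $\|\overline{n}=\overline{k}\|\neq\emptyset$; applying part~(ii) at the pair $(n,k)$, which has $n+k<n+m$, yields $n=k<m$. Likewise, if $\|\overline{n}=\overline{m}\|\neq\emptyset$ then both conjuncts are nonempty, so from the first conjunct there is $e$ with $e\widetilde{j}\in\|\overline{j}\in\overline{m}\|$ for all $j<n$; in particular each such $\|\overline{j}\in\overline{m}\|$ is nonempty, whence part~(i) at $(j,m)$, with $j+m<n+m$, gives $j<m$, so $n\leq m$, and the symmetric argument from the second conjunct gives $m\leq n$, hence $n=m$. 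I expect the main point to get right to be the bookkeeping of this interdependence: part~(i) at $(n,m)$ invokes part~(ii) at $(n,k)$ with $k<m$, while part~(ii) at $(n,m)$ invokes part~(i) at $(j,m)$ with $j<n$, so the two statements must be proved together, and the induction must be on $n+m$ rather than on $\max(n,m)$, since the latter need not strictly decrease in these reductions. The other subtlety worth flagging is that the construction of $e_{0}$ depends on having a \emph{single} reflexivity realizer $r$ valid at all numerals at once---supplied by the uniformity of Proposition~\ref{prop:itsuniform}---which is what lets us define $e_{0}$ by $x\mapsto pxr$ and thereby avoid any case analysis on the distinct Curry numerals $\widetilde{j}$.
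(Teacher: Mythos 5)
Your proof is correct, and the forward directions (numerical relation implies nonemptiness) are essentially the paper's own: both rest on a single reflexivity realizer valid at all elements at once (the paper calls it $i_0$, extracted in the proof of Proposition~\ref{prop:itsuniform}; your $r=e_{\mathrm{ref2}}z$ is the same thing, and in fact $r\in\|\overline{n}=\overline{n}\|$ already settles the forward half of (ii) directly, so your detour through $e_0$ with $e_0x=pxr$ is harmless but unnecessary). Where you genuinely diverge is in the organization of the converse directions. The paper also argues by simultaneous induction, but on the \emph{single} index $m$, and to make that work it must add a third, mirrored statement (iii): $m<n$ iff $\|\overline{m}\in\overline{n}\|\neq\emptyset$. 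Its inductive step at $m+1$ then proves (i) using the inductive hypothesis for (ii), proves (ii) using the hypothesis for (iii) together with the freshly established case of (i) (deriving $m<n<m+2$, hence $n=m+1$), and finally proves (iii) from the fresh case of (ii). Your induction on the symmetric measure $n+m$ dispenses with the auxiliary statement entirely: since unwinding (\ref{def:elementhood}) only ever produces pairs with strictly smaller sum --- (i) at $(n,m)$ calls (ii) at $(n,k)$ with $k<m$, and (ii) at $(n,m)$ calls (i) at $(j,m)$, $j<n$, and at $(j,n)$, $j<m$ --- well-foundedness is transparent, and your step for (ii) becomes the clean symmetric pair of inequalities $n\leq m$ and $m\leq n$. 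What each approach buys: yours is tidier bookkeeping with one fewer statement to track and no delicate ordering of sub-cases within the step; the paper's single-index induction keeps the induction variable matched to the recursion on rank implicit in (\ref{eqn:mccartynumbers}), at the cost of the extra mirrored condition and a step for (ii) that is easy to get wrong (and, read literally as ordinary rather than strong induction on $m$, needs a small repair that your measure avoids). Your closing observation that $\max(n,m)$ would not strictly decrease is also accurate and worth having flagged.
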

\begin{proof}
For the left-to-right direction of (i), suppose that $n<m$. Then $\langle \widetilde{n}, \overline{n}\rangle\in \overline{m}$ and so $p \widetilde{n} i_0 \in \|\overline{n}\in \overline{m}\|$, where $i_0$ is an element of $\mathcal{A}$ such that $i_0\in \|a=a\|$ for any $a\in V_{\kappa}^{\mathcal{A}}$. For the left-to-right direction of (ii), we  have that $i_0\in \|a=a\|$ for any element $a$ of the structure. 

For the right-to-left direction, first consider a third condition: (iii) $m<n$ iff $\|\overline{m}\in \overline{n}\| \neq \emptyset$. Then we argue by simultaneous induction on $m$, that the right-to-left directions of (i)-(iii) hold. For the base case, consider $m=0$. For (i), since $\overline{0}=\emptyset$, we have that $\|\overline{n}\in \overline{m}\|=\emptyset$. For (ii), if $n\neq 0$ then $\overline{n}\neq \emptyset$ which implies that $\|\overline{n}=\overline{m}\|=\emptyset$. For (iii), suppose that $\|\overline{m}\in \overline{n}\|\neq \emptyset$ but not $0<n$. Then $n=0$ and so $\overline{n}=\emptyset$ and then $\|\overline{m}\in \overline{n}\| =\emptyset$.

Now suppose that the result holds for $m$; we show it holds for $m+1$. For (i), suppose that $\|\overline{n}\in \overline{m+1}\|\neq \emptyset$. Choose $p e_0 e_1$ in $\|\overline{n}\in \overline{m+1}\|$. Then there is $c$ with $\langle e_0, c\rangle \in \overline{m+1}$ with $e_1\in \|c=\overline{n}\|$. Then there is $\ell<m+1$ with $e_0=\widetilde{\ell}$ and $c=\overline{\ell}$. Then $\|\overline{\ell}=\overline{n}\|\neq \emptyset$ implies $\ell=n$ by the induction hypothesis for part~(ii). Then $n<m+1$, which is what we wanted to show. 

For (ii), suppose that $\|\overline{n}= \overline{m+1}\|\neq \emptyset$. Choose $e\in p_1 (\|\overline{n}= \overline{m+1}\|)$. Since $\langle \widetilde{m}, \overline{m}\rangle\in \overline{m+1}$, we have that $e \widetilde{m} \in \|\overline{m}\in \overline{n}\|$, so that $m<n$ by the induction hypothesis for~(iii). Choose $i\in p_0 (\|\overline{n}= \overline{m+1}\|)$. Then for all $\ell<n$, one has that $i \widetilde{\ell}\in \|\overline{\ell}\in \overline{(m+1)}\|$ and so by the previous paragraph (i.e. the $m+1$ case for (i)), we have that $\ell<m+1$. Thus in particular $n-1<m+1$, so that $n<m+2$. Hence collecting all this together, we have that $m<n<m+2$, so that $n=m+1$.

For (iii), suppose that $\|\overline{m+1}\in \overline{n}\|\neq \emptyset$. Choose $p e_0 e_1$ in $\|\overline{m+1}\in \overline{n}\|$. Then there is $c$ with $\langle e_0, c\rangle \in \overline{n}$ with $e_1\in \|c=\overline{m+1}\|$. Then there is $\ell<n$ with $e_0=\widetilde{\ell}$ and $c=\overline{\ell}$. Then $\|\overline{\ell} = \overline{m+1}\|\neq \emptyset$ and so by the previous paragraph (i.e. the $m+1$ case for (ii)), we have that $\ell=m+1$. Then $m+1<n$, which is what we wanted to show.
\end{proof}

To develop our counterexample to the stability of negated atomics (cf. Proposition~\ref{prop:counterstabilityatomics}), we need to work not only with the ersatzes of the natural numbers, but also with ersatzes of subsets of natural numbers. Hence, for any $X\subseteq \omega$, let $\chi_X:\omega\rightarrow \{k, \breve{k}\}$ be the characteristic function of $X$, defined by  $\chi_X(n)=k$ if $n\in X$ and $\chi_X(n)=\breve{k}$ if $n\notin X$. 
\begin{prop}
For any $X\subseteq \omega$, define $\widehat{X}=\{\langle p \widetilde{n} \chi_X(n), \overline{n}\rangle: n\geq 0\}$, which is an element of $V_{\omega+1}^{\mathcal{A}}$. Then $\|\overline{n}\in \widehat{X}\| = \{ p (p \widetilde{n} \chi_X(n)) e : e\in \|\overline{n}=\overline{n}\|\}$ for all $n\geq 0$.
\end{prop}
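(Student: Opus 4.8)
The plan is to unfold the recursive definition of the membership valuation given in~(\ref{def:elementhood}) at the object $\widehat{X}$ and then invoke Proposition~\ref{prop:mnequality}(ii) to discard all but one of the resulting disjuncts. The real content of the proposition is precisely this collapse of a union indexed by all $m\geq 0$ down to the single index $m=n$.

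First I would record the (routine) fact that $\widehat{X}$ is a legitimate element of $V_{\omega+1}^{\mathcal{A}}$. By an easy induction on $n$ using the defining equation $\overline{n}=\{\langle\widetilde{m},\overline{m}\rangle : m<n\}$ from~(\ref{eqn:mccartynumbers}), one sees that $\overline{n}\subseteq \mathcal{A}\times\{\overline{m}:m<n\}\subseteq \mathcal{A}\times V_{n}^{\mathcal{A}}$, so that $\overline{n}\in V_{n+1}^{\mathcal{A}}\subseteq V_{\omega}^{\mathcal{A}}$. Since each $\widetilde{n}$ and each $\chi_X(n)$ lie in $\mathcal{A}$, the pairing element gives $p\widetilde{n}\chi_X(n)\in\mathcal{A}$, whence $\widehat{X}\subseteq \mathcal{A}\times V_{\omega}^{\mathcal{A}}$ and therefore $\widehat{X}\in P(\mathcal{A}\times V_{\omega}^{\mathcal{A}})=V_{\omega+1}^{\mathcal{A}}$.

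Next I would apply~(\ref{def:elementhood}) directly: $\|\overline{n}\in\widehat{X}\|$ consists of exactly those elements $p e_0 e_1$ for which there is some $c$ with $\langle e_0,c\rangle\in\widehat{X}$ and $e_1\in\|\overline{n}=c\|$. By the definition of $\widehat{X}$, the condition $\langle e_0,c\rangle\in\widehat{X}$ holds precisely when $e_0=p\widetilde{m}\chi_X(m)$ and $c=\overline{m}$ for some $m\geq 0$, so that $\|\overline{n}\in\widehat{X}\|=\bigcup_{m\geq 0}\{p(p\widetilde{m}\chi_X(m))e_1 : e_1\in\|\overline{n}=\overline{m}\|\}$.

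The key step—and the only place where anything beyond bookkeeping is required—is to observe that by Proposition~\ref{prop:mnequality}(ii) the set $\|\overline{n}=\overline{m}\|$ is empty whenever $m\neq n$, so every summand with $m\neq n$ contributes nothing to the union, leaving only the $m=n$ term, which is exactly $\{p(p\widetilde{n}\chi_X(n))e : e\in\|\overline{n}=\overline{n}\|\}$. For full rigor I would spell out the two inclusions: the $\supseteq$ direction is immediate from the $m=n$ instance of the membership clause, while the $\subseteq$ direction reads off $e_0=p\widetilde{m}\chi_X(m)$ and $c=\overline{m}$ from membership in $\widehat{X}$ and then uses nonemptiness of $\|\overline{n}=\overline{m}\|$ together with Proposition~\ref{prop:mnequality}(ii) to force $m=n$. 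I do not anticipate any genuine obstacle here; the entire proposition is the single-term collapse of the union, powered by the injectivity-type fact recorded in Proposition~\ref{prop:mnequality}(ii).
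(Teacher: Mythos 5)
Your proof is correct and follows essentially the same route as the paper's: both unfold the elementhood clause~(\ref{def:elementhood}) at $\widehat{X}$, observe that every pair in $\widehat{X}$ has the form $\langle p\widetilde{m}\chi_X(m),\overline{m}\rangle$, and invoke Proposition~\ref{prop:mnequality}(ii) to force $m=n$ from the nonemptiness of $\|\overline{n}=\overline{m}\|$, with your union-collapse phrasing being merely a repackaging of the paper's two-inclusion argument. Your additional verification that $\widehat{X}\in V_{\omega+1}^{\mathcal{A}}$ is a harmless (and correct) supplement to what the paper leaves implicit.
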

\begin{proof}
First suppose that $p e_0 e_1 \in \|\overline{n}\in \widehat{X}\|$. Then there is $d$ with $\langle e_0,d\rangle \in \widehat{X}$ and $e_1\in \|\overline{n}=d\|$. Then $\langle e_0,d\rangle = \langle p \widetilde{m} \chi_X(m), \overline{m}\rangle$ for some $m\geq 0$. But since $e_1\in \|\overline{n}=\overline{m}\|$ we have $n=m$ by the previous proposition, so that $\langle e_0,d\rangle = \langle p \widetilde{n} \chi_X(n), \overline{n}\rangle$, so that $p e_0 e_1 = p( p \widetilde{n} \chi_X(n)) e_1 $ with $e_1\in \|\overline{n}=\overline{n}\|$, which is what we wanted to show. For the converse containment, simply note that if $e\in \|\overline{n}=\overline{n}\|$ then $d=\overline{n}$ is a witness to $\langle p \widetilde{n} \chi_X(n), d\rangle \in \widehat{X} \; \wedge \; e\in \|d=\overline{n}\|$, so that $p (p \widetilde{n} \chi_X(n))e \in \|\overline{n}\in \widehat{X}\|$.
\end{proof}

Finally, we have the counterexample to the stability of negated atomics, which is part of Theorem~\ref{thm:main}:
\begin{prop}\label{prop:counterstabilityatomics} In the modal $\mathbb{B}$-valued $L_0$-structure $\mu[V_\kappa^{\mathcal{A}}]$, it is not the case that for every negated atomic formula $\neg R\overline{x}$, we have that $\|\neg  R (\overline x) \|_{\mu}  \equiv \|\Box(\neg  R(\overline x))\|_{\mu}$. In particular, a counterexample is the negated atomic formula ``$y\notin x$.''
\end{prop}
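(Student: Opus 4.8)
The plan is to instantiate the negated atomic $y \notin x$ at the parameters $x := \widehat{X}$ and $y := \overline{n}$, letting $n$ range over $\omega$, and to show that no single realizer witnesses the stability direction across this family. First I would observe that the reverse reduction $\|\Box \neg R\|_{\mu} \leq \|\neg R\|_{\mu}$ always holds uniformly, being an instance of the $\mathsf{T}$-axiom (cf.\ Proposition~\ref{S4}); hence, to refute the claimed equivalence in $\mathbb{B}(N^2)$, it suffices to show that the stability direction $\|\neg R\|_{\mu} \leq \|\Box \neg R\|_{\mu}$ admits no uniform witness. The conceptual point I would stress at the outset is that, for each \emph{fixed} $n$, the set $\|\overline{n}\in\widehat{X}\|$ is nonempty and so is pointwise equivalent to $\top$, which makes the two valuations equivalent for that $n$; the failure is therefore purely a failure of \emph{uniformity} in $n$.

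Next I would rewrite both sides using Proposition~\ref{prop:atomics}. Writing $R$ for the membership atomic, so that $\|R\|$ at $(\widehat{X},\overline{n})$ equals $\|\overline{n}\in\widehat{X}\|$, part (ii) gives uniformly in $n$ and $D$ that $\|\neg R\|_{\mu}(D) \equiv \|\overline{n}\in\widehat{X}\| \Rightarrow D$. Taking $N_0 = \{(\widehat{X},\overline{n}) : n\geq 0\}$ and using that $\|\overline{n}\in\widehat{X}\| \neq \bot$ for every $n$ (by the preceding proposition it contains $p(p\widetilde{n}\chi_X(n))i_0$), part (iii) gives uniformly in $n$ and $D$ that $\|\Box \neg R\|_{\mu}(D) \equiv D$. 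Assuming toward a contradiction that the stability direction holds uniformly, I would compose that assumed witness with the two uniform equivalence-witnesses supplied by (ii) and (iii)—forming the composite via Proposition~\ref{prop:onpcas}—to obtain a single $w \in \mathcal{A}$ with $w : (\|\overline{n}\in\widehat{X}\| \Rightarrow D) \leadsto D$ for all $n\geq 0$ and all $D \in P(\mathcal{A})$.

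The decisive step is a diagonal specialization: for each $n$, set $D := \|\overline{n}\in\widehat{X}\|$. The identity $skk$ lies in $\|\overline{n}\in\widehat{X}\| \Rightarrow \|\overline{n}\in\widehat{X}\|$, so $w(skk) \in \|\overline{n}\in\widehat{X}\|$ for every $n$; yet $w(skk)$ is a \emph{single fixed} element of $\mathcal{A}$, independent of $n$. By the preceding proposition every member of $\|\overline{n}\in\widehat{X}\|$ has the form $p(p\widetilde{n}\chi_X(n))e'$, so $p_0(p_0(w(skk))) = \widetilde{n}$ for all $n$. This forces all Curry numerals $\widetilde{n}$ to coincide, contradicting their distinctness (cf.\ the remark at~(\ref{eqn:currynumerals})). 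Hence no uniform witness exists, and the equivalence fails for $y \notin x$.

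I expect the main obstacle to be conceptual rather than computational: one must resist concluding equivalence from the pointwise fact $\|\overline{n}\in\widehat{X}\| \equiv \top$, and instead track realizers carefully, since the entire content of the counterexample is that the realizers of the membership facts encode the index $\widetilde{n}$ and so cannot be produced by one algorithm uniformly in $n$. A secondary point needing care is the bookkeeping in composing the assumed witness with the equivalences of Proposition~\ref{prop:atomics}, verifying that the resulting $w$ is genuinely uniform in both $n$ and $D$ so that the diagonal choice $D = \|\overline{n}\in\widehat{X}\|$ is legitimate.
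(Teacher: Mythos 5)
Your proposal is correct and follows essentially the same route as the paper's own proof: both instantiate the negated atomic at $(\widehat{X},\overline{n})$, invoke Proposition~\ref{prop:atomics}~(ii)-(iii) to reduce the assumed uniform stability witness to a uniform witness for $(\|\overline{n}\in \widehat{X}\|\Rightarrow D)\leq D$, diagonalize by setting $D=\|\overline{n}\in \widehat{X}\|$, feed in the identity element, and then use the characterization of $\|\overline{n}\in \widehat{X}\|$ to extract $\widetilde{n}$ via $p_0p_0$, contradicting the distinctness of the Curry numerals. Your explicit remarks that the converse direction is the $\mathsf{T}$-axiom and that the failure is one of uniformity rather than pointwise value are accurate glosses on what the paper leaves implicit.
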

\begin{proof}
Suppose not. Then there would be an index $e\geq 0$ such that for all $a,b$ in $V_{\kappa}^{\mathcal{A}}$ and all $D$ from $P(\mathcal{A})$, one has that there is a uniform witness to $\|b\notin a \|_{\mu}(D) \leq \|\Box (b\notin a) \|_{\mu}(D)$. For any $n\geq 0$ and any $X\subseteq \omega$ one has that $\|\overline{n}\in \widehat{X}\|$ is non-empty by the previous proposition. Then by Proposition~\ref{prop:atomics}~(ii)-(iii), for all $n\geq 0$, $X\subseteq \omega$ and $D$ from $P(\mathcal{A})$, one has that there is a uniform witness to $(\|\overline{n}\in \widehat{X}\|\Rightarrow D) \leq D$. But taking $D=\|\overline{n}\in \widehat{X}\|$, one has that for all all $n\geq 0$, $X\subseteq\omega$, there is a uniform witness~$e$ to $(\|\overline{n}\in \widehat{X}\|\Rightarrow \|\overline{n}\in \widehat{X}\|) \leq \|\overline{n}\in \widehat{X}\|$. Let $e^{\prime}$ be an index for the identity function, so that $e^{\prime}$ is in $(\|\overline{n}\in \widehat{X}\|\Rightarrow \|\overline{n}\in \widehat{X}\|)$ for all $n\geq 0$ and $X\subseteq \omega$. Then by choosing a pair of distinct numbers $n, m$, the previous proposition implies that $p_0 p_0 (ee^{\prime})$ is equal to both $\widetilde{n}$ and $\widetilde{m}$, a contradiction.
\end{proof}

In these last two propositions of this section, we work over Kleene's first model $\mathcal{K}_1$. We begin with the following proposition from \citet[p. 158]{McCarty1984aa}, whose proof we include for the sake of completeness:
\begin{prop}\label{prop:mccartychurch}
On any expansion of the uniform $P(\mathcal{K}_1)$-valued structure $V_{\kappa}^{\mathcal{K}_1}$, the following is valid:
\begin{multline}
[\forall \; n\in \overline{\omega} \; \exists \; m\in \overline{\omega} \; \varphi(n,m)]\Rightarrow \\ [\exists \; e\in \overline{\omega} \; \forall \; n\in \overline{\omega} \; \exists \; m\in \overline{\omega} \; \exists \; p\in \overline{\omega} \; (T(e,n,p) \wedge U(p,m) \wedge \varphi(n,m))]
\end{multline}
\end{prop}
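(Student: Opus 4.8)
The plan is to mimic the proof of Church's Thesis in the arithmetic setting (the Proposition preceding Theorem~\ref{thm:ECTisvalid}), transposed to the surrogate naturals $\overline{\omega}$. Since the quantifier here is classical, Proposition~\ref{prop:BF} lets me unfold the bounded quantifiers as $\|\forall\,n\in\overline{\omega}\,\psi(n)\|\equiv\bigcap_{c}(\|c\in\overline{\omega}\|\Rightarrow\|\psi(c)\|)$ and $\|\exists\,m\in\overline{\omega}\,\psi(m)\|\equiv\bigcup_{c}(\|c\in\overline{\omega}\|\wedge\|\psi(c)\|)$. Two elementary facts drive everything. First, every element of $\overline{\omega}$ has the form $\langle\widetilde{m},\overline{m}\rangle$, so by the definition of $\|\cdot\in\cdot\|$ any realizer $b_0\in\|d\in\overline{\omega}\|$ factors as $b_0=p\widetilde{m}f_1$ with $f_1\in\|d=\overline{m}\|$; in particular $p_0b_0=\widetilde{m}$. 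Second, the map $m\mapsto\widetilde{m}$ (given by $\widetilde{0}=skk$, $\widetilde{m+1}=p\breve{k}\widetilde{m}$) is primitive recursive and injective with recursive inverse on its range, so there is a recursive \emph{decoding} function sending $\widetilde{m}$ back to $m$. I also fix a uniform realizer $i_0\in\|a=a\|$ (from $e_{\mathrm{ref2}}$ in Definition~\ref{defn:uniform}), so that $p\widetilde{n}i_0\in\|\overline{n}\in\overline{\omega}\|$ for every $n$.

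Given a realizer $e^{*}$ of the antecedent, I would first extract a recursive function computing witnesses. For each $n$, the element $a_n=p\widetilde{n}i_0$ realizes $\overline{n}\in\overline{\omega}$, so $e^{*}a_n\!\downarrow$ realizes $\exists\,m\in\overline{\omega}\,\varphi(\overline{n},m)$; write $e^{*}a_n=p b_0 b_1$ with $b_0\in\|d\in\overline{\omega}\|$ and $b_1\in\|\varphi(\overline{n},d)\|$ for some $d$. Decoding $p_0b_0=\widetilde{m}$ yields a number $m=g(n)$, and applying the uniform substitution realizer of Proposition~\ref{prop:uniform valuation} to $p_1b_0\in\|d=\overline{m}\|$ and $b_1$ produces $r(n)\in\|\varphi(\overline{n},\overline{m})\|$. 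Because $e^{*}$ realizes the antecedent, $g$ is total, and both $g$ and $r$ are recursive uniformly in $n$ (and in the fixed data $e^{*}$). Let $\hat{e}$ be a total index for $g$ via the $s$-$m$-$n$ theorem.

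The witness for $\exists\,e\in\overline{\omega}$ in the consequent will be $\overline{\hat{e}}$, realized in $\|\overline{\hat{e}}\in\overline{\omega}\|$ by $p\widetilde{\hat{e}}i_0$. For the inner block, fix $n$ with a realizer of $\overline{n}\in\overline{\omega}$ and set $m=g(n)=\{\hat{e}\}(n)$. In the standard model there is then a halting-computation code $q$, recursive in $\hat{e}$ and $n$, with $T(\hat{e},n,q)$ and $U(q,m)$ true. I would assemble the required realizer of $\exists\,m\in\overline{\omega}\,\exists\,p\in\overline{\omega}\,(T(\overline{\hat{e}},\overline{n},p)\wedge U(p,\overline{m})\wedge\varphi(\overline{n},\overline{m}))$ by instantiating the two existentials at $\overline{m}$ and $\overline{q}$, tagging them with $p\widetilde{m}i_0$ and $p\widetilde{q}i_0$, and pairing $r(n)$ with realizers of $\|T(\overline{\hat{e}},\overline{n},\overline{q})\|$ and $\|U(\overline{q},\overline{m})\|$. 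Since every step is recursive and uniform in $n$, Proposition~\ref{prop:onpcas} collapses the whole construction into a single realizer of the inner universal statement, which paired with $p\widetilde{\hat{e}}i_0$ realizes the consequent.

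The step needing the most care — and the real content of the theorem — is twofold. First is the extraction in the second paragraph: it is precisely the fact that a set-theoretic witness $d$ for $\exists\,m\in\overline{\omega}$ always arrives \emph{tagged} with a Curry numeral $\widetilde{m}$ from which the genuine number $m$ can be read off recursively, that converts the non-constructive existential into an honest computable choice function. Second, I must justify that the true predicates $T(\hat{e},n,q)$ and $U(q,m)$ have non-empty, uniformly obtainable realizer sets $\|T(\overline{\hat{e}},\overline{n},\overline{q})\|$ and $\|U(\overline{q},\overline{m})\|$ on the numerals. This is the set-theoretic analogue of the strong representability of $\Delta_1$ predicates used in \S\ref{sec:arithmetic}: since $T$ and $U$ are primitive recursive, the relevant instances are provable in $\mathsf{IZF}$ relativized to $\overline{\omega}$ and hence valid on $V_{\kappa}^{\mathcal{K}_1}$ by Theorem~\ref{thm:zfr-valued}, with Proposition~\ref{prop:mnequality} supplying the base numeric facts.
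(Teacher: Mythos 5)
Your proposal is correct and follows essentially the same route as the paper's own proof: apply the antecedent's realizer to the canonical membership realizers $p\widetilde{n}i_0$, extract a total recursive witness function via the $s$-$m$-$n$ theorem, realize the true instances of $T$ and $U$ through their provability in $\mathsf{IZF}$ together with Theorem~\ref{thm:zfr-valued}, and assemble the whole thing uniformly via Proposition~\ref{prop:onpcas}. If anything, you are more explicit than the paper's terse proof on the one delicate point---decoding the Curry-numeral tag $p_0 b_0 = \widetilde{m}$ and applying the substitution realizer of Proposition~\ref{prop:uniform valuation} so that the number output by $\hat{e}$ is the very same $m$ for which $\varphi(\overline{n},\overline{m})$ is realized---a step the paper leaves implicit.
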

\begin{proof}
Suppose that $i\in \|\forall \; n\in \overline{\omega} \; \exists \; m\in \overline{\omega} \; \varphi(n,m)\|$. Then for all $b\in V_{\kappa}^{\mathcal{K}_1}$, one has that $i$ is in $\| b\in \overline{\omega} \Rightarrow \exists \; m\in \overline{\omega} \; \varphi(b,m)\|$. Then by definition of $\overline{\omega}$ one has that for all $n\geq 0$ that 
\begin{equation}
i(p \widetilde{n} i_0)\in \|\exists \; m\in \overline{\omega} \; \varphi(\overline{n},m)\|=\bigcup_{m\in V_{\kappa}^{\mathcal{K}_1}} \|m\in \overline{\omega}\|\wedge \|\varphi(\overline{n},m)\|
\end{equation}
where $i_0$ is the program such that $i_0\in \|a=a\|$ for all $a\in V_{\kappa}^{\mathcal{K}_1}$. Then let $e_0, e_1$ be the indexes for the program such that for all $n\geq 0$ one has $e_0 n = p_0 i(p \widetilde{n} i_0)$ and $e_1 n = p_1 i(p \widetilde{n} i_0)$. Then $e_0, e_1$ are total. Hence, for each $n\geq 0$ one may compute $m,p\geq 0$ such that $T(e_0,n,p) \wedge U(p,m)$. Since these are recursive, one may effectively find from $e_0,n,p,m$ a proof of $T(e_0,\overline{n},\overline{p}) \wedge U(\overline{p},\overline{m})$ from $\mathsf{IZF}$, and then ${e_1}n$ returns an element of $\|\varphi(\overline{n},\overline{m})\|$.
\end{proof}

\begin{prop}\label{eqn:finallyECTsettheory}
On the modal structure $\mu[V_{\kappa}^{\mathcal{K}_1}]$, the following is valid:
\begin{multline}
[\Box \; \forall \; n\in \overline{\omega} \; \exists \; m\in \overline{\omega} \; \Box \varphi(n,m)] \Rightarrow\\
 [\exists \; e\in \overline{\omega} \; \Box \; \forall \; n\in \overline{\omega} \; \exists \; m\in \overline{\omega} \; \exists \; p\in \overline{\omega} \; (T(e,n,p) \wedge U(p,m) \wedge \Box \varphi(n,m))]
\end{multline}
\end{prop}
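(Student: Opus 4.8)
The plan is to mirror the proof of Theorem~\ref{thm:ECTisvalid} from the arithmetic setting, replacing the arithmetic Church's Thesis by its set-theoretic counterpart Proposition~\ref{prop:mccartychurch} and replacing the primitive recursive presentation of the Kleene predicates by atomic relation symbols. Concretely, I would first expand $V_{\kappa}^{\mathcal{K}_1}$ to an $L_0 \cup \{T,U\}$-structure $\mathcal{N}$ by interpreting $T(e,n,p)$ and $U(p,m)$ as atomic relation symbols, where $\|T(a,b,c)\|$ is $\top$ when $a,b,c$ are structure-equal to ersatz naturals $\overline{e},\overline{n},\overline{p}$ for which the Kleene $T$-predicate holds and $\bot$ otherwise, and similarly for $U$. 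The purpose of this move is exactly as in the arithmetic case: since atomics are unchanged by the G\"odel translation (first clause of Definition~\ref{GodelTrans}), these predicates will pass through the translation untouched, matching the unboxed occurrences of $T$ and $U$ in both the hypothesis and the conclusion of the target statement.

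Next I would apply Flagg's Change of Basis Theorem~\ref{prop:changeofbasis} to $\mathcal{N}$ with the given modal formula $\varphi(n,m)$ and a new binary predicate $G$, obtaining an expansion $\mathcal{N}(\mathbf{G})$ in which the box-valuation of the atomic $G(n,m)$ on $\mu[\mathcal{N}(\mathbf{G})]$ agrees with that of $\Box\varphi(n,m)$ on $\mu[\mathcal{N}]$. On this expansion the non-modal set-theoretic Church's Thesis of Proposition~\ref{prop:mccartychurch} (which holds on any expansion of $V_{\kappa}^{\mathcal{K}_1}$), instantiated at the atomic $G$, is valid. I would then compute its G\"odel translation using the compositional rules for bounded quantifier blocks, namely~(\ref{eqn:remarkgodelatomics1}) and~(\ref{eqn:remarkgodelatomics2}). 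Writing the non-modal statement as a conditional $\Phi \Rightarrow \Psi$, these rules give
\begin{align}
\Phi^{\Box} & \equiv \Box \; \forall \; n\in \overline{\omega} \; \exists \; m\in \overline{\omega} \; G(n,m), \notag \\
\Psi^{\Box} & \equiv \exists \; e\in \overline{\omega} \; \Box \; \forall \; n\in \overline{\omega} \; \exists \; m\in \overline{\omega} \; \exists \; p\in \overline{\omega} \; (T(e,n,p) \wedge U(p,m) \wedge G(n,m)), \notag
\end{align}
where $T$, $U$, and $G$ survive unchanged because they are atomic. By Theorem~\ref{translationthingie} the translation $\Box(\Phi^{\Box} \Rightarrow \Psi^{\Box})$ is valid on $\mu[\mathcal{N}(\mathbf{G})]$, and stripping the outer box by the $\mathsf{T}$-axiom yields $\Phi^{\Box} \Rightarrow \Psi^{\Box}$. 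Finally, invoking the Change of Basis identification of $G$ with $\Box\varphi$ replaces every occurrence of $G(n,m)$ by $\Box\varphi(n,m)$, producing precisely the desired validity on $\mu[V_{\kappa}^{\mathcal{K}_1}]$.

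The step I expect to be the main obstacle is the first one: verifying that the expansion by the atomic predicates $T$ and $U$ is genuinely a uniform $P(\mathcal{K}_1)$-valued structure, i.e.\ that there are uniform witnesses to the substitution clauses of Definition~\ref{defn:uniform} for these new atomics. The delicate point is that set-theoretic equality $\|a=b\|$ is not two-valued, so one cannot naively transport a disjunctively-interpreted predicate across equalities. The resolution I would give is that $P(\mathcal{K}_1)$ is two-valued up to equivalence and that the interpretation is arranged to respect structure-equality: using the uniform transitivity witness $e_{\mathrm{tran}}$ of Definition~\ref{defn:uniform}, whenever all $\|a_i=b_i\|$ are nonempty one has $\|T(\overline a)\| \equiv \|T(\overline b)\|$, both being $\top$ or both $\bot$. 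Consequently the domain $\|a_1{=}b_1\| \wedge \cdots \wedge \|T(\overline a)\|$ is empty whenever $\|T(\overline b)\|$ is empty and is carried into $\top = \mathcal{K}_1$ otherwise, so the identity map serves as a single uniform witness $e_T$ (and likewise $e_U$). Alternatively, one could avoid adding atomics altogether by expressing $T$ and $U$ as $\Delta_0$-formulas over $\overline{\omega}$ and establishing a set-theoretic analogue of the $\Sigma_1$-stability of Proposition~\ref{thm:sigma1stable}; the atomic route is cleaner and keeps the argument parallel to Theorem~\ref{thm:ECTisvalid}.
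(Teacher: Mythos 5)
Your skeleton is the paper's skeleton: the paper's own proof says to invoke Proposition~\ref{prop:mccartychurch} and then run the arithmetic argument of Theorem~\ref{thm:ECTisvalid} verbatim, via the Change of Basis Theorem~\ref{prop:changeofbasis}; and your resolution of the uniformity question for the expanded structure (the antecedent of the substitution reduction is either empty or, by $e_{\mathrm{tran}}$, lands in a consequent equal to $\top$, so the identity map is a uniform witness) is correct as far as it goes. The genuine gap is in the treatment of $T$ and $U$. In the set-theoretic setting the signature is just $\{\in\}$, and the paper understands $T(e,n,p)$ and $U(p,m)$ as the set-theoretic formulas that strongly represent the Kleene predicates: this is explicit in the proof of Proposition~\ref{prop:mccartychurch}, which ``effectively find[s] \ldots a proof of $T(e_0,\overline{n},\overline{p})\wedge U(\overline{p},\overline{m})$ from $\mathsf{IZF}$.'' Those renderings are $\Sigma_1$-in-arithmetic formulas relativized to $\overline{\omega}$ (being primitive recursive predicates, they are not atomic in $\{\in\}$, nor $\Delta_0$ over $\overline{\omega}$ as your closing remark suggests). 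Consequently your appeal to Proposition~\ref{prop:mccartychurch} ``instantiated at the atomic $G$'' with your new atoms $T,U$ is not an instance of that proposition: the atomic variant would need its own (admittedly easy, McCarty-style) verification.

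The more serious problem is what your argument proves at the end: the validity of a sentence of the \emph{expanded} signature $\{\in, T, U, G\}$ on the expanded structure $\mu[\mathcal{N}(\mathbf{G})]$, not the proposition as stated, which concerns $\mu[V_{\kappa}^{\mathcal{K}_1}]$ with $T,U$ read as set-theoretic formulas. To pass from your sentence to the paper's, you must replace your atoms by the representing formulas underneath the boxes and quantifiers. Instance-by-instance equivalence of the two renderings --- which is all that the two-valuedness of $P(\mathcal{K}_1)$ up to equivalence gives you --- does not license this substitution, because the semantics of $\bigcap_{c}$, $\Rightarrow$, and $\ominus_D$ only respects \emph{uniform} equivalences. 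What is needed is the validity of a boxed universal biconditional such as $\Box \; \forall \; e,n,p\in\overline{\omega}\;(T_{\mathrm{at}}(e,n,p)\Leftrightarrow T_{\mathrm{set}}(e,n,p))$, where $T_{\mathrm{at}}$ is your atom and $T_{\mathrm{set}}$ the representing formula; and proving that (equivalently, proving that $T_{\mathrm{set}}$ is stable and equivalent to its own G\"odel translation) is precisely the ``stability of formulas which are $\Sigma_1$-definable in the signature of arithmetic'' --- the set-theoretic analogue of Proposition~\ref{thm:sigma1stable} --- that the paper's proof invokes and that you set aside as a dispensable alternative. So the atomic route does not eliminate the $\Sigma_1$-stability ingredient; it only relocates it, and as written your proposal establishes a theorem about a different structure and signature from the one in the proposition.
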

\begin{proof}
By invoking the previous proposition, the proof is exactly the same as in the arithmetic case in \S\ref{sec:arithmetic}, using the Change of Basis Theorem~\ref{prop:changeofbasis} and the stability of formulas which are $\Sigma_1$-definable in the signature of arithmetic in exactly the same way.
\end{proof}

As mentioned in the introduction, Theorem~\ref{thm:main} follows from this proposition and the earlier Proposition~\ref{prop:eZF} and Proposition~\ref{prop:counterstabilityatomics}.


\section{Troelstra's Elementary Analysis and Kleene's Second Model}\label{sec:Troelstrakleene}

Now we focus on building models of fragments of second-order arithmetic with very limited amounts of comprehension. This will be relative to the pca $\mathcal{K}_2$, namely Kleene's second model (cf. \S\ref{sec:heytingfrompca}), and we shall concentrate our efforts on the theory of \emph{elementary analysis} $\mathsf{EL}$ studied by Troelstra (cf. \citet[\S\S{3.3, 3.4} pp. 982-983]{Troelstra1977aa}, \citet[volume 1 \S{3.6} pp. 144 ff]{Troelstra1988aa}, \citet[\S{2.5} pp. 425-426]{Troelstra1998aa}).

Let us extend the single-sorted signature of Heyting arithmetic $\mathsf{HA}$ by second sort reserved for functions from natural numbers to natural numbers. The modal semantics developed in \S\ref{sec:intuitiontomodal} carries over straightforwardly to the many-sorted setting. Since there are only two sorts, we'll simply reserve the lower-case Roman letters $\ell,m,n,x,y,z$ for numbers and we'll reserve the lower-case case Greek letters $\alpha, \beta, \gamma, \delta$ for functions from natural numbers to natural numbers. The only primitive that we add to the signature is the application function $(\alpha,n)\mapsto \alpha(n)$, which takes a number-theoretic function~$\alpha$ and a number~$n$ and evaluates~$\alpha$ at $n$.

We work with the standard model of second-order arithmetic, which we call $\mathcal{N}_0^2$. In this, the number sort is interpreted as $\omega = \{0,1,2,3,\ldots\}$ and the function sort is interpreted as Baire space $\omega^{\omega}$. The elements of the signature of Heyting arithmetic are interpreted exactly the same as in \S\ref{sec:arithmetic}, except that they are taken to be $P(\mathcal{K}_2)$-valued instead of $P(\mathcal{K}_1)$-valued. In many treatments of second-order arithmetic, equality for second-order objects would be defined in terms of coextensionality. However, since the modal semantics of \S\ref{sec:intuitiontomodal} always has identity built-in, we go ahead and assume that identity terms between functions are well-formed and interpreted disjunctively just as in~(\ref{eqn:myhowidentity}).

The application function $(\alpha,n)\mapsto \alpha(n)$ is interpreted by the usual application function given by the metatheory. The same argument as deployed \emph{vis-\`a-vis} first-order arithmetic in \S\ref{sec:arithmetic} shows that $\mathcal{N}_0^2$ is a uniform $P(\mathcal{K}_2)$-valued structure, and that any expansion $\mathcal{N}^2$ of $\mathcal{N}_0^2$ by new function or relation symbols is similarly a uniform $P(\mathcal{K}_2)$-valued structure. As for the quantifiers, here we must proceed a little differently from \S\ref{sec:arithmetic}, since the quantifiers must map each element of the domain to a subset of $\mathcal{K}_2$ instead of $\mathcal{K}_1$. Recall from \S\ref{sec:heytingfrompca} that $[\sigma]$ is the clopen through the finite string $\sigma$ in Baire space, so that $[(n)]=\{\alpha \in \omega^{\omega}: \alpha(0)=n\}$. 

We then define the quantifiers in $\mathcal{N}^2$  as $\mathbb{Q}(n) = [(n)]$ and $\mathbb{Q}(\alpha) =\{\alpha\}$, so that $\omega^{\omega}=\bigsqcup_{n\geq 0} \mathbb{Q}(n)$ and $\omega^{\omega}=\bigsqcup_{\alpha \in \omega^{\omega}} \mathbb{Q}(\alpha)$ provide us with two distinct partitions of Baire space. Using the terminology from Definition~\ref{defn:Qproperties}, it's easy to see that these quantifiers are non-degenerate, non-uniform, and non-classical (where these notions are relativized to sorts in the obvious way). They are also term-friendly, providing that we only introduce new recursive number-theoretic functions, like in \S\ref{sec:arithmetic}, and providing that we don't introduce any new functions defined on second-order objects outside of the application function. For, assuming this, it then suffices to show that there is $\gamma$ in $\mathcal{K}_2$ such that $\gamma: \mathbb{Q}(n)\wedge \mathbb{Q}(\alpha)\leadsto \mathbb{Q}(\alpha(n))$ for all $n\geq 0$ and $\alpha$. Since the function $F:\omega^{\omega}\times \omega^{\omega}\rightarrow \omega^{\omega}$ given by $F(\beta,\alpha)=\alpha(\beta(0))^{\frown} \overline{0}$ is a continuous function, by Proposition~\ref{prop:oostenfacts}.I choose $\delta$ such that $\delta \alpha \beta = F(\beta, \alpha)$. Then by Proposition~\ref{prop:onpcas} choose $\gamma$ such that $\gamma \gamma^{\prime} = \delta (p_0 \gamma^{\prime}) (p_1 \gamma^{\prime})$. This is why the quantifiers on this structure are term-friendly.

\begin{prop}\label{prop:newHA}
All of the axioms of $\mathsf{HA}$ are valid on the structure $\mathcal{N}^2$, as well as the following recursion axiom and choice schema and law of the excluded middle for the new atomics:
\begin{align}
& \forall \; n_0 \; \forall \; \alpha \; \exists \; \gamma \; \gamma(0) =n_0 \; \wedge \; \forall \; n \; \gamma(S(n))=\alpha(\gamma(n))\label{eqn:recursion} \\ 
& [\forall \; n \; \exists \; m \; \varphi(n,m)]\Rightarrow \exists \; \gamma \; [\forall \; n \; \varphi(n, \gamma(n))]\label{eqn:choicetroes1} \\
& \forall \; \alpha \; \forall \; n \; \forall \; m \; (\alpha(n)=m \vee \alpha(n)\neq m)\label{eqn:LEMnewatomics}
\end{align}
\end{prop}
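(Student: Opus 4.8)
The plan is to verify each axiom and schema directly by producing a realizer in $\mathcal{K}_2$, invoking Proposition~\ref{prop:soundness} both to reduce validity of $\mathsf{HA}$ to validity of its axioms and to inherit closure of the valid sentences under $\mathsf{IQC}$; the hypotheses of that proposition, namely that $\mathbb{Q}$ is non-degenerate and term-friendly, were established just before this statement. Throughout, the mechanism for turning an explicit construction into an element of $\mathcal{K}_2$ is Proposition~\ref{prop:oostenfacts}.I: it suffices to check that the construction in question is a continuous function on a $G_\delta$ domain.

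For the $\mathsf{HA}$ axioms apart from induction I would follow Proposition~\ref{prop:heytingaxioms} essentially verbatim. The defining equations for $0$, $S$, and the primitive recursive function symbols are interpreted disjunctively and hold genuinely, so they are realized by the truth-realizers furnished by the uniform Heyting-prealgebra structure on $P(\mathcal{K}_2)$, while $\|S0=0\|$ is empty so $S0 \neq 0$ is realized. For induction I would re-run the recursion-on-$n$ argument of Proposition~\ref{prop:heytingaxioms}, with the single change forced by the quantifier: since now $\mathbb{Q}(n)=[(n)]$, the realizer reads $n=\beta(0)$ off its argument $\beta \in [(n)]$ and then iterates the step-realizer $n$ times, feeding it at stage $j$ the canonical name $(j)^{\frown}\overline{0}\in \mathbb{Q}(j)$. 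This operation is continuous in its inputs, so Proposition~\ref{prop:oostenfacts}.I supplies the required element of $\mathcal{K}_2$.

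The recursion axiom~(\ref{eqn:recursion}) and the law of excluded middle~(\ref{eqn:LEMnewatomics}) are then direct. For~(\ref{eqn:recursion}), given $n_0$ and $\alpha$ there is a unique $\gamma$ with $\gamma(0)=n_0$ and $\gamma(Sn)=\alpha(\gamma(n))$, and each value $\gamma(k)$ depends only on $n_0$ and finitely many values of $\alpha$, so $(n_0,\alpha)\mapsto \gamma$ is continuous; I realize the existential by outputting this $\gamma$ as the witness token (recall $\mathbb{Q}(\gamma)=\{\gamma\}$, so the token must be $\gamma$ itself) paired with truth-realizers for the matrix, which is a conjunction of genuinely true atomic equations. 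For~(\ref{eqn:LEMnewatomics}), given tokens $\alpha$, $\beta \in [(n)]$, and $\delta \in [(m)]$, the realizer computes $\alpha(n)$ using $\alpha$ as oracle and $n=\beta(0)$, compares the result with $m=\delta(0)$, and outputs $pk$ applied to a truth-realizer or $p\breve{k}$ applied to a truth-realizer according to the outcome, matching the disjunction clause of Definition~\ref{recHeytingpre}; this decision is continuous, hence realized in $\mathcal{K}_2$.

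The delicate case, and the \emph{main obstacle}, is the choice schema~(\ref{eqn:choicetroes1}). Here I would show that from any realizer $e$ of the antecedent $\forall\,n\,\exists\,m\,\varphi(n,m)$ one can continuously extract both a witnessing function and a realizer of the consequent's matrix. Concretely, for each $n$ set $\beta_n=(n)^{\frown}\overline{0}\in \mathbb{Q}(n)$; then $e\beta_n$ realizes $\exists\,m\,\varphi(n,m)$, so $p_0(e\beta_n)\in \mathbb{Q}(m)=[(m)]$ for the witnessed $m$ and $p_1(e\beta_n)$ realizes $\varphi(n,m)$. Define $\gamma(n)=(p_0(e\beta_n))(0)$, reading $m$ off the first coordinate; the map $e\mapsto \gamma$, together with the map sending $e$ and a name of $n$ to $p_1(e\beta_n)$, is continuous on a $G_\delta$ domain. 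I then package these by Proposition~\ref{prop:oostenfacts}.I into a single element of $\mathcal{K}_2$ realizing the implication, whose output is the pair consisting of the witness token $\gamma$ and the matrix-realizer. The crux is precisely that the partition $\omega^{\omega}=\bigsqcup_{n}[(n)]$ lets the realizer recover the numeral $n$ from its quantifier token, so that the selection of witnesses depends \emph{continuously}, not merely set-theoretically, on $e$; checking this continuity, and that the extraction lands in the correct $G_\delta$ domain so that Proposition~\ref{prop:oostenfacts}.I applies, is where the real work lies. This is also the structurally new point relative to \S\ref{sec:arithmetic}: realizing an existential over the function sort requires producing the witness function $\gamma$ itself as the token for $\mathbb{Q}(\gamma)=\{\gamma\}$, rather than merely a numeral.
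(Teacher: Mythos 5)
Your proposal is correct and follows essentially the same route as the paper's proof: reduce to the axioms via Proposition~\ref{prop:soundness}, then build each realizer as a continuous function on a $G_\delta$ domain and invoke Proposition~\ref{prop:oostenfacts}.I, with induction handled by reading $n$ off the token in $[(n)]$ and iterating the step-realizer on canonical names $(j)^{\frown}\overline{0}$, and choice handled by extracting $\gamma(n)=(p_0(e((n)^{\frown}\overline{0})))(0)$ together with the matrix-realizer. The paper's proof carries out exactly these constructions, including your identified crux for the choice schema, merely spelling out the continuity and $G_\delta$-domain verifications that you gloss.
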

\begin{proof}
Again, by Proposition~\ref{prop:soundness}, it suffices to verify the validity of the axioms. For $\mathsf{HA}$, the proof proceeds much as in the proof of Proposition~\ref{prop:heytingaxioms}, modulo needing to work with $\mathcal{K}_2$ instead of $\mathcal{K}_1$. In parallel to equation~(\ref{eqn:inductioninthenats}), for induction it suffices to find a witness to the following:
\begin{equation}\label{eqn:inductioninthenats4}
\| \varphi(0) \| \wedge \bigcap_{n\geq 0} ([(n)]\Rightarrow (\|\varphi(n)\|\Rightarrow \|\varphi(Sn)\|)\leq \bigcap_{n\geq 0} ([(n)] \Rightarrow \|\varphi(n)\|)
\end{equation}
For each $n\geq 0$, by Proposition~\ref{prop:onpcas} choose $\gamma_n$ such that $\gamma_n \alpha \beta =  (\alpha((n)^{\frown} \overline{0})) (\beta )$. Then by Proposition~\ref{prop:oostenfacts}.II, the map $G_n(\alpha, \beta)= \gamma_n\alpha\beta$ has $G_{\delta}$ domain $E_n$ and is continuous on this domain. Define a partial function of three variables $F:\omega^{\omega}\times \omega^{\omega}\times \omega^{\omega} \dashrightarrow \omega^{\omega}$ by $F(\alpha_0, \alpha_1, (0)^{\frown} \beta) = \alpha_0$ and $F(\alpha_0, \alpha_1, (n+1)^{\frown} \beta) = \gamma_n \alpha_1 F(\alpha_0, \alpha_1, (n)^{\frown} \beta)$. We claim that $F$ has $G_{\delta}$ domain and is continuous on this domain. Since the domains $D_n$ of $F\upharpoonright (\omega^{\omega}\times \omega^{\omega} \times [(n)])$ are disjoint and separated by opens, it suffices to show by induction on $n\geq 0$ that $F\upharpoonright (\omega^{\omega}\times \omega^{\omega} \times [(n)])$ is partial continuous and that its domain $D_n$ is $G_{\delta}$. For $n=0$, this is trivially the case. Suppose it holds for $n$. To show it holds for $n+1$, define $\widehat{F}(\alpha_0, \alpha_1, (n)^{\frown} \beta) = (\alpha_1, F(\alpha_0, \alpha_1, (n)^{\frown} \beta))$ which is continuous on $D_n$ by induction hypothesis. Then $(\alpha_0, \alpha_1, (n+1)^{\frown}\beta)\in D_{n+1}$ iff both $(\alpha_0, \alpha_1, (n)^{\frown} \beta)\in D_n$ and $(\alpha_0, \alpha_1, (n)^{\frown} \beta) \in \widehat{F}^{-1}(E_n)$, which is $G_{\delta}$ by induction hypothesis. Further $F$ is continuous on $D_{n+1}$ since it is the composition of two continuous functions. So indeed $F$ has $G_{\delta}$ domain and is continuous on this domain. Then by Proposition~\ref{prop:oostenfacts}.I there is a $\gamma$ such that $\gamma\alpha_0 \alpha_1 \beta= F(\alpha_0,\alpha_1,\beta)$. By Proposition~\ref{prop:onpcas}, choose $\gamma^{\prime}$ such that $\gamma^{\prime}\alpha = \gamma (p_0\alpha) (p_1 \alpha)$. Now, to verify~(\ref{eqn:inductioninthenats4}), suppose that $\alpha=p\alpha_0\alpha_1$ is in the antecedent of this reduction, so that $p_0\alpha = \alpha_0$ and $p_1\alpha =\alpha_1$. Then an easy induction on $n\geq 0$ shows that $\gamma^{\prime}\alpha = \gamma\alpha_0 \alpha_1$ is in the consequent of this reduction.

For the recursion axiom~(\ref{eqn:recursion}), for ease of readability, consider the specific case where $n_0$ has been fixed ahead of time, and let's find a witness to the following reduction, uniformly in $\alpha$:
\begin{equation}\label{eqn:recver}
\{\alpha\} \leq \bigcup_{\gamma} (\{\gamma\} \wedge [\bigcap_{n\geq 0} ([(n)]\Rightarrow (\|\gamma(0)=n_0\| \wedge \|\gamma(S(n))=\alpha(\gamma(n))\|))])
\end{equation}
Define a function $F:\omega^{\omega}\rightarrow \omega^{\omega}$ by $F(\alpha)(0)=n_0$ and $F(\alpha)(n+1)=\alpha(F(\alpha)(n))$. Then  $F:\omega^{\omega}\rightarrow \omega$ is continuous iff $\pi_n \circ F: \omega^{\omega}\rightarrow \omega$ is continuous, wherein $\pi_n$ denotes the projection onto the $n$-th component, so that $\pi_n(\beta) = \beta(n)$, and wherein $\omega$ is given the discrete topology. Clearly $\pi_0 \circ F$ is continuous since it is a constant function. Suppose that $\pi_n \circ F$ is continuous. Then $(\pi_{n+1}\circ F)^{-1}(\{k\}) = \bigcup_{\ell\geq 0} \{\alpha: \alpha(\ell)=k \; \wedge \; \alpha\in (\pi_{n}\circ F)^{-1}(\{\ell\})\}$ is open since it's a union of sets which are an intersection of a clopen and an open. Hence $F:\omega^{\omega}\rightarrow \omega^{\omega}$ is indeed continuous. By Proposition~\ref{prop:oostenfacts}.I there is a $\gamma$ such that $F(\alpha)=\gamma\alpha$. Choose $\beta_{\wedge}$ such that $\beta_{\wedge}$ is a uniform witness to $\top \leq \top \wedge \top$. By Proposition~\ref{prop:onpcas} choose $\delta$ such that $\delta\alpha = p ((\gamma \alpha)(\beta_{\wedge}))$. Let's verify that $\delta$ is a witness to~(\ref{eqn:recver}). So suppose that $\alpha$ is given. Then by construction $(\gamma\alpha)(0)=F(\alpha)(0)=n_0$ and so $\|(\gamma\alpha)(0)=n_0\|=\top$ and $(\gamma\alpha)(Sn) = F(\alpha)(n+1) = \alpha(F(\alpha)(n)) = \alpha( (\gamma\alpha)(n))$, so that $\|(\gamma\alpha)(Sn)=\alpha( (\gamma\alpha)(n))\|=\top$. Then by construction, $\beta_{\wedge}$ is an element of $ [(n)]\Rightarrow (\|\gamma(0)=n_0\| \wedge \|\gamma(S(n))=\alpha(\gamma(n))\|)]$ for all $n\geq 0$, which finishes the argument for the recursion axiom~(\ref{eqn:recursion}).

For the choice schema~(\ref{eqn:choicetroes1}), we must find a witness to the following reduction:
\begin{equation}\label{eqn:choicetroes1verif}
 \bigcap_{n\geq 0} [(n)]\Rightarrow \bigcup_{m\geq 0} ([(m)] \wedge \|\varphi(n,m)\|) \leq \bigcup_{\gamma} (\{\gamma\} \wedge (\bigcap_{n\geq 0} ([(n)] \Rightarrow \|\varphi(n,\gamma(n))\|)))
\end{equation}
Define a partial map $F:\omega^{\omega}\dashrightarrow \omega^{\omega}$ by $(F(\alpha))(n) = (p_0(\alpha ((n)^{\frown} \overline{0})))(0)$. For each $n\geq 0$, by Proposition~\ref{prop:onpcas}, there is $\delta_n$ such that $\delta_n\alpha = p_0(\alpha ((n)^{\frown} \overline{0}))$. By Proposition~\ref{prop:oostenfacts}.II, this map is continuous on its $G_{\delta}$ domain $D_n$, and hence its projection $\alpha\mapsto (p_0(\alpha ((n)^{\frown} \overline{0})))(0)$  onto its zero-th component  is also continuous with $G_{\delta}$ domain $D_n$, where we view $\omega$ as having the discrete topology. Then $F$ has $G_{\delta}$ domain $D=\bigcap_n D_n$ and $F:D\rightarrow \omega^{\omega}$ is continuous. By Proposition~\ref{prop:oostenfacts}.II, there is $\gamma$ such that $F(\alpha)=\gamma\alpha$. Similarly, there is $\gamma^{\prime}$ such that for all $\alpha$ and $n\geq 0$ and $\beta$ one has $(\gamma^{\prime}\alpha)((n)^{\frown} \beta) = p_1(\alpha((n)^{\frown}\overline{0}))$. By Proposition~\ref{prop:onpcas} choose $\delta$ such that $\delta \alpha = p((\gamma\alpha)(\gamma^{\prime}\alpha))$. Suppose that $\alpha$ is in the antecedent of~(\ref{eqn:choicetroes1verif}). Then we show that $\delta\alpha$ is in the consequent of~(\ref{eqn:choicetroes1verif}). For each $n\geq 0$, there is $m\geq 0$ such that $\alpha ((n)^{\frown} \overline{0})$ is in $[(m)] \wedge \|\varphi(n,m)\|$, so that $p_0(\alpha ((n)^{\frown} \overline{0}))$ is in $[(m)]$ and $p_1(\alpha ((n)^{\frown} \overline{0}))$ is in $\|\varphi(n,m)\|$. Then by the definition of $F$ and $\gamma, \gamma^{\prime}$, we have $(\gamma\alpha)(n)=(F(\alpha))(n)= (p_0(\alpha ((n)^{\frown} \overline{0})))(0)=m$, and for all $\beta$ we have $(\gamma^{\prime}\alpha)((n)^{\frown}\beta)=p_1(\alpha(n^{\frown}\overline{0}))\in \|\varphi(n,m)\|=\|\varphi(n, (\gamma\alpha)(n))\|$. Hence indeed $\delta\alpha$ is in the consequent of~(\ref{eqn:choicetroes1verif}), which finishes the verification of the choice schema~(\ref{eqn:choicetroes1}).

For the law of the excluded middle for the new atomics~(\ref{eqn:LEMnewatomics}), first define $F:\omega^{\omega}\times \omega^{\omega} \times \omega^{\omega}\rightarrow \omega^{\omega}$ by $F(\alpha, \beta, \gamma)= pkk$ if $\alpha(\beta(0)) = \gamma(0)$, while $F(\alpha, \beta, \gamma)= p\breve{k}\breve{k}$ otherwise. Then clearly $F$ is continuous and so by Proposition~\ref{prop:oostenfacts}.I, choose $\delta$ such that $\delta \alpha \beta \gamma = F(\alpha, \beta, \gamma)$. It suffices to show that $\delta\alpha$ is in $[(n)]\Rightarrow ([(m)]\Rightarrow (\|\alpha(n)=m\| \vee \|\alpha(n)\neq m)\|)$ for all $\alpha$ and $n,m\geq 0$. Fix $\alpha$ and $n,m\geq 0$ and $\beta\in [(n)]$; we must show that $\delta \alpha \beta$ is in $[(m)]\Rightarrow (\|\alpha(n)=m\| \vee \|\alpha(n)\neq m)\|)$. So suppose that $\gamma\in [(m)]$; we must show that $\delta\alpha \beta \gamma=F(\alpha, \beta, \gamma)$ is in $\|\alpha(n)=m\| \vee \|\alpha(n)\neq m)\|$. If $\alpha(\beta(0))=\gamma(0)$ then $\alpha(n)=m$ and so $\|\alpha(n)=m\|=\top$ and hence $F(\alpha, \beta ,\gamma)=pkk$ is in $\|\alpha(n)=m\| \vee \|\alpha(n)\neq m\|$. If $\alpha(\beta(0))\neq \gamma(0)$ then $\alpha(n)\neq m$ and so  $\|\alpha(n)=m\|=\bot$ and $\|\alpha(n)\neq m\|=\top$ and hence $F(\alpha, \beta ,\gamma)=p\breve{k}\breve{k}$ is in $\|\alpha(n)=m\| \vee \|\alpha(n)\neq m)\|$.
\end{proof}

Now, if $t(x)$ is term, then $\forall \; x \; \exists \; y \; t(x)=y$ is valid in the structure $\mathcal{N}^2$, since the quantifiers in this structure are term-friendly. Hence by the axiom of choice~(\ref{eqn:choicetroes1}), one has that there is $\gamma$ such that $\gamma(x)=t(x)$. Hence, without loss of generality, we may assume that we have $\lambda$-terms in the language, and write  $\lambda{x}.t$ in lieu of $\gamma$. These satisfy $\lambda$-conversion $(\lambda{x}.t)(n) = t(x/n)$ since $\gamma(n)=t(n)$. Similarly, we may assume that we have a term~${\bf r}_{n_0, \alpha}$ which provides a witness to the recursion axiom~(\ref{eqn:recursion}). Further, by a conceptually-slight but notationally-heavy modification of the proof of the recursion axiom~(\ref{eqn:recursion}), one may replace the base case~$n_0$ by a term $t(x_1, \ldots, x_{\ell})$ with $\ell$-parameter places and obtain a modified version of the recursion axiom which reads as follows:
\begin{equation}\label{eqn:recursion24}
\forall \; \alpha \; \exists \; \gamma \; \forall \; \overline{x} \; [\gamma(0) =t(\overline{x}) \; \wedge \; \forall \; n \; \gamma(S(n), \overline{x})=\alpha(\gamma(n), \overline{x})]
\end{equation}
In this, one thinks of the elements $\alpha, \gamma$ of Baire space as taking $\ell+1$-many inputs by using a primitive recursive pairing function on natural numbers, which is available in the structure due to it validating Heyting arithmetic $\mathsf{HA}$. Proceeding in this fashion, one then introduces the term ${\bf r}_{t,\alpha}$ for the $\gamma$ from~(\ref{eqn:recursion24}).

Troelstra's development of the theory $\mathsf{EL}$ of elementary analysis proceeded deductively rather than semantically, and so it was natural for him to work in a logic enriched by $\lambda$-terms and the recursion terms ${\bf r}_{t,\alpha}$. Since we can introduce these terms as abbreviations, it's then a consequence of the above proposition that all the theorems of $\mathsf{EL}$ are valid on the definitional expansion of $\mathcal{N}^2$ induced by these abbreviations. See the citations in the beginning paragraph of this section for references to Troelstra's explicit description of the syntax and axioms of $\mathsf{EL}$.

Troelstra further developed the basics of oracle computability in $\mathsf{EL}$, and showed for instance that if we define $(\alpha\beta)(n)=m$ in the natural $\Sigma^0_1$-way in arithmetic as $\exists \; \ell \; (\alpha(((n)^{\frown} \beta)\upharpoonright \ell)=m+1 \wedge \forall \; \ell_0<\ell \; \alpha(((n)^{\frown} \beta)\upharpoonright \ell_0)=0)$, then one can show in $\mathsf{EL}$ that $\{e_0\}^{\alpha\oplus \beta} = \alpha(\beta)$, where the left-hand side is written out in terms of oracle computations and where $e_0$ can be taken to be the same index as in~(\ref{eqn:appinkleenesecond}) \cite[volume 1 \S\S 3.7.9-3.7.10 p. 157]{Troelstra1988aa}. Another consequence which it is useful to take note of is that the usual Normal Form Theorem  for oracle computation is provable in $\mathsf{EL}$ \cite[volume 1 \S\S 3.7.6 p. 155]{Troelstra1988aa}.

These observations allow us to establish the following helpful proposition, where in part~(I), the expression ``$\{e\}^{\gamma}=\beta$'' in the context $\|\{e\}^{\gamma}=\beta\|$ is an abbreviation for ``$\forall \; n \; \exists \; q \; (T(e,n,\gamma, q) \wedge U(q,\beta(n)))$,'' where $T$ is the oracle-computability version of Kleene's $T$-predicate given by the Normal Form Theorem, and where both $T$ and $U$ may be represented by a term in the signature of $\mathcal{N}_0^2$ (using the $\lambda$-terms and ${\bf r}_{t,\alpha}$-terms introduced above). Further, in part~(II) of this proposition, the expression ``$\gamma\alpha =\beta$'' in the context $\|\gamma\alpha=\beta\|$ is an abbreviation for $\|\{e_0\}^{\gamma\oplus \alpha} = \beta\|$.
\begin{prop}
(I) For every index $e\geq 0$ there is $\delta_e$ such that for all $\gamma,\beta$ one has $\{e\}^{\gamma}=\beta$ implies $\delta_e\gamma \in \|\{e\}^{\gamma}=\beta\|$. (II) Hence, there is $\delta$ such that for all $\alpha, \beta, \gamma$ one has that $\gamma \alpha = \beta$ implies $\delta \gamma \alpha \in \|\gamma \alpha =\beta\|$. 
\end{prop}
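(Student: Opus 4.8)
The plan is to read off from the semantics exactly what a member of $\|\{e\}^\gamma=\beta\|$ must do, and then to produce such a member by the very oracle computation that witnesses $\{e\}^\gamma=\beta$. Unwinding Definition~\ref{thetheorem} with the quantifier clauses $\mathbb{Q}(n)=[(n)]$ and $\mathbb{Q}(q)=[(q)]$, one has
\begin{equation}
\|\{e\}^\gamma=\beta\| = \bigcap_{n\geq 0}\Bigl([(n)] \Rightarrow \bigcup_{q\geq 0}\bigl([(q)] \wedge (\|T(e,n,\gamma,q)\| \wedge \|U(q,\beta(n))\|)\bigr)\Bigr).
\end{equation}
Thus an element $c$ lies in this set exactly when, for every $n$ and every $\rho\in[(n)]$, the application $c\rho$ is defined and equals a pair $p\,\hat q\,\mu$ with $\hat q\in[(q)]$ for some $q$ and $\mu\in\|T(e,n,\gamma,q)\|\wedge\|U(q,\beta(n))\|$. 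Now suppose the hypothesis $\{e\}^\gamma=\beta$ holds. Then for each $n$ there is a least halting-computation code $q(n)$ with $T(e,n,\gamma,q(n))$ and $U(q(n),\beta(n))$ both true; since these atomics are term-represented and interpreted disjunctively, both truth values are $\top$, so any fixed pair $\mu_0=p\,a_0\,b_0$ lies in $\|T(e,n,\gamma,q(n))\|\wedge\|U(q(n),\beta(n))\|=\top\wedge\top$.

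For part~(I) I would therefore define the functional
\begin{equation}
G(\gamma,\rho) = p\,\bigl((q)^{\frown}\overline{0}\bigr)\,\mu_0, \qquad q = \text{least code with } T(e,\rho(0),\gamma,q),
\end{equation}
and claim that it is continuous on an open (hence $G_\delta$) domain. Indeed $T(e,\rho(0),\gamma,q)$ reads only finitely much of $\gamma$ and only $\rho(0)$, so $\{(\gamma,\rho): T(e,\rho(0),\gamma,q)\}$ is clopen for each $q$, its domain of convergence $\{(\gamma,\rho):\exists q\,T(e,\rho(0),\gamma,q)\}$ is open, and on that domain $G$ takes locally constant values, hence is continuous. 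By Proposition~\ref{prop:oostenfacts}.I (with $n=2$) there is $\delta_e$ with $G(\gamma,\rho)=\delta_e\gamma\rho$ on this domain. If $\{e\}^\gamma=\beta$ holds, then for every $n$ and every $\rho\in[(n)]$ (so $\rho(0)=n$) the search terminates at $q(n)$, whence $\delta_e\gamma\rho\downarrow = p\,((q(n))^{\frown}\overline{0})\,\mu_0$, which lies in the $q=q(n)$ disjunct of the union; this shows $\delta_e\gamma\in\|\{e\}^\gamma=\beta\|$. The construction is uniform in $\gamma$ precisely because $\gamma$ enters $G$ only as a variable, so a single $\delta_e$ works for all $\gamma,\beta$.

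For part~(II) I would reduce to part~(I) using that $\gamma\alpha=\beta$ abbreviates $\{e_0\}^{\gamma\oplus\alpha}=\beta$ (cf.~(\ref{eqn:appinkleenesecond}) and the facts about $\mathsf{EL}$ cited above). The interleaving $(\gamma,\alpha)\mapsto\gamma\oplus\alpha$ is total and continuous, so by Proposition~\ref{prop:oostenfacts}.I there is an element $o$ with $o\gamma\alpha=\gamma\oplus\alpha$; then by combinatory completeness (Proposition~\ref{prop:onpcas}) there is $\delta$ with $\delta\gamma\alpha=\delta_{e_0}(o\gamma\alpha)$, and the hypothesis $\gamma\alpha=\beta$ gives $\delta\gamma\alpha=\delta_{e_0}(\gamma\oplus\alpha)\in\|\{e_0\}^{\gamma\oplus\alpha}=\beta\|=\|\gamma\alpha=\beta\|$ by part~(I). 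The main obstacle is the verification underlying part~(I): one must check carefully that the oracle-search functional has $G_\delta$ domain and is continuous there, so that Proposition~\ref{prop:oostenfacts}.I genuinely applies, and that its output is assembled in exactly the pairing format dictated by the clauses for $\exists$ and $\wedge$, with $\mu_0$ serving as the trivial realizer of the true, hence $\top$-valued, atomic conjunction.
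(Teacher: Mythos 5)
Your proposal is correct and takes essentially the same route as the paper's own proof: both realize the search for a halting-computation code as a partial continuous functional with $G_{\delta}$ domain, invoke Proposition~\ref{prop:oostenfacts}.I to obtain an element of $\mathcal{K}_2$ representing it, return $(q)^{\frown}\overline{0}\in[(q)]$ paired with a trivial realizer of the $\top\wedge\top$-valued atomic conjunction $\|T\|\wedge\|U\|$, and derive (II) from (I) by composing with a realizer of $(\gamma,\alpha)\mapsto\gamma\oplus\alpha$ and noting that $\|\{e_0\}^{\gamma\oplus\alpha}=\beta\|$ is literally the set $\|\gamma\alpha=\beta\|$. The only difference is packaging: you apply the $n=2$ case of Proposition~\ref{prop:oostenfacts}.I directly to the two-variable functional, whereas the paper curries through the pca pairing $p\gamma\rho$ and assembles the final witness using Proposition~\ref{prop:onpcas}.
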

\begin{proof}
First let's note why (II) follows from (I). By (I), one has that $\{e_0\}^{\gamma\oplus \alpha}=\beta$ implies $\delta_{e_0} (\gamma\oplus \alpha) \in \| \{e_0\}^{\gamma\oplus \alpha} = \beta\|$. Since $(\gamma,\alpha)\mapsto \gamma\oplus \alpha$ is continuous, by Proposition~\ref{prop:oostenfacts}.I, choose $p^{\prime}$ such that $p^{\prime} \gamma \alpha = \gamma \oplus\alpha$. Then by Proposition~\ref{prop:onpcas}, let $\delta$ be such that $\delta \gamma \alpha = \delta_{e_0} (p^{\prime} \gamma \alpha)$. Then $\gamma\alpha =\beta$ implies $\delta \gamma \alpha = \delta_{e_0} (p^{\prime} \gamma \alpha)=\delta_{e_0}(\gamma\oplus \alpha)\in \| \{e_0\}^{\gamma\oplus \alpha} = \beta\|$. By the remark in the previous paragraph, we have that $\|\{e_0\}^{\gamma\oplus\alpha} =\beta\|$ is identical to $\| \gamma \alpha = \beta\|$.

Now we prove (I). Choose index $e^{\prime}$ so that $e^{\prime}$ on input $n$ with oracle $\gamma$ searches for a halting computation of $e$ on input $n$ with oracle $\gamma$. By Proposition~\ref{prop:oostenfacts}.I, choose $\delta$ such that $\delta \gamma = \{e^{\prime}\}^{\gamma}$. The function $H$ which on input~$\alpha$ returns $H(\alpha) = ((\delta (p_0 \alpha))(n)) ^{\frown} \overline{0}$ for $n=(p_1\alpha)(0)$ is a partial continuous function with $G_{\delta}$ domain, and so by  Proposition~\ref{prop:oostenfacts}.I, choose $\eta$ with $\eta \alpha = H(\alpha)$. Then $\eta (p \gamma ((n)^{\frown} \beta)) = ((\delta \gamma)(n))^{\frown} \overline{0}$, so that
\begin{equation}\label{eqn:temp141234123}
 \beta\in [(n)] \mbox{ implies } \eta (p \gamma \beta) \in [(q)] \mbox{ for } q=(\delta \gamma)(n)
\end{equation}
Let $\delta_{\wedge}$ be any witness for $\top \leq \top \wedge \top$. By Proposition~\ref{prop:onpcas}, choose $\delta_e^{\prime}$ such that $\delta_e^{\prime} \beta = p (\eta (p \gamma \beta)) (\delta_{\wedge} \beta)$. Suppose that $\{e\}^{\gamma}=\beta$. It suffices to show that $\delta_e^{\prime}$ is a witness to $[(n)] \leq \bigcup_{q\geq 0} [(q)] \wedge (\|T(e,n,\gamma,q)\| \wedge \|U(q, \beta(n)\|)$ for all $n\geq 0$. Suppose that $\beta \in [(n)]$. Then by~(\ref{eqn:temp141234123}) we have $\eta (p \gamma \beta) \in [(q)]$ for $q=(\delta \gamma)(n)$. By definition of $\delta$, it follows that $q$ is a halting computation of $e$ on input $n$ with oracle $\gamma$, so that $T(e,n,\gamma,q)$ and since $\{e\}^{\gamma}=\beta$ by hypothesis, also $U(q,\beta(n))$. Then by the semantics for the atomics, it follows that $(\|T(e,n,\gamma,q)\| \wedge \|U(q, \beta(n)\|) = \top \wedge \top$, so that $\delta_{\wedge}\beta$ is in this set. 
\end{proof}

In \citet[p. 427]{Troelstra1998aa}, it is noted that in discussions of Kleene's second model $\mathcal{K}_2$, the role of Church's Thesis is taken over by the following schema, which is called \emph{generalized continuity}:
\begin{prop}\label{prop:genconprop} In $\mathcal{N}^2$, the following Generalized Continuity schema is valid:
\begin{equation}\label{eqn:plainGC}
[\forall \; \alpha \; \exists \; \beta \; \varphi(\alpha, \beta)] \Rightarrow [\exists \; \gamma \; \forall \; \alpha \; \exists \; \beta \; (\gamma\alpha =\beta \; \wedge \; \varphi(\alpha, \beta))]
\end{equation}
\end{prop}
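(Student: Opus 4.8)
The plan is to exhibit, directly and uniformly, a single realizer in $\mathcal{K}_2$ for the implication, rather than to deduce it from a soundness theorem, since Generalized Continuity is not a theorem of $\mathsf{IQC}$. First I would unwind the semantics of Definition~\ref{DefValuationR} using the second-order quantifier $\mathbb{Q}(\alpha)=\{\alpha\}$, so that validity of~(\ref{eqn:plainGC}) reduces to producing one element $w$ of $\mathcal{K}_2$ witnessing the reduction
\begin{equation*}
\bigcap_{\alpha} \Big(\{\alpha\} \Rightarrow \bigcup_{\beta} (\{\beta\} \wedge \|\varphi(\alpha,\beta)\|)\Big) \ \leq\ \bigcup_{\gamma} \Big(\{\gamma\} \wedge \bigcap_{\alpha}\big(\{\alpha\} \Rightarrow \bigcup_{\beta} (\{\beta\} \wedge \|\gamma\alpha=\beta\| \wedge \|\varphi(\alpha,\beta)\|)\big)\Big),
\end{equation*}
where $\|\gamma\alpha=\beta\|$ is understood as in the proposition immediately preceding the present one.

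Next I would suppose $e$ realizes the antecedent, i.e. $e\colon \{\alpha\} \leadsto \bigcup_\beta(\{\beta\}\wedge\|\varphi(\alpha,\beta)\|)$ for every $\alpha$. Because $\alpha \in \{\alpha\} = \mathbb{Q}(\alpha)$, this forces $e\alpha\hspace{-1mm}\downarrow$ for all $\alpha\in\omega^\omega$, and moreover $e\alpha = p\beta_\alpha b_\alpha$ with $\beta_\alpha = p_0(e\alpha)$ and $b_\alpha = p_1(e\alpha)\in\|\varphi(\alpha,\beta_\alpha)\|$. I would then extract the continuous choice functional as follows: by Proposition~\ref{prop:onpcas} choose a fixed $c$ with $c e \alpha = p_0(e\alpha)$ for all $e,\alpha$, and set $\gamma^* = c e$. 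Since $e\alpha\hspace{-1mm}\downarrow$ everywhere, $\gamma^*$ is a genuine element of $\mathcal{K}_2$ satisfying $\gamma^*\alpha = p_0(e\alpha) = \beta_\alpha$ as a metatheoretic $\mathcal{K}_2$-equation.

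The crux is the internalization step, which turns this metatheoretic equation into a realizer of the internal formula ``$\gamma^*\alpha=\beta_\alpha$'': by part~(II) of the preceding proposition there is a fixed $\delta$ with $\delta\gamma^*\alpha \in \|\gamma^*\alpha=\beta_\alpha\|$. Assembling the pieces, for each $\alpha$ the element $p\,\beta_\alpha\,(p\,(\delta\gamma^*\alpha)\,b_\alpha)$ lies in $\{\beta_\alpha\}\wedge(\|\gamma^*\alpha=\beta_\alpha\|\wedge\|\varphi(\alpha,\beta_\alpha)\|)$ and hence in $\bigcup_\beta(\{\beta\}\wedge\|\gamma^*\alpha=\beta\|\wedge\|\varphi(\alpha,\beta)\|)$. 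By Proposition~\ref{prop:onpcas} choose $d$ with $d e \alpha = p\,(p_0(e\alpha))\,(p\,(\delta(ce)\alpha)\,(p_1(e\alpha)))$, so that $u = d e$ realizes the inner universal statement for the witness $\gamma^*$. Finally, again by Proposition~\ref{prop:onpcas}, I would choose $w$ with $w e = p\,(c e)\,(d e)$; then $we = p\gamma^* u$ realizes the consequent, and $w$ is the desired uniform realizer. The whole construction is uniform in any parameters of $\varphi$, since nothing beyond the fact $b_\alpha\in\|\varphi(\alpha,\beta_\alpha)\|$ is used.

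The hard part will be the internalization step together with the totality bookkeeping. One must be careful that $e$ realizing a \emph{universal} statement (rather than merely being a partial continuous functional) guarantees $e\alpha\hspace{-1mm}\downarrow$ for \emph{all} $\alpha$, so that $\gamma^*=ce$ is everywhere-defined and $u\alpha$ converges for every $\alpha$; and one must invoke the preceding proposition precisely in order to certify that the externally-computed value $\gamma^*\alpha$ is internally equal to $\beta_\alpha$, recalling that ``$\gamma\alpha=\beta$'' is only an abbreviation for an oracle-computation formula $\|\{e_0\}^{\gamma\oplus\alpha}=\beta\|$ in the signature of $\mathcal{N}^2$. Everything else is routine combinatory algebra carried out via Proposition~\ref{prop:onpcas}.
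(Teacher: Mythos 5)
Your proposal is correct and follows essentially the same route as the paper's proof: your $c$, $d$, and $w$ are exactly the paper's $\delta_0$, $\delta^{\prime}$, and $\delta^{\prime\prime}$, and the key internalization step via part~(II) of the preceding proposition (producing $\delta(\delta_0\gamma)\alpha \in \|\delta_0\gamma\alpha = \beta\|$) is identical. The totality bookkeeping you flag is likewise handled the same way in the paper, which notes that $\delta_i\gamma\hspace{-1mm}\downarrow$ and $\delta_i\gamma\alpha\hspace{-1mm}\downarrow$ for all $\alpha$ once $\gamma$ realizes the antecedent.
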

\begin{proof}
It suffices to show that there is witness such that when given an element $\gamma$ of 
\begin{equation}\label{eqn:pre1341234}
\bigcap_{\alpha} (\{\alpha\}\Rightarrow \bigcup_{\beta} (\{\beta\} \wedge \|\varphi(\alpha, \beta)\|))
\end{equation}
returns an element of the set
\begin{equation}\label{eqn:post1341234}
\bigcup_{\gamma^{\prime}} (\{\gamma^{\prime}\} \wedge [\bigcap_{\alpha} (\{\alpha\} \Rightarrow \bigcup_{\beta} (\{\beta\} \wedge (\| \gamma^{\prime} \alpha = \beta \| \wedge \| \varphi(\alpha, \beta)\|)))])
\end{equation}
By Proposition~\ref{prop:onpcas} choose $\delta_i$ so that $\delta_i \gamma \alpha = p_i (\gamma \alpha)$, so that if $\gamma$ is an element of~(\ref{eqn:pre1341234}) then $\delta_1\gamma \alpha \in \|\varphi(\alpha, \delta_0 \gamma \alpha)\|$ and both $\delta_i \gamma\hspace{-1mm}\downarrow$ and $\delta_i \gamma\alpha\hspace{-1mm}\downarrow$ for all $\alpha$. By Proposition~\ref{prop:onpcas}, choose $\delta^{\prime}$ such that $\delta^{\prime} \gamma \alpha = p (\delta_0 \gamma \alpha)(p (\delta (\delta_0\gamma) \alpha)(\delta_1\gamma\alpha))$, where $\delta$ is from part~II of the previous proposition. Then similarly, let $\delta^{\prime\prime}$ such that $\delta^{\prime\prime}\gamma=p(\delta_0 \gamma)(\delta^{\prime}\gamma)$. Then we claim that $\delta^{\prime\prime}$ is the desired witness. So let $\gamma$ be from~(\ref{eqn:pre1341234}); we must show that $\delta^{\prime\prime}\gamma$ is in~(\ref{eqn:post1341234}). For this it suffices to show that $\delta^{\prime}\gamma\alpha$ is an element of:
\begin{equation}\label{eqn:post1341234v2}
\bigcup_{\beta} (\{\beta\} \wedge (\| \delta_0\gamma \alpha = \beta \| \wedge \| \varphi(\alpha, \beta)\|))
\end{equation}
Let $\beta = \delta_0\gamma \alpha$. Then since $(\delta_0\gamma) \alpha = \beta$, by the previous proposition, we have that $\delta ((\delta_0\gamma) \alpha)$ is in $\|\delta_0 \gamma \alpha = \beta\|$, where we keep in mind from the discussion prior to the previous proposition that the syntax of ``$\delta_0 \gamma \alpha = \beta$'' in the context $\|\delta_0 \gamma \alpha = \beta\|$ is \emph{not} an atomic. Finally, as remarked above, we also have that  $\delta_1\gamma \alpha \in \|\varphi(\alpha, \beta)\|$, and so we are done.
\end{proof}

Using the apparatus constructed thus far, we can then deduce the consistency of $\mathsf{HA}$ and an epistemic version of generalized continuity:
\begin{thm}\label{thm:genalizedcont}
In $\mu[\mathcal{N}^2]$, all the theorems of $\mathsf{EA}^{\circ}$ are valid, as is 
\begin{equation}\label{eqn:EGC}
[\Box \; \forall \; \alpha \; \exists \; \beta \; \Box \varphi(\alpha, \beta)] \Rightarrow [\exists \; \gamma \; \Box \; \forall \; \alpha \; \exists \; \beta \; (\gamma\alpha =\beta \; \wedge \; \Box \varphi(\alpha, \beta))]
\end{equation}
\end{thm}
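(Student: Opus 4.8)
```latex
\textbf{Proof proposal.} The plan is to follow exactly the template set in \S\ref{sec:arithmetic}, where $\mathsf{EA}^{\circ}$ and $\mathsf{ECT}$ were validated on $\mu[\mathcal{N}]$, and to transplant that argument wholesale to the second-order setting of $\mu[\mathcal{N}^2]$. Two ingredients are needed, and both are already in hand. First, the validity of all theorems of $\mathsf{EA}^{\circ}$ on $\mu[\mathcal{N}^2]$ follows from the Soundness Theorem~\ref{prop:S4soundness}: since $\mathcal{N}^2$ is a uniform $P(\mathcal{K}_2)$-valued structure with non-degenerate quantifier (as established just before Proposition~\ref{prop:newHA}), the modal structure $\mu[\mathcal{N}^2]$ validates all theorems of $Q^{\circ}_{eq}.\mathsf{S4}+\mathsf{CBF}$. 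Adjoining the validity of $\mathsf{HA}$ from Proposition~\ref{prop:newHA}, together with the fact that the $\mathsf{HA}$-axioms are implied by their own G\"odel translations (via the $\mathsf{T}$-axiom, exactly as in the proof of Theorem~\ref{thm:EA}), yields all of $\mathsf{EA}^{\circ}$.

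The core of the work is validating the Epistemic Generalized Continuity schema~(\ref{eqn:EGC}). Here I would mirror the proof of Theorem~\ref{thm:ECTisvalid} step for step. I begin with the plain Generalized Continuity schema~(\ref{eqn:plainGC}), which by Proposition~\ref{prop:genconprop} is valid in the non-modal structure $\mathcal{N}^2$, taken in the special case where $\varphi(\alpha,\beta)$ is replaced by an \emph{atomic} formula $G(\alpha,\beta)$ in a signature extending that of $\mathcal{N}^2$. I then compute its G\"odel translation. Writing this conditional as $\Phi \Rightarrow \Psi$, its translation is $\Box(\Phi^{\Box} \Rightarrow \Psi^{\Box})$. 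The antecedent $\Phi^{\Box}$ becomes $\Box\,\forall\,\alpha\,\exists\,\beta\,G(\alpha,\beta)$ (using that blocks of universal quantifiers pick up one box in front, per the remarks around~(\ref{eqn:remarkgodelatomics})), and in the consequent $\Psi^{\Box}$ the atomics $G$ and the equality-abbreviation ``$\gamma\alpha=\beta$'' (which unfolds to a $\Sigma_1$-formula in the arithmetic signature via the oracle-computability $T$-predicate) are unchanged by the G\"odel translation. By Theorem~\ref{translationthingie}, the translated schema is valid on $\mu[\mathcal{N}^2]$. Finally I invoke the Change of Basis Theorem~\ref{prop:changeofbasis} to replace the atomic $G$ by $\Box\varphi$ for an arbitrary modal formula $\varphi$, which converts each occurrence of $G(\alpha,\beta)$ into $\Box\varphi(\alpha,\beta)$ and delivers precisely~(\ref{eqn:EGC}).

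The one place demanding care, and the step I expect to be the main obstacle, is the handling of the equality abbreviation ``$\gamma\alpha=\beta$'' inside the consequent. Unlike the Kleene-$T$ and $U$ predicates of the first-order case, this is not literally atomic: it abbreviates a $\Sigma_1$-formula $\|\{e_0\}^{\gamma\oplus\alpha}=\beta\|$ over the arithmetic signature. To drop the G\"odel translation on it, I must appeal to the stability of $\Sigma_1$-formulas and their equivalence with their own G\"odel translations. Proposition~\ref{thm:sigma1stable} was stated and proved for $\mu[\mathcal{N}]$ over $\mathcal{K}_1$; I would verify that its proof carries over verbatim to $\mu[\mathcal{N}^2]$ over $\mathcal{K}_2$, since that proof relied only on the stability of atomics (Proposition~\ref{prop:atomics}), the validity of $\mathsf{HA}$ (now supplied by Proposition~\ref{prop:newHA}), and the propositional $\mathsf{S4}$ reasoning available on any $\mu[\mathcal{N}]$ by Proposition~\ref{S4}. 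Indeed, Proposition~\ref{eqn:finallyECTsettheory} already performs exactly this transplantation in the set-theoretic case, observing there that ``the proof is exactly the same as in the arithmetic case\ldots using the stability of formulas which are $\Sigma_1$-definable in the signature of arithmetic in exactly the same way,'' so I would indicate that~(\ref{eqn:EGC}) follows by the same reasoning.
```
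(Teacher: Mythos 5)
Your overall route is the same as the paper's: validate $\mathsf{EA}^{\circ}$ exactly as in Theorem~\ref{thm:EA} with Proposition~\ref{prop:newHA} in place of Proposition~\ref{prop:heytingaxioms}, then obtain~(\ref{eqn:EGC}) by taking the G\"odel translation of~(\ref{eqn:plainGC}) in the case of an atomic $G$ and applying the Change of Basis Theorem~\ref{prop:changeofbasis}. However, the step you yourself single out as the delicate one contains a genuine error. The abbreviation ``$\gamma\alpha=\beta$'' is not $\Sigma_1$: it unfolds to $\forall\, n\;\exists\, q\;(T(e_0,n,\gamma\oplus\alpha,q)\wedge U(q,\beta(n)))$, which is $\Pi^0_2$. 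Consequently Proposition~\ref{thm:sigma1stable} does not apply to it, and your claims that it is ``unchanged by the G\"odel translation'' and equivalent to its own translation are unjustified. Indeed, granting $\Sigma_1$-stability for the matrix, the missing direction $\gamma\alpha=\beta\Rightarrow(\gamma\alpha=\beta)^{\Box}$ would amount to an instance of the Barcan Formula, $[\forall\, n\;\Box\,\sigma(n)]\Rightarrow[\Box\;\forall\, n\;\sigma(n)]$ with $\sigma(n)\equiv\exists\,q\,(T\wedge U)$, which is precisely the schema shown to fail on these realizability structures in the arithmetic case (Proposition~\ref{prop:failureofbarcan}); there is no reason to expect it here.

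The repair---and it is what the paper does---is to note that only one direction is needed: since ``$\gamma\alpha=\beta$'' occurs positively in the consequent of~(\ref{eqn:EGC}), it suffices that it be \emph{implied by} its own G\"odel translation. That implication does hold: the translation is $\Box\;\forall\, n\;\sigma^{\Box}(n)$, the formula $\sigma(n)$ is $\Sigma_1$ and hence equivalent to $\sigma^{\Box}(n)$ by Proposition~\ref{thm:sigma1stable} transplanted to $\mu[\mathcal{N}^2]$, and the outer box is then discharged by the $\mathsf{T}$-axiom. One further point about the transplantation itself: the proof of Proposition~\ref{thm:sigma1stable} runs an induction over $\Delta_0$-formulas that uses the decidability of the atomics involved, and in the present signature those include the new application atomics $\alpha(n)=m$, whose law of the excluded middle is not a theorem of $\mathsf{HA}$ in the expanded signature. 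It is exactly clause~(\ref{eqn:LEMnewatomics}) of Proposition~\ref{prop:newHA}---absent from your list of ingredients (stability of atomics, $\mathsf{HA}$, propositional $\mathsf{S4}$)---that makes the transplantation go through, and the paper flags this as the essential reason.
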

\begin{proof}
As for the theorems of $\mathsf{EA}^{\circ}$, the argument proceeds just as in the proof of Theorem~\ref{thm:EA}, using Proposition~\ref{prop:newHA} in lieu of Proposition~\ref{prop:heytingaxioms}. As for~(\ref{eqn:EGC}) this follows directly from taking the G\"odel translation of~(\ref{eqn:plainGC}) in the case of an atomic and applying the Change of Basis Theorem~\ref{prop:changeofbasis}. In particular, Proposition~\ref{thm:sigma1stable} holds on this structure as well, so that since $\gamma\alpha =\beta$ in~(\ref{eqn:EGC}) is a $\Pi^0_2$-formula, it's implied by its own G\"odel translation. The reason that Proposition~\ref{thm:sigma1stable} holds on this structure is that the law the excluded middle holds for the new atomics by~(\ref{eqn:LEMnewatomics}).
\end{proof}


\section{Scott's Graph Model of the Untyped Lambda Calculus}\label{sec:untypedlambda}

To illustrate the generality of our construction, let's consider now Scott's graph model $\mathcal{S}$ described in \S\ref{sec:heytingfrompca}. Since in this pca the application operation~(\ref{eqn:appREpca}) is total, there is a natural way of viewing $\mathcal{S}$ as a uniform $P(\mathcal{S})$-valued structure in the sparse functional pca signature~$L$ (cf. immediately after Proposition~\ref{prop:onpcas} for a definition of this signature). On the uniform $P(\mathcal{S})$-valued $L$-structure, we interpret identity disjunctively as in~(\ref{eqn:myhowidentity}), and we use the quantifier $\mathbb{Q}(x) =\{x\}$ which is non-degenerate, non-uniform, and non-classical (cf. Definition~\ref{defn:Qproperties}). Further, it is term-friendly by Proposition~\ref{prop:onpcas}.

Then in parallel to Proposition~\ref{prop:scottmeyer}, we can show that 
\begin{prop}
On the uniform $P(\mathcal{S})$-valued $L$-structure $\mathcal{S}$, all of the following axioms are valid: $\forall \; x,y \; kxy=x$ and $\forall \; x,y,z \; sxyz = (xz)(yz)$ and $1_2 k = k$ and $1_3 s = s$ and the Meyer-Scott axiom $\forall \; a, b \; ((\forall \; x \; ax=bx)\Rightarrow 1a=1b)$. 
\end{prop}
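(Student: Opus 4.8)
The plan is to reduce the validity of each displayed axiom to the corresponding genuine identity in the pca $\mathcal{S}$ furnished by Proposition~\ref{prop:scottmeyer}. Two features of this particular structure make the reduction essentially automatic. First, identity is interpreted disjunctively as in~(\ref{eqn:myhowidentity}), so for any $L$-terms $t,s$ and any assignment $\overline{a}$ one has $\|t(\overline{a})=s(\overline{a})\|=\top=\mathcal{S}$ when $t$ and $s$ evaluate to the same element of $\mathcal{S}$, and $\|t(\overline{a})=s(\overline{a})\|=\bot=\emptyset$ otherwise. Second, the application operation~(\ref{eqn:appREpca}) is total, so $ea\hspace{-1mm}\downarrow$ for all $e,a$; consequently $X\Rightarrow\top\equiv\top$ for every $X\in P(\mathcal{S})$, and recall that in $P(\mathcal{S})$ a sentence is valid exactly when its valuation is nonempty. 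I would also use throughout that $\bot\Rightarrow Y\equiv\top$ by the vacuous realizability clause of Definition~\ref{recHeytingpre}.

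For the four purely equational axioms I would simply chase the semantics. Taking $\forall\,x,y\;kxy=x$ as the representative case, Definition~\ref{DefValuationR} with $\mathbb{Q}(c)=\{c\}$ gives $\|\forall x\,\forall y\;kxy=x\|=\bigcap_{p\in\mathcal{S}}\bigl(\{p\}\Rightarrow\bigcap_{q\in\mathcal{S}}(\{q\}\Rightarrow\|kpq=p\|)\bigr)$. By Proposition~\ref{prop:scottmeyer} one has $kpq=p$ in $\mathcal{S}$, so each $\|kpq=p\|=\top$; since $X\Rightarrow\top\equiv\top$, the whole nested intersection collapses to $\top$, and any fixed element of $\mathcal{S}$ (every application being defined) serves as a uniform realizer. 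The axioms $\forall\,x,y,z\;sxyz=(xz)(yz)$, $1_2k=k$, and $1_3s=s$ are handled identically, appealing to the relevant clauses of Proposition~\ref{prop:scottmeyer}; the last two are closed atomic sentences and reduce immediately to $\|1_2k=k\|=\top$ and $\|1_3s=s\|=\top$.

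The Meyer--Scott axiom is the one step requiring a small argument. Fixing values $p,q$ for $a,b$, the antecedent valuation is $\|\forall x\;ax=bx\|=\bigcap_{c\in\mathcal{S}}(\{c\}\Rightarrow\|pc=qc\|)$, which the disjunctive identity renders two-valued. If $pc=qc$ for every $c\in\mathcal{S}$, then each factor is $\top$, so the antecedent valuation is $\top$; moreover the genuine hypothesis $\forall x\;px=qx$ lets me invoke the Meyer--Scott clause of Proposition~\ref{prop:scottmeyer} to conclude $1p=1q$ in $\mathcal{S}$, whence $\|1p=1q\|=\top$ and the conditional valuation is $\top\Rightarrow\top\equiv\top$. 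If instead $pc_0\neq qc_0$ for some $c_0$, then $\|pc_0=qc_0\|=\emptyset$, and by totality no element realizes $\{c_0\}\leadsto\emptyset$, so the antecedent valuation is $\bot$ and the conditional valuation is $\bot\Rightarrow\|1p=1q\|\equiv\top$. In both cases the inner conditional has value $\top=\mathcal{S}$ for all $p,q$, so by totality any fixed element uniformly realizes the full universal closure, giving validity.

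The main obstacle --- really the only non-formal point --- is the bridge in the Meyer--Scott case between the internal quantifier $\forall x$ and the external algebraic hypothesis of Proposition~\ref{prop:scottmeyer}: one must observe that $\|\forall x\;ax=bx\|$ is nonempty precisely when $pc=qc$ holds in $\mathcal{S}$ for all $c$, so that the pca-level Meyer--Scott axiom becomes applicable exactly when it is needed. Everything else is forced by the two-valuedness of atomic identity under the disjunctive interpretation together with totality of application, the latter also making the required uniformity in $a,b$ immediate.
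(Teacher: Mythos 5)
Your proof is correct and follows essentially the same route as the paper's: both reduce each axiom to its classical truth in the pca $\mathcal{S}$ via the disjunctive (two-valued) interpretation of identity, exploit totality of application so that arbitrary elements serve as realizers, and in the Meyer--Scott case use the observation that nonemptiness of $\|\forall\,x\;ax=bx\|$ forces $ax=bx$ to hold for every $x$, at which point Proposition~\ref{prop:scottmeyer} yields $1a=1b$. The only cosmetic difference is that you split into cases according to whether the antecedent valuation is $\top$ or $\bot$, whereas the paper argues directly that any element realizes the reduction $\bigcap_{x}(\{x\}\Rightarrow\|ax=bx\|)\leq\|1a=1b\|$ uniformly in $a,b$; these amount to the same argument.
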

\begin{proof}
For $\forall \; x \; \forall \; y \; kxy=x$, it suffices to find an element of $\bigcap_{x} (\{x\} \Rightarrow \bigcap_{y} (\{y\} \Rightarrow \|kxy=x\|))$. But in fact any element of $c$ is an element of this set. For, suppose that $x$ is given; we must show that $cx\in \bigcap_{y} (\{y\} \Rightarrow \|kxy=x\|)$. So suppose that $y$ is given; we must show that $cxy\in \|kxy=x\|$. But since $kxy=x$ is true, we have $\|kxy=x\|=\top$, and so trivially $cxy\in \|kxy=x\|$. A similar argument works in the case of $s$; and the validity of the identities $1_2 k = k$ and $1_3 s = s$ follows trivially since they are true in the classical model. For the Meyer-Scott Axiom, it suffices to find a witness to the reduction $\bigcap_{x} (\{x\}\Rightarrow \|ax=bx\|) \leq \|1a=1b\|$, uniformly in $a,b$. But again, any element $c$ will do. For, suppose that $y$ is an element of $\bigcap_{x} (\{x\}\Rightarrow \|ax=bx\|)$; we must show that $cy$ is in $\|1a=1b\|$. But for all $x$, we have that $yx$ is in $\|ax=bx\|$, so that it is true, and hence since the Meyer-Scott axiom holds classically, we have that $1a=1b$ is true, and so $\|1a=1b\|=\top$, and so $cy\in \|1a=1b\|$.
\end{proof}

Similar to the previous sections, it's convenient to consider expansions $\mathcal{S}^{\ast}$ of $\mathcal{S}$ by new relation or function symbols, which are for the same reasons still uniform $P(\mathcal{S})$-valued structures; and for the same reasons as in the previous sections, here too we refrain from introducing function symbols which violate term-friendliness. Then we have that the following choice principle holds on these expansions. However, note that in this choice principle, ``$cx=y$'' is literally the atomic formula, and so the proof of this choice principle is less involved than the proof of Proposition~\ref{prop:genconprop}.
\begin{prop}
On the uniform $P(\mathcal{S})$-valued $L$-structure $\mathcal{S}^{\ast}$, the following choice principle holds for any formula in the signature:
\begin{equation}\label{eqn:choicescott}
(\forall \; x \; \exists \; y \; \varphi(x,y))\Rightarrow (\exists \; c \; \forall \; x \; \exists \; y \; (cx=y \; \wedge \; \varphi(x,y)))
\end{equation}
\end{prop}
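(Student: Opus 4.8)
The goal is to produce, uniformly in $a,b$ (where $a,b$ serve as the suppressed parameters carried by $\varphi$), a single witness $e$ from $\mathcal{S}$ realizing the reduction
\begin{equation}
\bigcap_{x\in\mathcal{S}}\bigl(\{x\}\Rightarrow \bigcup_{y\in\mathcal{S}}(\{y\}\wedge \|\varphi(x,y)\|)\bigr)\;\leq\;\bigcup_{c\in\mathcal{S}}\bigl(\{c\}\wedge \bigcap_{x\in\mathcal{S}}(\{x\}\Rightarrow \bigcup_{y\in\mathcal{S}}(\{y\}\wedge(\|cx=y\|\wedge\|\varphi(x,y)\|)))\bigr).\notag
\end{equation}
Since $\mathbb{Q}(x)=\{x\}$, an element $\gamma$ of the antecedent is an element of $\mathcal{S}$ such that $\gamma x\hspace{-1mm}\downarrow$ (automatic, as application is total) and $\gamma x\in\bigcup_{y}(\{y\}\wedge\|\varphi(x,y)\|)$ for every $x$. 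First I would extract from $\gamma$, via the pairing projections $p_0,p_1$ of $\mathcal{S}$, the two components: $p_0(\gamma x)$ lands in some $\{y_x\}$, forcing $p_0(\gamma x)=y_x$, and $p_1(\gamma x)\in\|\varphi(x,y_x)\|$.

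The key point, and where the argument is genuinely simpler than that of Proposition~\ref{prop:genconprop}, is that here ``$cx=y$'' is the literal atomic formula, interpreted disjunctively as in~(\ref{eqn:myhowidentity}). So the natural choice is to set the witnessing function to be $c=\mathrm{fun}$-extracted from $\gamma$ itself: by Proposition~\ref{prop:onpcas} choose $\delta$ with $\delta\gamma x = p_0(\gamma x)$, so that $\delta\gamma$ is an element of $\mathcal{S}$ acting as $x\mapsto y_x$. Then for each $x$, the atomic statement $(\delta\gamma)x = y_x$ is \emph{true} in the classical metatheory, whence $\|(\delta\gamma)x = y_x\|=\top$; any fixed element of $\mathcal{S}$ (e.g. $c=skk$, the identity) then realizes membership in $\|(\delta\gamma)x=y_x\|$. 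This is exactly the shortcut that the surrounding text advertises: because the equation is atomic and disjunctively interpreted, no appeal to the representability machinery of Proposition~\ref{prop:oostenfacts} is needed.

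Concretely, I would next assemble the witness by packaging $\delta\gamma$ (the chosen $c$) together with, for each $x$, the pair $\langle$ value $y_x$, pair of the identity-realizer and the realizer $p_1(\gamma x)$ of $\varphi(x,y_x)\rangle$. Using Proposition~\ref{prop:onpcas} I would fix $f\in\mathcal{S}$ satisfying $f\gamma x = p\,(p_0(\gamma x))\,(p\,i_0\,(p_1(\gamma x)))$, where $i_0$ is a fixed realizer of $\top$, and then set $e\gamma = p\,(\delta\gamma)\,(f\gamma)$ by a further application of Proposition~\ref{prop:onpcas}. Unwinding the definitions shows $e\gamma$ lies in the consequent: its first projection $\delta\gamma$ witnesses the outer $\exists c$ via $\{\delta\gamma\}$, and for each $x$ the term $f\gamma x$ witnesses $\bigcup_y(\{y\}\wedge(\|cx=y\|\wedge\|\varphi(x,y)\|))$ with $y=y_x$, since $p_0$ of it is $y_x\in\{y_x\}$, while $p_1$ of it pairs a realizer of $\|(\delta\gamma)x=y_x\|=\top$ with the realizer $p_1(\gamma x)\in\|\varphi(x,y_x)\|$.

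The only mild obstacle is bookkeeping: one must verify that every application invoked ($\delta\gamma$, $f\gamma$, their applications to $x$) is defined, but since application in $\mathcal{S}$ is total by the remark following~(\ref{eqn:appREpca2}), all downarrow conditions are automatic, and the projections $p_0,p_1$ exist as for any pca. Thus the argument reduces entirely to composing projection and pairing functions via Proposition~\ref{prop:onpcas} and observing that true atomic identities receive value $\top$; there is no continuity or $G_\delta$-domain analysis to perform, which is precisely why this case is lighter than Proposition~\ref{prop:genconprop}.
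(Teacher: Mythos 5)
Your proposal is correct and follows essentially the same route as the paper's own proof: define $c$ by applying a term (the paper's $b_0$, your $\delta$) that extracts $p_0(\gamma x)$, observe that the atomic $cx=y$ is then literally true and hence has value $\top$ under the disjunctive interpretation of identity, and package the witness with pairing exactly as in the paper's $b_1,b_2$ (your $f$ and $e$), with your $i_0$ playing the role of the paper's $k$ as an arbitrary realizer of $\top$.
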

\begin{proof}
By Proposition~\ref{prop:onpcas}, choose $b_0, b_1, b_2$ from  $\mathcal{S}$ such that $b_0ax = p_0(ax)$ and $b_1ax = p((p_0(ax))(p (k) (p_1(ax))))$ and $b_2a = p(b_0 a)(b_1 a)$. Then $b_2$ is a witness to the reduction. For, suppose that $a$ is a member of the antecedent. Then for all $x$ one has that $p_1 (ax) \in \|\varphi(x, p_0(ax))\|$. Define $c=b_0a$, and let $x$ be arbitrary. Then by construction $cx=b_0ax = p_0(ax)$. Further, if we define $y=p_0(ax)$, we have that $p (k) (p_1(ax)) \in \|cx=y\| \wedge \|\varphi(x,y)\|$ and hence $b_1 ax \in \{y\} \wedge (\|cx=y\| \wedge \|\varphi(x,y)\|)$. Hence, since $x$ was arbitrary we have that $b_1a\in \| \forall \; x \; \exists \; y \; (cx=y \; \wedge \; \varphi(x,y)) \|$. Since $c=b_0a$, we then have that $b_2a \in \|\exists \; c \; \forall \; x \; \exists \; y \; (cx=y \; \wedge \; \varphi(x,y))\|$.
\end{proof}

\begin{prop}
On the modal $\mathbb{B}$-valued $L$-structure $\mu[\mathcal{S}]$, all of the following axioms are valid: $\forall \; x,y \; kxy=x$ and $\forall \; x,y,z \; sxyz = (xz)(yz)$ and $1_2 k = k$ and $1_3 s = s$ and the modal version of the Meyer-Scott axiom $\forall \; a, b \; ((\Box (\forall \; x \; ax=bx))\Rightarrow 1a=1b)$. In addition, one has the following choice principle, for any modal formula in the signature: 
\begin{equation}\label{eqn:boxedchoice}
\Box \; (\forall \; x \; \exists \; y \; \Box \; \varphi(x,y))\Rightarrow (\exists \; c \; \Box \; \forall \; x \; \exists \; y \; (cx=y \; \wedge \; \Box \; \varphi(x,y)))
\end{equation}
However, Meyer-Scott axiom itself $\forall \; a, b \; ((\forall \; x \; ax=bx)\Rightarrow 1a=1b)$ is not valid on $\mu[\mathcal{S}]$.
\end{prop}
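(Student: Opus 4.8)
The plan is to treat the three assertions separately. The validity of the combinatory axioms and of the modal Meyer--Scott axiom on $\mu[\mathcal{S}]$ follows the now-familiar pattern of the preceding sections. Each of $\forall\,x,y\,kxy=x$, $\forall\,x,y,z\,sxyz=(xz)(yz)$, $1_2 k=k$, and $1_3 s=s$ is a universal closure of an atomic equation that is valid on the non-modal uniform $P(\mathcal{S})$-valued structure $\mathcal{S}$ by the previous proposition; hence by Theorem~\ref{translationthingie} its G\"odel translation is valid on $\mu[\mathcal{S}]$, and since the G\"odel translation of a universal closure of an atomic is $Q^{\circ}_{eq}.\mathsf{S4}$-equivalent to $\Box$ of the original (atomics being unchanged by the translation), the $\mathsf{T}$-axiom delivers the original statement, whose soundness comes from Theorem~\ref{prop:S4soundness}. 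For the modal Meyer--Scott axiom I would start from the non-modal Meyer--Scott axiom, valid on $\mathcal{S}$, and compute its G\"odel translation: since its antecedent $\forall\,x\,ax=bx$ is universal over an atomic, its translation acquires a leading box, namely $\Box(\forall\,x\,ax=bx)$, while the atomic consequent $1a=1b$ is unchanged; applying Theorem~\ref{translationthingie} and then stripping the outer boxes by the $\mathsf{T}$-axiom yields exactly $\forall\,a,b\,(\Box(\forall\,x\,ax=bx)\Rightarrow 1a=1b)$.

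For the boxed choice principle~(\ref{eqn:boxedchoice}) I would imitate the proofs of Comprehension$^{\Box}$ (Proposition~\ref{prop:eZF}) and of generalized continuity (Theorem~\ref{thm:genalizedcont}). Begin with the non-modal choice principle~(\ref{eqn:choicescott}), valid on any expansion $\mathcal{S}^{\ast}$, in the instance where $\varphi$ is a fresh atomic $G$. Its G\"odel translation is valid on $\mu[\mathcal{S}^{\ast}]$ by Theorem~\ref{translationthingie}; using that atomics and the matrix $cx=y$ are unchanged, and that universal blocks acquire a single leading box, the translation simplifies to the desired boxed shape. Finally I would invoke the Change of Basis Theorem~\ref{prop:changeofbasis} to replace the atomic $G$ by $\Box\varphi$ for an arbitrary modal $\varphi$, and drop the outermost box via the $\mathsf{T}$-axiom, obtaining~(\ref{eqn:boxedchoice}).

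The heart of the proposition, and the main obstacle, is the failure of the \emph{unboxed} Meyer--Scott axiom. My plan is to argue by contradiction: were $\forall\,a,b\,((\forall\,x\,ax=bx)\Rightarrow 1a=1b)$ valid, then---because $\mathbb{Q}(x)=\{x\}$ feeds each quantified element to its realizer---there would be a single $m\in\mathcal{S}$ such that, for every pair $a,b$ and every $D\in P(\mathcal{S})$, the element $mab$ realizes the reduction $\|\forall\,x\,ax=bx\|_{\mu}(D)\leadsto\|1a=1b\|_{\mu}(D)$. For a pair with $1a\neq 1b$ the consequent is $\ominus_D\bot$ and, using the disjunctive identity~(\ref{eqn:myhowidentity}) together with Proposition~\ref{prop:atomics}, the antecedent unwinds to $\bigcap_{c\in S}(\{c\}\Rightarrow\ominus_D\bot)$, where $S=\{c:ac\neq bc\}$ is the disagreement locus; passing to the minimal admissible $D$ shows the realizer condition is equivalent to demanding that $\mathrm{fun}(mab\,h)$ have range inside $\bigcup_{c\in S}\mathrm{range}(\mathrm{fun}(hc))$ for all $h$. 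The plan is then to manufacture, via Scott's characterizations (Propositions~\ref{eqn:helperprop} and~\ref{prop:veryhlpeful112}) together with the identity $1x=\mathrm{graph}(\mathrm{fun}(x))$ underlying Proposition~\ref{prop:counterextension}, a family of pairs $(a,b)$ with $1a\neq 1b$ whose disagreement loci $S$ are basic clopens $[d_n]$ localized at varying positions, and to show that a single $m$ meeting the above demand across the whole family would amount to a continuous (equivalently effective) selection of a disagreement witness from the opaque inputs $a,b$---that is, to a continuous map detecting where two continuous functions on $P(\omega)$ differ.

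The main obstacle is precisely closing this last step: by continuity in $\mathcal{S}$ (Proposition~\ref{prop:veryhlpeful112}.II) the value $mab$ depends on only finitely much of $a,b$, yet the disagreement loci can be pushed arbitrarily far out, so that no fixed finite amount of information about $a,b$ can locate them. This is the same non-uniformity phenomenon exploited in Theorem~\ref{thm:EA}, where a putative uniform witness was forced to land a single fixed element in each of the pairwise-disjoint singletons $\{n\}$. I would therefore aim to extract from the hypothetical $m$, by a diagonal construction over the localized family, a fixed element of $\mathcal{S}$ that would have to lie in incompatible basic clopens simultaneously (equivalently, to decide an undecidable predicate), yielding the contradiction and establishing that the unboxed Meyer--Scott axiom is not valid on $\mu[\mathcal{S}]$.
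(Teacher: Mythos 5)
Your handling of the positive parts---the combinatory identities, the modal Meyer--Scott axiom, and the boxed choice principle~(\ref{eqn:boxedchoice})---is exactly the paper's: the paper compresses all of it into a single sentence (``the positive parts of the proposition follow from the G\"odel translation and Flagg's Change of Basis Theorem, by considerations which are by now routine''), and your elaboration of that sentence via Theorem~\ref{translationthingie}, Theorem~\ref{prop:changeofbasis}, and the $\mathsf{T}$-axiom is correct.

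The genuine gap is in the invalidity of the unboxed Meyer--Scott axiom, which is the real content of the proposition and which your proposal does not prove: by your own admission ``the main obstacle is precisely closing this last step,'' so what you have is a program, not an argument. Moreover, the program's guiding intuition is off target. For your localized pairs with disagreement locus $[d_n]$---concretely $a=\emptyset$ and $b=\{\langle n,m\rangle\}$---the position of the locus is not opaque to a would-be witness $m$: it is carried by a single \emph{positive} datum, namely $\langle n,m\rangle\in b$, which $m$ sees after reading finitely much of its input, so continuity poses no barrier at all to ``locating'' the locus. What continuity genuinely forbids is \emph{confirming} a disagreement, i.e.\ certifying the negative fact $\langle n,m\rangle\notin a$; that is a different obstruction from the one in Theorem~\ref{thm:EA}, where the sets $\mathbb{Q}(n)=\{n\}$ are pairwise disjoint and no single value can lie in all of them. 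Because the loci are locatable from positive data, the intended diagonal---a fixed element forced into incompatible basic clopens---never materializes from the family you describe, and the analogy you invoke cannot supply the missing step.

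The paper's own proof involves no family, no minimal $D$, and no continuity analysis. It fixes one pair from Proposition~\ref{prop:counterextension}: $a,b\in\mathcal{S}$ with $1a\neq 1b$ and $ax\neq bx$ for \emph{every} $x$, so the disagreement locus is all of $\mathcal{S}$. For this pair every atomic value collapses, $\|ax=bx\|_{\mu}(D)\equiv\ominus_D\bot\equiv D$ and $\|1a=1b\|_{\mu}(D)\equiv D$ uniformly; the paper then reads the validity hypothesis as yielding a witness $c$ to the reduction $\{x\}\Rightarrow D\leq D$ uniformly in $x$ and $D$, and concludes with the same $D$-instantiation trick as in \S\ref{secS5isfalse}: since $skk\in\{x\}\Rightarrow\{x\}$ for every $x$, taking $D=\{x\}$ forces $c(skk)=x$ for all $x$ simultaneously, which is absurd. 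One comparative remark, in fairness to your proposal: for that single pair taken in isolation the implication instance is realizable outright (e.g.\ $e\mapsto e\emptyset$ carries $\bigcap_x(\{x\}\Rightarrow\ominus_D\bot)$ into $\ominus_D\bot$), so the extraction of the termwise-uniform reduction $\{x\}\Rightarrow D\leq D$ must lean on the fact that one witness serves all pairs $(a,b)$ at once---precisely the cross-pair uniformity your proposal correctly identifies as the crux. Your instinct about where the difficulty sits is therefore sound; but neither the localization diagonal (which targets the wrong obstruction) nor anything else in your write-up converts that instinct into the required contradiction, and that is the gap.
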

\begin{proof}
The positive parts of the proposition follow from the G\"odel translation and Flagg's Change of Basis Theorem, by considerations which are by now routine. For the negative part, suppose for the sake of contradiction that the Meyer-Scott axiom itself $\forall \; a, b \; ((\forall \; x \; ax=bx)\Rightarrow 1a=1b)$ was valid on $\mu[\mathcal{S}]$. By Proposition~\ref{prop:counterextension}, fix $a,b\in \mathcal{S}$ such that $1a\neq 1b$ and for all $x\in \mathcal{S}$ one has $ax\neq bx$. Since $\ominus_D \bot \equiv D$ uniformly, for each $x\in \mathcal{S}$ we would have $\|ax= bx\|_{\mu}(D)\equiv D$ uniformly, and similarly $\|1a=1b\|_{\mu}(D)\equiv D$ uniformly. Then our reductio hypothesis yields a witness~$c$ to the reduction $\{x\}\Rightarrow D\leq D$ uniformly in $D$ and $x$. But since $skk$ is the identity function in pcas, one has that $skk$ is a member of $\{x\}\Rightarrow \{x\}$ for all $x$. But then taking $D=\{x\}$, we have that $c(skk)=x$ for all $x\in \mathcal{S}$, a contradiction.
\end{proof}

\begin{prop}\label{prop:failsurenegstable}
On the modal $\mathbb{B}$-valued $L$-structure $\mu[\mathcal{S}]$, the negated atomics are not stable, and in particular the negated atomic $x\neq y$ is not stable.
\end{prop}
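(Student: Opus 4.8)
The plan is to read ``not stable'' exactly as in Proposition~\ref{prop:atomics}~(i) and Proposition~\ref{prop:counterstabilityatomics}, i.e.\ as the failure of the equivalence $\|x\neq y\|_{\mu}\equiv\|\Box(x\neq y)\|_{\mu}$ in the Boolean prealgebra $\mathbb{B}(\mathcal{S}^{2})$. Since $\|\Box(x\neq y)\|_{\mu}\leq\|x\neq y\|_{\mu}$ holds uniformly by the $\mathsf{T}$-axiom (Proposition~\ref{S4}), it suffices to refute the forward reduction: I will assume, toward a contradiction, that there is a single $e\in\mathcal{S}$ with $e\colon\|u\neq v\|_{\mu}(D)\leadsto\|\Box(u\neq v)\|_{\mu}(D)$ for all $u,v\in\mathcal{S}$ and all $D\in P(\mathcal{S})$, and then probe this $e$ at an on-diagonal and an off-diagonal pair simultaneously. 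The whole point is that this is precisely the configuration responsible for the failure of uniform $\mathsf{S5}$ in Proposition~\ref{prop:S5degncase}.

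First I would compute the two valuations using Definition~\ref{thetheorem} together with $\ominus_{D}\bot\equiv(\top\Rightarrow D)$ and the definition of $Q$ in line~(\ref{defn:IamdefnofQ}). For an off-diagonal pair $u\neq v$ one has $\|u=v\|=\bot$, so by Proposition~\ref{prop:atomics}~(ii) and the clause for $\Box$,
\begin{equation*}
\|u\neq v\|_{\mu}(D)=(\top\Rightarrow D)\Rightarrow(\top\Rightarrow D),\qquad \|\Box(u\neq v)\|_{\mu}(D)=\ominus_{D}\textstyle\bigcap_{Z}\big[(\top\Rightarrow Z)\Rightarrow(\top\Rightarrow Z)\big]=\ominus_{D}Q .
\end{equation*}
On the diagonal one has $\|u=u\|=\top$, giving $\|u\neq u\|_{\mu}(D)=\ominus_{D}\top\Rightarrow\ominus_{D}\bot$, while $\|\Box(u\neq u)\|_{\mu}(D)=\ominus_{D}\bot=(\top\Rightarrow D)$ since the intersection $\bigcap_{Z}\|u\neq u\|_{\mu}(Z)$ already collapses to $\bot$ at $Z=\bot$ (equivalently, by Proposition~\ref{prop:atomics}~(iii) applied to the diagonal, where $\|u=u\|\neq\bot$). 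Thus off the diagonal the source behaves like $\top$ with target $\ominus_{D}Q$, and on the diagonal the source behaves like $\top\Rightarrow D$ with target $\top\Rightarrow D$: exactly the $f_{0}$/$f_{1}$ pattern of the non-uniform $\mathsf{S5}$ argument.

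Next I would run that argument with the exact sets. Fix two distinct $a,b\in\mathcal{S}$ and write $\kappa_{y}:=ky$ for the constant-$y$ element, so $\kappa_{y}z=y$ for all $z$ and $\top\Rightarrow\{c\}$ is the set of constant-$c$ elements. Using Proposition~\ref{prop:onpcas} I would define the probe $e'$ by $e'g=\kappa_{g\kappa_{a}}$. The key verification is that $e'$ lies in \emph{both} source sets at the relevant $D$: if $g\in\top\Rightarrow\{b\}=\|u_{0}\neq v_{0}\|_{\mu}(\{b\})$-input then $g\kappa_{a}=b$, so $e'g=\kappa_{b}\in\top\Rightarrow\{b\}$, whence $e'\in(\top\Rightarrow\{b\})\Rightarrow(\top\Rightarrow\{b\})=\|u_{0}\neq v_{0}\|_{\mu}(\{b\})$ for any $u_{0}\neq v_{0}$; and if $g\in\ominus_{\{a\}}\top=(\top\Rightarrow\{a\})\Rightarrow\{a\}$ then $g\kappa_{a}=a$, so $e'g=\kappa_{a}\in\top\Rightarrow\{a\}=\ominus_{\{a\}}\bot$, whence $e'\in\|u_{1}\neq u_{1}\|_{\mu}(\{a\})$ for any $u_{1}$. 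Applying the hypothesized $e$ to $e'$: the on-diagonal clause at $D=\{a\}$ gives $ee'\in\top\Rightarrow\{a\}$, i.e.\ $ee'n=a$ for all $n$; the off-diagonal clause at $D=\{b\}$ gives $ee'\in\ominus_{\{b\}}Q=(Q\Rightarrow\{b\})\Rightarrow\{b\}$, i.e.\ $ee'm=b$ for every $m\in Q\Rightarrow\{b\}$. Taking $m=\kappa_{b}$ (which is in $Q\Rightarrow\{b\}$) then forces $a=ee'\kappa_{b}=b$, contradicting $a\neq b$.

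The main obstacle, and the only place where genuine work beyond the $\mathsf{S5}$ template is needed, is the off-diagonal source. In Proposition~\ref{prop:S5degncase} the $f_{0}$-source is literally $\top=\mathcal{S}$, so any probe lands in it automatically; here it is the proper subset $(\top\Rightarrow D)\Rightarrow(\top\Rightarrow D)$, and the two source sets $(\top\Rightarrow\{b\})\Rightarrow(\top\Rightarrow\{b\})$ and $\ominus_{\{a\}}\top\Rightarrow\ominus_{\{a\}}\bot$ are disjoint as collections of ``input types.'' The device $e'g=\kappa_{g\kappa_{a}}$ is what threads this needle: it reads off the intended constant value of $g$ by evaluating at the single test point $\kappa_{a}$, returning $\kappa_{b}$ in the first case and $\kappa_{a}$ in the second, and so is simultaneously a member of both source sets. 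With $e'$ in hand the contradiction is immediate, and since $a,b,u_{0},v_{0},u_{1}$ are arbitrary this shows no uniform $e$ can exist, so $\|x\neq y\|_{\mu}\not\equiv\|\Box(x\neq y)\|_{\mu}$.
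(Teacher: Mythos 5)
Your computations are all correct, and the probe $e'g=\kappa_{g\kappa_a}$ is a genuinely nice device: it does thread both source sets exactly, and your contradiction does establish that no \emph{single} $e\in\mathcal{S}$ witnesses $\|x\neq y\|_{\mu}(D)\leadsto\|\Box(x\neq y)\|_{\mu}(D)$ uniformly in $x,y,D$, i.e.\ that $\|x\neq y\|_{\mu}\not\equiv\|\Box(x\neq y)\|_{\mu}$ in $\mathbb{B}(\mathcal{S}^2)$. The gap is in the reading of ``not stable.'' Stability is defined in \S\ref{sec:intro} as the schema $\Box[\forall\,x\,\forall\,y\,(x\neq y\Rightarrow\Box(x\neq y))]$, and the proposition (as used in the introduction and in \S\ref{sec:conclusions}) asserts that this schema is \emph{not valid} on $\mu[\mathcal{S}]$. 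In the set-theoretic case of Proposition~\ref{prop:counterstabilityatomics} your reading and the schema reading coincide, because there the quantifier is classical, so validity of the universally quantified formula yields a single pair-independent realizer. But on $\mu[\mathcal{S}]$ the quantifier is $\mathbb{Q}(x)=\{x\}$, which is non-classical (Definition~\ref{defn:Qproperties}): validity of the schema only supplies a $w$ with $wuv$ realizing the reduction \emph{for each pair} $(u,v)$, with $wuv$ varying with the pair. Refuting the existence of a merged, pair-independent realizer therefore does not refute the schema. The gap is not vacuous: for each \emph{fixed} pair the reduction you consider is realizable uniformly in $D$ --- off the diagonal, $\ominus_D Q\equiv\top$ uniformly via $m\mapsto m(skk)$, so $\Box(u\neq v)$ is outright valid for fixed distinct $u,v$; on the diagonal, $g\mapsto g\iota$ with $\iota h=h(skk)$ realizes $\ominus_D\top\Rightarrow\ominus_D\bot\leadsto\top\Rightarrow D$ for all $D$. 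So what you have refuted is precisely uniformity across pairs, which is strictly more than the negation of schema validity demands of your reductio hypothesis.

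The good news is that your construction is one observation away from closing the gap. Suppose $w$ witnesses the schema. The source and target sets depend only on whether the pair is diagonal, so $w\emptyset\emptyset$ realizes the diagonal reduction and $w\emptyset\omega$ the off-diagonal one, for all $D$. Application in $\mathcal{S}$ is continuous, hence monotone (see the remark before Proposition~\ref{prop:openclosed} and Proposition~\ref{prop:veryhlpeful112}.II), and $\emptyset\subseteq\omega$, so $w\emptyset\emptyset\subseteq w\emptyset\omega$ and hence $(w\emptyset\emptyset)e'\kappa_b\subseteq(w\emptyset\omega)e'\kappa_b$. Running your two clauses with $D=\{a\}$ and $D=\{b\}$ then gives $a=(w\emptyset\emptyset)e'\kappa_b\subseteq(w\emptyset\omega)e'\kappa_b=b$; choosing $a\not\subseteq b$ (say $a=\{0\}$, $b=\{1\}$) yields the contradiction. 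With that patch your argument becomes a correct, and quite different, proof: the paper instead proceeds logically, using Curry's paradoxical combinator to validate $\forall\,c\,\exists\,x\,(cx=x)$ on $\mathcal{S}$, then the G\"odel translation (Theorem~\ref{translationthingie}), the boxed choice principle~(\ref{eqn:boxedchoice}), and the Change of Basis Theorem~\ref{prop:changeofbasis} to derive $\Diamond\,\forall\,x,y\,(x=y)$ from the stability hypothesis, contradicting $\Box\,\exists\,x,y\,(x\neq y)$. Your route, once repaired, is more self-contained (no fixed-point theory needed) but is tied to the concrete combinatorics of $\mathcal{S}$; the paper's route shows how the failure flows from a structural feature of the model, namely that every element has a fixed point.
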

\begin{proof}
\cite{Scott1975ab} p. 174 showed that the ``paradoxical combinator'' ${\bf y}$ used by Curry in his eponymous paradox (cf. \citet[Appendix B pp. 573 ff]{Barendregt1981aa}) is such that for all continuous $f:\mathcal{S}\rightarrow \mathcal{S}$ one has $f({\bf y} g) = {\bf y} g$, where $g=\mathrm{graph}(f)$ (cf. Proposition~\ref{prop:veryhlpeful112}.I). By Proposition~\ref{prop:veryhlpeful112}.I \& IV, choose ${\bf g}$ in $\mathcal{S}$ such that ${\bf g} c = \mathrm{graph}(\mathrm{fun}(c))$ for all $c$ from $\mathcal{S}$. Putting these two things together, one has that $c ({\bf y} ({\bf g}c)) = {\bf y} ({\bf g} c)$ for all $c$ from $\mathcal{S}$. By Proposition~\ref{prop:onpcas}, choose $\delta$ in $\mathcal{S}$ such that $\delta c = p({\bf y} ({\bf g}c))k$ for all $c$ in $\mathcal{S}$. Then $\delta$ is a witness to the validity of $\forall \; c \; \exists \; x \; cx = x$ on the uniform $P(\mathcal{S})$-valued $L$-structure $\mathcal{S}$.

The G\"odel translation of this is $\Box \; (\forall \; c \; \exists \; x \; cx=x)$ which is thus valid on the modal $\mathbb{B}$-valued $L$-structure $\mu[\mathcal{S}]$. In conjunction with the stability of identity and~(\ref{eqn:boxedchoice}), this implies that the following is valid on  the modal $\mathbb{B}$-valued $L$-structure $\mu[\mathcal{S}]$:
\begin{equation}
\Box \; (\forall \; x \; \exists \; y \; \Box \; \varphi(x,y))\Rightarrow \exists \; x \; \Box \; \varphi(x,x)
\end{equation}
If one applies this to $\varphi(x,y)\equiv x\neq y$, then because $\forall \; x \; \Diamond \; \neg \varphi(x,x)$ is valid on the modal $\mathbb{B}$-valued $L$-structure $\mu[\mathcal{S}]$, so too is 
\begin{equation}
\Diamond \; \exists \; x \; \forall \; y \; \Diamond \; x=y
\end{equation}

Suppose now for the sake of contradiction that the negated atomic $x\neq y$ was stable. This implies that $\Box \; \forall \; x,y \; (\Diamond (x=y) \Rightarrow x=y)$ is valid on the modal $\mathbb{B}$-valued $L$-structure $\mu[\mathcal{S}]$. In conjunction with the previous equation, we then obtain the validity of $\Diamond \; \exists \; x \; \forall \; y \; x=y$, which implies
\begin{equation}\label{eqn:juststop}
\Diamond \; \forall \; x, y \; x=y
\end{equation}
But by choosing distinct $x\neq y$ in $\mathcal{S}$, our reductio hypothesis gives that $\Box \; x\neq y$, which implies $\exists \; x, y \; \Box \; x\neq y$, which by Proposition~\ref{prop:CBF} implies $\Box \; \exists \; x, y \;  x\neq y$, which contradicts the previous equation.
\end{proof}


\section{Conclusions and Further Questions}\label{sec:conclusions}

Drawing the comparison to modal logic prior to Kripke semantics, Horsten once reported the concern about $\mathsf{ECT}$~(\ref{eqn:whatwegotnow}) and related systems that ``in the absence of a clear and unifying semantic framework there is the suspicion [\ldots] that we don't know what we are talking about'' \cite[p. 24]{Horsten1998aa}. While one can always adopt an instrumentalist attitude towards semantics, it's hard not to have some sympathy for Horsten's remark, and it's our hope that the realizability semantics developed here, which we take to generalize Flagg's 1985 construction, can add to our understanding of $\mathsf{ECT}$~(\ref{eqn:whatwegotnow}). On the one hand, we've tried to do so by making transparent how, e.g., these semantics don't validate the Barcan Formula~$\mathsf{BF}$~(\ref{eqn:BF}) or the stability of non-identity (cf. Proposition~\ref{prop:failureofbarcan} and Proposition~\ref{prop:failsurenegstable}), but they do validate the Converse Barcan Formula~$\mathsf{CBF}$~(\ref{eqn:CBF}) and the stability of identity and other atomics (cf. Proposition~\ref{prop:CBF} and Proposition~\ref{prop:atomics}). On the other hand, we've sought to specify the senses in which the Flagg construction does not depend crucially on the ordinary model of computation over the natural numbers. One degree of freedom is that we can vary the partial combinatory algebra over which we are working, and we can rather work over Kleene's second model or Scott's graph model of the untyped lambda-calculus (cf. \S\S\ref{sec:Troelstrakleene}-\ref{sec:untypedlambda}). But we can also vary the base theory from Flagg's original setting of arithmetic to the setting of set theory using McCarty's construction (cf. \S\ref{sec:setheory}).

But there is obviously much that is left unresolved by this paper. Four issues in particular seem especially noteworthy. First, one would like some understanding of how complex the consequence relation is for the modal semantics developed here. Given that the semantics for the box operator is defined in terms of intersections over arbitrary subsets of the underlying pca (cf. Definitions~\ref{inf(f)} and \ref{defn:boxremark}), one suspects that the induced consequence relation might be rather complex, and showing that it was not computably enumerable would be good evidence that there is no natural completeness theorem for these semantics. 

Second, given the centrality of the stability of atomics to the Flagg construction, the considerations of this paper tell us nothing about the consistency of $\mathsf{ECT}$~(\ref{eqn:whatwegotnow}) plus the failure of the stability of atomics against the background of an epistemic set theory. (See \citet[p. 149]{Flagg1985aa} for a discussion of the stability of atomics against the background of an epistemic arithmetic.)

Third, the main open technical question about $\mathsf{ECT}$~(\ref{eqn:whatwegotnow}) is whether it plus epistemic arithmetic is a conservative extension of Peano arithmetic $\mathsf{PA}$ (cf. \citet[p. 649]{Horsten1997aa}, \citet[p. 20]{Horsten1998aa}, \citet[p. 260]{Horsten2006aa}, \citet[Question 1 p. 462]{Halbach2000ab}). But the conservation of a given system over a subsystem can often be established by showing that any model of the subsystem can be expanded into a model of the given system. Because of this, as well as due to its intrinsic interest, one would want to know whether the model of $\mathsf{EA}^{\circ}+\mathsf{ECT}$ constructed in~\S\ref{sec:arithmetic} validates all the same arithmetical truths as the standard model of arithmetic.

Fourth and finally, we indicated in \S\ref{sec:intro} how the stability of atomics motivated the modification $\mathsf{eZF}$ of Goodman and Scedrov's theory $\mathsf{EZF}$, and in \S\ref{sec:setheory} we showed that modal versions of McCarty's model, which is closely related to the Boolean-valued models of set theory, validated this theory $\mathsf{eZF}$. However, Martin once suggested that ``[\ldots] a Boolean-valued  model would hardly seem to be a contender for the universe of sets'' \cite[p. 15]{Martin2001aa}. While the philosophical interpretation of Boolean-valued models is currently being debated, one might reasonably want some further assurance that $\mathsf{eZF}$ had the right to the name of a set~theory at all, and for this it would be natural to show that it actually interpreted ordinary set theory $\mathsf{ZF}$, or at least some fragment of this.


\appendix


\section{Appendix: Proposition on Kleene's Second Model}\label{appendix:specificpcas}

Here is the proof of Proposition~\ref{prop:oostenfacts}, which we stated in \S\ref{sec:heytingfrompca} in the context of describing Kleene's second model $\mathcal{K}_2$ but only used in \S\ref{sec:Troelstrakleene}:

\begin{proof}
For (I) in the case of $n=1$, let $D=\bigcap_n U_n$, where $U_n$ is open. Let $\pi_n:\omega^{\omega}\rightarrow \omega$ be the $n$-th projection $\pi_n(\alpha)=\alpha(n)$, so that $(\pi_n \circ G):D\rightarrow \omega$ is continuous where $\omega$ is given the discrete topology. For each $n,m\geq 0$, choose open $V_{n,m}\subseteq U_n$ such that $(\pi_n \circ G)^{-1}(\{m\}) =V_{n,m}\cap D$. Define $\gamma$ by $\gamma((n)^{\frown} \sigma)=m+1$ if $[\sigma]\subseteq V_{n,m}$ and $[\tau]\nsubseteq V_{n,m}$ for all $\tau\prec \sigma$ and $m$ is the least with this property, and $\gamma((n)^{\frown} \sigma)=0$ if there is no such $m$. Suppose that $\alpha \in D$ and $n\geq 0$ and $m=G(\alpha)(n)$, so that $\alpha\in (\pi_n \circ G)^{-1}(\{m\})=V_{n,m}\cap D$. Choose least $\ell$ such that $[\sigma]\subseteq V_{n,m}$ where $\sigma=\alpha\upharpoonright \ell$, and note that there is no $m^{\prime}<m$ with this property since it would then imply that $G(\alpha)(n)$ was equal to both $m^{\prime}, m$. Then $\gamma((n)^{\frown} \sigma)=m+1$ and $\gamma((n)^{\frown} \tau)=0$ for all $\tau \prec \sigma$. Hence $(\gamma\alpha)(n) = m$. Finally, note that if $\alpha\notin D$, then $\gamma \alpha$ is undefined. For, if $\alpha\notin D$, then choose $U_n$ with $\alpha\notin U_n$. Then $\gamma((n)^{\frown}\sigma)=0$ for all initial segments $\sigma$ of $\alpha$, and so $(\gamma\alpha)(n)$ is undefined.

For (I) in the case of $n=2$ and $D=\omega^{\omega}\times \omega^{\omega}$, see \citet[Lemma 1.4.1 p. 16]{Oosten2008aa}. Then by applying this to the homeomorphism $\Gamma_2:\omega^{\omega}\times \omega^{\omega}\rightarrow \omega^{\omega}$ given by $\Gamma_2(\alpha_1, \alpha_2)=\alpha_1\oplus \alpha_2$, one obtains $\gamma_2$ such that $\gamma_2\alpha_1\alpha_2 = \alpha_1\oplus\alpha_2$. 

Now let's show (I) in the case of $n=2$ and $D\subseteq \omega^{\omega}\times \omega^{\omega}$ being $G_{\delta}$. Its image $D^{\prime}$ under $\Gamma_2$ is also $G_{\delta}$, and $F^{\prime}: D^{\prime}\rightarrow \omega^{\omega}$ defined by $F^{\prime}(\alpha_1\oplus \alpha_2) = F(\alpha_1, \alpha_2)$ is also continuous. Then by (I) in the case of $n=1$, one has that there is $\gamma$ such that $\gamma \beta = F^{\prime}(\beta)$ for all $\beta$ in $D^{\prime}$. Then $\gamma (\gamma_2 \alpha_1 \alpha_2) = \gamma (\alpha_1\oplus \alpha_2) = F^{\prime}(\alpha_1\oplus \alpha_2) = F(\alpha_1, \alpha_2)$ for all $(\alpha_1, \alpha_2)$ in $D$. By Proposition~\ref{prop:onpcas}, choose $\gamma^{\prime}$ such that $\gamma^{\prime}\alpha_1 \alpha_2 = \gamma (\gamma_2 \alpha_1 \alpha_2)$. Then $\gamma^{\prime}$ is the desired witness, and so we are done with the case $n=2$. 

To finish the proof of (I), suppose that the result holds for $n\geq 2$. To show it holds for $n+1$, suppose that $D\subseteq (\omega^{\omega})^{n+1}$ is $G_{\delta}$. Then its image $D^{\prime}$ under the homeomorphism $\Gamma_{n+1}: (\omega^{\omega})^{n+1}\rightarrow (\omega^{\omega})^{n}$ given by $\Gamma_{n+1}(\alpha_1, \ldots, \alpha_{n-1}, \alpha_n, \alpha_{n+1})=(\alpha_1, \ldots,\alpha_{n-1}, \alpha_n\oplus \alpha_{n+1})$ is also $G_{\delta}$. Further the function $F^{\prime}: D^{\prime}\rightarrow \omega^{\omega}$ defined by $F^{\prime}(\alpha_1, \ldots, \alpha_{n-1}, \alpha_n\oplus \alpha_{n+1}) = F(\alpha_1, \ldots, \alpha_{n-1}, \alpha_n, \alpha_{n+1})$ is continuous. Then by induction hypothesis, choose $\gamma$ such that $\gamma \beta_1 \cdots \beta_n = F^{\prime}(\beta_1, \ldots, \beta_n)$ for all $(\beta_1, \ldots, \beta_n)$ in $D^{\prime}$. Then one has $\gamma \alpha_1 \cdots \alpha_{n-1} (\gamma_2 \alpha_n \alpha_{n+1}) = F^{\prime}(\alpha_1, \ldots, \alpha_{n}\oplus \alpha_{n+1}) = F(\alpha_1, \ldots, \alpha_n, \alpha_{n+1})$. Then by Proposition~\ref{prop:onpcas}, choose an element $\gamma^{\prime}$ such that $\gamma^{\prime}\alpha_1 \cdots \alpha_{n+1} = \gamma \alpha_1 \cdots \alpha_{n-1} (\gamma_2 \alpha_n \alpha_{n+1})$. Then $\gamma^{\prime}$ is the desired witness.

For (II), for the case $n=1$, it follows from~(\ref{eqn:appinkleenesecond}). For the case of $n=2$, $(\alpha_1, \alpha_2)$ is in the domain the map iff (i)~$\alpha_1$ is in the domain of the map $\alpha\mapsto \gamma\alpha$ and (ii)~for all $n\geq 0$ there is $\sigma_1\oplus \sigma_2$ such that the following three $\Sigma^0_1$-conditions in oracle $\gamma\oplus \alpha_1\oplus \alpha_2$ occur: (ii.1) for all $i<\left|\sigma_1\right|$ there is $\tau_i\preceq \gamma\oplus \alpha_1$ such that $\sigma_1(i)=\{e_0\}^{\tau_i}(i)$, and (ii.2) $\sigma_2\preceq \alpha_2$ and (ii.3) $\{e_0\}^{\sigma_1\oplus \sigma_2}(n)\hspace{-1mm}\downarrow$. Note that condition (ii.1) is $\Sigma^0_1$ in the oracle since the $\Sigma^0_1$-formulas are closed under bounded quantifiers. Since conditions~(i)-(ii) are $\Pi^0_2$ in the oracle, it follows that the domain in the case $n=2$ is $\Pi^0_2$ and thus $G_{\delta}$. For continuity, suppose that $\alpha_{i,k}\rightarrow \alpha_i$ where $(\alpha_{1,k}, \alpha_{2,k})$ is in the domain of the function. Then by case $n=1$, we have $\gamma\alpha_{1,k}\rightarrow \gamma \alpha_1$. Suppose that we are trying to secure agreement with $\gamma\alpha_1\alpha_2$ up to length $\ell$. Choose $\sigma\preceq \gamma\alpha_1 \oplus \alpha_2$ such that $\{e_0\}^{\sigma}(i)\hspace{-1mm}\downarrow$ for all $i<\ell$. Then choose $K\geq 0$ large enough so that if $k\geq K$, then $\gamma\alpha_{1,k}$ agrees with $\gamma \alpha_1$ up to length $\left|\sigma\right|$ and $\alpha_{i,k}$ agrees with $\alpha_2$ up to length $\left|\sigma\right|$.
\end{proof}


\section{Appendix: Heyting Prealgebras and Uniformity}\label{sec:appendixheyting}

In this section, we briefly set up a logic in which to state and prove quantifier-free facts about Heyting prealgebras, and in this logic we prove (\ref{prop:diamonds1})-(\ref{prop:diamonds10}), and we connect this to the discussion of uniform witnesses in Heyting prealgebras from \S\ref{sec:heytingfrompca}. The well-formed formulas of this logic are simply the quantifier-free formulas in the language of Heyting prealgebras \emph{without equality}, and the deduction rules are the simply the usual natural deduction rules for the intuitionistic propositional calculus $\mathsf{IPC}$ together with the substitution rule ``from $\varphi(x)$ infer $\varphi(t)$.'' The axioms of Heyting prealgebras are naturally quantifier-free, and so for instance the axiom governing the conditional may be written
\begin{equation}
x\wedge y\leq z \mbox{ iff } y\leq x\Rightarrow z \label{defnheyting}
\end{equation}
Let us call the system formed by this logic and the quantifier-free versions of the axioms of Heyting prealgebras the system \emph{elementary Heyting prealgebras} or $\mathsf{EHP}$. It's obviously just a Heyting-prealgebra analogue of primitive recursive arithmetic $\mathsf{PRA}$ (cf. \citet[volume 1 \S{3.2} 120 ff]{Troelstra1988aa}). 

First let's begin by noting some more elementary consequences of $\mathsf{EHP}$:
\begin{prop}\label{prop:moreelementary}
$\mathsf{EHP}$ proves~(\ref{help1})-(\ref{help3}) and hence each are true on any Heyting prealgebra.
\end{prop}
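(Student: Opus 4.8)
The statement to prove is that $\mathsf{EHP}$ proves the three elementary facts
\begin{align}
& x \wedge (x\Rightarrow z)\leq z, \notag \\
& x\leq y \mbox{ implies } y\Rightarrow z \; \leq \; x\Rightarrow z, \notag \\
& (x\Rightarrow y) \wedge (y \Rightarrow z)\; \leq\; x\Rightarrow z, \notag
\end{align}
each of which should follow purely formally from the defining adjunction $x\wedge y\leq z \mbox{ iff } y\leq x\Rightarrow z$ together with the basic lattice axioms (reflexivity, transitivity, and the characterizing properties of $\wedge$). The plan is to derive each inequality by straightforward manipulation of the Galois connection between $\wedge$ and $\Rightarrow$, supplemented by commutativity of $\wedge$ where needed.

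For~(\ref{help1}), I would start from the reflexivity instance $(x\Rightarrow z)\leq (x\Rightarrow z)$ and apply the right-to-left direction of the adjunction~(\ref{defnheyting}) with the roles arranged so that $y=(x\Rightarrow z)$; this immediately yields $x\wedge (x\Rightarrow z)\leq z$. For~(\ref{help2}), assume $x\leq y$. The goal is a statement of the form $(y\Rightarrow z)\leq(x\Rightarrow z)$, so by the left-to-right use of the adjunction it suffices to establish $x\wedge(y\Rightarrow z)\leq z$. From $x\leq y$ and the meet properties one has $x\wedge(y\Rightarrow z)\leq y\wedge(y\Rightarrow z)$, and the latter is $\leq z$ by~(\ref{help1}) applied with $y$ in place of $x$; transitivity then closes the argument. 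For~(\ref{help3}), the target is $(x\Rightarrow y)\wedge(y\Rightarrow z)\leq(x\Rightarrow z)$, so again by the adjunction it suffices to show $x\wedge\bigl[(x\Rightarrow y)\wedge(y\Rightarrow z)\bigr]\leq z$. Here I would use~(\ref{help1}) twice: first $x\wedge(x\Rightarrow y)\leq y$, hence after conjoining with $(y\Rightarrow z)$ one obtains a bound $\leq y\wedge(y\Rightarrow z)$, which is in turn $\leq z$ by~(\ref{help1}) again.

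I do not anticipate a genuine obstacle here, since all three are purely equational consequences of an adjunction. The only point requiring care is bookkeeping: the defining property~(\ref{defnheyting}) is asymmetric in the two arguments of $\wedge$, so each application must have its conjuncts ordered correctly, and I would invoke commutativity and associativity of $\wedge$ (which are derivable in $\mathsf{EHP}$ from the infimum axioms) to rearrange the meets into the shape demanded by~(\ref{defnheyting}). Once the derivations are carried out in $\mathsf{EHP}$, the concluding clause---that each holds on any Heyting prealgebra---follows at once from the soundness of $\mathsf{EHP}$ for Heyting prealgebras, i.e.\ from the fact that every model of the quantifier-free axioms validates what $\mathsf{EHP}$ proves. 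This justifies the parenthetical claim made at the point where~(\ref{help1})-(\ref{help3}) are first asserted true in \S\ref{sec:heyting}.
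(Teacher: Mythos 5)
Your proposal is correct and follows essentially the same route as the paper's own proof: each of (\ref{help1})--(\ref{help3}) is derived from the adjunction~(\ref{defnheyting}), with~(\ref{help1}) obtained by instantiating $y$ as $x\Rightarrow z$, (\ref{help2}) reduced via the adjunction to $x\wedge(y\Rightarrow z)\leq z$ and closed using monotonicity of $\wedge$ plus~(\ref{help1}), and~(\ref{help3}) obtained by two applications of~(\ref{help1}). Your added remarks on commutativity/associativity of $\wedge$ and on soundness merely make explicit bookkeeping that the paper leaves tacit.
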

\begin{proof}
For, equation~(\ref{help1}) follows from~(\ref{defnheyting}) by setting $y$ equal to $x\Rightarrow z$. For equation~(\ref{help2}), suppose $x\leq y$; we want to show $y\Rightarrow z \leq x\Rightarrow z$. By equation~(\ref{defnheyting}) this is equivalent to $x\wedge (y\Rightarrow z)\leq z$. But since $x\leq y$, we have $x\wedge (y\Rightarrow z) \leq y \wedge  (y\Rightarrow z) \leq z$, where the second reduction comes from equation~(\ref{help1}). Finally, equation~(\ref{help3}) follows from two applications of equation~(\ref{help1}).
\end{proof}

Using this more elementary proposition, we may then establish:
\begin{prop}\label{prop:thediamondprop22}
$\mathsf{EHP}$ proves (\ref{prop:diamonds1})-(\ref{prop:diamonds10}), so that these are true on all Heyting prealgebras.
\end{prop}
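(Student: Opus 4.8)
The plan is to prove each of the equivalences and reductions (\ref{prop:diamonds1})--(\ref{prop:diamonds10}) as a theorem of $\mathsf{EHP}$, working purely syntactically in the intuitionistic natural-deduction calculus with the substitution rule, and invoking the axiom~(\ref{defnheyting}) governing the conditional together with the elementary consequences~(\ref{help1})--(\ref{help3}) established in Proposition~\ref{prop:moreelementary}. The crucial move throughout will be to \emph{unfold} the definition $\ominus_d(x) = (x\Rightarrow d)\Rightarrow d$ and then repeatedly apply the adjunction $x\wedge y\leq z$ iff $y\leq x\Rightarrow z$. Since $\mathsf{EHP}$ is sound for Heyting prealgebras (as noted at the outset of \S\ref{sec:heyting}), provability in $\mathsf{EHP}$ immediately yields truth on all Heyting prealgebras, which is what the statement asserts.

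First I would dispatch the easy monotonicity and inflation facts. For~(\ref{prop:diamonds2}), $x\leq \ominus_d(x)$, I would show $x\wedge(x\Rightarrow d)\leq d$ by~(\ref{help1}) and then apply~(\ref{defnheyting}) to conclude $x\leq (x\Rightarrow d)\Rightarrow d$. For~(\ref{prop:diamonds1}), assuming $x\leq y$, I would use~(\ref{help2}) to get $y\Rightarrow d\leq x\Rightarrow d$ and then~(\ref{help2}) again (contravariance in the antecedent of the outer conditional) to get $\ominus_d(x)\leq\ominus_d(y)$. The inflation facts~(\ref{prop:diamonds12}), $\ominus_d(d)\leq d$, and~(\ref{prop:diamonds8}), $d\leq\ominus_d(x)$, come from~(\ref{help1}) applied to $d\wedge(d\Rightarrow d)$ and from the fact that $d\wedge(x\Rightarrow d)\leq d$ trivially. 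The top fact~(\ref{prop:diamonds7}) follows since $\top\Rightarrow d\equiv d$ and hence $\ominus_d(\top)\equiv d\Rightarrow d\equiv\top$. From~(\ref{prop:diamonds2}) applied to $\ominus_d(x)$ together with~(\ref{prop:diamonds3}) one gets the idempotence~(\ref{prop:diamonds5}).

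The more substantive steps are~(\ref{prop:diamonds3}), the conjunction equivalence~(\ref{prop:diamonds4}), the two conditional facts~(\ref{prop:diamonds6}) and~(\ref{prop:diamonds10}), and the join inequality~(\ref{prop:diamonds9}). For~(\ref{prop:diamonds3}), $\ominus_d\ominus_d(x)\leq\ominus_d(x)$, the key observation is that $\ominus_d(x)\Rightarrow d\equiv x\Rightarrow d$; one direction of this is~(\ref{prop:diamonds10}), which itself follows because $x\leq\ominus_d(x)$ gives $\ominus_d(x)\Rightarrow d\leq x\Rightarrow d$ by~(\ref{help2}), while the reverse uses~(\ref{help1})-style manipulation showing $(x\Rightarrow d)\wedge\ominus_d(x)\leq d$. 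Granting~(\ref{prop:diamonds10}), substituting into the outer conditional collapses $\ominus_d\ominus_d(x)$ to $\ominus_d(x)$. For~(\ref{prop:diamonds4}) I would prove $\leq$ from monotonicity~(\ref{prop:diamonds1}) applied to the projections $x\wedge y\leq x$ and $x\wedge y\leq y$, and prove $\geq$ by a direct adjunction argument that $\ominus_d(x)\wedge\ominus_d(y)\leq\ominus_d(x\wedge y)$, unfolding and using~(\ref{help1}) and~(\ref{help3}) to chase the conditionals. The facts~(\ref{prop:diamonds6}) and~(\ref{prop:diamonds9}) are handled by analogous bookkeeping with~(\ref{help2}) and~(\ref{help3}).

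The main obstacle I anticipate is not any single step but the organization of the \emph{dependencies}: several of these lines rely on one another (for instance~(\ref{prop:diamonds3}) rests on~(\ref{prop:diamonds10}), and~(\ref{prop:diamonds5}) rests on~(\ref{prop:diamonds2}) and~(\ref{prop:diamonds3})), so I must sequence the proofs so that each cited fact is already available, and I must take care that every inference is genuinely intuitionistic---in particular avoiding any appeal to $x\equiv\neg\neg x$, which fails in a general Heyting prealgebra. The purely formal, $\mathsf{IPC}$-internal nature of the derivations is exactly what guarantees the uniformity payoff flagged in \S\ref{sec:heytingfrompca}: a proof in $\mathsf{EHP}$ compiles, via realizability, into a single pca-element witnessing the corresponding reduction in $\mathbb{F}(\mathcal{X})$ uniformly, which is the use to which Proposition~\ref{prop:formaljustificationunfirm} will put this result.
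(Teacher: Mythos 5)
Your proposal is correct, and it is organized around a genuinely different dependency structure than the paper's proof. The paper proves~(\ref{prop:diamonds3}) directly, via the most involved derivation in its proof (introducing $z = (x\Rightarrow d)$, showing $(x\Rightarrow d)\leq \ominus_d x \Rightarrow d$, and then chasing $(x\Rightarrow d)\wedge \ominus_d\ominus_d x \leq d$ through two applications of the adjunction), and only proves~(\ref{prop:diamonds10}) at the very end, independently. You invert this: you establish~(\ref{prop:diamonds10}), i.e. $\ominus_d(x)\Rightarrow d \equiv x \Rightarrow d$, first --- one half by~(\ref{help2}) from $x\leq \ominus_d(x)$, the other half by the adjunction reducing it to the instance $(x\Rightarrow d)\wedge((x\Rightarrow d)\Rightarrow d)\leq d$ of~(\ref{help1}) --- and then obtain~(\ref{prop:diamonds3}) by applying~(\ref{help2}) to substitute inside the antecedent of the outer conditional, which in fact yields the full idempotence~(\ref{prop:diamonds5}) in the same stroke. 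This is a cleaner economy: the identity $\ominus_d(x)\Rightarrow d\equiv x\Rightarrow d$ does double duty, and the paper's hardest single derivation becomes a corollary. Your sequencing is non-circular ((\ref{prop:diamonds10}) rests only on~(\ref{prop:diamonds2}), (\ref{help1}), (\ref{help2})), your handling of~(\ref{prop:diamonds7}) via $\top\Rightarrow d\equiv d$ is a valid alternative to the paper's appeal to~(\ref{prop:diamonds2}), and the remaining items match the paper's arguments.

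One small slip worth fixing: for~(\ref{prop:diamonds12}) you cite ``(\ref{help1}) applied to $d\wedge(d\Rightarrow d)$,'' but that instance only gives the trivial $d\wedge(d\Rightarrow d)\leq d$, which does not by itself yield $\ominus_d(d)\leq d$. The instance you need is $(d\Rightarrow d)\wedge((d\Rightarrow d)\Rightarrow d)\leq d$, i.e.~(\ref{help1}) with the antecedent $d\Rightarrow d$, combined with $\top\leq d\Rightarrow d$ (which follows from $d\leq d$ by the adjunction~(\ref{defnheyting})); this is exactly how the paper argues, writing $\ominus_d d\leq(\ominus_d d)\wedge(d\Rightarrow d)\leq d$. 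The repair is one line and does not affect anything downstream.
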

\begin{proof}
For equation~(\ref{prop:diamonds1}), suppose that $x\leq y$. Then by equation~(\ref{help2}), we have $y\Rightarrow d \leq x\Rightarrow d$. Then by equation~(\ref{help2}) again, we have $(x\Rightarrow d)\Rightarrow d \leq (y\Rightarrow d)\Rightarrow d$. Then $\ominus_{d} x \leq \ominus_{d} y$.  

For equation~(\ref{prop:diamonds2}), note that by equation~(\ref{defnheyting}), it is equivalent to $(x\Rightarrow d) \wedge x \leq d$, which follows immediately from equation~(\ref{help1}).

For equation~(\ref{prop:diamonds3}), let $z$ be $(x\Rightarrow d)$. Then by equation~(\ref{help1}), we have $(z\Rightarrow d) \wedge z \leq d$. Since $z$ is $(x\Rightarrow d)$ we have $(z\Rightarrow d)\equiv \ominus_d x$. We thus have that $\ominus_d x \wedge z \leq d$. By equation~(\ref{defnheyting}), it follows that $z\leq \ominus_d x \Rightarrow d$ or $(x\Rightarrow d)\leq \ominus_d x \Rightarrow d$. From this we argue that $(x\Rightarrow d) \wedge \ominus_d \ominus_d x \leq (\ominus_d x \Rightarrow d) \wedge ((\ominus_d x\Rightarrow d)\Rightarrow d)\leq d$, where the last reduction comes from equation~(\ref{help1}). By equation~(\ref{defnheyting}), this thus yields that $\ominus_d \ominus_d x \leq (x\Rightarrow d)\Rightarrow d = \ominus_d x$.

For equation~(\ref{prop:diamonds5}), we want to show that  $\ominus_{d}\ominus_{d} x \equiv \ominus_{d} x$. But we already have $\ominus_{d}\ominus_{d} x \leq \ominus_{d} x$ from equation~(\ref{prop:diamonds3}). To see $\ominus_{d} x\leq \ominus_{d}\ominus_{d} x$, we note that we have $x\leq \ominus_{d}(x)$ from equation~(\ref{prop:diamonds2}), and applying equation~(\ref{prop:diamonds1}) with $y$ equal to $\ominus_d(x)$, we obtain $\ominus_d x\leq \ominus_d\ominus_{d}(x)$.

For equation~(\ref{prop:diamonds4}), let us first show that $\ominus_d (x\wedge y)\leq \ominus_d x \wedge \ominus_d y$. Since $\wedge$ is defined in Heyting prealgebras as the infimum, it suffices to show that $\ominus_d (x\wedge y)\leq \ominus_d(x)$ and $\ominus_d(x\wedge y)\leq \ominus_d (y)$. But these both follow directly from equation~(\ref{prop:diamonds1}), since $x\wedge y \leq x$ and $x\wedge y \leq y$. Now we show the converse, namely  $\ominus_d x \wedge \ominus_d y\leq \ominus_d (x\wedge y)$. By equation~(\ref{defnheyting}), this is equivalent to
\begin{equation}\label{eqn:123421342134dsafsdflkkj}
((x \wedge y)\Rightarrow d) \wedge \ominus_d x \wedge \ominus_d y \leq d
\end{equation}
which by expanding is equivalent to
\begin{equation}\label{whatwewant}
[((x \wedge y)\Rightarrow d) \wedge ((x\Rightarrow d)\Rightarrow d)]  \wedge\ominus_d(y) \leq d
\end{equation}
By two applications of equation~(\ref{defnheyting}) we can get $((x\wedge y) \Rightarrow  d)  \leq (y\Rightarrow (x\Rightarrow d))$. Hence we may argue from this and equation~(\ref{help3}) to equation~(\ref{whatwewant}) by noting that $[((x \wedge y)\Rightarrow d) \wedge ((x\Rightarrow d)\Rightarrow d)]  \wedge\ominus_d(y) \leq (y\Rightarrow d)\wedge \ominus_d y \leq d$.

For equation~(\ref{prop:diamonds6}), note that we have by equation~(\ref{help1}) that $x\wedge (x\Rightarrow y)\leq y$. By equation~(\ref{prop:diamonds1}), we have $\ominus_d (x \wedge (x\Rightarrow y))\leq \ominus_d (y)$. By equation~(\ref{prop:diamonds4}) we obtain $\ominus_d x \wedge \ominus_d (x\Rightarrow y) \leq \ominus_d (y)$. Then by equation~(\ref{defnheyting}), we have $\ominus_d (x\Rightarrow y)\leq \ominus_d x \Rightarrow \ominus_d y$.

For equation~(\ref{prop:diamonds9}), since $\vee$ is defined in Heyting prealgebras as the supremum, it suffices to show that $\ominus_d (x\vee y)\geq \ominus_d(x)$ and $\ominus_d(x\vee y)\geq \ominus_d (y)$. But these both follow from equation~(\ref{prop:diamonds1}), since $x\vee y \geq x$ and $x\vee y \geq y$. 

For~(\ref{prop:diamonds12}), note that $d\leq d$ implies that $\top \leq (d\Rightarrow d)$. Hence one has that $\ominus_d d \leq (\ominus_d d)\wedge (d\Rightarrow d)\leq d$.

For equation~(\ref{prop:diamonds8}), note that it is equivalent to $d\leq (x\Rightarrow d)\Rightarrow d$. But by equation~(\ref{defnheyting}) this is equivalent to  $d\wedge (x\Rightarrow d)\leq d$, which follows directly from~$\wedge$ being a lower bound. Note that equation~(\ref{prop:diamonds7}) follows directly from equation~(\ref{prop:diamonds2}) since this implies $\top \leq \ominus_d (\top)$ and we always have the converse.

For equation (\ref{prop:diamonds10}), first note that $(\ominus_d (x) \Rightarrow d) \leq (x\Rightarrow d)$ is equivalent to $[x\wedge (\ominus_d (x) \Rightarrow d)] \leq d$ by (\ref{defnheyting}). But this follows from $x\wedge (\ominus_d (x) \Rightarrow d) \leq \ominus_d(x) \wedge (\ominus_d (x) \Rightarrow d)\leq d$ by~(\ref{prop:diamonds2}) and~(\ref{help1}). Second note that $(x\Rightarrow d) \leq (\ominus_d (x) \Rightarrow d)$ is equivalent to $[\ominus_d (x) \wedge (x\Rightarrow d)] \leq d$ by (\ref{defnheyting}). But since $\ominus_d x = (x\Rightarrow d)\Rightarrow d$, this follows from ~(\ref{help1}).

\end{proof}

Now we formally justify the claim, made initially in \S\ref{sec:heytingfrompca} but used throughout the paper, that there are uniform witnesses to the reductions in (\ref{prop:diamonds1})-(\ref{prop:diamonds10}). In the statement of this proposition, recall that the ample relational signature for pcas was introduced immediately after Proposition~\ref{prop:onpcas}.
\begin{prop}\label{prop:formaljustificationunfirm}
Suppose that $t(\overline{x}), s(\overline{x})$ are two terms in the signature of Heyting prealgebras. Suppose that $\mathsf{EHP}$ proves $t(\overline{x})\leq s(\overline{x})$. Then there is a closed term $\tau$ in the ample relational signature of pcas such that for all pcas $\mathcal{A}$ and all non-empty sets $\mathcal{X}$ and all $\overline{f}$ in $\mathbb{F}(\mathcal{X})$, one has $\tau:t(\overline{f})\leadsto s(\overline{f})$.
\end{prop}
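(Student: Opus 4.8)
The plan is to prove this by a careful induction on the length of the $\mathsf{EHP}$-derivation of $t(\overline{x})\leq s(\overline{x})$, building the witnessing closed term $\tau$ alongside the proof tree. The key conceptual point is that $\mathsf{EHP}$ is a finitely-axiomatized system with a fixed (intuitionistic natural deduction plus substitution) proof calculus, and that each axiom and each inference rule of $\mathsf{EHP}$ has a \emph{uniform realizer} in the ample relational signature of pcas, because the uniformity results recorded after Definition~\ref{recHeytingpre} (the witnesses $e_1,\ldots,e_{12}$) already supply pca-terms witnessing precisely the defining inequalities of a Heyting prealgebra, and crucially the same terms work for every pca $\mathcal{A}$ and every $\mathbb{F}(\mathcal{X})$. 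So the strategy is: convert the Hilbert/natural-deduction machinery of $\mathsf{EHP}$ into composition of these fixed pca-terms.

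First I would set up the inductive framework. Since $\mathsf{EHP}$ proves reductions $t\leq s$ but uses full intuitionistic propositional reasoning over such atomic reductions, I would either (a) work with a natural-deduction presentation in which each proved sequent $\Gamma \vdash t\leq s$ is assigned a witnessing term $\tau_\Gamma$ in the ample relational signature, with free realizer-variables matching the hypotheses in $\Gamma$, or (b) appeal to the fact (established in $\S\ref{sec:heytingfrompca}$, via the soundness of $\mathsf{IPC}$ over $\mathbb{F}(\mathcal{X})$ and the uniform witnesses $e_1,\ldots,e_{12}$) that the map $t(\overline{x})\mapsto \|t(\overline{f})\|$ is an interpretation under which each $\mathsf{EHP}$-axiom gets a uniform realizer. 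The base cases are the defining axioms of a Heyting prealgebra: reflexivity and transitivity of $\leq$, the infimum/supremum clauses, the bottom/top clauses, and the adjunction~(\ref{defnheyting}). Each of these is realized uniformly by one of the $e_k$ (or by the identity element $skk$), and each such $e_k$ is a term in the ample relational signature that works for all pcas. For the substitution rule ``from $\varphi(x)$ infer $\varphi(\tau)$'', the realizer is unchanged: if $\tau$ uniformly witnesses the reduction for arbitrary $\overline{f}$, then in particular it witnesses the instance obtained by substituting any term $t(\overline{f})$ for $f_i$, since the quantification over $\mathbb{F}(\mathcal{X})$ in the conclusion is universal.

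The inductive steps then handle the inference rules of $\mathsf{IPC}$. The central observation is that $\leq$ is the ``$\Rightarrow$-at-top-level'' relation: a reduction $t\leq s$ corresponds to realizability of $\|t\|\Rightarrow\|s\|$, i.e.\ to having $e\colon \|t(\overline{f})\| \leadsto \|s(\overline{f})\|$, which is exactly the notation $e:t(\overline{f})\leadsto s(\overline{f})$ from~(\ref{eqn:handynotation}). Modus ponens becomes composition of realizers, witnessed uniformly by $e_{10}$ (transitivity/composition) together with the pairing element $p$; conjunction-introduction and elimination use $e_4,e_5,e_6$; disjunction uses $e_1,e_2,e_3$; and the rules for the conditional correspond to the currying/uncurrying realizers $e_{11},e_{12}$, which are exactly the uniform witnesses to the adjunction. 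At each step, the realizer of the conclusion is a fixed pca-term built by applying $s,k,p,p_0,p_1$ to the realizers supplied by the induction hypotheses; since all of $e_1,\ldots,e_{12}$ and the combinators are closed terms in the ample relational signature, and since Proposition~\ref{prop:onpcas} guarantees a closed term implementing any explicit composition pattern, the resulting $\tau$ is again such a closed term, independent of $\mathcal{A}$ and $\mathcal{X}$.

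The main obstacle I anticipate is bookkeeping rather than mathematics: one must be precise about how hypotheses (the antecedent context $\Gamma$ of a natural-deduction sequent) are tracked as arguments to the realizing term, and about how the discharge of an assumption in $\Rightarrow$-introduction corresponds to lambda-abstraction in the pca (i.e.\ to applying Proposition~\ref{prop:onpcas} to absorb a free realizer-variable into the term). A secondary subtlety is confirming that each $e_k$ genuinely lies in the \emph{ample relational} signature and that \emph{the same} term serves uniformly over all pcas and all $\mathbb{F}(\mathcal{X})$ — but this was explicitly asserted in the discussion following Definition~\ref{recHeytingpre} (``the same terms will work for any pca''), and the passage from $P(\mathcal{A})$ to $\mathbb{F}(\mathcal{X})$ is immediate since the operations on $\mathbb{F}(\mathcal{X})$ are defined pointwise (Definition~\ref{FlaggHeyting}), so a uniform witness over $P(\mathcal{A})$ automatically yields one over $\mathbb{F}(\mathcal{X})$ via the convention $e:f\leadsto g$ iff $e:f(x)\leadsto g(x)$ for all $x$. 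Given these, the result follows: every $\mathsf{EHP}$-proof of $t(\overline{x})\leq s(\overline{x})$ compiles to a closed ample-signature term $\tau$ with $\tau\colon t(\overline{f})\leadsto s(\overline{f})$ for all $\overline{f}$, which in particular applies to~(\ref{prop:diamonds1})--(\ref{prop:diamonds10}) via Proposition~\ref{prop:thediamondprop22}.
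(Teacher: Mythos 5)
Your proposal is correct and takes essentially the same approach as the paper's own proof: both proceed by induction on the length of the $\mathsf{EHP}$-derivation, realize the Heyting-prealgebra axioms by the uniform closed terms $e_1,\ldots,e_{12}$, handle the $\mathsf{IPC}$ rules by composing realizers built from Proposition~\ref{prop:onpcas}, and note that the substitution rule leaves the realizer unchanged. The paper resolves your anticipated bookkeeping issue exactly as in your option (a), by bundling the open hypotheses into a single conjunction $\Phi$ and realizing $\Phi \Rightarrow \psi$ (with an explicit inductive definition of $a:\varphi(\overline{f})$ for compound formulas), so that assumption discharge becomes currying via Proposition~\ref{prop:onpcas}.
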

\begin{proof}
In the context of this proof, for $a\in \mathcal{A}$ let's write $a:t(\overline{f})\leadsto s(\overline{f})$ as $a:t(\overline{f})\leq s(\overline{f})$. Let's further inductively define $a:\varphi(\overline{f})$ for all $a$ from $\mathcal{A}$ and all formulas $\varphi(\overline{x})$ of $\mathsf{EHP}$ as follows:
\begin{align}
a: \varphi(\overline{f}) \wedge \psi(\overline{f}) & \mbox{ iff } a=p_0a_0a_1 \; \wedge \ a_0: \varphi(\overline{f}) \mbox{ and } a_1:\psi(\overline{f}) \\
a: \varphi(\overline{f}) \vee \psi(\overline{f}) & \mbox{ iff } (a=pkb \; \wedge \;  bÂÂÂ:\varphi(\overline{f})) \mbox{ or } (a=p{\breve{k}}b \; \wedge \; b:\psi(\overline{f})) \\
a: \varphi(\overline{f}) \Rightarrow \psi(\overline{f}) & \mbox{ iff } \forall \; b\in \mathcal{A}, \mbox{ if } b:\varphi(\overline{f}) \mbox{ then } ab: \psi(\overline{f}).\label{eqn:condi}
\end{align}
and where we say that that no $b$ from $\mathcal{A}$ is such that $b:\bot$. It then suffices to show by induction on length of proofs that if $\mathsf{EHP},\varphi_0(\overline{x}), \ldots, \varphi_n(\overline{x})\vdash \psi(\overline{x})$, then there is closed term $\tau$ such that for all pcas $\mathcal{A}$ and all non-empty sets $\mathcal{X}$ and all $\overline{f}$ in $\mathbb{F}(\mathcal{X})$, one has $\tau:(\varphi_0(\overline{f}) \wedge \cdots \wedge \varphi_n(\overline{f}))\Rightarrow \psi(\overline{f})$. To aid in readability, we'll write $\Phi(\overline{x})$ for  $\varphi_0(\overline{x}) \wedge \cdots \wedge \varphi_n(\overline{x})$, and we'll drop the free variables from $\Phi, \varphi, \psi$ when not needed.

One base case is where $\psi$ is one of the axioms of Heyting prealgebras, which are taken care of by the closed terms $e_1, \ldots, e_{12}$ mentioned immediately after Definition~\ref{recHeytingpre}. Another base case is where $\psi$ is one of the $\varphi_k$, in which case we may take $\tau$ to be an appropriate projection function. 

The induction steps correspond to the introduction and elimination rules of the intuitionistic propositional calculus $\mathsf{IPC}$ and the substitution rule. For the ``and'' introduction rule, by Proposition~\ref{prop:onpcas}, choose $\tau$ such that $\tau \tau_0 \tau_1 x = p(\tau_0x)(\tau_1x)$. Then $\tau_i: \Phi \Rightarrow \psi_i$ implies $\tau \tau_0 \tau_1: \Phi \Rightarrow (\psi_0 \wedge \psi_1)$. Similarly, for the ``and'' elimination rule, by Proposition~\ref{prop:onpcas} choose $\tau_i$ such that $\tau_i \tau x = p_i (\tau x)$. Then $\tau: \Phi\Rightarrow (\psi_0\wedge \psi_1)$ implies $\tau_i \tau: \Phi \Rightarrow \psi_i$. 

For the ``or'' introduction rule, by Proposition~\ref{prop:onpcas} choose $\tau, \tau^{\prime}$ such that $\tau \tau_0 x = pk (\tau_0 x)$ and $\tau^{\prime} \tau_1 x = p\breve{k} (\tau_1 x)$. Then $\tau_0: \Phi \Rightarrow \psi_0$ implies $\tau \tau_0: \Phi\Rightarrow (\psi_0 \vee \psi_1)$, while $\tau_1: \Phi \Rightarrow \psi_1$ implies $\tau^{\prime} \tau_1: \Phi\Rightarrow (\psi_0 \vee \psi_1)$.  For the ``or'' elimination rule, by Proposition~\ref{prop:onpcas} choose $\iota$ such that $\iota{xyz} =xyz$, so that $\iota{kab}=kab=a$ and $\iota\breve{k}ab=\breve{k}ab=b$, so that $\iota$ serves to distinguish cases. By Proposition~\ref{prop:onpcas}, choose $\tau^{\prime}$ such that $\tau^{\prime} \sigma \tau_0 \tau_1 x = (\iota (p_0 (\sigma{x}) )\tau_0 \tau_1) (px (p_1(\sigma{x})))$. Suppose that $\sigma: \Phi\Rightarrow (\psi_0 \vee \psi_1)$ and $\tau_i: (\Phi \wedge \psi_i) \Rightarrow \xi$. Suppose that $x: \Phi$. Then $\sigma{x}: (\psi_0\vee \psi_1)$. Then $\sigma{x} = pky$ for $y: \psi_0$ or $\sigma{x} = p{\breve{k}}y$ for $y: \psi_1$. In the former case $\iota (p_0(\sigma{x}))\tau_0 \tau_1= \iota k \tau_0 \tau_1=\tau_0$ and $px(p_1(\sigma{x}))=px(p_1(pky))=pxy$, so that in sum $\tau^{\prime} \sigma \tau_0 \tau_1 x =\tau_0 (pxy)$ and clearly $\tau_0 (pxy):\xi$. The argument in the latter case is analogous.

For the ``arrow'' introduction rule, by Proposition~\ref{prop:onpcas} choose $\tau^{\prime}$ such that $\tau^{\prime}\tau xy = \tau (pxy)$. Then $\tau: (\Phi \wedge \varphi) \Rightarrow \psi$ implies $\tau^{\prime}\tau: \Phi \Rightarrow (\varphi \Rightarrow \psi)$. For the elimination rule, $\tau: \Phi\Rightarrow \varphi$ and $\sigma: \Phi\Rightarrow (\varphi \Rightarrow \psi)$ together imply $s \sigma \tau : \Phi \Rightarrow \psi$.

For the ex falso rule, suppose that $\sigma: \Phi\Rightarrow \bot$. Since no $b$ from $\mathcal{A}$ is such that $b:\bot$, it follows from~(\ref{eqn:condi}) that there is no $a$ such that $a:\Phi$, since otherwise $b =\sigma a$ would be such that $b:\bot$. Hence, it follows that $\sigma: \Phi\Rightarrow \psi$ for any $\psi$, again by~(\ref{eqn:condi}). 

For the substitution rule, suppose that $\mathsf{EHP}+\Phi(x)\vdash \psi(x)$ with $\sigma$ being a witness to the inductive hypothesis, so that for all pcas $\mathcal{A}$ and all non-empty sets $\mathcal{X}$ and all $f$ in $\mathbb{F}(\mathcal{X})$ one has $\sigma: \Phi(f)\Rightarrow \psi(f)$. Let $t(y)$ be a term in the signature of Heyting prealgebras. Then by taking $f=t(g)$, one has $\sigma: \Phi(t(g))\Rightarrow \psi(t(g))$ for all pcas $\mathcal{A}$ and all non-empty sets $\mathcal{X}$ and all $g$ in $\mathbb{F}(\mathcal{X})$.
\end{proof}


\section{Appendix: McCarty's Theorem}\label{sec:appendixmccarty}

The purpose of this brief appendix is to present a self-contained proof of Proposition~\ref{prop:itsuniform} and Theorem~\ref{thm:zfr-valued}. For references to McCarty's original work, see the discussion prior to the statements of these results in \S\ref{sec:setheory}. We begin with the proof of Proposition~\ref{prop:itsuniform}:

\begin{proof}
So it suffices to find $\rho, \sigma,\tau, \iota$ (for ``reflexivity'', ``symmetry,'' ``transitivity'', and ``indiscernability'') such that for all $a,b,c,a^{\prime}, b^{\prime}$ from $V_{\kappa}^{\mathcal{A}}$ we have $\rho: \top \leadsto \|a=a\|$,  $\sigma: \|a=b\|\leadsto \|b=a\|$, $\tau: \|a=b\|\wedge \|b=c\| \leadsto \|a=c\|$, $\iota: \|a=a^{\prime}\| \wedge \|b=b^{\prime} \|\wedge \|a\in b\|\leadsto \|a^{\prime}\in b^{\prime}\|$. 

For $\rho$, by the recursion theorem \cite[Proposition 1.3.4 p. 12]{Oosten2008aa}, choose an index $j$ such that $jn=pn(pjj)$. We show that $pjj\in \|a=a\|$ for all $a\in V^{\mathcal{A}}_{\kappa}$ by induction on rank, so that we can set $\rho{n}=pjj$. Suppose it holds for all sets in $V^{\mathcal{A}}_\kappa$ of lower rank than $a$. Suppose that $\langle n,c\rangle \in a$. Then $c$ has lower rank than $a$, so $pjj \in \|c=c\|$. Then we have $pn(pjj)\in \|c\in a\|$, and so $jn=pn(pjj)\in \|c\in a\|$. Since $\langle n,c\rangle \in a$ was arbitrary, we thus have  $j\in p_0\|a=a\|$. A similar argument shows $j\in p_1\|a=a\|$ and so $pjj\in \|a=a\|$. 

Symmetry is trivial since it is witnessed by $\sigma(p{e_0}{e_1})= p{e_1}{e_0}$, which exists by Proposition~\ref{prop:onpcas}.

For transitivity, by Proposition~\ref{prop:onpcas}, choose $\wp$ from $\mathcal{A}$ such that $\wp uvwxy = u(v(w(xy)))$. By the  recursion theorem \cite[Proposition 1.3.4 p. 12]{Oosten2008aa}, choose $\tau$ such that
\begin{align}
(p_0\tau) (p (p e_0 e_1) (p e_0^{\ast} e_1^{\ast})) n & = p(\wp p_0  e_0^{\ast}  p_0  e_0 n) (\tau(p (p_1 (e_0n))  (\wp p_1  e_0^{\ast}  p_0  e_0 n))) \notag \\
(p_1\tau) (p (p e_0 e_1) (p e_0^{\ast} e_1^{\ast})) n & = p (\wp p_0  e_1  p_0  e_1^{\ast} n )(\tau(p (p_1 (e_1^{\ast} n)) (\wp p_1  e_1  p_0  e_1^{\ast} n))) \notag
\end{align}
We argue by induction on rank. So suppose that $p{e_0}{e_1} \in \|a=b\|$ and $p{e_0^{\ast}}{e_1^{\ast}} \in \|b=c\|$. We must show that $(p_0\tau) (p (e_0 e_1) p(e_0^{\ast} e_1^{\ast})) \in p_0 \|a=c\|$ and $(p_1\tau) p( (e_0 e_1) p(e_0^{\ast} e_1^{\ast})) \in p_1 \|a=c\|$. We focus on the first since the other is similar. For this we must show that if $\langle n,d\rangle \in a$ then $(p_0\tau) (p(e_0 e_1) p(e_0^{\ast} e_1^{\ast})) n \in \|d\in c\|$. So suppose that $\langle n,d\rangle\in a$. Since $e_0 \in p_0 \|a=b\|$, we have that $e_0n\in \|d\in b\|$. Let ${e_0}n=p{n_0}{n_1}$. Since $p{n_0}{n_1} \in \|d\in b\|$ there is $d^{\prime}$ with $\langle n_0, d^{\prime}\rangle \in b$ and $n_1\in \|d=d^{\prime}\|$. Since  $\langle n_0, d^{\prime}\rangle \in b$ and $e_0^{\ast}\in p_0 \|b=c\|$, we have $e_0^{\ast}n_0\in \|d^{\prime}\in c\|$. Then $e_0^{\ast}n_0=p{m_0}{m_1}$ and there is $d^{\prime\prime}$ with $\langle m_0, d^{\prime\prime}\rangle\in c$ and $m_1\in \| d^{\prime}=d^{\prime\prime}\|$. By induction hypothesis $\tau(p{n_1}{m_1})\in \|d=d^{\prime\prime}\|$. Then $\exists \; d^{\prime\prime}\; (\langle m_0, d^{\prime\prime}\rangle \in c \; \wedge \; \tau(p{n_1}{m_1})\in \|d=d^{\prime\prime}\|)$. Hence $p{m_0}\tau(p{n_1}{m_1}) \in \|d\in c\|$. Now, note that
$m_0 = p_0(e_0^{\ast} n_0) = p_0(e_0^{\ast}(p_0 (e_0 n)))=\wp p_0  e_0^{\ast}  p_0  e_0 n$, and $n_1  = p_1(e_0{n})$ and $m_1 = p_1(e_0^{\ast} n_0) = p_1(e_0^{\ast}(p_0(e_0 n))) = \wp p_1  e_0^{\ast}  p_0  e_0 n$. From this and what was said earlier in the paragraph, we are then done with the verification that $\tau$ is the witnesses to transitivity.

Now it remains to define $\iota$. So apply Proposition~\ref{prop:onpcas} to obtain $\iota_0$ and $\iota$ such that $\iota_0 e_0 e_1 e_1^{\prime} = \tau(p (\sigma(p{e_0}{e_1}) e_1^{\prime}))$, 
$\iota e_0 e_1 e_0^{\ast} e_1^{\ast} e_0^{\prime} e_1^{\prime} = p (p_0 (e_0^{\ast}e_0^{\prime}))  (\tau (p (\iota_0 e_0 e_1 e_1^{\prime}) ((p_1 (e_0^{\ast} e_0^{\prime})))))$. So suppose that $p{e_0}{e_1} \in \|a=a^{\prime}\|$ and $p{e_0^{\ast}}{e_1^{\ast}} \in \|b=b^{\prime}\|$ and $p{e_0^{\prime}}{e_1^{\prime}} \in \|a\in b\|$. It suffices to show that $\iota e_0 e_1 e_0^{\ast} e_1^{\ast} e_0^{\prime} e_1^{\prime} \in \|a^{\prime}\in b^{\prime}\|$. Since $p{e_0^{\prime}}{e_1^{\prime}} \in \|a\in b\|$, there is $c$ such that $\langle e_0^{\prime}, c\rangle \in b$ and $e_1^{\prime}\in \|a=c\|$. Since $p{e_0}{e_1}\in \|a=a^{\prime}\|$ we have  $\tau(p(\sigma(p{e_0}{e_1})) e_1^{\prime})\in \|a^{\prime}=c\|$, or what is the same $\iota_0 e_0 e_1 e_1^{\prime}\in \|a^{\prime}=c\|$. Now, since $e_0^{\ast}\in p_0 \|b=b^{\prime}\|$ and  $\langle e_0^{\prime}, c\rangle \in b$, it follows that ${e_0^{\ast}}e_0^{\prime}\in \|c\in b^{\prime}\|$. If we write $e_0^{\ast}e_0^{\prime}=p{\ell_0}{\ell_1}$ then it follows that there is $c^{\prime}$ with $\langle \ell_0, c^{\prime}\rangle \in b^{\prime}$ and $\ell_1\in \|c=c^{\prime}\|$. Since  $\iota_0 e_0 e_1 e_1^{\prime}\in \|a^{\prime}=c\|$ and $\ell_1\in \|c=c^{\prime}\|$ we have that $\tau (p (\iota_0 e_0 e_1 e_1^{\prime}) (\ell_1)) \in \|a^{\prime}=c^{\prime}\|$. Since $\ell_i = p_i (e_0^{\ast} e_0^{\prime})$, we thus have  $\exists \; c^{\prime} \; \langle (p_0 (e_0^{\ast} e_0^{\prime}), c^{\prime}\rangle \in b^{\prime} \; \wedge \; 
\tau (p (\iota_0 e_0 e_1 e_1^{\prime}) (p_1 (e_0^{\ast} e_0^{\prime}))) \in \|a^{\prime}=c^{\prime}\|$, which is to say that $\iota e_0 e_1 e_0^{\ast} e_1^{\ast} e_0^{\prime} e_1^{\prime} \in \|a^{\prime}\in b^{\prime}\|$.
\end{proof}

Before going onto the proof of Theorem~\ref{thm:zfr-valued}, we need only one small preliminary proposition, which is from \cite{McCarty1984aa} Lemma 6.2 p. 92.
\begin{prop}\label{lem:mccartylem}
\begin{enumerate}
\item[]
\item[] (i) For all $b \in V_\alpha^{\mathcal{A}}$ and all $c\in V_{\kappa}^{\mathcal{A}}$, if $\|c \in b\|\equiv \top$ then $c \in V_\beta^{\mathcal{A}}$ for some $\beta < \alpha$.
\item[] (ii) If $a \in V_\alpha^{\mathcal{A}}$ and $\| a = b\|\equiv \top$, then $b \in V_\alpha^{\mathcal{A}}$.
\item[] (iii) If $\beta<\alpha$ then $V_{\beta}^{\mathcal{A}}\subseteq V_{\alpha}^{\mathcal{A}}$.
\item[]
\end{enumerate}
\end{prop}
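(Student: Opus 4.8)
The statement to prove is Proposition~\ref{lem:mccartylem}, which collects three structural facts about the cumulative hierarchy $V_\alpha^{\mathcal{A}}$ defined in~(\ref{eqn:booleanvaluedmccarty}). The plan is to prove the three parts in the order (iii), (i), (ii), since (iii) is purely about the definition of the hierarchy and (ii) will want to invoke (i).

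For part~(iii), the plan is a routine transfinite induction on $\alpha$ showing that $\beta < \alpha$ implies $V_\beta^{\mathcal{A}} \subseteq V_\alpha^{\mathcal{A}}$. At successor stages $\alpha = \gamma+1$ one has $V_\gamma^{\mathcal{A}} \subseteq V_{\gamma+1}^{\mathcal{A}}$ because every $x \in V_\gamma^{\mathcal{A}}$ is (by the easy rank induction noted just after~(\ref{eqn:booleanvaluedmccarty})) a subset of $\mathcal{A} \times V_\delta^{\mathcal{A}}$ for some $\delta < \gamma$, and the monotonicity $V_\delta^{\mathcal{A}} \subseteq V_\gamma^{\mathcal{A}}$ from the induction hypothesis then places $x$ inside $P(\mathcal{A} \times V_\gamma^{\mathcal{A}}) = V_{\gamma+1}^{\mathcal{A}}$; combined with the induction hypothesis for $\beta < \gamma$ this gives the successor case. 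Limit stages are immediate from the union definition. I expect this to be entirely mechanical.

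For part~(i), the key observation is the definition of $\|c \in b\|$ in~(\ref{def:elementhood}): a pair $p e_0 e_1 \in \|c \in b\|$ forces the existence of some $d$ with $\langle e_0, d\rangle \in b$ and $e_1 \in \|c = d\|$. The plan is: since $\|c \in b\| \equiv \top$, it is in particular nonempty, so fix such a witness and extract $d$ with $\langle e_0, d\rangle \in b$ and $\|c = d\| \neq \emptyset$. Because $b \in V_\alpha^{\mathcal{A}}$, writing $b$ as a subset of $\mathcal{A} \times V_\delta^{\mathcal{A}}$ for some $\delta < \alpha$ (using the rank remark) gives $d \in V_\delta^{\mathcal{A}}$ with $\delta < \alpha$. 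Then I would apply part~(ii) to the pair $c, d$: since $d \in V_\delta^{\mathcal{A}}$ and $\|d = c\| \neq \emptyset$, I want to conclude $c \in V_\delta^{\mathcal{A}}$, giving the desired $\beta = \delta < \alpha$. Here I must be careful about the logical dependency — (i) is invoking (ii), so these two should really be proved by a simultaneous induction on $\alpha$ rather than sequentially.

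This points to the main obstacle and the correct overall structure: parts~(i) and~(ii) are genuinely intertwined and should be proved together by transfinite induction on $\alpha$, since the natural argument for~(ii) (given $a \in V_\alpha^{\mathcal{A}}$ and $\|a=b\| \equiv \top$, show $b \in V_\alpha^{\mathcal{A}}$) unpacks the two conjuncts of $\|a = b\|$ from~(\ref{def:elementhood}) — the clauses $p_0\|a=b\|$ and $p_1\|a=b\|$ — to compare the elements of $a$ and $b$ membership-wise, which in turn requires the element-level statement~(i) at lower ranks. Concretely, for~(ii) I would take $\langle n, c\rangle \in b$ and use the nonempty witness in $p_1\|a=b\|$ to get $\|c \in a\| \neq \emptyset$; then the induction hypothesis form of~(i) at rank $\alpha$ (with $a \in V_\alpha^{\mathcal{A}}$) yields $c \in V_\beta^{\mathcal{A}}$ for some $\beta < \alpha$, so every element of $b$ sits in $\bigcup_{\beta<\alpha} V_\beta^{\mathcal{A}}$, and a rank/bounding argument (using that $b$, as a set, has boundedly many elements and that $\kappa$ is a limit) places $b$ itself at stage $\alpha$. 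The delicate bookkeeping is getting the simultaneous induction hypotheses stated at exactly the right rank so that each part may legitimately call the other at strictly lower rank or at the matching rank; once that scaffolding is fixed, each individual inference is a direct unwinding of~(\ref{def:elementhood}) and the definition of the hierarchy.
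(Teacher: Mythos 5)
Your scaffolding is essentially the paper's --- the paper proves (i)--(iii) by a single simultaneous induction on $\alpha$, doing all the work at successor stages --- and where you deviate, your variants are correct and arguably cleaner. Your standalone proof of (iii) from the rank fact is right, and it removes a dependency the paper's version has (the paper's successor step for (iii) invokes (i) together with the reflexivity witness $i_0$). Your (i) is the paper's argument: extract $d$ with $\langle e_0,d\rangle\in b$ and $e_1\in\|c=d\|$, locate $d$ in $V^{\mathcal{A}}_{\delta}$ with $\delta<\alpha$, and apply (ii) at the strictly smaller stage $\delta$; passing from $\|c=d\|\neq\emptyset$ to $\|d=c\|\equiv\top$ is harmless since $P(\mathcal{A})$ is two-valued up to equivalence. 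In (ii), your move of reading $e_1 n\in\|c\in a\|$ directly off the second conjunct $p_1\|a=b\|$ of definition~(\ref{def:elementhood}) is actually more elementary than the paper's, which instead forms $p e i_0\in\|d\in b\|$ and transports it across $\|a=b\|$ using a substitution witness $k_0$ obtained from Proposition~\ref{prop:uniform valuation}. Your ordering of the simultaneous induction (prove (i) at level $\alpha$ using only (ii) strictly below $\alpha$, then let (ii) at $\alpha$ call (i) at $\alpha$) is also coherent.

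The one step that fails as stated is the end of your (ii): the ``rank/bounding argument.'' From the fact that every $c$ with $\langle n,c\rangle\in b$ lies in some $V^{\mathcal{A}}_{\beta_c}$ with $\beta_c<\alpha$, you cannot conclude $b\in V^{\mathcal{A}}_{\alpha}$ when $\alpha$ is a limit: you would need $\sup_c\beta_c<\alpha$, and neither ``$b$ has boundedly many elements'' nor the inaccessibility of $\kappa$ gives that, since $|b|$ may exceed the cofinality of $\alpha$ and the needed bound is below $\alpha$, not below $\kappa$. (Nothing established up to that point in your argument rules out, say, $\alpha=\omega$ with the elements of $b$ having finite but unbounded ranks.) The repair is the standard one, and it is exactly why the paper writes out only the successor step and declares the zero and limit steps trivial: at limit $\alpha$, part (ii) is immediate from the induction hypothesis, because $a\in V^{\mathcal{A}}_{\alpha}=\bigcup_{\beta<\alpha}V^{\mathcal{A}}_{\beta}$ already puts $a\in V^{\mathcal{A}}_{\beta}$ for some $\beta<\alpha$, whence $b\in V^{\mathcal{A}}_{\beta}\subseteq V^{\mathcal{A}}_{\alpha}$ by (ii) at $\beta$ together with (iii); while at a successor $\alpha=\gamma+1$ no bounding is needed at all, since each $\beta_c\le\gamma$ gives $c\in V^{\mathcal{A}}_{\gamma}$ by (iii), hence $b\subseteq\mathcal{A}\times V^{\mathcal{A}}_{\gamma}$, i.e.\ $b\in V^{\mathcal{A}}_{\gamma+1}$. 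With that case split substituted for the bounding claim, your proof goes through.
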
 
\begin{proof}
The proof of (i)-(iii) is by simultaneous induction on $\alpha$. The zero and limit steps are trivial. So suppose the result holds for $\alpha$; we show that it holds for $\alpha+1$. 

For (i), suppose that $b\in V_{\alpha+1}^{\mathcal{A}}$. Then $b\subseteq \omega\times V_{\alpha}^{\mathcal{A}}$. Suppose that $\|c\in b\|\equiv \top$. Choose $p e_0 e_1 \in \|c\in b\|$, so that there is $d$ with $\langle e_0,d\rangle \in b$ and $e_1\in \|c=d\|$. Then $d\in V_{\alpha}^{\mathcal{A}}$ and so by the induction hypothesis for~(ii) we have that $c\in V_{\alpha}^{\mathcal{A}}$. 

For (ii), suppose that $a\in V_{\alpha+1}^{\mathcal{A}}$ and that $\|a=b\|\equiv \top$ with witness $j\in \|a=b\|$. Suppose that $\langle e,d\rangle \in b$. It suffices to show that $d\in V_{\alpha}^{\mathcal{A}}$ since then we would have $b\in V_{\alpha+1}^{\mathcal{A}}$. Recall that in the proof of Proposition~\ref{prop:itsuniform}, we showed that there is an index $i_0$ such that $i_0 \in \|d=d\|$ for all $a\in V_{\kappa}^{\mathcal{A}}$. Then $\langle e,d\rangle \in b$ implies $p e i_0 \in \|d\in b\|$. Then by appeal to $V_{\kappa}^{\mathcal{A}}$ being a uniform $P(\mathcal{A})$-valued structure and substitution therein (Proposition~\ref{prop:uniform valuation} in conjunction with Proposition~\ref{prop:itsuniform}), choose $k_0$ such that for all $u,v,w$ one has $k_0\in \| (u\in v \; \wedge \; w=v)\Rightarrow u\in w\|$. Then $k_0 p(p e i_0) j\in \|d\in a\|$. Then by the induction hypothesis for (i) and (iii), we have that $d\in V_{\alpha}^{\mathcal{A}}$.

For (iii), it suffices by induction hypothesis to show that $V_{\alpha}^{\mathcal{A}}\subseteq V_{\alpha+1}^{\mathcal{A}}$. So suppose that $b\in V_{\alpha}^{\mathcal{A}}$, and suppose that $\langle e,c\rangle \in b$. Then $ p e i_0\in \|c\in b\|$ and so by~(i) there is $\beta<\alpha$ such that $c\in V_{\beta}^{\mathcal{A}}$ and so by induction hypothesis we have $c\in V_{\alpha}^{\mathcal{A}}$. Hence $b\subseteq \mathcal{A}\times V_{\alpha}^{\mathcal{A}}$ which is just to say that $b$ is an element of $V_{\alpha+1}^{\mathcal{A}}$.
\end{proof}

Here, finally, is the proof of Theorem~\ref{thm:zfr-valued}, which of course resembles the proof that the axioms of set theory are valid on Boolean-valued models from e.g. \cite[pp. 37 ff]{Bell1985aa}:

\begin{proof}
By Proposition~\ref{prop:soundness}, it suffices to show that the axioms of $\mathsf{IZF}$ are valid. Recall that in the proof of Proposition~\ref{prop:itsuniform}, we showed that there is an index $i_0$ such that $i_0 \in \|a=a\|$ for all $a\in V_{\kappa}^{\mathcal{A}}$. This index $i_0$ is fixed throughout this proof.

To verify extensionality, by Proposition~\ref{prop:onpcas} choose $f,h$ such that $fen = e p n i_0$ and $h p e_0 e_1 = p (fe_0) (fe_1)$. Suppose $p e_0 e_1 \in \bigcap_{c\in V_{\kappa}^{\mathcal{A}}} (\|c\in a\Rightarrow c\in b\| \wedge \|c\in b \Rightarrow c\in a\|)$. We must show that $h p e_0 e_1\in \|a=b\|$, or what is the same that $f e_0\in p_0\|a=b\|$ and $f e_1\in p_1 \|a=b\|$. This is equivalent to showing that $\langle n,c\rangle \in a$ implies $f e_0 n = e_0 p n i_0  \in \|c\in b\|$ and $\langle n,c\rangle \in b$ implies $f e_1 n=e_1 p n i_0  \in \|c\in a\|$. We verify the first since the proof of the other is identical. So suppose that $\langle n,c\rangle \in a$. Then taking $d=c$ we trivially have $\exists \; d \; (\langle n,d\rangle \in a \; \wedge \; i_0 \in \|c=d\|)$. Then $p n i_0 \in \|c\in a\|$. Since $e_0\in p_0\|c\in a\Rightarrow c\in b\|$ we have that $e_0 p n i_0 \in \|c\in b\|$, so that we are now done verifying extensionality.

For pairing, suppose that $a,b$ are members of $V_{\alpha}^{\mathcal{A}}$. Fix an element $e_0$ of $\mathcal{A}$. Then $c=\{\langle e_0, a\rangle\cup \langle e_0, b\rangle\}$ is a member of $V_{\alpha+1}^{\mathcal{A}}$. By Proposition~\ref{prop:onpcas}, consider $h$ such that $hn  =p (p e_0 i_0)(p e_0 i_0)$. Then we may verify that $h: \top \leadsto \|a\in c\| \wedge \|b\in c\|$. For, by setting $d=a$ and $d^{\prime}=b$ we have $\exists \; d \; (\langle e_0, d\rangle \in c \; \wedge \; i_0 \in \|d=a\|)$ and $\exists \; d^{\prime} \; (\langle e_0, d^{\prime}\rangle \in c \; \wedge \; i_0 \in \|d=b\|)$. This finishes the verification of pairing. 

For the union axiom, let $a$ in $V^{\mathcal{A}}_{\kappa}$ be given and then set  $u = \{ \langle n,c \rangle : n \in \|\exists \; x \; (c \in x\ \wedge\ x \in a)\|\}$. Then $u$ is in $V_\kappa^{\mathcal{A}}$ by Proposition~\ref{lem:mccartylem}. By Proposition~\ref{prop:onpcas}, choose $f$ such that $fn= p n i_0$. Then we show that $f$ is the witness to the reduction $\|\exists \; x \; (c\in x \; \wedge \; x\in a)\| \leq \|c\in u\|$. So suppose that $n$ is in $\|\exists \; x \; (c\in x \; \wedge \; x\in a)\|$. Then by taking $d=c$ we have that $\exists \; d \; (\langle n,d\rangle\in u \; \wedge \; i_0\in \|c=d\|)$. Then $fn=pni_0 \in \|c\in u\|$. 

For power set, let $a$ in $V^{\mathcal{A}}_{\kappa}$ be given and set $P^{\mathcal{A}}(a) = \{\langle n,c\rangle: n\in \|c\subseteq a\|\}$. By Proposition~\ref{lem:mccartylem}, choose $\beta<\kappa$ such that $b\in V^{\mathcal{A}}_{\kappa}$ and $\|b\in a\|\equiv \top$ implies $b\in V^{\mathcal{A}}_{\beta}$. Supposing that $\langle n,c\rangle \in P^{\mathcal{A}}(a)$, we first show that $c\in V^{\mathcal{A}}_{\beta+1}$. So suppose that $\langle e,b\rangle \in c$. Then taking $d=b$ we have $\exists \;d \; \langle e,d\rangle \in c \; \wedge \; i_0\in \|b=d\|$, so that $p{e}i_0 \in \|b\in c\|$. Then by hypothesis on $n$, we have that $n p e i_0 \in \|b\in a\|$. Hence $\|b\in a\|\equiv \top$ and thus $b\in V^{\mathcal{A}}_{\beta}$. Thus indeed $c\in V^{\mathcal{A}}_{\beta+1}$ for all $\langle n,c\rangle \in P^{\mathcal{A}}(a)$. Hence $P^{\mathcal{A}}(a)\in V^{\mathcal{A}}_{\beta+2}$. Now consider, just as in the verification of the union axiom, the element $f$ such that  $fn= p n i_0$. Then we show that $f$ is the witness to the reduction $\|c\subseteq a\| \leq \|c\in P^{\mathcal{A}}(a)\|$. So suppose that $n\in \|c\subseteq a\|$. Then taking $d=c$ we have that $\exists \; d \; (\langle n,d\rangle \in P^{\mathcal{A}}(a) \; \wedge \; i_0\in \|c=d\|)$. Hence $fn= p n i_0 \in \| c\subseteq P^{\mathcal{A}}(a)\|$.

For separation, let $a$ in $V^{\mathcal{A}}_{\kappa}$ and a formula $\varphi(x)$ in the signature be given, which perhaps has parameters from $V^{\mathcal{A}}_{\kappa}$. Define $b= \{ \langle  e_0,d \rangle\ : \ e_0 \in \|d \in a\ \wedge\ \varphi(d)\| \}$. Again by similar appeal to Proposition~\ref{lem:mccartylem}, $b$ is an element of~$V_\kappa^{\mathcal{A}}$. Then by appeal to $V_{\kappa}^{\mathcal{A}}$ being a uniform $P(\mathcal{A})$-valued structure and substitution therein (Proposition~\ref{prop:uniform valuation} in conjunction with Proposition~\ref{prop:itsuniform}), let $g$ be an element  witnessing the substitution $\| d\in a \; \wedge \; \varphi(d) \; \wedge \; c=d\|\leq \|c\in a \; \wedge \; \varphi(c)\|$. Suppose that $p e_0 e_1 \in \|c\in b\|$. Then $\exists \; d \; (\langle e_0, d\rangle\in b \; \wedge \; e_1\in \|c=d\|)$. Then by definition of $b$, we have $e_0\in \|d\in a \; \wedge \; \varphi(d)\|$. Then $g (p e_0 e_1)\in \|c\in a  \; \wedge \; \varphi(c)\|$. Hence $g$ is also a witness to the reduction $\|c\in b\|\leq \|c\in a \; \wedge \; \varphi(c)\|$. Conversely, suppose that $e_0\in \|c\in a \; \wedge \; \varphi(c)\|$. Then by taking $d=c$, we have that $\exists \; d \; (\langle e_0, d\rangle \in b \; \wedge \; i_0\in \|c=d\|)$. Then $p{e_0}{i_0} \in \|c\in b\|$. Hence the element $f e_0=p e_0  i_0$ is the witness to the reduction $\|c\in a \; \wedge \; \varphi(c)\|\leq \|c\in b\|$.

For collection, suppose that $a$ from $V^{\mathcal{A}}_{\kappa}$ is given and that $\varphi(x,y)$ is a formula in the signature, perhaps with parameters from $V^{\mathcal{A}}_{\kappa}$. Choose $\alpha$ such that $\|c \in a\|\equiv \top$ implies $c \in V_\alpha^{\mathcal{A}}$, and set $Z = \{ c \in V_{\alpha}^{\mathcal{A}}: \|c \in a\|\equiv \top\}$. For each triple $(c,e,k)$ such that $c \in Z$, $e \in \|\forall x \in a \; \exists \; y \; \varphi(x,y) \|$, $k \in \|c \in a \|$, define $\gamma (c, e, k)$ to be the least ordinal $\gamma<\kappa$ such that there is $d \in V_\gamma^{\mathcal{A}}$ with $ek\in \|\varphi(c,d)\|$. Then since $\kappa$ is strongly inaccessible and satisfies $\kappa>\left|\mathcal{A}\right|$, choose $\beta<\kappa$ such that $\beta$ is strictly greater than all such  $\gamma (c, e, k)$. Let $b=\mathcal{A} \times V_\beta^{\mathcal{A}}$ which is trivially an element of $V_{\kappa}^{\mathcal{A}}$. Fix an element $e_0$ of $\mathcal{A}$. Our desired $e^\prime$ is given by $e^{\prime} e k = p (p e_0 i_0) (ek)$, which exists by Proposition~\ref{prop:onpcas}. For suppose $e\in \| \forall x \in a \ \exists \; y \; \varphi(x,y)\|$. We must establish that $e^\prime e \in \| \forall \; x \in a \; \exists \; y\in b\; \varphi(x,y)\|$. So suppose $c$ is fixed and $k \in \|c\in a\|$. By construction of $\beta$, choose $d\in V_{\beta}^{\mathcal{A}}$ with $ek\in \|\varphi(c,d)\|$. Then $p e_0 i_0\in \|d\in b\|$ and so $p (p e_0 i_0) (ek) \in \|\exists \; y\in b \; \varphi(c,y)\|$.

For the axiom of infinity, recall the elements $\widetilde{n}$ from $\mathcal{A}$ from~(\ref{eqn:currynumerals}) and the elements $\overline{n}$ and $\overline{\omega}$ from $V_{\kappa}^{\mathcal{A}}$ from~(\ref{eqn:mccartynumbers}). It suffices to show that there are $e^{\prime},e\in \mathcal{A}$ such that $e^{\prime} \in  \|\overline{0} \in \overline{\omega} \|$ and $e \in \|c\in \overline{\omega}\Rightarrow \exists \; y \; (c\in y \; \wedge y\in \overline{\omega})\|$. For $e^{\prime}$, simply take $e^{\prime}=p\widetilde{0}{i_0}$. Then we have $\langle \widetilde{0}, \overline{0}\rangle \in \overline{\omega}$ and $i_0\in \|\overline{0}=\overline{0}\|$ and hence $e^{\prime}\in  \|\overline{0} \in \overline{\omega} \| $. Now we work on $e$. By appeal to $V_{\kappa}^{\mathcal{A}}$ being a uniform $P(\mathcal{A})$-valued structure and substitution therein (Proposition~\ref{prop:uniform valuation} in conjunction with Proposition~\ref{prop:itsuniform}), choose $k_0$ such that for all $u,v,w$ one has $k_0\in \| (u\in v \; \wedge \; u=w)\Rightarrow w\in v\|$. Using primitive recursion on Curry numerals \cite[Proposition 1.3.5 p. 12]{Oosten2008aa}, we choose $e$ such that $e p\widetilde{n}e_1= p({k_0}p(p\widetilde{n}{i_0})e_1)( p(\widetilde{n+1})i_0)$. Suppose that $p e_0 e_1 \in \|c\in \overline{\omega}\|$. Then there is $d$ such that $\langle e_0, d\rangle \in \overline{\omega}$ and $e_1\in \|d=c\|$. Then $\langle e_0, d\rangle = \langle \widetilde{n}, \overline{n}\rangle$ for some $n<\omega$ and hence $e_1\in \|\overline{n}=c\|$. Since $p\widetilde{n} i_0 \in \|\overline{n}\in \overline{n+1}\|$, we have $k_0(p(p(\widetilde{n} i_0))e_1)\in \|c\in \overline{n+1}\|$. Since $p(\widetilde{n+1})i_0\in \|\overline{n+1}\in \overline{\omega}\|$, we thus have that $e p\widetilde{n}e_1=e pe_0e_1\in \|\exists \; y \; (c\in y \; \wedge y\in \overline{\omega})\|$.

For the the induction schema, we need to show that there is $e$ such that $e\in \|[\forall \; x \; (\forall \; y\in x \; \varphi(y))\Rightarrow \varphi(x)]\Rightarrow (\forall \; x \; \varphi(x))\|$. Let $k_0$ such that for all $a,b$ in $V_{\kappa}^{\mathcal{A}}$ we have $k_0\in \| \varphi(b) \Rightarrow (b\in a \Rightarrow \varphi(b))\|$. By Proposition~\ref{prop:onpcas}, choose $f$ such that $fem = m k_0 (e m)$. By the recursion theorem \cite[Proposition 1.3.4 p. 12]{Oosten2008aa}, choose an $e$ such that $em = fem = m k_0 (e m)$. Suppose that 
\begin{equation}\label{eqn:thisisM}
m\in \|\forall \; x \; (\forall \; y\in x \; \varphi(y))\Rightarrow \varphi(x)\|
\end{equation}
We show by induction on $\alpha$ that $a\in V_{\alpha}^{\mathcal{A}} \mbox{ implies } em\in \|\varphi(a)\|$. Suppose it holds for all $\beta<\alpha$. Suppose that $a\in V_{\alpha}^{\mathcal{A}}$. Then we claim ${k_0}(em)\in \|\forall \; y\in a \; \varphi(y)\|$. For this claim, it suffices to show that for all $b\in V_{\kappa}^{\mathcal{A}}$ we have  ${k_0}em\in \|b\in a\Rightarrow \varphi(b)\|$. So suppose that $n\in \|b\in a\|$. Then by the second part of Proposition~\ref{lem:mccartylem}, we have that $b\in V_{\beta}^{\mathcal{A}}$ for some $\beta<\alpha$. Then by induction hypothesis we have $em\in \|\varphi(b)\|$. Then by definition of $k_0$ we have that ${k_0}(em)\in \|b\in a\Rightarrow \varphi(b)\|$, which is what we wanted to show. So we've succeeded in showing the claim. Then by the hypothesis on~$m$ recorded in equation~(\ref{eqn:thisisM}), we have  $m{k_0}(em)\in \|\varphi(a)\|$, which by the choice of~$e$ implies that $em\in \|\varphi(a)\|$, which is what we wanted to show.
\end{proof}

\section{Acknowledgements}

This paper has been bettered by comments from and conversations with the following people, whom we warmly thank: Marianna Antonutti, Jeff Barrett, Benno van den Berg, Tim Carlson, Walter Dean, Michael Ernst, Rohan French, Leon Horsten, Bob Lubarsky, Jaap van Oosten, and Kai Wehmeier. Thanks also to the anonymous referees for very helpful comments.

\bibliography{rin-walsh.bib}

\end{document}